\documentclass{asl}
\usepackage{graphicx,amsfonts,bbm,proof,amssymb,stmaryrd,arydshln,caption}
\newtheorem{thm}{Theorem}[section]
\newtheorem{cor}[thm]{Corollary}
\newtheorem{lem}[thm]{Lemma}
\theoremstyle{definition}
\newtheorem{defn}[thm]{Definition}
\newtheorem{ex}[thm]{Example}

\usepackage{FLe_Und_Macros} 

\title{Most simple extensions of $\mathsf{FL_e}$ are undecidable}
\author{Nikolaos Galatos}
\revauthor{Galatos, Nikolaos}
\address{Department of Mathematics\\ University of Denver\\
Denver, CO, 80210, USA}
\email{Nikolaos.Galatos@du.edu}
\author{Gavin St.\,John}
\revauthor{St.\,John, Gavin} 
\address{Department of Mathematics\\ University of Denver\\
Denver, CO, 80210, USA}
\email{Gavin.StJohn@gmail.com}

\begin{document}

\begin{abstract}
All known structural extensions of the substructural logic $\m {FL_e}$, Full Lambek calculus with exchange/commutativity, (corresponding to subvarieties of commutative residuated lattices axiomatized by $\{\jn, \cdot, 1\}$-equations) have decidable theoremhood; in particular all the ones defined by knotted axioms enjoy strong decidability properties (such as the finite embeddability property). We provide infinitely many such extensions that have undecidable theoremhood, by encoding machines with undecidable halting problem. An even bigger class of extensions is shown to have undecidable deducibility problem (the corresponding varieties of residuated lattices have undecidable word problem); actually with very few exceptions, such as the knotted axioms and the other prespinal axioms, we prove that undecidability is ubiquitous. Known undecidability results for non-commutative extensions use an encoding that fails in the presence of commutativity, so and-branching counter machines are employed. Even these machines provide encodings that fail to capture proper extensions of commutativity, therefore we introduce a new variant that works on an exponential scale. The correctness of the encoding is established by employing the theory of residuated frames.
\end{abstract}

\maketitle

%
%
\section{Introduction} 
Substructural logics are defined as extensions of the Full Lambek calculus $\m {FL}$ and include among others classical, intuitionistic, linear, relevance, bunched-implication and many-valued logics. They find applications to areas as diverse as mathematical linguistics, philosophy, management of pointers in computer architecture, engineering, theoretical physics and functional programming. Their algebraic semantics, in the sense of Blok and Pigozzi \cite{BP}, are (pointed) residuated lattices (or FL-algebras) and they have an independent history with connections to classical and to ordered algebra. In particular they include  the lattice of ideals of rings, lattice-order groups, algebras of relations, and of course Boolean and Heyting algebras. Pointed residuated lattices form a variety $\sf{FL}$ and its subvarieties  correspond to extensions of $\m {FL}$ via a dual lattice isomorphism; algebraic and logical properties are tightly linked. See \cite{GJKO} for an introduction to the area.

Decidability questions are at the core of the study of logical systems and here we explore logics/varieties with structure rich enough to allow for encoding the computation of machines with undecidable halting problem. This yields undecidability results for the word problem, and hence also for the quasiequational theory, of these varieties (namely the deducibility relation for the logics) and sometimes even the undecidability of the equational theory of the varieties (i.e., the theoremhood in the corresponding logics). 
 
The equational theory of $\sf{FL}$ is decidable and the same holds for many of its standard extensions such as $\sf{FL_e}$ (with exchange/commutativity: $xy=yx$), $\sf{FL_w}$ (with weakening/integrality: $x \leq 1$),  $\sf{FL_{ei}}$ (with exchange and weakening), $\sf{FL_{ec}}$ (with exchange and contraction: $x \leq x^2$), $\sf{FL_{em}}$ (with exchange and mingle: $x^2 \leq x$).  The equational theory of $\sf{FL_{c}}$ is one of the few known to be undecidable \cite{ChvHor}; a precursor to this result is the fact that the equational theory of $\mathsf{FL_{ec}}$, even though decidable, is not primitive recursive \cite{Urq}. The only other known subvarieties of $\sf{FL}$ with undecidable equational theory are the ones axiomatized by $x^m \leq x^n$ (where $0<m<n$), the one axiomatized by the modular law, and the one axiomatized by commutativity, involutivity and distributivity (corresponding to the relevance logic R). In particular,  the last one is the only subvariety of $\sf{FL_e}$ with undecidable equational theory; actually distributivity does not correspond to a sequent structural rule, unless the syntax is expanded, so it is not even a structural extension of $\m {FL}$.  
On the contrary, prominent subvarieties of $\sf{FL_e}$, such as (proper, non-trivial) subvarieties axiomatized by any knotted inequality $x^m \leq x^n$ (where $m\neq n$) not only have a decidable equational theory but also a decidable quasiequational theory, and even the finite embeddability property \cite{vanA}. (In \cite{Card} it is further shown that this remains true even under conditions weaker than commutativity.)

In contrast to the above, in this paper we construct infinitely many subvarieties of $\sf{FL_e}$ with undecidable equational theory. We also show that an even bigger collection of subvarieties of $\sf{FL_e}$ have an undecidable quasiequational theory, actually undecidable word problem. The encoding used for the undecidability of the word problem for $\sf{FL_e}$ does not work for its subvarieties, so we modify it in a novel way, by storing the values in the counter machines as powers of a sufficiently large constant, which depends on the subvariety. 

 A \textit{residuated lattice} $\mathbf{R}=( R, \vee,\land,\cdot,\backslash,/,1 )$ is an algebraic structure such that $(R,\vee,\land) $ is a lattice, $(R,\cdot, 1)$ is a monoid, and the \textit{law of residuation} holds: for all $x,y,z\in R$,
$$x\cdot y \leq z \mbox{ iff } x\leq z/y \mbox{ iff } y\leq x\backslash z,$$
where $\leq$ is the induced lattice order. The residuated lattice $\mathbf{R}$ is called \textit{commutative} if $(R,\cdot,1)$ is a commutative monoid; in such a case $x\backslash y = y/ x$ for all $x,y\in R$ and will use the notation $x\to y:=x \backslash y$. It is well known that (commutative) residuated lattices form a variety denoted $(\Cc )\RL $, see \cite{GJKO}. FL-algebras are defined as expansions of  residuated lattices by an arbitrary constant $0$ (which is used to define the negation operation), but we will not be making use of this constant in our encodings and our results remain true in the presence or absence of this constant.

For example, the extension of ($0$-free) $\m{FL_e}$ with the structural rule
\begin{equation*}\label{d}\infer[{(\varepsilon)}] {\Gamma, \Delta,\Sigma \vdash \Pi }{\Gamma,\Delta,\Delta,\Sigma\vdash\Pi &\Gamma,\Delta,\Delta,\Delta,\Sigma\vdash\Pi} \end{equation*}
corresponds to the subvariety of $\CRL$ defined by the inequality $\varepsilon: x\leq x^2\vee x^3$.  More generally, structural rules, such as the above, correspond to inequalities in the signature $\{\vee,\cdot,1\}$. We will show that theoremhood for $\m{FL_e}+(\varepsilon)$ and the equational theory of $\CRL +\varepsilon$ are undecidable.

To establish the main undecidability results, we encode in the theory of commutative residuated lattices the computation of machines with undecidable halting problem; these are \textit{and-branching counter machines}, a variant of counter machines, introduced in \cite{Lincoln}. In \S3 we outline their structure and in \S4 we use them to give a simple and direct proof of the undecidability of every variety between $\CRL$ and $\RL$, extending the result that was known for just these two varieties. This is done to set the stage for the much more complicated development that follows, while still introducing two of the main tools: abstract machines and residuated frames. The theory of residuated frames, developed in \cite{GJ}, is used to prove the completeness of the encoding, inspired by \cite{Hor,ChvHor}. The encodings used in the latter and the standard encodings, however, do not work in the presence of commutativity and this is why we make use of the one considered in \cite{Lincoln}; in the Conclusions section \S9 we compare some of the encodings and discuss further some related results. 

In the beginning of \S5 we review the algebraic counterparts of structural rules: equalities in the signature $\{\vee,\cdot,1\}$, as alluded to above, and show how they can be viewed as conjunctions of simple inequalities. We also explain why the encoding of and-branching counter machines fails to work for subvarieties of $\CRL$ axiomatized by such equations, thus necessitating the use of a novel encoding. Parts of \S5 and \S6 explore properties of the new encoding that are needed for capturing the additional equations: the application of the equation even though it may interfere with the computation of the new machine, should not add any more accepted configurations. We refer to this condition as \emph{admissibility} of the equation relative to the machine and also we work out some motivating examples. 

In \S7 we define the new machines as variants of and-branching counter machines working on an exponential level; the main idea is that the new equation does affect the computation but only in a linear/polynomial way, so if the encoding is done at the exponential level no new accepted configurations will be added. The desired properties of these exponential machines are studied in detail in \S7 and a technical condition $(\star \star)$ emerges as a prominent condition for the given equation so that the encoding will work. In the beginning of \S8 it becomes clear that if the added equation fails condition $(\star\star)$, not even the new exponential encoding will work to establish undecidability and thus such equations are outside the scope of this paper. However, in  \S7 it is shown that if the equation satisfies condition $(\star \star)$, then it indeed defines a variety of residuated lattices with undecidable word problem. Actually, Theorem~\ref{th:1-var} shows that almost all 1-variable equations actually satisfy condition $(\star \star)$, indicating that this condition is not really restrictive. In parallel to all this, starting already from \S5 we introduce a very big class of equations, called \emph{spineless}; the class is so big that we define it via its complement (prespinal equations), which forms a very small portion of all equations and admits a very natural and simple definition. 

In \S8 we show that, surprisingly, the very transparently defined spineless equations are precisely the ones that satisfy the technical condition $(\star \star)$. Therefore, the results of \S7 and \S8 together imply that every spineless equation defines a variety with undecidable word problem (Corollary~\ref{UndLink}).
Wanting to showcase this result as early as possible, we (re)state it in advance in \S5 and use it to derive the stronger result of the undecidability of the equational theory for certain subvarieties; this is done by using a special form of the deduction theorem, relying on the characterization of congruences in commutative and expansive residuated lattices. In that sense, someone who reads just the first third of the paper, up to \S5, has a full and clear grasp of all the notions and also knows the statements of the two main results of the paper, Theorem~\ref{main} and Theorem~\ref{main2}.

The proof, given in \S8, that these two differently-looking notions coincide is quite involved and relies on positive linear algebra, therefore \S8 uses very different tools than the ones of the rest of the paper. We even find it helpful to move from the multiplicative notation for monoids, used in most of the paper and which suits the link to the encoding of the machines, to additive notation for monoids, giving rise to positive linear transformations and matrices with natural numbers as entries. Therefore we chose to have this proof done as the last section of the paper, as it already takes up one-fourth of the length of the paper.  
%
%
\section{Preliminaries}
We denote the sets of natural numbers, positive integers, and real numbers by $\N$, $\Z^+$, and $\R$, respectively.   
We denote the \textit{powerset} of a set $X$ by $\powerset(X)$. Given a set $X$ and a binary operation symbol $\cdot$, we denote by $(X^*,\cdot,1)$ the free commutative monoid generated by $X$ with unit $1$. A {\em substitution} on $X$ is a monoid homomorphism $\subt:X^*\to X^*$; substitutions are determined by their restriction to $X$. For $x\in X^*$, we write $x^n$ to denote $1$, if $n=0$, and the term $x\cdot\cdots \cdot x$ consisting of $n$ copies of $x$ for $n>0$. For subsets $A,B$ of $X$, we define $A\cdot B=\{a\cdot b: a\in A,b\in B\}$, and if $a\in X$ then $a\cdot B=\{a\}\cdot B$. 

Let $\mathcal{L}$ be an algebraic language, i.e., containing no relational symbols. Given a set of variables $X$, $T(X)$ denotes the set of terms over $X$ and $\mathcal{L}$ and $\m{T}(X)$ the absolutely free algebra of terms. A \textit{quasiequation} is (the universal closure of) a formula of the form
\begin{equation}\label{qe}s_1=t_1~ \& ~\dots ~\&~ s_n=t_n \implies s_0=t_0, \end{equation}
where $s_0,t_0,...,s_n,t_n\in T(X)$ are terms and $n \in \N$. If $n=0$ then the left-hand side is empty and we obtain an equation.

For a class of algebras $\mathcal{K}$ in the language $\mathcal{L}$, we say that (\ref{qe}) holds in $\mathcal{K}$ (i.e. $\mathcal{K}\models (\ref{qe})$) if for every algebra $\mathbf{A}\in \mathcal{K}$ and homomorphism $h:\mathbf{T}(X)\to \mathbf{A},$
$$(\forall i\in \nset{n}) (\mathbf{A},h\models s_i=t_i ) \implies \mathbf{A},h\models s_0=t_0.$$
Here $\mathbf{A}, h \models s=t$ means $h(s)=h(t)$.

A \textit{presentation} is a pair $\langle X,E\rangle$ where $X$ is a set of generators and $E$ is a set of equations over $T(X)$. A presentation $\langle X,E\rangle $ is said to be finite iff both $X$ and $E$ are finite. We denote the conjunction of equations in $E$ by $\&E$. For a variety of algebras $\mathcal{V}$ in the language $\mathcal{L}$, we say \textit{$\mathcal{V}$ has an undecidable word problem} if there exists a finite presentation $\langle X,E\rangle$ such that there is no algorithm, which on input $s,t\in T(X)$ decides whether the quasiequation
\begin{equation}\label{wordproblem}\& E \implies s=t \end{equation}
holds in $\mathcal{V}$. Note that if $\mathcal{V}$ has undecidable word problem then its quasiequational and universal theories are undecidable as well. The word problem is also referred to as the local word problem and the quasiequational theory as the global word problem, making explicit that in the former the antecedent of the quasiequations is fixed while in the latter it is unrestricted. 

In the following the proofs of the undecidability of the word problem will rely on the fact that residuated lattices have a $\{\jn, \cdot, 1\}$-reduct, and therefore the undecidability results apply to all reducts that contain it.
%
%
\section{Counter Machines}
For proving undecidability we use a type of abstract machine known as an \textit{And-branching $k$-Counter Machine} ($k$-ACM), introduced in \cite{Lincoln}, as they have an undecidable halting problem. A $k$-ACM is a tuple ${\acm}=(\Reg_k, {\State},{\Inst}, q_f)$ representing a type of parallel-computing counter machine, where 
\begin{itemize}
	\item $\Reg_k:=\{\reg_1,...,\reg_k\}$ is a set of $k$ \textit{registers}, each able to store a nonnegative integer (representing the number of tokens in that register), 
	\item ${\State}$ is a finite set of \textit{states} with a designated \textit{final state} $q_f$, and
	\item ${\Inst}$ is a finite set of \textit{instructions} (to be formalized below) that indicate whether to, given a certain state of the machine, \textit{increment} a register or \textit{decrement} a nonzero register, as well as a ``branching'' instruction known as \textit{forking}, with no instruction applicable to the state $q_f$. 
\end{itemize}

A \textit{configuration} ${\cf}$ of a $k$-ACM is a tuple consisting of a single state and, for each register, a nonnegative integer indicating the contents of that register. We can imagine a configuration being a box labeled by a state and containing tokens each labelled by an element of the set $\Reg_k$. In essence, a configuration is specified by the state label and the multiset of register labels of the tokens.
Since the order of the symbols is irrelevant, we represent a configuration ${\cf}$ as a term in the free commutative monoid generated by ${\State}\cup \Reg_k$, and canonically arranged as
$$q\reg_1^{n_1} \reg_2^{n_2}\cdots \reg_k^{n_k}, $$
where $q\in {\State}$ is the \textit{state} of the configuration and $n_i$ is the number stored in the register $\reg_i$, for each $i=1,..,k$; if $n_i=0$, we say the register $\reg_i$ is \textit{empty}. Since ${\cf}$ contains precisely one state, we may define the set of configurations by $\conf {\acm} := {\State}\cdot \Reg_k^*$.

The instructions of a $k$-ACM replace a single configuration by a new configuration (via increment and decrement), or by two configurations (via forking). An \textit{increment} instruction can be understood as ``if a box is labeled by state $q$, add one register-$\reg_i$ token and relabel the box with state $q'$,'' \textit{decrement} as ``if a box is labeled by state $q$, and $\reg_i$ is not empty, remove one register-$\reg_i$ token and relabel the box with state $q'$,'' and \textit{forking} as ``if a box is labeled by state $q$, duplicate the box and its contents, resulting in two boxes relabeled by $q'$ and $q''$, respectively.'' As a consequence of the forking instruction, the machine can be operating on multiple configurations, i.e. branches, in parallel and is inherently nondeterministic. The status of a machine at a given moment in a computation, called an \textit{instantaneous description} (ID), is represented by the configurations that are present. Formally, an ID is an element
$${\cf}_1\vee\cdots\vee {\cf}_m, $$
of the free commutative semigroup $\indes {\acm}$ generated by $\conf {\acm}$; we denote the associated binary operation by $\vee$.

In this way, we view $\indes {\acm}$ as a subset of the commutative semiring $\mathbf{A_{\acm}}=(A_{\acm},\vee,\cdot, \bot, 1)$ generated by $\Reg_k\cup {\State}$, where
\begin{itemize}
	\item $(A_{\acm},\vee,\bot)$ is a commutative monoid with additive identity $\bot$, 
	\item $(A_{\acm},\cdot,1)$ is a commutative monoid with multiplicative identity 1, and
	\item multiplication distributes over join.
\end{itemize}
Even though $\jn$ in $\mathbf{A}_{\acm}$ is not defined to be idempotent, we will consider homomorphisms from $\mathbf{A}_{\acm}$ that will map $\jn$ to a semilattice operation and for our applications it would not hurt to define $\jn$ to be idempotent. However, the non-idempotent status of ID's matches better the intuition of computation and this is the reason for our choice.

Since multiplication fully distributes over $\jn$ in $\mathbf{A}_\acm$, each element of $A_{\acm}$ can be written as a finite join $\bigvee_{i\in I} m_i$, where $I$ is a finite (possibly empty) set, of monoid terms $m_i \in ({\State} \cup \Reg_k)^*$, for all $i \in I$; recall that the join of the empty set is the bottom element ($\bot=\bigvee \emptyset$).
As usual, each element of $\mathbf{A}_\acm$, which is the equivalence class $[t]$ of a term $t$ in the absolutely free algebra over $\{\jn, \cdot, 1\}$ and ${\State} \cup \Reg_k$, will be identified with the term $t$ itself, when no confusion arrises. 
 
Formally, an instruction $p$ of a $k$-ACM is an expression of the form 
$q\leq q'\reg_i$, $q\reg_i \leq q'$, or $q\leq q'\vee q''$, where $q, q', q'' \in {\State}$ and $\reg_i \in \Reg_k$, representing \textit{increment $\reg_i$}, \textit{decrement $\reg_i$}, and \textit{fork}, respectively. 
We will often write $p:{\cf}\leq {\uid}$ to indicate the instruction $p$ is given by ${\cf}\leq {\uid}$, where ${\cf} \in \conf {\acm}$ and ${\uid} \in  \indes {\acm}$. For a state $q\in \State$, we say $p$ is a \textit{$q$-instruction} if $p:qx\leq \uid$ for some $x\in\Reg_k^*$. Note that a machine $\acm$ with final state $q_f$ contains no $q_f$-instructions by definition.

The \textit{computation relation} $\leq$ for the machine ${\acm}=(\Reg_K,{\State},{\Inst},q_f)$ is defined to be the smallest ${\{\cdot,\vee\}}$-compatible preorder containing ${\Inst}$, and will be denoted by $\leq_{\acm}$. For a given instruction $p: {\cf}\leq {\uid}$, it will be useful to define the relation $\leq^p$ to be the closure of $p$ under the inference rules 
$$\infer[{[\cdot]}] {vx\leq^p wx }{v\leq^p w}\quad\mbox{and }\quad\infer[{[\vee]}]{ v\vee t \leq^p w\vee t}{v\leq^p w},$$
for all $v,w,x,t\in A_{\acm}$ (in that order, without loss of generality, due to the distributivity of $\cdot$ over $\vee$). Consequently, $v\leq^p w$ if and only if $v={\cf}x\vee t$ and $w=\uid x\vee t$, for some $x, t \in A_{\acm}$; these equalities are understood inside $ A_{\acm}$, so the terms $v$ and ${\cf}x\vee t$ need not be identical. We therefore conclude that if $v\leq^p w$, then $v\in\indes{\acm}$ if and only if $w\in\indes{\acm}$.

It is easily verified that $\leq_{\acm}$ is equivalent to the smallest preorder generated by $\bigcup\{\leq^p: p\in{\Inst}\}$. 
Therefore, if $s\leq_{\acm} t$, then there exist $n \in \mathbb{N}$, 
a sequence of $A_{\acm}$-terms $t_0, \ldots, t_n$ and a sequence of instructions $p_1, \ldots, p_n$ from ${\Inst}$, collectively called a \textit{computation in ${\acm}$} of \emph{length} $n$ witnessing $s\leq_{\acm} t$, such that
$$s_0=_{\mathbf{A}_\acm}t_0 \leq^{p_1} t_1 \leq^{p_2}\cdots \leq^{p_n} t_n=_{\mathbf{A}_\acm} t.$$
Clearly, if there is a computation witnessing $s\leq_{\acm} t$, then there is a computation of minimal length, the value of which we simple call the \textit{computation length}. The following result is an easy consequence of the definitions.
\begin{lem}\label{basicrel}
Let $s,t,t'\in A_{\acm}$.
\begin{enumerate}
\item  If $s\leq_{\acm} t$, then there exists a computation witnessing it and furthermore, $s\in\indes{\acm}$ iff $t\in \indes{\acm}$. 
\item $t\vee t' \leq_{\acm} s$ if and only if there exist $s',s''\in A_{\acm}$ such that $s=s'\vee s''$, $t\leq_{\acm} s'$ and $t'\leq_{\acm} s''$. Furthermore, the sum of the computation lengths of $t\leq_{\acm} s'$ and $t'\leq_{\acm} s''$ is less than or equal to the computation length of $t\vee t' \leq_{\acm} s$.
\end{enumerate}
\end{lem}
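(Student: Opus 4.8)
\textit{Proof proposal.} The plan is to prove both parts by induction on the length of a witnessing computation, using only what is already recorded above the statement: that $\leq_{\acm}$ is the reflexive--transitive closure of $\bigcup_{p\in\Inst}\leq^{p}$; that a single $\leq^{p}$-step rewrites, inside $\mathbf{A}_\acm$, an occurrence $\cf x$ of the instruction $p:\cf\leq\uid$ to $\uid x$ while leaving the rest $r$ untouched (with $x$ taken to be a monoid term); and that $v\leq^{p}w$ forces $v\in\indes{\acm}\iff w\in\indes{\acm}$. Part~(1) is then almost immediate: the existence of a witnessing computation is the unfolding of ``$s\leq_{\acm}t$'' through the closure description, and the equivalence $s\in\indes{\acm}\iff t\in\indes{\acm}$ follows by running along the chain $s=_{\mathbf{A}_\acm}t_{0}\leq^{p_{1}}\cdots\leq^{p_{n}}t_{n}=_{\mathbf{A}_\acm}t$, since the two endpoint equalities cannot change $\indes{\acm}$-membership (it depends only on the element of $A_\acm$) and each internal step preserves and reflects it.

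For part~(2), the direction $(\Leftarrow)$ is straightforward: from $t\leq_{\acm}s'$ and $t'\leq_{\acm}s''$, the $\{\cdot,\vee\}$-compatibility and transitivity of $\leq_{\acm}$ give $t\vee t'\leq_{\acm}s'\vee t'\leq_{\acm}s'\vee s''=s$, and splicing the two witnessing computations --- each $\leq^{p}$-step on one component lifted by the rule $[\vee]$ with the other component joined on --- yields a witnessing computation of length the sum of the two. The substantive direction is $(\Rightarrow)$, which I would prove by induction on $N$, the computation length of $t\vee t'\leq_{\acm}s$. If $N=0$ then $t\vee t'=_{\mathbf{A}_\acm}s$ and $s':=t$, $s'':=t'$ work, with sub-lengths $0+0\le 0$. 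If $N\ge 1$, peel off the first step $t\vee t'=_{\mathbf{A}_\acm}u_{0}\leq^{p_{1}}u_{1}$, so $u_{1}\leq_{\acm}s$ has computation length $\le N-1$. Write $u_{0}$ as a join of monoid terms; the instruction $p_{1}$ rewrites one of these summands, its active occurrence $\cf x$, and that summand is contributed by exactly one of $t$ and $t'$, say by $t$, with the remaining summands of $t$ forming some $r_{t}\in A_\acm$. Then $u_{1}=_{\mathbf{A}_\acm}t_{1}\vee t'$ with $t_{1}:=\uid x\vee r_{t}$ and $t\leq^{p_{1}}t_{1}$, the latter obtained by applying $[\cdot]$ with $x$ and then $[\vee]$ with $r_{t}$. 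Applying the induction hypothesis to $t_{1}\vee t'\leq_{\acm}s$ gives $s=s_{1}'\vee s''$ with $t_{1}\leq_{\acm}s_{1}'$ and $t'\leq_{\acm}s''$ of computation lengths summing to $\le N-1$; taking $s':=s_{1}'$ and prepending the step $t\leq^{p_{1}}t_{1}$ to the computation of $t_{1}\leq_{\acm}s_{1}'$ shows $t\leq_{\acm}s'$ with computation length at most one more, so the two sub-lengths now sum to $\le 1+(N-1)=N$.

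The one point I expect to require care --- the main (if mild) obstacle --- is the claim used in the inductive step of $(\Rightarrow)$: that a single instruction application rewrites exactly one join-summand of $u_{0}$ and that this summand is attributable to exactly one side of the split $u_{0}=t\vee t'$. This is where one leans on the free commutative semiring structure of $\mathbf{A}_\acm$ (each element is a finite join of monoid terms, so occurrences are well defined) and on reading a computation step as one instruction applied to one configuration; granting that, the length accounting is precisely ``charge each step of the given computation to the side on which it acts''. Everything else is routine verification.
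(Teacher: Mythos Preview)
Your proposal is correct and is precisely the argument the paper has in mind; the paper itself offers no proof beyond the remark that the lemma ``is an easy consequence of the definitions,'' and what you have written is the natural unfolding of that remark. One small wording point: in the inductive step of $(\Rightarrow)$ the rewritten monoid summand need not be attributable to \emph{exactly} one of $t,t'$ (it may occur in both), but since $\mathbf{A}_\acm$ is the free commutative semiring, elements are multisets of monoid terms and $\vee$ is multiset union, so one occurrence of $\cf x$ in $u_0=t\vee t'$ can always be charged to at least one side---which is all your argument needs.
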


An ID consisting entirely of configurations labeled by the state $q_f$ with all registers empty is called a \textit{final ID} and we denote the set of final ID's by ${\Fid}({\acm}):= \{ \bigvee_{i=1}^n q_f: n\geq 1\}$. Our choice to not assume the idempotency of $\jn$ in  $\m A_{\acm}$ explains the necessity of treating $ \bigvee_{i=1}^n q_f$ as a final ID.  We say that a term $t\in A_{\acm}$ is \textit{accepted by ${\acm}$} if there exists ${\uid}_f\in {\Fid}({\acm})$ such that $t\leq_{\acm} {\uid}_f$, and we define the set \textit{accepted terms} to be ${\Acc}({\acm}):=\{t\in A_{\acm}: \exists {\uid}_f\in {\Fid}({\acm}),~t\leq_{\acm} {\uid}_f \}$. 
\begin{thm}\label{lincoln}
(\cite{Lambek, Minsky,Lincoln}) There exists a 2-ACM $\Uacm$ such that membership in ${\Acc}(\Uacm)$ is undecidable.
\end{thm}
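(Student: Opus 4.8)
\medskip
\noindent\emph{Proof plan.}\quad
The plan is to reduce from the halting problem for deterministic two-counter (Minsky) machines. By the classical results of Minsky~\cite{Minsky} --- see also Lambek's ``infinite abacus'' presentation~\cite{Lambek} --- such machines simulate arbitrary Turing machines, so there is a deterministic $2$-counter machine $M$ with counters $\reg_1,\reg_2$, start state $q_0$, and a distinguished halting state $q_f$, whose halting problem ``does the computation of $M$ from $(q_0,m,n)$ reach $q_f$?'' is undecidable; by prefixing a standard clean-up phase to $q_f$ we may further assume that $M$ empties both counters before halting. The only instruction type of $M$ not directly available in a $2$-ACM is the \emph{zero-test} conditional branch, and, following~\cite{Lincoln}, the idea is to replace it by a fork into an auxiliary ``verifier'' state.

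First I would define $\Uacm$, with register set $\{\reg_1,\reg_2\}$, as follows. The increment and decrement instructions of $M$ are copied verbatim. A plain move $q\to q'$ is simulated, using a fresh state $\hat q$, by the pair $q\le\hat q\reg_1$ and $\hat q\reg_1\le q'$, whose net effect on the counters is nil. Each zero-test ``in state $q$: if $\reg_i=0$ go to $s$, else decrement $\reg_i$ and go to $t$'' is replaced by the decrement instruction $q\reg_i\le t$ together with a fork $q\le s\vee z_i$, where $z_i$ is a fresh verifier state equipped only with the instructions $z_i\reg_j\le z_i$ (for the other register $\reg_j$, $j\neq i$) and a simulated move $z_i\to q_f$, realized as above by an increment--decrement pair on $\reg_j$. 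No instruction is attached to $q_f$, so $\Uacm$ is a bona fide $2$-ACM, obtained effectively from $M$.

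The core of the argument is to prove
\[ q_0\reg_1^m\reg_2^n\in\Acc(\Uacm)\qquad\iff\qquad M\mbox{ halts on }(q_0,m,n), \]
which, since the right-hand condition is undecidable, makes membership in $\Acc(\Uacm)$ undecidable as well (already for inputs of this special form). For the direction ``$\Leftarrow$'' I would have the ``main'' branch of $\Uacm$ imitate the unique computation of $M$ step by step: increments, decrements and moves translate directly, while a zero-test reached with $\reg_i=0$ becomes the fork $q\le s\vee z_i$, after which the spawned verifier branch --- carrying state $z_i$ and an empty register $\reg_i$ --- empties $\reg_j$ and moves to $q_f$, thereby reaching $q_f$ with both registers empty. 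Since $M$ ends at $q_f$ with empty counters, every branch of $\Uacm$ then sits at $q_f$ with empty registers, i.e.\ we reach a final ID. For the direction ``$\Rightarrow$'' I would show that \emph{any} $\Uacm$-computation ending in a final ID is forced to be exactly this faithful simulation: by induction the main branch always carries the current state and counter contents of $M$; at an increment/decrement/move state the applicable $\Uacm$-instruction is unique; and at a zero-test state $q$ on $\reg_i$, if $\reg_i$ is nonempty then taking the fork strands the verifier branch forever with a nonempty $\reg_i$ --- no instruction available at $z_i$, at the auxiliary state, or at $q_f$ can remove an $\reg_i$-token, and that branch cannot fork again --- so that branch can never join a final ID and the decrement $q\reg_i\le t$ must be used instead, matching $M$; whereas if $\reg_i$ is empty the decrement is inapplicable and the fork must be used, again matching $M$. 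Hence the main branch traces the run of $M$ and, since it terminates at $q_f$, so does $M$.

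The step I expect to be the main obstacle is this ``$\Rightarrow$'' direction --- excluding spurious acceptances. One must verify carefully that the verifier states really do act as dead-ends unless the tested register is empty, and that the generous definition of a final ID ($\bigvee_{i=1}^{n}q_f$ with all registers empty, $\jn$ not being idempotent) introduces no loophole. This is handled by maintaining, along any accepting computation, the invariant that the non-verifier branches correspond exactly to the single $M$-configuration reached so far, while each verifier branch spawned at a correctly taken zero-test is immediately settled, succeeding precisely because the relevant register was empty at the moment of the fork. The remaining points --- the correctness of the move gadget and the routine step-by-step matching of increments and decrements --- are straightforward bookkeeping.
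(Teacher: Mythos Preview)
The paper does not prove this theorem; it is stated with citations to \cite{Lambek, Minsky, Lincoln} and used as a black box. Your sketch is a correct reconstruction of the standard argument from \cite{Lincoln}: reduce from the halting problem for deterministic $2$-counter Minsky machines and simulate each zero-test by a fork into a verifier state that can only drain the \emph{other} register, so that a verifier branch spawned with a nonempty tested register can never reach $q_f$ with all registers empty. One phrasing to tighten: in the ``$\Rightarrow$'' direction you say the accepting computation is ``forced to be exactly this faithful simulation,'' but nondeterminism allows unfaithful runs --- the correct claim (which your subsequent explanation in fact makes) is that every \emph{accepting} run must be faithful on the main branch, since any incorrectly taken fork produces a verifier branch that is permanently stranded with a nonempty $\reg_i$, blocking acceptance of the whole ID.
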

\begin{ex}\label{ex: Meven}
Consider the $1$-ACM ${\acm}_{\mathrm{even}}= (\Reg_1,{\State}_{\mathrm{even}},{\Inst}_{\mathrm{even}},q_f)$, where ${\State}_{\mathrm{even}}=\{q_0,q_1,q_f\}$ and ${\Inst}_{\mathrm{even}}=\{ p_0,p_1,p_f\}$ is given by 
$$\begin{array}{ l c r c l}
p_0 &:& q_0 \reg_1& \leq & q_1 \\
p_1 &:& q_1 \reg_1& \leq & q_0 \\
p_f &:& q_0 & \leq & q_f\vee q_f .
\end{array} $$
For example, $q_0\reg_1^2\leq^{p_0} q_1\reg_1 \leq^{p_1} q_0 \leq^{p_f} q_f \vee q_f$ is a computation showing that $q_0\reg_1^2$ is accepted. On the other hand  $q_0\reg_1 \leq^{p_0} q_1$ and $q_0\reg_1 \leq^{p_f} (q_f \vee q_f)\reg_1=q_f\reg_1 \vee q_f\reg_1$ are the only maximal computations starting with $q_0\reg_1$ and none of them ends in a final configuration, so $q_0\reg_1$ is not accepted. In general, it is easy to see that $q_0\reg_1^n\in{\Acc}({\acm}_\mathrm{even})$ if and only if $n$ is even. 
\end{ex}
\begin{lem}\label{comprel}
Let ${\acm}$ be an ACM.
\begin{enumerate}
	\item ${\Acc}({\acm})\subseteq \indes {\acm}$ and the terms in any computation ending in a final ID are all ID's.
	\item For all ${u},{v}\in A_{\acm}$, ${u}\vee {v}\in {\Acc}({\acm})$ if and only if ${u}\in{\Acc}({\acm})$ and ${v}\in{\Acc}({\acm})$.
\end{enumerate}
\end{lem}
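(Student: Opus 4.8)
The plan is to derive both parts from Lemma~\ref{basicrel}, together with two structural facts about the semiring: $(\mathbf{A}_{\acm},\vee,\bot)$ is the free commutative monoid on the monoid terms over ${\State}\cup\Reg_k$, and $\Fid(\acm)$ is closed under $\vee$ and consists of joins of one or more copies of $q_f$ (so, in particular, of non-bottom elements). For part~(1), the inclusion $\Acc(\acm)\subseteq\indes{\acm}$ is already contained in Lemma~\ref{basicrel}(1): if $t\leq_{\acm}\uid_f$ for some $\uid_f\in\Fid(\acm)\subseteq\indes{\acm}$, then $t\in\indes{\acm}$. For the remaining assertion, fix a computation $t_0\leq^{p_1}t_1\leq^{p_2}\cdots\leq^{p_n}t_n$ with $t_n=_{\mathbf{A}_{\acm}}\uid_f\in\Fid(\acm)$ and show $t_i\in\indes{\acm}$ by downward induction on $i$: the base case $i=n$ is clear, and each step is exactly the equivalence recorded just before Lemma~\ref{basicrel}, that $t_i\leq^{p_{i+1}}t_{i+1}$ forces $t_i\in\indes{\acm}$ whenever $t_{i+1}\in\indes{\acm}$.

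For part~(2), the direction $(\Leftarrow)$ is immediate: if $u\leq_{\acm}\uid_f'$ and $v\leq_{\acm}\uid_f''$ with $\uid_f',\uid_f''\in\Fid(\acm)$, then compatibility of $\leq_{\acm}$ with $\vee$ gives $u\vee v\leq_{\acm}\uid_f'\vee\uid_f''$, and $\uid_f'\vee\uid_f''\in\Fid(\acm)$ since it is again a join of copies of $q_f$; hence $u\vee v\in\Acc(\acm)$. For $(\Rightarrow)$, assume $u\vee v\leq_{\acm}\uid_f$ with $\uid_f=\bigvee_{i=1}^{n}q_f$. Apply Lemma~\ref{basicrel}(2) to get $s',s''\in A_{\acm}$ with $\uid_f=s'\vee s''$, $u\leq_{\acm}s'$ and $v\leq_{\acm}s''$. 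Since $(\mathbf{A}_{\acm},\vee,\bot)$ is free and $\uid_f$ is a join of $n$ copies of the single monoid term $q_f$, the decomposition must have the form $s'=\bigvee_{i=1}^{n'}q_f$, $s''=\bigvee_{i=1}^{n''}q_f$ with $n'+n''=n$; once we know $n',n''\geq 1$, both $s'$ and $s''$ are final IDs, and therefore $u,v\in\Acc(\acm)$.

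The delicate point — and the only real obstacle — is excluding a degenerate split in $(\Rightarrow)$, say $s'=\bot$ (i.e.\ $n'=0$). This is where one uses that the terms in play are genuine IDs rather than arbitrary elements of $A_{\acm}$: by the characterization of $\leq^p$ noted above, nothing but $\bot$ itself can be $\leq^p$-below $\bot$ (since the right-hand side $\uid$ of any instruction contains a state, and $\mathbf{A}_{\acm}$, being the free commutative semiring on ${\State}\cup\Reg_k$, has no zero divisors), so $u\leq_{\acm}s'=\bot$ would force $u=\bot\notin\indes{\acm}$; thus for IDs $u,v$ — the setting in which Lemma~\ref{comprel} is applied — both $n'$ and $n''$ are positive and the argument closes. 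Everything else is a routine unwinding of Lemma~\ref{basicrel} and of the freeness of $\mathbf{A}_{\acm}$.
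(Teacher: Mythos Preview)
Your approach matches the paper's: both parts are derived from Lemma~\ref{basicrel}, part~(1) by downward induction from $\Fid(\acm)\subseteq\indes{\acm}$ and part~(2) by the splitting in Lemma~\ref{basicrel}(2) together with the shape of $\Fid(\acm)$. You go beyond the paper's two-line sketch by flagging the degenerate split $s'=\bot$, and your diagnosis is right: the forward direction of~(2) literally fails for $u=\bot$, $v=q_f$, but since only $\bot$ computes to $\bot$ the issue vanishes once $u,v$ are IDs, which is the only setting in which the lemma is invoked.
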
 
\begin{proof} The first claim follows from Lemma~\ref{basicrel}(1) by induction on the computation length since $\Fid(\acm)\subseteq\indes{\acm}$ by definition. The second claim follows from Lemma~\ref{basicrel}(2) since $\Fid(\acm)$ is exactly the set of finite non-empty joins of the same configuration $q_f$.
\end{proof}

Lemma~\ref{comprel} shows that in a computation witnessing the acceptance of an ID all configurations are ID's and therefore for those cases the inference rule $[\cdot]$ could have been restricted to $x \in \Reg_k^*$.
%
%
\section{Machines and Residuated Frames}
As a demonstration of our general technique, we will use counter machines and residuated frames to show that any variety between $\CRL $ and $\RL $ has an undecidable word problem. 

Let $\acm=(\Reg_k,\State,\Inst,q_f)$ be an ACM. For each $u\in \indes{\acm}$, formally viewed as an $\{\vee,\cdot,1\}$-term in $\mathbf{T}(\Reg_k\cup \State)$, we define the quasiequation $\mathrm{acc}_{\acm}(u)$ to be 
$$\amper \Pcom \Rightarrow {u}\leq q_f ,$$
where $q_f$ is the final state of ${\acm}$, and $\Pcom:={\Inst}\cup\{xy\leq yx: x,y\in \Reg_k\cup\State\}$ is the (finite) set of instructions ${\Inst}$ together with a finite set encoding commutativity for letters (and hence also all words) over the set $\Reg_k\cup {\State}$; for our purposes we could actually restrict the $x$ in $\Pcom$ to only state variables. 

The following lemma shows that computations in machines can be performed also in the theory of residuated lattices.
\begin{lem}\label{crl1} 
Let ${\acm}$ be an ACM and ${u}$ an ID. If ${u}\in {\Acc}({\acm})$ then $\RL\models \mathrm{acc}_{\acm}({u})$. 
\end{lem}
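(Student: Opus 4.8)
The plan is to unpack what $\RL\models\mathrm{acc}_{\acm}(u)$ means and then replay the accepting computation of $u$ inside an arbitrary residuated lattice. So I would fix a residuated lattice $\m R$ and a homomorphism $h\colon\m T(\Reg_k\cup\State)\to\m R$ satisfying every premise of $\Pcom$, and aim to prove $h(u)\le h(q_f)$. By the definition of $\Pcom$, the hypothesis says precisely that $h(\cf)\le h(\uid)$ for each instruction $p\colon\cf\le\uid$ of $\Inst$, and that $h(xy)=h(yx)$ for all generators $x,y\in\Reg_k\cup\State$ (since both $xy\le yx$ and $yx\le xy$ sit among the premises).

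The one point that requires care is recorded first: \emph{if $s,t$ are ID's with $s=_{\mathbf A_{\acm}}t$, then $h(s)=h(t)$}. Equality in the commutative semiring $\mathbf A_{\acm}$ --- whose join is deliberately not idempotent --- is, when restricted to ID's, generated by associativity and commutativity of $\vee$, associativity of $\cdot$ with unit $1$, distributivity of $\cdot$ over $\vee$, and commutativity of $\cdot$. The first families of identities are valid in every residuated lattice (in particular $\cdot$ distributes over binary joins because it is residuated), and commutativity of $\cdot$ on arbitrary words over $\Reg_k\cup\State$ follows from the generator-level premises $h(xy)=h(yx)$ by a routine induction on word length using associativity; hence $h$ respects all of them. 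Staying within ID's is intentional: the bottom $\bot$ of $\mathbf A_{\acm}$ is not part of the residuated-lattice signature (and $\bot\cdot x=\bot$ has no counterpart in $\m R$), but this is costless here since by Lemma~\ref{comprel}(1) every term appearing in a computation that ends in a final ID is an ID.

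Next, as $u\in\Acc(\acm)$, Lemma~\ref{basicrel}(1) gives a computation
$$u=_{\mathbf A_{\acm}}t_0\le^{p_1}t_1\le^{p_2}\cdots\le^{p_n}t_n=_{\mathbf A_{\acm}}\uid_f$$
with each $p_j\in\Inst$ and $\uid_f=\bigvee_{i=1}^{m}q_f\in\Fid(\acm)$, all $t_j$ being ID's by Lemma~\ref{comprel}(1). I would then check that $h$ is monotone along this chain. For a single step $t_{j-1}\le^{p_j}t_j$ coming from $p_j\colon\cf\le\uid$, the definition of $\le^{p_j}$ (together with the remark following Lemma~\ref{comprel}) provides $x\in\Reg_k^*$ and an ID $r$ (possibly absent) with $t_{j-1}=_{\mathbf A_{\acm}}\cf x\vee r$ and $t_j=_{\mathbf A_{\acm}}\uid x\vee r$; applying the fact above and then monotonicity of $\cdot$ and $\vee$ in $\m R$ together with $h(\cf)\le h(\uid)$ yields $h(t_{j-1})=h(\cf)h(x)\vee h(r)\le h(\uid)h(x)\vee h(r)=h(t_j)$ (the $\vee h(r)$ summand simply omitted when $r$ is absent). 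By transitivity of $\le$ we get $h(u)=h(t_0)\le h(t_n)=h(\uid_f)$, and idempotency of $\vee$ in $\m R$ collapses $h(\uid_f)=h(q_f)\vee\cdots\vee h(q_f)$ to $h(q_f)$. Thus $h(u)\le h(q_f)$, and since $\m R$ and $h$ were arbitrary, $\RL\models\mathrm{acc}_{\acm}(u)$.

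I expect no genuine obstacle beyond this bookkeeping; the only nontrivial design decision --- and the reason the commutativity inequalities are bundled into $\Pcom$ --- is that the notation for configurations and ID's silently builds in the commutative-monoid identifications, which a general (non-commutative) residuated lattice need not respect, so those identifications must be imposed on $h$ by the hypotheses. All remaining ingredients --- residuation-induced distributivity, monotonicity of $\cdot$ and $\vee$, transitivity of $\le$, and idempotency of join --- hold in every residuated lattice.
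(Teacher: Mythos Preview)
Your proof is correct and follows essentially the same approach as the paper's: fix a residuated lattice and a homomorphism satisfying $\Pcom$, then replay the accepting computation step by step using monotonicity of $\cdot$ and $\vee$, finishing with idempotency of join on the final ID. The only packaging difference is that the paper organizes the argument as an induction on computation length and encapsulates your ``$h$ respects $=_{\mathbf A_{\acm}}$ on ID's'' observation by noting that $h$ factors through the semiring $\mathbf A_{\acm}^+$; your direct verification of the semiring identities accomplishes the same thing.
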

\begin{proof}
Let $\acm=(\Reg_k,\State,\Inst,q_f)$ be an ACM and suppose $u\in \indes{\acm}$ is accepted in $\acm$. We proceed by induction on the length $n$ of the computation  witnessing the acceptance of $u$ in $\acm$. If the length is zero then $u\in \Fid(\acm)$. Since $\vee$ is idempotent in residuated lattices, $\RL\models \uid_f \leq q_f$ for any $\uid_f \in \Fid(\acm)$. Hence $\RL\models \mathrm{acc}_\acm (u)$ {\em a fortiori}. Now, suppose the claim holds for all accepted ID's with computation length $0\leq k<n$.  By Lemma~\ref{comprel}(1),  there is an instruction $p\in\Inst$ such that $u\leq^p u'\in \Acc(\acm)$ for some $u'\in \indes{\acm}$, where the acceptance computation of $u'$ has length less than $n$. Formally viewing $u'$ as an element in $\mathbf{T}(X)$ where $X=\Reg_k\cup \State$, $\RL\models \mathrm{acc}_\acm(u')$ by the induction hypothesis.

Now, suppose that for a residuated lattice $\mathbf{R}$ and for a homomorphism $f:\mathbf{T}(X)\to\mathbf{R}$ we have $\mathbf{R},f\models \Pcom$. Hence $f(u')\leq_\mathbf{R} f(q_f)$ since $\RL\models \mathrm{acc}_\acm(u')$. As $\leq_\mathbf{R}$ is transitive, we need only show $f(u)\leq_\mathbf{R} f(u')$ to establish $\mathbf{R},f\models \mathrm{acc}_\acm(u)$.

Let $\mathbf{S}(X)$ be the free algebra over $\{\vee,\cdot,1\}$ and $X$. As $\mathbf{R}$ has a semiring reduct and $f(a)f(b)=_\mathbf{R}f(b)f(a)$, for all $a,b\in X$, the restriction of $f$ on $\m{S}(X)$ factors through $\mathbf{A}^+_{\acm} := \mathbf{A}_{\acm} \setminus \{\bot\}$ as $f:S(X)\stackrel{\nu}{\to}A^+_{\acm} \stackrel{h}{\to}R$ as a semiring homomorphism, where $\nu$ is the natural surjective homomorphism and $h$ is a semiring homomorphism. So, 
 $h(a)h(b)=_{\mathbf{R}}h(b)h(a)$, for all $a,b\in X$, and $ h({\cf})\leq_{\mathbf{R}}  h ({\vid})$ where $p:\cf\leq \vid$.
By definition of $\leq^p$, $$u=_{\mathbf{A}_\acm} \cf x\vee w\leq^p \vid x \vee w =_{\mathbf{A}_\acm} u',$$ for some $x\in X^*$ and $w\in A_\acm$, where $\vid x \vee w=_{\mathbf{A}_\acm} \vid x$ if $w=\bot$. Using the properties above and the fact that $h$ is a semiring homomorphism we obtain
$$h(u)=_\mathbf{R}h ({\cf}x \vee w) \leq_{\mathbf{R}} h({\vid}x\vee w)=_\mathbf{R}h(u').$$ 
It follows that $ h ({u}) \leq_{\mathbf{R}}h(u')$ and therefore that $f(u) \leq_{\mathbf{R}} f(u').$
\end{proof}

To show the converse of Lemma~\ref{crl1}, we will need to show that given an ACM  ${\acm}$, if ${u}\nin {\Acc}({\acm})$ then there is a residuated lattice $\mathbf{W}_{\acm}^+$ (which will actually even be commutative) that falsifies $\mathrm{acc}_{\acm}({u})$; actually, in the proof we proceed by contraposition. We will further prove that every subvariety of $\RL $ that contains $\mathbf{W}_{\Uacm}^+$ has undecidable word problem (and thus undecidable quasiequational theory). The construction of $\mathbf{W}_{\acm}^+$ is based on residuated frames \cite{GJ}, structures that will also be used later in the paper, so we define them briefly here.

 For the purposes of this paper, a \textit{commutative residuated frame} is a structure $\mathbf{W}=(W,W', \Nuc, \cdot, 1)$, where $(W, \cdot, 1)$ is a commutative monoid, $W'$ is a set, and $\Nuc$ is a subset of $W \times W'$, such that 
there exists a function $\sslash: W' \times W \rightarrow W'$ with: $\forall x,y \in W$ $z \in W'$, $x \cdot y \Nuc z$ iff $x \Nuc z \sslash y$.
Given such a residuated frame, 
for $X\subseteq W$, $x \in W$, $Y \subseteq W'$ and $y \in W'$, we define
$X \Nuc y$ to mean $x \mathrel{N} y$ for all $x \in X$, and 
$x \Nuc Y$ to mean $x \mathrel{N} y$ for all $y \in Y$. 
For $X\subseteq W$ and $Y\subseteq W'$, we define $X^\triangleright:=\{y\in W':X \Nuc y\}$, $Y^\triangleleft:=\{x\in W: x \Nuc Y\}$. The pair $({}^\triangleright,{}^\triangleleft)$ forms what is known as a {\em Galois connection}, and we will make use of the fact that $X_1^{\triangleright\triangleleft} \subseteq X_2^{\triangleright\triangleleft}$ if and only if $X_2^{\triangleright}\subseteq X_1^{\triangleright}$ for any $X_1,X_2\subseteq W$. 

Define $\gamma(X)= X^{\triangleright\triangleleft}$. We write $\gamma(x)=\gamma(\{x\})$ for $x\in W$, and $\powerset(W)_\gamma=\gamma[\powerset(W)]$. It follows from \cite{GJ} that the algebra $\mathbf{W}^+:=(\powerset(W)_{\gamma},\cap,\cup_{\gamma} , \cdot_{\gamma},\to, \gamma(1))$ is a commutative residuated lattice, where $X\cup_\gamma Y:=\gamma(X\cup Y)$, $X\cdot_\gamma Y:=\gamma(X\cdot Y)$, and $X\to Y:=\{z\in W: X\cdot\{z\}\subseteq Y\}$.

Inspired by \cite{Hor}, given an ACM ${\acm}=(\Reg_k, {\State},{\Inst},q_f)$ we define the tuple $\mathbf{W}_{\acm}=(W_{\acm},W_{\acm}, {\Nuc}_{\acm}, \cdot, 1)$, where $W_{\acm}:=({\State}\cup \Reg_k)^*\subseteq A_{\acm}$ and $x\Nuc_{\acm}y$ if and only if $xy\in {\Acc}({\acm})$, for all $x,y\in W_{\acm}$.
\begin{lem}
$\mathbf{W}_{\acm}$ is a residuated frame and therefore $\mathbf{W}_{\acm}^+ \in \CRL $. 
\end{lem}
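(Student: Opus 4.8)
The plan is to verify the single defining condition of a commutative residuated frame, namely that there is a function $\sslash : W_{\acm}' \times W_{\acm} \to W_{\acm}'$ satisfying $x \cdot y \mathrel{\Nuc_{\acm}} z \iff x \mathrel{\Nuc_{\acm}} z \sslash y$ for all $x,y \in W_{\acm}$ and $z \in W_{\acm}'$; the conclusion $\mathbf{W}_{\acm}^+ \in \CRL$ is then immediate from the cited result of \cite{GJ} quoted just above. Since here $W_{\acm}' = W_{\acm} = (\State \cup \Reg_k)^*$ with its commutative monoid structure, and $x \mathrel{\Nuc_{\acm}} y$ means $xy \in \Acc(\acm)$, the natural candidate is simply $z \sslash y := zy$ (multiplication in the free commutative monoid $W_{\acm}$).

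With this choice, $x \mathrel{\Nuc_{\acm}} z \sslash y$ unfolds to $x \cdot (zy) \in \Acc(\acm)$, while $x \cdot y \mathrel{\Nuc_{\acm}} z$ unfolds to $(xy) \cdot z \in \Acc(\acm)$. These two membership statements coincide because $W_{\acm}$ is commutative and associative: $x(zy) =_{\mathbf{A}_\acm} (xy)z$. So the equivalence holds on the nose, and $\mathbf{W}_{\acm}$ is a commutative residuated frame. There is essentially no obstacle here — the only thing to notice is that the relation $\Nuc_{\acm}$ is well-defined as a relation between elements of the free commutative monoid (i.e., it respects the commutative-monoid identifications), which is clear since $\Acc(\acm)$ is a subset of $A_{\acm}$ and commutativity/associativity already hold in $A_{\acm}$, of which $W_{\acm}$ is the submonoid of monoid terms over $\State \cup \Reg_k$.

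Once $\mathbf{W}_{\acm}$ is known to be a commutative residuated frame, the dual algebra $\mathbf{W}_{\acm}^+ = (\powerset(W_{\acm})_\gamma, \cap, \cup_\gamma, \cdot_\gamma, \to, \gamma(1))$ is a commutative residuated lattice by the result of \cite{GJ} recalled above, which is exactly the assertion $\mathbf{W}_{\acm}^+ \in \CRL$. Thus the whole proof amounts to exhibiting $\sslash$ and checking the defining biconditional, which reduces to commutativity and associativity in the free commutative monoid $W_{\acm}$; there is no genuinely hard step.
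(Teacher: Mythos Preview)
Your proposal is correct and matches the paper's own proof essentially verbatim: the paper also defines $z \sslash y = yz$ and verifies the biconditional by unfolding the definition of $\Nuc_{\acm}$ and using associativity/commutativity of the free commutative monoid. The only difference is cosmetic (the paper's proof is a single line and does not spell out the appeal to \cite{GJ} for the second clause, since that was recalled just before the lemma).
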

\begin{proof} We define $z \sslash y = yz$. 
Clearly, for $x,y,z\in W_{\acm}$,
$xy \Nuc_{\acm} z$ iff $xyz\in{\Acc}({\acm})$ iff $x \Nuc_{\acm} yz.$ 
\end{proof}

For an ACM ${\acm}=(\Reg_k,{\State},{\Inst},q_f)$, we define the assignment $e:{\State}\cup \Reg_k\to W_{\acm}^+$ via $e(a):=\{a\}^{\triangleright\triangleleft}$
and its homomorphic extension $\bar e:\m{T}({\State}\cup \Reg_k)\to \m{W}_{\acm}^+$.

We will need to make use of the following technical lemma.
\begin{lem}\label{valuation}
If  ${\acm}=(\Reg_k,{\State},{\Inst},q_f)$ is an ACM, then $\mathbf{W}_{\acm}^+,\bar e\models{\Pcom}$. Furthermore, $\bar{e} (x \vee y) = \{x, y\}^{\triangleright\triangleleft}$ for any $x, y \in W_{\acm}$.
\end{lem}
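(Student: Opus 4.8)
The plan is to verify the two assertions separately, both by unwinding the definitions of the frame $\mathbf{W}_{\acm}$, the Galois connection, and the operations of $\mathbf{W}_{\acm}^+$. Recall that in $\mathbf{W}_{\acm}^+$ the join is $X \cup_\gamma Y = \gamma(X \cup Y)$, that $\bar e(a) = \gamma(\{a\})$ for $a \in \State \cup \Reg_k$, and that $\bar e$ is the homomorphic extension; also $\gamma$ is a closure operator on $\powerset(W_{\acm})$ with $\gamma(X) = X^{\triangleright\triangleleft}$, so $\gamma$ is monotone, $X \subseteq \gamma(X)$, and $\gamma\gamma = \gamma$.

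First I would prove the ``furthermore'' clause, since it is needed to even make sense of the main claim for the commutativity instances of $\Pcom$ involving states. The computation is $\bar e(x \vee y) = \bar e(x) \cup_\gamma \bar e(y) = \gamma(\gamma(\{x\}) \cup \gamma(\{y\}))$; using the standard fact that $\gamma(\gamma(A) \cup \gamma(B)) = \gamma(A \cup B)$ for a closure operator arising from a Galois connection (which follows from monotonicity and idempotency of $\gamma$), this equals $\gamma(\{x\} \cup \{y\}) = \{x,y\}^{\triangleright\triangleleft}$. More generally the same argument gives $\bar e(x_1 \vee \cdots \vee x_n) = \{x_1,\dots,x_n\}^{\triangleright\triangleleft}$ for $x_i \in W_{\acm}$, and $\bar e(w) = \gamma(\{w\})$ for any monoid word $w \in W_{\acm}$ since $\cdot_\gamma$ on singletons behaves the same way.

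Next, for $\mathbf{W}_{\acm}^+, \bar e \models \Pcom$ I must check each quasi-inequality in $\Pcom$, i.e. each instruction $p : \cf \leq \uid$ from $\Inst$ and each commutativity axiom $ab \leq ba$ for $a,b \in \State \cup \Reg_k$. For the commutativity axioms: $\bar e(ab) = \gamma(\{ab\})$ and $\bar e(ba) = \gamma(\{ba\})$, but $ab = ba$ already in the \emph{commutative} monoid $W_{\acm}$, so these are literally equal and the inequality holds trivially. For an instruction $p : \cf \leq \uid$ (where $\cf \in \conf{\acm}$ and $\uid \in \indes{\acm}$), I need $\bar e(\cf) \subseteq \bar e(\uid)$ in $\mathbf{W}_{\acm}^+$, equivalently (since $\bar e(\uid)$ is a $\gamma$-closed set and $\bar e(\cf) = \gamma(\{\cf\})$) it suffices to show $\{\cf\}^{\triangleright} \supseteq \bar e(\uid)^{\triangleright}$, or even just $\cf \in \bar e(\uid)^{\triangleleft\triangleright\triangleleft} = \bar e(\uid)^{\triangleleft}$... more cleanly: by the Galois connection it is enough to show $\{\cf\} \subseteq \bar e(\uid)$, i.e. $\cf \in \{\uid\}^{\triangleright\triangleleft}$ (writing $\uid = \uid_1 \vee \cdots \vee \uid_m$ and using the ``furthermore'' clause, $\bar e(\uid) = \{\uid_1,\dots,\uid_m\}^{\triangleright\triangleleft}$). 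Unwinding: $\cf \in \{\uid_1,\dots,\uid_m\}^{\triangleright\triangleleft}$ means that for every $z \in W_{\acm}$ with $\uid_i z \in \Acc(\acm)$ for all $i$, we also have $\cf z \in \Acc(\acm)$. But $\uid_i z \in \Acc(\acm)$ for all $i$ is, by Lemma~\ref{comprel}(2) applied to $\uid z = (\uid_1 \vee \cdots \vee \uid_m) z$, equivalent to $\uid z \in \Acc(\acm)$; and since $p : \cf \leq \uid$ is an instruction we have $\cf z \leq^p \uid z$, hence $\cf z \leq_{\acm} \uid z \leq_{\acm} \uid_f$ for the relevant final ID, giving $\cf z \in \Acc(\acm)$. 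This closes the argument.

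The main obstacle, such as it is, is purely bookkeeping: being careful that the ``instantaneous descriptions'' $\uid$ are joins of several configurations, so that one must pass through Lemma~\ref{comprel}(2) and the ``furthermore'' clause to relate acceptance of the join to simultaneous acceptance of the joinands, rather than treating $\uid$ as a single element of $W_{\acm}$. No genuinely hard step is involved; everything reduces to the definition $x \Nuc_{\acm} y \iff xy \in \Acc(\acm)$, the $\{\cdot\}$-compatibility of $\leq_{\acm}$, and properties of the closure operator $\gamma$.
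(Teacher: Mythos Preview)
Your proposal is correct and follows essentially the same route as the paper: first establish $\bar e(w)=\gamma(\{w\})$ for monoid words and $\bar e(x\vee y)=\{x,y\}^{\triangleright\triangleleft}$ via the closure identities $\gamma(\gamma(A)\cup\gamma(B))=\gamma(A\cup B)$ and $\gamma(\gamma(A)\cdot\gamma(B))=\gamma(A\cdot B)$, dispose of the commutativity inequalities using commutativity of the underlying monoid (the paper simply notes $\mathbf{W}_{\acm}^+\in\CRL$), and then verify each instruction by reducing $\gamma(\{\cf\})\subseteq\{\cf_1,\ldots,\cf_m\}^{\triangleright\triangleleft}$ to the implication ``$\cf_i z\in\Acc(\acm)$ for all $i$ $\Rightarrow$ $\cf z\in\Acc(\acm)$'' via Lemma~\ref{comprel}(2) and $\{\cdot\}$-compatibility of $\leq_{\acm}$. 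The paper phrases the last step as the equivalent inclusion $\{\cf_1,\ldots,\cf_m\}^{\triangleright}\subseteq\{\cf\}^{\triangleright}$, but this is the same Galois-connection manoeuvre you describe.
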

\begin{proof}
In \cite{GJ} it is shown that the map $\gamma$ satisfies the properties $\gamma(\gamma(X) \cdot \gamma(Y))=\gamma(X \cdot Y)$ and $\gamma(\gamma(X) \cup \gamma(Y))=\gamma(X \cup Y)$, for all $X , Y \sbs W$. Using the first one, for each $a,b\in {\State}\cup \Reg_k$ we have
\[
\bar e(ab)
 = \bar e(a)\cdot_{\gamma} \bar e(b)
 = e(a)\cdot_{\gamma} e( b)
 =\gamma( \gamma(a)\cdot \gamma( b))
 = \gamma(ab). 
\]
It follows by induction that $\bar e(x)=\gamma(x)$ for each $x\in W_{\acm}$. 

Now, let $x,y\in W_{\acm}.$ Then
\[
\bar e(x\vee y) 
= \bar e(x) \cup_{\gamma} \bar e(y) 
= \gamma(x) \cup_{\gamma} \gamma(y)
=\gamma(\gamma(x)\cup\gamma(y))
=\gamma(\{x,y\})
\]
where the last equality follows from the second property of $\gamma$ above.

Since $\mathbf{W}_\acm^+$ is commutative, we need only show $\mathbf{W}_\acm^+\models \Inst$. Let $p:{\cf}\leq {\cf}_1\vee {\cf}_2$ be in $\Inst$.\footnote{The argument that follows clearly works for $\vid=\cf_1$ as well.} By the calculation above, $\bar e({\cf})=\{{\cf}\}^{\triangleright\triangleleft}$ and 
$\bar e({\cf}_1 \vee {\cf}_2)= \{{\cf}_1, {\cf}_2\}^{\triangleright\triangleleft}$. So, to show $\mathbf{W}_{\acm}^+,\bar e\models {\cf}\leq {\cf}_1\vee {\cf}_2$, we need to show that $\{{\cf}\}^{\triangleright\triangleleft} \sbs \{{\cf}_1, {\cf}_2\}^{\triangleright\triangleleft}$, or equivalently that $\{{\cf}_1,{\cf}_2 \}^\triangleright \subseteq \{{\cf}\}^\triangleright$.  Suppose $x\in \{ {\cf}_1,{\cf}_2\}^\triangleright$, then 
${\cf}_1 \Nuc_{\acm} x$ and ${\cf}_2 \Nuc_{\acm} x$, so ${\cf}_1 x, {\cf}_2 x\in {\Acc}({\acm}).$ 
Now ${\cf}\leq^p {\cf}_1\vee {\cf}_2$ implies ${\cf}x\leq^p ({\cf}_1\vee {\cf}_2)x$, thus by Lemma~\ref{comprel}(2)
$${\cf}x \leq^p ({\cf}_1\vee {\cf}_2) x = {\cf}_1x \vee {\cf}_2x \in {\Acc}({\acm}), $$
and it follows that ${\cf} \Nuc_{\acm} x$, or equivalently $x\in \{ {\cf}\}^\triangleright$. 
\end{proof}
\begin{lem}\label{crl} 
Let $\mathcal{V}$ be a subvariety of $\RL $ containing $\mathbf{W}_{\acm}^+$ for some ACM ${\acm}$. Then for all ${u}\in \indes{\acm}$, ${u}\in {\Acc}({\acm})$ if and only if $\mathcal{V}\models \mathrm{acc}_{\acm}({u})$. 
\end{lem}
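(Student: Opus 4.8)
The statement to prove, Lemma~\ref{crl}, is a biconditional; the two directions are of very different character. For the forward direction ($u \in \mathrm{Acc}(\acm) \Rightarrow \mathcal{V} \models \mathrm{acc}_\acm(u)$) I would simply invoke Lemma~\ref{crl1}: it gives $\RL \models \mathrm{acc}_\acm(u)$, and since $\mathcal{V}$ is a subvariety of $\RL$, a fortiori $\mathcal{V} \models \mathrm{acc}_\acm(u)$. So this direction is immediate and requires no new work.

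The interesting direction is the contrapositive of the converse: assuming $u \notin \mathrm{Acc}(\acm)$, I want to produce a counterexample to $\mathrm{acc}_\acm(u)$ inside $\mathcal{V}$, and the natural candidate is $\mathbf{W}_\acm^+$, which lies in $\mathcal{V}$ by hypothesis. The plan is to use the valuation $\bar e$ of Lemma~\ref{valuation}. That lemma already tells us $\mathbf{W}_\acm^+, \bar e \models \Pcom$, so the antecedent of the quasiequation $\mathrm{acc}_\acm(u)$ is satisfied. It remains to show the succedent $u \leq q_f$ fails, i.e. $\bar e(u) \not\subseteq \bar e(q_f)$. Using the computations of Lemma~\ref{valuation}, $\bar e(u) = \{u_1,\dots,u_m\}^{\triangleright\triangleleft}$ where $u = u_1 \vee \cdots \vee u_m$ with each $u_i \in \conf\acm$, and $\bar e(q_f) = \{q_f\}^{\triangleright\triangleleft}$. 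By the Galois connection, $\bar e(u) \subseteq \bar e(q_f)$ iff $\{q_f\}^\triangleright \subseteq \{u_1,\dots,u_m\}^\triangleright$. Now $1 \in \{q_f\}^\triangleright$ precisely because $q_f \cdot 1 = q_f \in \mathrm{Acc}(\acm)$ (the final ID consisting of a single $q_f$). If $\bar e(u) \subseteq \bar e(q_f)$ held, we would get $1 \in \{u_1,\dots,u_m\}^\triangleright$, that is, each $u_i \cdot 1 = u_i \in \mathrm{Acc}(\acm)$; by Lemma~\ref{comprel}(2) this forces $u = u_1 \vee \cdots \vee u_m \in \mathrm{Acc}(\acm)$, contradicting our assumption. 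Hence $\bar e(u) \not\subseteq \bar e(q_f)$, so $\mathbf{W}_\acm^+, \bar e \not\models \mathrm{acc}_\acm(u)$, and therefore $\mathcal{V} \not\models \mathrm{acc}_\acm(u)$.

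**Main obstacle.** There is no deep obstacle here; all the substantive work has been front-loaded into Lemma~\ref{valuation} (that $\bar e$ validates $\Pcom$ and that joins are computed by the closure operator) and into the construction of the frame $\mathbf{W}_\acm$. The one point requiring a little care is making sure the correspondence between "$\bar e(u) \subseteq \bar e(q_f)$" and "$1 \in \{u_1,\dots,u_m\}^\triangleright$" is drawn correctly — one must use that $u$ is genuinely an ID (so that Lemma~\ref{valuation}'s formula for $\bar e$ of a join applies, extended by an easy induction from the two-element case) and that membership of $1$ in a set $X^\triangleright$ just says each element of $X$ is accepted. After that, the only tool needed is Lemma~\ref{comprel}(2), which packages exactly the fact that a join is accepted iff all its disjuncts are. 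I would also remark that the counterexample algebra is in fact commutative, so the same proof shows the equivalence holds for every subvariety of $\CRL$ containing $\mathbf{W}_\acm^+$ as well.
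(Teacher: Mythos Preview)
Your proposal is correct and follows essentially the same route as the paper's proof: invoke Lemma~\ref{crl1} for the forward direction, and for the converse use the valuation $\bar e$ on $\mathbf{W}_\acm^+$, Lemma~\ref{valuation} to validate $\Pcom$ and compute $\bar e(u)$ and $\bar e(q_f)$ as Galois closures, then observe $1\in\{q_f\}^\triangleright$ and conclude each joinand of $u$ is accepted via Lemma~\ref{comprel}(2). The only cosmetic difference is that the paper argues the converse directly (from $\mathcal{V}\models\mathrm{acc}_\acm(u)$ to $u\in\mathrm{Acc}(\acm)$) whereas you phrase it contrapositively; the underlying steps are identical.
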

\begin{proof}  Let $\acm=(\Reg_k,\State,\Inst,q_f)$ be a $k$-ACM. 
The forward direction follows from Lemma~\ref{crl1}. For the reverse direction note that from $\mathbf{W}_{\acm}^+\in\mathcal{V}$ we have $\mathbf{W}_{\acm}^+\models \mathrm{acc}_{\acm}({u})$. By Lemma~\ref{valuation}, $\mathbf{W}_{\acm}^+,\bar e\models {\Pcom}$ and so $\mathbf{W}_{\acm}^+,\bar e \models {u}\leq q_f$. 
Let $t_1,...,t_n\in (\State\cup\Reg_k)^*$ be given so that ${u}={t}_1\vee\cdots\vee {t}_n$, so $\bar{e} ({t}_1\vee\cdots\vee {t}_n) \subseteq \bar{e}(q_f)$, which yields $\{{t}_1, \ldots, {t}_n\}^{\triangleright\triangleleft} \subseteq {q_f}^{\triangleright\triangleleft}$ by Lemma~\ref{valuation}. This is equivalent to ${\{q_f\}^\triangleright} \subseteq \{{t}_1,...,{t}_n \}^\triangleright.$ 
 Since $\leq_{\acm}$ is reflexive, $q_f\in {\Acc}({\acm})$ and thus $q_f\Nuc_\acm~1,$
so $1\in\{q_f\}^\triangleright$. Therefore, $1\in \{{t}_1,...,{t}_n \}^\triangleright$, that is $\{{t}_1,...,{t}_n \} \Nuc_\acm 1$, so $t_1 \Nuc_\acm 1, \ldots ,t_n \Nuc_\acm 1$. Hence ${t}_1,...,{t}_n\in{\Acc}({\acm})$ and by Lemma~\ref{comprel}(2) we conclude $u\in{\Acc}({\acm})$.
\end{proof}

As a consequence of Lemma~\ref{crl}, if $\mathcal{V}\subseteq\RL $ is a variety containing $\mathbf{W}_{\acm}^+$, for some ACM $\acm$, then $\{\mathrm{acc}_{\acm}({u}): {u}\in {\Acc}({\acm}) \}=\{\mathrm{acc}_{\acm}({u}): \mathcal{V}\models \mathrm{acc}_{\acm}({u})\}$. Since $\langle {\State}\cup \Reg_k, {\Pcom}\rangle$ is a finite presentation and all equations in $\mathrm{acc}_{\acm}({u})$ have a common antecedent $\& {\Pcom}$, the following is immediate:
\begin{thm}\label{Vmhard} Let ${\acm}$ be an ACM and $\mathbf{W}^+_{\acm}\in\mathcal{V}\subseteq\RL $ for a variety $\mathcal{V}$. Then deciding the word problem of $\mathcal{V}$ is at least as hard as deciding membership in ${\Acc}({\acm})$.
\end{thm}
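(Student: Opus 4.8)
The plan is to exhibit one fixed finite presentation $\langle X,E\rangle$ in the language of residuated lattices, together with a computable reduction of the membership problem for $\Acc(\acm)$ to the word problem of $\mathcal V$ relative to $\langle X,E\rangle$; by the definition of the word problem this immediately yields that the latter is at least as hard as the former.

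First I would take $X:=\State\cup\Reg_k$, which is finite since $\acm$ has finitely many states and registers, and let $E$ be the equational rendering of $\Pcom=\Inst\cup\{xy\leq yx:x,y\in\State\cup\Reg_k\}$: replace each of the finitely many inequalities $s\leq t$ in $\Pcom$ by the equation $s\vee t=t$, which is equivalent to it in every residuated lattice. Then $\langle X,E\rangle$ is a finite presentation independent of any input, and for $u\in\indes{\acm}$ the quasiequation $\mathrm{acc}_\acm(u)=(\amper\Pcom\Rightarrow u\leq q_f)$ becomes the word-problem instance $\amper E\Rightarrow(u\vee q_f=q_f)$ with fixed antecedent $\amper E$, where $u\vee q_f$ is computable from $u$.

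The reduction then reads as follows. Given a $\{\vee,\cdot,1\}$-term $t$ over $X$ (equivalently, an element of $A_\acm$), effectively compute its join-normal form $\bigvee_{i\in I}m_i$ with each $m_i$ a word over $X$. If $I=\emptyset$, or if some $m_i$ does not contain exactly one occurrence of a state letter, then $t\notin\indes{\acm}$, hence $t\notin\Acc(\acm)$ by Lemma~\ref{comprel}(1), and the reduction outputs ``no''. Otherwise $t$ represents an ID $u\in\indes{\acm}$, and by Lemma~\ref{crl} — applicable because $\mathbf{W}_{\acm}^+\in\mathcal V\subseteq\RL$ — we have $u\in\Acc(\acm)$ if and only if $\mathcal V\models\mathrm{acc}_\acm(u)$, i.e.\ if and only if the word-problem instance $\amper E\Rightarrow(u\vee q_f=q_f)$ holds in $\mathcal V$; the reduction outputs the answer to this query. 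Every step is effective, so this is a genuine reduction and the theorem follows.

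I do not expect a real obstacle here: all the substance is already contained in Lemma~\ref{crl} (and, beneath it, the residuated frame $\mathbf{W}_{\acm}$ together with Lemma~\ref{valuation}), so this theorem is essentially the packaging of that equivalence as a decidability reduction. The only points deserving a line of care are that the word problem is the \emph{local} one, so one must note that a single finite presentation — with $E$ and $q_f$ fixed — handles all inputs, and the harmless bookkeeping of passing between $\leq$ and $\vee$ and of disposing of inputs that are not instantaneous descriptions.
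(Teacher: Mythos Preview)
Your proposal is correct and follows essentially the same approach as the paper: the paper derives the theorem as an immediate consequence of Lemma~\ref{crl} together with the observation that $\langle \State\cup\Reg_k,\Pcom\rangle$ is a single finite presentation common to all instances $\mathrm{acc}_\acm(u)$. Your extra bookkeeping (converting $\leq$ to equations via $s\vee t=t$ and explicitly disposing of inputs that are not ID's) is sound and merely fills in details the paper leaves implicit.
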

\begin{cor}\label{wp}
 If $\mathcal{V}$ is a subvariety of $\RL $ containing $\mathbf{W}^+_{\acm}$, where ${\acm}$ is an ACM such that membership in ${\Acc}({\acm})$ is undecidable, then $\mathcal{V}$ has an undecidable word problem. In particular, any variety in the interval $\CRL$ to $\RL$ has undecidable word problem since $\mathbf{W}_{\Uacm}^+\in \CRL $, where $\Uacm$ is the machine from Theorem~\ref{lincoln}.
\end{cor}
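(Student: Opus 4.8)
The plan is to read this off directly from Theorem~\ref{Vmhard}. For the first assertion, apply that theorem to the given ACM $\acm$: since $\mathbf{W}^+_{\acm}\in\mathcal{V}\subseteq\RL$, deciding the word problem of $\mathcal{V}$ is at least as hard as deciding membership in ${\Acc}({\acm})$; as the latter is undecidable by hypothesis, so is the word problem of $\mathcal{V}$. Unwinding what Theorem~\ref{Vmhard} encodes, the reduction is the following: given $u\in\indes{\acm}$, regarded as a $\{\jn,\cdot,1\}$-term over $\State\cup\Reg_k$, one has $u\in{\Acc}({\acm})$ iff $\mathcal{V}\models\mathrm{acc}_{\acm}(u)$ by Lemma~\ref{crl}, i.e.\ iff the quasiequation $\amper\Pcom\Rightarrow u\leq q_f$ holds in $\mathcal{V}$; hence a decision procedure for the word problem of $\mathcal{V}$ over the presentation $\langle\State\cup\Reg_k,\Pcom\rangle$ would decide membership in ${\Acc}({\acm})$.

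For the ``in particular'' clause, I would instantiate the first part with $\acm=\Uacm$, the $2$-ACM of Theorem~\ref{lincoln} whose acceptance problem is undecidable. The lemma preceding Lemma~\ref{valuation} shows that $\mathbf{W}_{\acm}$ is a (commutative) residuated frame and hence $\mathbf{W}^+_{\acm}\in\CRL$; in particular $\mathbf{W}^+_{\Uacm}\in\CRL$, so $\mathbf{W}^+_{\Uacm}\in\mathcal{V}$ for every variety $\mathcal{V}$ with $\CRL\subseteq\mathcal{V}\subseteq\RL$, and the first part then yields that each such $\mathcal{V}$ has undecidable word problem.

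There is essentially nothing difficult here, since all the work has been carried out in Theorem~\ref{Vmhard} and Lemma~\ref{crl}; the statement is genuinely a corollary. The only point that deserves a moment's attention is the bookkeeping required by the definition of the word problem in \S2, namely that the finite presentation must be fixed \emph{before} the input terms: one needs that $X=\State\cup\Reg_k$ and $E=\Pcom$ can be chosen uniformly in $u$, and that every $\mathrm{acc}_{\acm}(u)$ has the common antecedent $\amper\Pcom$ — which is exactly the observation recorded immediately before Theorem~\ref{Vmhard}.
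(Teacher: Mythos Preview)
Your proposal is correct and follows exactly the intended route: the corollary is immediate from Theorem~\ref{Vmhard} (together with Lemma~\ref{crl} and the observation that $\langle\State\cup\Reg_k,\Pcom\rangle$ is a fixed finite presentation), and the ``in particular'' clause is obtained by instantiating with $\Uacm$ and using that $\mathbf{W}^+_{\Uacm}\in\CRL$. The paper itself gives no separate proof, treating the statement as an evident consequence of the preceding results, and your unpacking of the bookkeeping is faithful to the discussion immediately before Theorem~\ref{Vmhard}.
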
 
The above results hold even for the $\{\jn, \cdot, 1\}$ reducts of these varieties. 
Since $\{\mathrm{acc}_{\acm}({u}): \mathcal{V}\models \mathrm{acc}_{\acm}({u})\}\subseteq \{\xi: \mbox{$\xi$ is a quasieq. such that }\mathcal{V}\models \xi \}$, we therefore also obtain the undecidability of the quasiequational theory. 
The quasiequational theories of $\RL$ and $\CRL$ alone were known to be undecidable; see \cite{JT02} and \cite{Lincoln}, respectively.
%
%
\section{Equations in the signature $\{\vee,\cdot, 1\}$ and machine admissibility} 
Our goal is to find proper subvarieties of $(\Cc )\RL $ for which Theorem~\ref{Vmhard} will be applicable, as well as strengthening this result to the undecidability of the equational theory for some proper subvarieties of $\CRL .$ Since structural rules correspond to equations in the signature $\{\vee,\cdot, 1\}$ (see \cite{GJ}), we will restrict our attention to varieties axiomatized by such equations.

Since in residuated lattices multiplication distributes over joins, every equation over $\{\vee,\cdot,1\}$ is equivalent to an equality between finite joins of monoid terms. This equality can in turn be written as two inequalities and in each one of them the joins on the left-hand side of the inequality yield a conjunction of inequalities of the form  $t_0\leq t_1\vee\cdots \vee t_l$, where $t_0,...,t_l\in X^*$ are monoid terms. We call equations of this form  \emph{basic equations} (or basic inequalities), if it is further true that the variable sets on the two sides of the inequality are the     same. It can be easily shown that joinands on the right-hand side containing variables that do not appear on the left can be safely omitted, resulting in an equivalent equation. (In the case where all the joinands are of this form, the equation implies $1 \leq x$, so it defines the trivial variety).  Furthermore, if there are variables that appear on the left and not on the right, then the inequality implies integrality ($x \leq 1)$. These claims are easy to prove and the needed instantiations are also mentioned on page 277 of \cite{CGT2012}. The trivial variety and variety axiomatized by integrality together with any set of $\{\vee,\cdot,1\}$-equations have the FEP, hence decidable universal (and quasiequational) theory. Therefore the restriction of the variables appearing on both sides does not leave out any unknown cases of (un)decidability. Via a process of \emph{linearization} \cite{GJ} any basic equation is further equivalent to one where the term $t_0$ is {\em linear}, namely to an equation of the form $x_1\cdots x_n \leq \bigvee_{i=1}^k m_i, $ where $k \geq 1$, $m_1,...,m_k\in X^*$ and  $x_1, \ldots, x_n$ are distinct elements of $X$. (For example the equation $x^2 \leq x$ is equivalent to the linearized equation $x_1 x_2 \leq x_1 \vee x_2$). Such equations are called simple in \cite{GJ}, but in this paper we will reserve the name \emph{simple equations of $\RL$} for the subclass where the variable sets on the left and the right-hand sides of the equation are equal.

When writing simple equations, we will be using the set of variables $\{x_i : i \in \Z^+\}$, and we will assume implicitly that this set is ordered by the natural order of the indices. We will informally use variables like $x, y, z$ in some of the examples, as well. We also define $\mathbf{x_n} := (x_1, \ldots, x_n)$, for all $n \in \Z^+$ and for a tuple $a\in \N^n$ of natural numbers, we define $\mathbf{x_n}^{a}=x_1^{a(1)}\cdots x_n^{a(n)}$; we also define $\mathbf{x_n}^{\mathbf{1}}=x_1\cdots x_n$. For reasons that will be clear later, we will actually think of $a$ as a column vector (as opposed to a row vector).

For a simple equation $\varepsilon$, we will be interested in its commutative version $\varepsilon_\mathcal{C}$, obtained from $\varepsilon$ by rearranging the variables within each monoid term according to the natural ordering of their indices and removing any resulting duplicate joinands on the right-hand side. In particular, $\CRL +\varepsilon\models \varepsilon_\comeq$, and we call equations of the form $\varepsilon_\comeq$ {\em simple equations of $\CRL $}. As the encodings for the undecidability are harder for commutative varieties than for arbitrary ones, by proving the results in the commutative case we obtain as corollaries results for general subvarieties of $\RL$. Therefore,  
we restrict ourselves to simple equations of $\CRL $ and we will refer to them simply as \emph{simple equations}. 
 Such equations are of the form 
$$[\Dset]: \mathbf{x_n}^{\mathbf{1}} \leq \bigvee_{d\in \Dset} \mathbf{x_n}^{\vd},$$
where $\Dset$ is a finite nonempty set of $n$-column vectors with entries in $\N$, such that the variable sets on the two sides are the same, namely there is no row such that all column vectors in $\Dset$ are zero on that row. Note that because of the equality of the variable sets on the left and on the right and due to the idempotency of join in residuated lattices, every simple equation is fully determined by the set of joinands on its right-hand side. Our notation is chosen so that if $\Dset$ is a set of $n$-columns, then $[\Dset]$ denotes the simple equation displayed above (the exponents of the joinands on the right-hand side come from $\Dset$).
\begin{figure}[!htb]
\[
\Scale[.85]{
\begin{array}{ c | c  | c | c } 
&[\Dset]  &  \Dset \\
\hline
\mathrm{i.}&x\leq x^2 &  \{ 2\} \\ \hline
\mathrm{ii.}&x\leq 1\vee x^2  & \{0, 2\}\\ \hline
\mathrm{iii.}&x\leq x^{2}\vee x^{4} & \{2,4 \} \\ \hline
\mathrm{iv.}&xy\leq1\vee x^2y\vee x^3y^2 &  
\left\{\hspace{-5pt}\begin{array}{rcl}
   0& 2 &  3  \\
    0&1 &  2 
\end{array}\hspace{-5pt}\right\} 
\end{array}
\begin{array}{ c | c  | c | c } 
&[\Dset]  &  \Dset \\
\hline
\mathrm{v.}&xyz \leq x^2y\vee y^2z\vee xz^2  & 
\left\{\hspace{-5pt}\begin{array}{rcl}
      2&0 & 1 \\
      1&2 & 0\\
	  0&1 & 2
\end{array}\hspace{-5pt}\right\} 
\\ \hline
\mathrm{vi.} & xyz \leq yz\vee xz^2 &  
\left\{\hspace{-5pt}\begin{array}{rcl}
      0 &1  \\
      1 &0\\
	  1&2
\end{array}\hspace{-5pt}\right\}
\end{array}
 }
 \]
 \captionof{table}{Some simple equations viewed as sets of column vectors.}\label{Rtable}
 \end{figure}

For the bigger class of basic equations of $\CRL$ (which may not be linearized), if $\Dset$ is again a nonempty set of column vectors and $f$ a column vector over the positive integers, we denote by $[f,\Dset]$, the basic equation 
$$[f,\Dset]: \mathbf{x_n}^{f} \leq \bigvee_{\vd\in \Dset} \mathbf{x_n}^{\vd}.$$ 

The following theorem provides a link between simple equations that hold in $\mathbf{W}^+$ and conditions that hold in $\mathbf{W}$.

Let $(W,\cdot,1)$ be a commutative monoid and $[\Dset]$ a $n$-variable simple equation given by $\Dset=\{\vd_j : 1 \leq j \leq m\}$. If $\mathbf{W}=(W,W',\Nuc, \cdot, 1)$ is a residuated frame then we write $\mathbf{W}\models (\Dset)$ iff for all $\mathbf{u_n}\in W^n$ and $v\in W'$, the following implication is satisfied (the premises above the line are understood conjunctively and the vertical line denotes the implication to the conclusion below):
$$ \infer[{(\Dset)}] {\mathbf{u_n}^\mathbf{1}\Nuc v }{\mathbf{u_n}^{\vd_1} \Nuc v& \cdots &\mathbf{u_n}^{\vd_m} \Nuc v}$$
For example if $[\Dset]$ is $x_1 x_1 \leq x_1 \jn x_2$, then $(\Dset)$ is: $$\forall x_1, x_2 \in W, v \in W', x_1 \Nuc v ~\&~ x_2 \Nuc v \implies x_1x_2 \Nuc v.$$
\begin{thm}[\cite{GJ}]\label{nucond}
Let $[\Dset]$ be a simple equation and suppose $\mathbf{W}$ is a residuated frame. Then $\mathbf{W}^+\models [\Dset]$ iff $\mathbf{W}\models (\Dset)$.
\end{thm}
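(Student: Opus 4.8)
The statement to prove is Theorem~\ref{nucond}: for a simple equation $[\Dset]$ and a commutative residuated frame $\mathbf{W}$, we have $\mathbf{W}^+ \models [\Dset]$ iff $\mathbf{W} \models (\Dset)$. The plan is to unwind both sides into statements about the Galois-closed sets $\gamma(X)$ and the basic relation $\Nuc$, and use the correspondence between closed sets and their $\triangleright$-images. Recall that $\mathbf{W}^+ \models [\Dset]$ means: for every closed set $U = \gamma(U) \in \powerset(W)_\gamma$ (it suffices to check on a generating family, but in fact we can work with arbitrary elements), $U^{\otimes \mathbf{1}} \subseteq \bigcup_{\vd \in \Dset} U^{\otimes \vd}$ computed with $\cdot_\gamma$, i.e. $\gamma(U^{\mathbf{1}}) \subseteq \gamma\bigl(\bigcup_{\vd}U^{\vd}\bigr)$ (using the nucleus identities $\gamma(\gamma(X)\cdot\gamma(Y)) = \gamma(X\cdot Y)$ and $\gamma(\gamma(X)\cup\gamma(Y)) = \gamma(X\cup Y)$ from \cite{GJ}, as already invoked in the proof of Lemma~\ref{valuation}).

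First I would reduce $\mathbf{W}^+ \models [\Dset]$ to a statement about principal closed sets. The key observation is that $[\Dset]$ is a simple equation whose left-hand side $\mathbf{x_n}^{\mathbf{1}}$ is linear, so it suffices to verify it for $n$-tuples of principal closed elements $\gamma(u_1), \ldots, \gamma(u_n)$ with $u_i \in W$: indeed, the class $\mathsf{CRL}$ is generated (as a variety, with respect to $\{\vee,\cdot,1\}$-inequalities holding in $\mathbf{W}^+$) in a way that closed sets are joins of principal ones, multiplication distributes over those joins, and since the equation is of the form $\text{linear} \leq \text{join}$, validity on principal generators propagates to all elements — this is exactly the content of why one only needs to check ``simple'' inequalities on generators, and it is implicit in \cite{GJ}. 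So $\mathbf{W}^+ \models [\Dset]$ becomes: for all $\mathbf{u_n} \in W^n$, $\gamma(u_1\cdots u_n) \subseteq \gamma\bigl(\bigcup_{\vd\in\Dset} u_1^{\vd(1)}\cdots u_n^{\vd(n)}\bigr)$.

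Next I would translate this containment of closed sets via the Galois connection. Since $X_1^{\triangleright\triangleleft}\subseteq X_2^{\triangleright\triangleleft}$ iff $X_2^{\triangleright}\subseteq X_1^{\triangleright}$, the displayed containment is equivalent to $\{u_1^{\vd(1)}\cdots u_n^{\vd(n)} : \vd \in \Dset\}^{\triangleright} \subseteq \{u_1\cdots u_n\}^{\triangleright}$, i.e.: for every $v \in W'$, if $u_1^{\vd(1)}\cdots u_n^{\vd(n)} \Nuc v$ for all $\vd \in \Dset$, then $u_1\cdots u_n \Nuc v$. Writing $\mathbf{u_n}^{\vd}$ for $u_1^{\vd(1)}\cdots u_n^{\vd(n)}$ and $\mathbf{u_n}^{\mathbf{1}}$ for $u_1\cdots u_n$, this is precisely the rule $(\Dset)$. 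Thus both directions of the biconditional fall out of the chain of equivalences, once the reduction to principal elements is in place.

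The main obstacle is the reduction to principal closed sets — justifying that it suffices to check $[\Dset]$ on $n$-tuples of the form $(\gamma(u_1),\ldots,\gamma(u_n))$ rather than on all of $\powerset(W)_\gamma$. This needs the fact that every closed set is a $\gamma$-join of principal closed sets ($\gamma(X) = \bigcup_\gamma \{\gamma(x) : x \in X\}$), that $\cdot_\gamma$ distributes over arbitrary $\gamma$-joins (so the left side $\mathbf{X}^{\mathbf{1}}_\gamma$ of the equation, being a product of the variables each appearing once, is a $\gamma$-join over tuples of generators of the corresponding products), and that the right side is monotone — so the general inequality reduces to the generator case. The remaining steps (the nucleus identities and the Galois-connection translation) are routine manipulations already used verbatim in the proof of Lemma~\ref{valuation}. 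I would present the argument by first stating the principal-generator reduction as the crux, then giving the short equivalence chain.
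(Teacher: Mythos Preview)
The paper does not give its own proof of this theorem: it is stated with a citation to \cite{GJ} and no argument is supplied. Your sketch is a correct reconstruction of the standard proof from \cite{GJ}. The forward direction is immediate by instantiating the variables with principal closed sets $\gamma(u_i)$ and passing through the Galois connection; the backward direction is where the work lies, and you have correctly isolated the point that linearity of the left-hand side is what makes the reduction to principal generators go through (since $X_1\cdot_\gamma\cdots\cdot_\gamma X_n$ is a $\gamma$-join of products $\gamma(u_1\cdots u_n)$ over independent choices $u_i\in X_i$, whereas a non-linear left side would force repeated use of the same generator). The nucleus identities and the Galois translation you invoke are exactly the ones already used in Lemma~\ref{valuation}, so the remaining steps are routine.
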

%
%
\subsection{Motivation for subvarieties of $\CRL$}\label{motivation}
Recall from Example~\ref{ex: Meven}, that the computations of the $1$-ACM ${\acm}_{\mathrm{even}}$ leading to a final state are faithfully represented by the inequality relation of $\CRL $, in the sense that $\CRL \models (\& {\Inst} \Rightarrow u \leq q_f)$ iff $u \in \Acc(\acm)$. If we consider the inequality relation in $\CRL _{{\Dnots}}$, where ${\Dnots}$ is the simple equation $ x\leq x^2\vee x^4$, we observe that for the computation relation of a machine to be faithfully represented by the associated inequality relation it must further admit the ``ambient instruction'' given by 
$$t \leq^{\Dnots} t^2 \vee t^4, $$
for all $t\in ({\State}_{\mathrm{even}}\cup \Reg_1)^*$ in addition to being closed under the inference rules $[\cdot]$ and $[\vee]$. Let $\leq_{{\Dnots} {\acm}_{\mathrm{even}}}$ be the smallest compatible preorder generated by ${{\Inst}_{\mathrm{even}}}\cup \leq^{\Dnots}$, and define ${\Acc}({\Dnots} {\acm}_\mathrm{even})$ to be the set of accepted ID's under the relation $\leq_{{\Dnots}{\acm_\mathrm{even}}}$. It is clear that ${\Acc}({\acm}_\mathrm{even})\subseteq {\Acc}({\Dnots} {\acm}_\mathrm{even})$ since ${\leq_{\acm_\mathrm{even}}}\subseteq{\leq_{\Dnots\acm_\mathrm{even}}}$, and since there are no instructions (nor instances of $\leq^{\Dnots}$) that remove state variables we obtain ${\Acc}({\Dnots} {\acm}_\mathrm{even})\subseteq \indes {\acm_\mathrm{even}}$. However, while $q_0\reg_1^3 \nin {\Acc}({\acm}_\mathrm{even})$, we have
$q_0\reg_1^3 \in{\Acc}({\Dnots} {\acm}_\mathrm{even})$ since
$$q_0\reg_1^3 \leq^{\Dnots} q_0\reg_1^6\vee q_0\reg_1^{12}\in{\Acc}({\acm}_{\mathrm{even}}).$$ It is clear that the expansion of the machine by the ambient instruction (needed for representing the inequality relation in $\CRL _{{\Dnots}}$) does not have the same computation relation, or put differently the machine ${\acm}_{\mathrm{even}}$  is not suitable for representing the inequality relation in $\CRL _{{\Dnots}}$ because these ambient instructions are not already \textit{admissible} in it. 

Likewise, there is no guarantee that there is a machine that has an undecidable acceptance problem (for example the machine $\Uacm$) and in which these ambient instructions are available/admissible. For that reason we cannot use the same argumentation to show that $\CRL _{{\Dnots}}$ has undecidable word problem. 

Exactly the same issue occurs if the simple equation is contraction ${\sf{c}}:x \leq x^2$. Actually, for the case of contraction not only does this particular encoding fail to be faithful, but there is no faithful encoding of an undecidable machine: the word problem for $\CRL _{\sf{c}}$ is actually decidable \cite{vanA}.  However, we will show that even though for the equation ${\Dnots}$ above the current encoding is problematic (as is with contraction), surprisingly, unlike with contraction, there is a different encoding that works for ${\Dnots}$; this will allow us to prove that the word problem for $\CRL _{{\Dnots}}$ is undecidable. We present the idea of this new encoding by showing that it at least faithfully encodes the machine ${\acm}_\mathrm{even}$. As we will see, what makes it work is that the new encoding is such that, even if they were available, the ambient instructions would not contribute to any more accepted configurations; this is a rephrasing of what we referred to as: the given equation is \emph{admissible} in the particular machine. 

The idea is to construct a new machine ${\acm}_K$, for an appropriate integer $K$, as a modification of ${\acm}_\mathrm{even}$ that works at an exponential scale (with base $K$) compared to that of  ${\acm}_\mathrm{even}$. In particular, ${\acm}_K$ manages to replace the decrement instructions $p_0: q_0 \reg_1 \leq  q_1$ and $p_1 : q_1 \reg_1 \leq  q_0$ by {\em programs} (sets of instructions) $\mathcal{P}_0$ and $\mathcal{P}_1$, respectively, that divide the contents of register $\reg_1$ by the fixed constant $K$. For example, the general effect for $p_0$ being $q_0\reg_1^m\leq^{p_0} q_1\reg_1^n$ iff $m=n+1$ is mirrored by $\mathcal{P}_0$ in the sense that $q_0\reg_1^{M} \leq_{\mathcal{P}_0}q_1\reg_1^N$ iff $M=K\cdot N$, and consequently $q_0\reg_1^{K^{n+1}} \leq_{\mathcal{P}_0}q_1\reg_1^{K^n}$; therefore computations in $\acm_\mathrm{even}$ are simulated in $\acm_K$ by storing the contents of $\reg_1$ by $K^n$ instead of $n$. In this case, we will say a term is accepted if it computes a join of configurations of the form $q_f\reg_1^{K^0}$ (i.e. $q_f\reg_1$), so $q_0\reg_1^n\in{\Acc}({\acm}_K)$ iff $n=K^{2m}$ for some $m\geq 0$. Thereupon an additional necessary condition for acceptance in $\acm_K$ is demanded for configurations labeled by a state $q\in\State_{\mathrm{even}}$ (independently from the conditions of acceptance in $\acm_\mathrm{even}$) namely that if $q\reg_1^N$ is accepted in $\acm_K$ then $N$ must be a power of $K$.\footnote{This definition of acceptance for the machine ${\acm}_K$ is for heuristic convenience. In Section~\ref{mksec}, to properly define programs to multiply/divide by $K$, we will need to add new states and instructions to carry out such computations, as well a fresh variable $q_F$, acting as a new final state, and a set of instructions that guarantee $q_f\reg_1\leq_{{\acm}_K}q_F$.} For the equation ${\Dnots}$, if we choose $K> 2$ it is easily verified that if  ${\Dnots}$ is applied
$$q\reg_1^n \reg_1^m \leq^{\Dnots} q\reg_1^n( \reg_1^{2m} \vee \reg_1^{4m})  = q\reg_1^{n+2m}\vee q\reg_1^{n+4m},$$
the only way $n+2m$ and $n+4m$ are both powers of $K$ is if $m=0$.\footnote{Indeed, if $n+2m=K^a$ and $n+4m=K^{a+b}$, for some $a\geq 0$ and $b\geq 1$, then 
$K^a\geq 2m = K^{a+b}-K^a\geq K^a(K-1),$ 
and hence $K\leq 2$.} In such an instance, the configuration on the left-hand side of the equation appears as a joinand on the right-hand side. Consequently we see that, with respect to being accepted, instances of $\leq^{\Dnots}$ in a computation are superfluous, and we obtain
$$q\reg_1^n \reg_1^{2m}\vee q\reg_1^n\reg_1^{4m} \in {\Acc}({\acm}_K )
\quad\implies \quad
q\reg_1^n \reg_1^m \in  {\Acc}( {\acm}_K ),$$
thus ${\Acc}({\acm}_K) = {\Acc}({\Dnots}{\acm}_K )$. So, the equation ${\Dnots}$ is \textit{admissible} in the machine ${\acm}_K$.

The reason why this works is that the effect of the inequality ${\Dnots}$, even when applied repeatedly, is to modify the register values in a linear or polynomial way, but when these values are encoded on an exponential scale the applications of the inequality do not produce modifications on the same scale and thus do not lead to final configurations.

More generally, consider an $n$-variable simple equation $[\Dset]: \mathbf{x_n}^{\mathbf{1}} \leq \bigvee_{\vd \in \Dset} \mathbf{x_n}^{\vd}$. For $[\Dset]$ to be admissible, and viewing $[\Dset]$ as an ambient instruction, we need to consider all the substitution instances $\mathbf{t_n}^\mathbf{1}\leq \bigvee_{\vd\in\Dset}\mathbf{t_n}^\vd$ of $[\Dset]$, where the tuple of terms $\mathbf{t_n}=(t_1,\ldots, t_n)$ is given by a substitution $\subt: x_i \mapsto t_i$, for all $i$. Then for any term $s$, $s\mathbf{t_n}^\mathbf{1}\leq^{\Dset} s\bigvee_{\vd\in\Dset}\mathbf{t_n}^\vd$ is part of the computation that includes the ambient instructions coming from $[\Dset]$. It is shown in Lemma~\ref{mingly} that if $[\Dset]$ does not have instances equivalent to a $k$-mingle equation\footnote{Note that if there is an instance of $[\Dset]$ that is equivalent to {\em $k$-mingle} ($x^k\leq x$) for some $k>1$,  $[\Dset]$ cannot be admissible for any ACM $\acm$: from $q_f^k\leq^{\Dset}q_f$ we would obtain that $q_f^k$ is accepted, a contradiction. Actually, then the variety of $\CRL+[\Dset]$ has a decidable word problem. More generally, we note that $k$-mingle, as well as contraction, are examples of {\em knotted equations}: equations of the form $x^{k}\leq x^{l}$, where $k\neq l$. It is known \cite{vanA} that all knotted subvarieties of $\CRL$ have decidable universal theories, and therefore so do the subvarieties of $\CRL$ axiomatized by any set of simple equations $\Gamma$ for which $\CRL+\Gamma\models x^k\leq x^l$ by \cite{GJ}.} and $s\bigvee_{\vd\in\Dset}\mathbf{t_n}^\vd$ is accepted in an ACM $\acm$, then the term $s$ contains precisely one state variable and no term $t_i$ contains any state variable. In the case where $\acm$ is a $1$-ACM, for example $\acm=\acm_\mathrm{even}$, this implies that $s=q\reg_1^{\const}$, for some state $q$ and number $\const$, and $\subt$ is a (1-variable) substitution with $\subt(x_i)= t_i= \reg_1^{\rvec(i)}$, where $\rvec$ is an $n$-tuple of natural numbers. Using the equality relation for $\mathbf{A}_\acm$, this is equivalently written as
\begin{equation}\label{eq:Ron1acm}
q\reg_1^{\const+\rvec\mathbf{1}}
\leq^\Dset 
\bigvee_{\vd\in\Dset} q\reg_1^{\const + \rvec \vd},
\end{equation} 
where $\rvec \vd= \rvec(1)\vd(1)+ \cdots \rvec(n)\vd(n)$ and $\rvec \mathbf{1}= \rvec(1)+ \cdots \rvec(n)$.

Admissibility of $[\Dset]$ in such a 1-ACM $\acm$ is the demand that if the right-hand side of the above inequality is accepted in $\acm$ then the left-hand side is also accepted (thus making every instance of $[\Dset]$ superfluous). The most naive and obvious way to ensure this is to ask that the left-hand side already appears as one of the joinands on the right-hand side; that is $\rvec\Lvec{}=\rvec\bar \vd$ for some $\bar \vd\in \Dset$, hence rendering the substitution instance of $[\Dset]$ by $\subt$ trivial. Recall that in the case of the machine $\acm_K$ constructed from $\acm_\mathrm{even}$, if a configuration $q\reg_1^N$ is accepted in $\acm_K$ then $N$ must be some power of $K$. So, for $[\Dset]$ to be admissible in $\acm_K$ the most obvious condition to require is:
\begin{center}
If the exponents in the right-hand side of $[\Dset]$ produced by a 1-variable substitution are translated powers of $K$ (by the same constant), then the substitution instance is trivial.
\end{center}
In symbolic terms this can be written as
\begin{equation}\label{singlestar}\tag{$\star K$}
\begin{array}{c}
\mbox{If for some $\rvec\in\N^n$ and  $\const\in \N,$}\\
\mbox{every $\const+\rvec{\vd}$ is a power of $K$, where $\vd\in \Dset$,}\\ 
\mbox{then there exists $\bar \vd\in \Dset$ such that $\rvec{\bar \vd}= \rvec{\mathbf{1}}$}\end{array}
\end{equation}
in which case we say that $[\Dset]$ satisfies $(\star K)$. We also consider the condition $(\star)$: there exists $K>1$ such that $(\star K)$ holds. 

In the following sections we will make rigorous the notion of admissibility and carefully construct the machines ${\acm}_K$.
%
%
\subsection{Spineless equations}
We will now define a class of simple equations, for which their defining subvarieties will have an undecidable word problem. The class is so vast that it is easier to define its complement. We motivate the definition with the following observation.

Consider the machine ${\acm}_{\mathrm{even}}$ from Example~\ref{ex: Meven}, and the simple equation ${\Ds}: x\leq 1\vee x^2$. As before, it is easy to see that $q_0\reg_1^3\in {\Acc}({\Ds} {\acm}_{\mathrm{even}})\setminus {\Acc}({\acm}_{\mathrm{even}})$. However, this behavior cannot be remedied by ${\acm}_K$ for any $K>1$. E.g., let $n = (K^4-K^2)/2$, then $q_0\reg_1^{K^2+n}\nin {\Acc}({\acm}_K)$ since $K^2+n\neq K^{2m}$ for any $m\in\N$. However,
$$q_0\reg_1^{K^2+n}=q_0\reg_1^{K^2}\reg_1^n\leq^{\Ds} q_0\reg_1^{K^2}\reg_1^{0}\vee q_0\reg_1^{K^2}\reg_1^{2n}= q_0\reg_1^{K^2}\vee q_0\reg_1^{K^4}\in {\Acc}({\acm}_K).  $$ 
By setting $\const=K^2$ and $\rvec=(n)\in\N^1$, this also demonstrates that $(\star K)$ is not satisfied by ${\Ds}$ for any $K>1$, i.e., ${\Ds}$ fails $(\star)$. In fact, we will show in Lemma~\ref{injective} that this failure occurs not just for functions of the form $n\mapsto K^n$ but actually for any function on $\N$ with infinite range (in particular those that are actually computable).

The equation ${\Ds}$ is an example of a {\em spinal equation} and as explained above, unfortunately it cannot be handled by our work. In general, a basic equation $[f,\spine]$ is called {\em spinal} if it is of the form:
$$[f, \spine]: \underbrace{x_1^{f(1)}\cdots x_k^{f(k)}}_{f} \leq \underbrace{(1\vee{})}_{\vs_0}~\underbrace{x_1^{\vs_1(1)}}_{\vs_1}\vee \underbrace{x_1^{\vs_2(1)}x_2^{\vs_2(2)}}_{\vs_2}\vee\cdots \vee \underbrace{x_{1}^{\vs_k(1)}\cdots x_k^{\vs_k(k)} }_{\vs_k},$$
where $f\nin \spine$, $\vs_j(j)\neq 0$ and $\vs_i(j)=0$ for each $0\leq i<j\leq k$, and $(1\vee)$ is meant to signify that $1$ may or may not be included in the join. Note that if the column vectors of the set $\spine=\{(\vs_0,) \vs_1, \ldots, \vs_k\}$ are listed in the above order, $\spine$ becomes an upper-right triangular matrix whose diagonal contains only positive entries. In Corollary~\ref{3to1} we establish that spinal equations falsify the condition $(\star K)$ for every $K>1$. 
\begin{defn}\label{def: spinal} 
We say that a basic equation $[f,\spine]$ is {\em spinal} if $f\nin \spine$, $\vs_i(i)\not =0$ and $\vs_i(j)=0$, for all $j > i$, for all $\vs_i$ in $\spine$ that are not constantly zero. In this case, we will refer to the set $\spine$ as a {\em spine}.
We say that a simple equation is {\em prespinal} if it has a spinal equation as the image under some monoidal substitution.
\end{defn}

From now on we will only consider monoidal substitutions (the image of every variable is a monoidal term, i.e. a product of variables) and we will refer to them simply as substitutions. 

Note that the only one-variable spinal equations are the knotted inequalities $x^n \leq x^m$ (for which we know that they define subvarieties of $\CRL$ with decidable quasiequational theory) and their variants  $x^n \leq 1 \jn x^m$, where $n\neq m$, for which decidability results are still open.\footnote{The only exception being the case where $n>m=1$, where equations of this form have the finite model property by Theorem 3.15 in \cite{GJ}.} Also, their equivalent simple equations are prespinal, as verified below in Lemma~\ref{und1var}. As a consequence of the definition, a simple equation $[\Dset]$ is prespinal if and only if $[\Dset\cup\{{\Zero}\}]$ is prespinal.

From Table~\ref{Rtable}, we see that (i)-(ii) are spinal. The simple equation (vi) is prespinal via the $1$-variable substitution $\subt$ given by $\subt(x)=x$, $\subt(y)=x$ and $\subt(z)=1$.
i.e., $\CRL +\mbox{(vi)}\models x^2\leq x$. On the other hand, no trivial equations are prespinal. The general characterization of whether a simple equation is prespinal will be addressed in \S\ref{charD}, where it is verified that (iii)-(v) in Table~\ref{Rtable} are not prespinal right after Theorem~\ref{spinalform}.
\begin{defn}\label{def:spineless}
A simple equation is called \emph{spineless} if it is not prespinal. A simple equation $\varepsilon$ for $\RL $ is called spineless if $\varepsilon_\comeq$ is spineless.
\end{defn}

To demonstrate the vastness of the collection of spineless equations, we will focus our attention only on $1$-variable basic equations below. Note that every one-variable basic equation has the form
$$ x^n\leq \bigvee_{p\in P} x^p $$
for some $n>0$ and for some finite subset $P$ of $\N$ such that $P\not = \{0\}$; we denote such an equation by $[n,P]$. Also, note that $[n,P]$ is trivial iff $n \in P$.
\begin{lem}\label{und1var}
Let $[n,P]$ be a $1$-variable basic equation. Then the linearization of $[n,P]$ is a spineless simple equation iff $[n,P]$ is trivial or $P$ contains at least two distinct positive integers.
\end{lem}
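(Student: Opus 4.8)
The plan is to identify the linearization of $[n,P]$ explicitly and then handle the two implications separately. For $P\neq\{0\}$ (and $P\neq\varnothing$, which I set aside since otherwise there is no linearized simple equation at all) the linearization of $[n,P]$ is the $n$-variable simple equation $[A_P]$ whose column set is $A_P:=\{a\in\N^n : a(1)+\cdots+a(n)\in P\}$; equivalently, its right-hand joinands are exactly the monomials $\mathbf{x_n}^a$ with $|a|\in P$. (Equivalence of $[n,P]$ and $[A_P]$ over $\CRL$ is obtained by substituting $x_i\mapsto x_1\vee\cdots\vee x_n$ into $[n,P]$ and expanding powers via distributivity, and conversely $x_i\mapsto x$.) I will use three facts about $[A_P]$: (1) for each $p\in P$ with $p>0$ and each $i$, the monomial $x_i^p=\mathbf{x_n}^{p e_i}$ is a right-hand joinand of $[A_P]$, since $|p e_i|=p\in P$; (2) the collapsing substitution $\subt_0:x_i\mapsto x$ sends $[A_P]$, after deleting duplicate joinands, back to $[n,P]$, because $|a|$ ranges over exactly $P$ as $a$ ranges over $A_P$; and (3) if $n\in P$ then the all-ones column lies in $A_P$, so $x_1\cdots x_n$ is a joinand of $[A_P]$ and $[A_P]$ is trivial (holds in every member of $\CRL$).

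For the implication producing prespinality, assume $[n,P]$ is not trivial and $P$ has at most one positive element. Since $P\neq\{0\}$ this forces $P\cap\Z^+=\{p\}$ for a single $p\geq 1$, so $P=\{p\}$ or $P=\{0,p\}$, and $n\notin P$ gives $n\neq p$. By fact (2), $\subt_0([A_P])$ is $x^n\leq x^p$ (if $P=\{p\}$) or $x^n\leq 1\vee x^p$ (if $P=\{0,p\}$); since $n\neq p$ and $p\geq 1$, in both cases this is a one-variable spinal equation in the sense of Definition~\ref{def: spinal}, with $f=(n)\notin\spine$ and $\spine$ consisting of the column $(p)$ together with, possibly, the zero column. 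Hence $[A_P]$ has a spinal equation as its image under a monoidal substitution, i.e. $[A_P]$ is prespinal. (This also verifies the remark preceding the lemma that the simple equations equivalent to $x^n\leq x^m$ and to $x^n\leq 1\vee x^m$, for $n\neq m$, are prespinal.)

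For the converse, assume $[n,P]$ is trivial or $P$ contains two distinct positive integers, and show $[A_P]$ is not prespinal. If $n\in P$, then by fact (3) $x_1\cdots x_n=\mathbf{x_n}^{\mathbf 1}$ is among the right-hand joinands of $[A_P]$; under any monoidal substitution $\subt$, the term $\subt(x_1\cdots x_n)$ is still a right-hand joinand of $\subt([A_P])$, so the left-hand exponent vector of $\subt([A_P])$ lies among its right-hand columns and $\subt([A_P])$ cannot be spinal, as spinal $[f,\spine]$ satisfies $f\notin\spine$. Now instead let $p<q$ be positive elements of $P$ and let $\subt$ be any monoidal substitution, say $\subt(x_i)=z_1^{c_i(1)}\cdots z_r^{c_i(r)}$ with $c_i\in\N^r$. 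If every $c_i$ is the zero vector, $\subt([A_P])$ has left side $1$ and only the joinand $1$, and again is not spinal. Otherwise pick $i_0$ with $c_{i_0}\neq\mathbf 0$; by fact (1) the monomials $x_{i_0}^p$ and $x_{i_0}^q$ are joinands of $[A_P]$, so the columns $p\,c_{i_0}$ and $q\,c_{i_0}$ are two distinct right-hand columns of $\subt([A_P])$, both nonzero and both with support $\mathrm{supp}(c_{i_0})$. But in any spinal equation distinct nonzero right-hand columns have distinct supports: enumerating the nonzero columns of the spine as $\vs_1,\ldots,\vs_m$ with $\vs_i(i)\neq 0$ and $\vs_i(j)=0$ for $j>i$, if $\vs_a$ and $\vs_b$ with $a<b$ had common support $T$ then $b\in T=\mathrm{supp}(\vs_a)\subseteq\{1,\ldots,a\}$, a contradiction — and ``having distinct supports'' is unaffected by the renaming of variables that accompanies the passage from spinal to prespinal. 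Hence $\subt([A_P])$ is not spinal for any $\subt$, so $[A_P]$ is spineless.

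The main point requiring care is pinning down the exact column set $A_P$ of the linearization, together with the harmless degenerate exclusions $P=\{0\}$ and $P=\varnothing$; the only other nonroutine ingredient is the observation that a spine contains no two distinct nonzero columns of equal support, which is immediate from the defining ``staircase'' shape. Everything else is bookkeeping. Note that the converse argument in fact proves more than stated: when $P$ has two distinct positive elements, \emph{no} substitution — not merely the identity — can expose a spine in the linearization, which is the robustness exploited in the later sections.
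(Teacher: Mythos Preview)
Your proof is correct and follows essentially the same approach as the paper's: identify the linearization explicitly, use the collapsing substitution $x_i\mapsto x$ for the prespinal direction, and for the converse observe that the joinands $x_i^p,x_i^q$ survive any non-trivializing substitution as two distinct nonzero monomials with identical variable support, which cannot both sit in a spine. You are in fact a bit more thorough than the paper in treating the trivial case $n\in P$ and the case $P=\{0,p\}$ explicitly, and in spelling out the ``distinct nonzero columns of a spine have distinct supports'' observation; your parenthetical about variable renaming is harmless but unnecessary, since the definition of prespinal already quantifies over all monoidal substitutions (which include permutations).
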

\begin{proof} 
The simple equation resulting from the linearization over $\CRL$ of $[n,P]$ is
\begin{equation}\label{1varLin}
[\Dset]: \mathbf{x_n}^\Lvec{}\leq \bigvee\left\{\mathbf{x_n}^d:{d\in \N^n}, \sum_{i=1}^n d(i) \in P \right\}.
\end{equation}
and is equivalent over $\CRL$ to $ [n,P]$.\footnote{By setting $x:= x_1\vee\cdots x_n$ in $[n,P]$, we obtain
$\bigvee\{\mathbf{x_n}^a: \sum_{i=1}^na(i)=n \} \leq \bigvee\left\{\mathbf{x_n}^d:{d\in \N^n}, \sum_{i=1}^n d(i) \in P \right\}$.
 Since $\mathbf{x_n}^\Lvec{} \leq  \bigvee\{\mathbf{x_n}^a: \sum_{i=1}^na(i)=n \}$, we obtain $[\Dset]$. Conversely,  by setting $x_i:=x$, for all $i\leq n$, in $[\Dset]$, we obtain $[n,P]$.}

We prove the contraposition for each direction. For the forward direction, if $P=\{p\}$, where $0<p \not = n$, then $[n,P]$ is spinal by definition and hence $[\Dset]$ is prespinal by its obvious substitution to $[n,P]$: $x_i\mapsto x$, for all $i$. 

For the reverse direction, suppose that $[n,P]$ is nontrivial and $P$ contains distinct positive numbers $p>q$. Then for each $i\leq n$, the terms $x_i^p$ and $x_i^{q}$ appear as joinands on the right-hand side of $[\Dset]$. Let $\subt$ be a monoidal substitution that is non-trivializing for $[\Dset]$. Then for some $i\leq n$, $\subt: x_i\mapsto w$ for some monoid term $w\neq 1$. So both $w^p$ and $w^{q}$ appear as joinands on the right-hand side of $\subt[\Dset]$. Since $p>q>0$, $w^p\neq 1\neq w^{q}$ and $\CRL\nmodels w^p= w^{q}$, so $\subt[\Dset]$ is not spinal, as $w^p$ and $w^q$ contain the same variables. Since $\subt$ was arbitrary, it follows that $[n,P]$ is spineless.
\end{proof}

In the following sections we undertake a deep analysis of spineless equations, culminating in Corollary~\ref{UndLink}. To highlight this result, we (re)state it here and we use it right afterwards, in Section~\ref{s:eqth}, to obtain results (Theorem~\ref{main2}) about the equational theory. 
\begin{thm}\label{main}
Let $\Gamma$ be a finite set of spineless simple equations, then any variety between $\CRL +\Gamma$ and $\RL $ has an undecidable word problem.
\end{thm}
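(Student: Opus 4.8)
The plan is to reduce the undecidability to Corollary~\ref{wp} by producing, for each finite set $\Gamma$ of spineless simple equations and each variety $\mathcal{V}$ with $\CRL+\Gamma\sbs\mathcal{V}\sbs\RL$, an ACM $\acm_\Gamma$ with undecidable acceptance problem such that $\mathbf{W}_{\acm_\Gamma}^+\in\mathcal{V}$. Starting from the universal machine $\Uacm$ of Theorem~\ref{lincoln}, I would first invoke the results promised for \S7 and \S8: since each $\varepsilon\in\Gamma$ is spineless, it satisfies condition $(\star\star)$ (this is the coincidence proved in \S8, restated as the content feeding Corollary~\ref{UndLink}), and therefore, by the \S7 construction, there is a constant $K$ (large enough to work simultaneously for all finitely many equations in $\Gamma$) and an exponential-scale machine $\acm_K$ built from $\Uacm$ in which every $\varepsilon\in\Gamma$ is \emph{admissible}, i.e. $\Acc(\acm_K)=\Acc(\Gamma\acm_K)$, where $\Gamma\acm_K$ denotes $\acm_K$ augmented by the ambient instructions $t\leq^{\varepsilon}\bigvee_{\vd}\mathbf t_n^{\vd}$ for $\varepsilon\in\Gamma$. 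Because $\acm_K$ faithfully simulates $\Uacm$ (values stored as $K$-powers), membership in $\Acc(\acm_K)$ is still undecidable.

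The second step is to check that $\mathbf{W}_{\acm_K}^+\models\Gamma$, so that $\mathbf{W}_{\acm_K}^+\in\CRL+\Gamma\sbs\mathcal{V}$. By Theorem~\ref{nucond} it suffices to show $\mathbf{W}_{\acm_K}\models(\Dset)$ for each simple equation $[\Dset]\in\Gamma$ (writing each $\varepsilon$ in simple form $[\Dset]$). Fix $\mathbf u_n\in W_{\acm_K}^n$ and $v\in W_{\acm_K}$ with $\mathbf u_n^{\vd}\Nuc_{\acm_K}v$ for every $\vd\in\Dset$, i.e. $\mathbf u_n^{\vd}v\in\Acc(\acm_K)$ for all $\vd$. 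Applying the ambient instruction for $[\Dset]$ with the substitution $x_i\mapsto u_i$ and multiplying by $v$ gives $\mathbf u_n^{\mathbf1}v\leq^{\varepsilon}\bigvee_{\vd\in\Dset}\mathbf u_n^{\vd}v$; by (the analogue of) Lemma~\ref{comprel}(2) for $\Gamma\acm_K$, the join $\bigvee_{\vd}\mathbf u_n^{\vd}v$ lies in $\Acc(\Gamma\acm_K)$, hence so does $\mathbf u_n^{\mathbf1}v$, and by admissibility $\mathbf u_n^{\mathbf1}v\in\Acc(\acm_K)$, i.e. $\mathbf u_n^{\mathbf1}\Nuc_{\acm_K}v$. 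This is exactly $(\Dset)$, so $\mathbf{W}_{\acm_K}^+\models[\Dset]$, and since $\Gamma$ is finite, $\mathbf{W}_{\acm_K}^+\models\Gamma$.

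Finally, with $\mathbf{W}_{\acm_K}^+\in\mathcal{V}$ and membership in $\Acc(\acm_K)$ undecidable, Corollary~\ref{wp} (equivalently Theorem~\ref{Vmhard}) gives that $\mathcal{V}$ has undecidable word problem; the uniform antecedent $\&\Pcom[\acm_K]$ of the quasiequations $\mathrm{acc}_{\acm_K}(u)$ together with Lemma~\ref{crl} (which applies since $\mathbf{W}_{\acm_K}^+\in\mathcal{V}\sbs\RL$) ensures the reduction is from a genuinely undecidable set against a single finite presentation. The statement of the theorem then follows for every variety in the interval.

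I expect the genuine work — not reproduced here, as it occupies \S7–\S8 — to be the first step: constructing $\acm_K$ so that the division-by-$K$ programs $\mathcal P_i$ correctly simulate decrements, proving that the ambient instructions coming from a spineless equation are admissible at the exponential scale (this is where condition $(\star\star)$ and the positive-linear-algebra arguments of \S8 enter, generalizing the elementary estimate shown for $x\leq x^2\vee x^4$ in \S\ref{motivation}), and verifying that admissibility survives simultaneously for all finitely many equations in $\Gamma$ by choosing $K$ large enough. Once that machinery is in place, the frame-theoretic verification $\mathbf{W}_{\acm_K}^+\models\Gamma$ and the appeal to Corollary~\ref{wp} are routine.
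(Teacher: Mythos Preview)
Your proposal is correct and follows essentially the same route as the paper. The paper's argument runs: spineless $\Rightarrow$ non-mingly (Lemma~\ref{nming}) and spineless $\Rightarrow$ $(\star\star K)$ for all large $K$ (Theorem~\ref{final}); then non-mingly gives state-admissibility (Lemma~\ref{mingly}) and $(\star\star K)$ gives register-admissibility in $\acm_K$ (Lemma~\ref{dredundant}), hence admissibility; then Lemma~\ref{Wadmiss} yields $\mathbf{W}^+_{\acm_K}\in\CRL+[\Dset]$ for each $[\Dset]\in\Gamma$, whence $\mathbf{W}^+_{\acm_K}\in\CRL+\Gamma$; finally Corollary~\ref{mkundec} and Theorem~\ref{Vmhard} conclude (this is packaged as Corollary~\ref{preUndLink} and then Corollary~\ref{UndLink}). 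Your second step---verifying $(\Dset)$ directly in $\mathbf{W}_{\acm_K}$ via Theorem~\ref{nucond} using admissibility---is exactly the content of Lemmas~\ref{Wsatsr} and~\ref{Wadmiss} inlined, and your appeal to a common $K$ for all equations in $\Gamma$ is how the paper handles the finite set as well. The only cosmetic difference is that the paper establishes admissibility equation-by-equation and intersects the resulting varieties, whereas you phrase it as joint admissibility $\Acc(\acm_K)=\Acc(\Gamma\acm_K)$; the latter follows immediately from the former by induction on computation length, so there is no gap.
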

%
%
\subsection{From quasiequations to equations in $\CRL$}\label{s:eqth}
In this section we exploit the fact that in certain varieties certain quasiequations are equivalent to equations to show that even their equational theory is undecidable, making use of Theorem~\ref{main}.

The {\em negative cone} of a residuated lattice $\mathbf{A}$ is the set $A^-=\{a\in A: a\leq 1\}$. We will say that a subvariety $\mathcal{V}$ of $\CRL $ is \textit{negatively $n$-potent} if the negative cone of each algebra in $\mathcal{V}$ is $n$-potent, i.e., $\mathcal{V}\models (x\wedge 1)^n= (x\wedge1)^{n+1}$ (or equivalently, $\mathcal{V}\models (x\wedge 1)^n\leq (x\wedge1)^{n+1}$).
 
Let $t$ be a term and $S$ be a finite set of terms in the language of $\CRL $. It can be easily verified that\footnote{The forward direction is trivial, taking $k=m$,  since $\bigwedge S \leq s$, for all $s\in S$. The reverse direction holds by setting $m=k\cdot|S|$, and observing that $\prod_{s\in S}(s\wedge 1)\leq 1\wedge \bigwedge S$.}
\begin{equation}\label{prodjoin}
\begin{array}{r c l} 
(\exists m\in\N)(\exists s_1,...,s_m\in S) ~\CRL &\models&\prod_{i=1}^m (1\wedge s_i)\leq t \\ 
\mbox{if and only if}\quad (\exists k\in\N)~\CRL &\models& (1\wedge\bigwedge S)^k\leq t.
\end{array}
\end{equation}
If $\mathcal{V}\subseteq\CRL $ is a negatively $n$-potent variety, then we obtain\footnote{The reverse direction follows from (\ref{prodjoin}), while the forward direction uses the fact that ${(1\wedge x)^n}\leq (1\wedge x)^k$, if $k\leq n$, and $(1\wedge x)^n=(1\wedge x)^k$, if $k>n$, by the negative $n$-potency of $\mathcal{V}$.} 
\begin{equation}\label{prodjoin2}
(\exists m\in\N)(\exists s_1,...,s_m\in S) ~\mathcal{V}\models\prod_{i=1}^m (1\wedge s_i)\leq t  \iff\mathcal{V}\models (1\wedge\bigwedge S)^n\leq t.
\end{equation}
We consider the following quasiequation and equation:
$$\xi_S(t):  \amper_{s\in S} 1\leq s \implies 1\leq t \qquad \qquad  \varepsilon_S^n(t): (1\wedge \bigwedge S)^n\leq t. $$

In this way we establish the fact that satisfaction of a quasiequation in a negatively $n$-potent subvariety of $\CRL $ is equivalent to the satisfaction of a corresponding equation.
\begin{lem}\label{lem:nnpotent}
If $\mathcal{V}$ is a negatively $n$-potent subvariety of $\CRL $ and $S\cup \{t\}$ a finite set of terms in the language of $\mathcal{V}$, then 
$$\mathcal{V}\models \xi_S(t) \iff \mathcal{V}\models \varepsilon_S^n(t).$$
\end{lem}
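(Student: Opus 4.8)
The plan is to prove the two implications separately. The direction from $\varepsilon_S^n(t)$ to $\xi_S(t)$ is a direct semantic computation; the reverse is a local-deduction argument that I would route through the equivalence (\ref{prodjoin2}) already established for negatively $n$-potent varieties, which identifies $\mathcal{V}\models\varepsilon_S^n(t)$ with the assertion that $\prod_{i=1}^m(1\wedge s_i)\le t$ holds in $\mathcal{V}$ for \emph{some} $m\in\N$ and some $s_1,\dots,s_m\in S$.

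For $\mathcal{V}\models\varepsilon_S^n(t)\Rightarrow\mathcal{V}\models\xi_S(t)$ I would argue directly. Given $\mathbf{A}\in\mathcal{V}$ and a homomorphism $h$ with $1\le h(s)$ for every $s\in S$, we get $h(1\wedge s)=1$ for each $s\in S$, hence $h((1\wedge\bigwedge S)^n)=(1\wedge\bigwedge_{s\in S}h(s))^n=1$; applying $\varepsilon_S^n(t)$ yields $1\le h(t)$, i.e.\ $\mathbf{A},h\models 1\le t$. This half uses no potency assumption, and the same soundness computation applied to a term $\prod_{i=1}^m(1\wedge s_i)$ with all $s_i\in S$ supplies one half of the deduction step below.

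For $\mathcal{V}\models\xi_S(t)\Rightarrow\mathcal{V}\models\varepsilon_S^n(t)$ the core is a local-deduction step: I claim $\mathcal{V}\models\xi_S(t)$ forces $\mathcal{V}\models(\prod_{s\in S}(1\wedge s))^n\le t$, which is exactly an instance of the existential side of (\ref{prodjoin2}) (list each $s\in S$ precisely $n$ times), so (\ref{prodjoin2}) then delivers $\varepsilon_S^n(t)$. To prove the claim, fix $\mathbf{A}\in\mathcal{V}$ and a homomorphism $h$, and set $a:=\prod_{s\in S}(1\wedge h(s))$, so $a\le 1$. Let $\theta$ be the congruence of $\mathbf{A}$ generated by the single pair $(a,1)$, and form $\mathbf{B}:=\mathbf{A}/\theta\in\mathcal{V}$ with quotient map $\pi$ and $h':=\pi\circ h$. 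Since $a\le 1\wedge h(s)\le 1$ for each $s\in S$ and congruence classes of residuated lattices are convex, $(1\wedge h(s),1)\in\theta$, so $\mathbf{B},h'\models 1\le s$ for all $s\in S$; as $\mathbf{B}\in\mathcal{V}$ and $\mathcal{V}\models\xi_S(t)$, we get $\mathbf{B},h'\models 1\le t$, that is $(1\wedge h(t),1)\in\theta$. Now I invoke the description of the principal congruence generated by $(a,1)$ with $a\le 1$ --- equivalently, of the convex normal subalgebra it generates, see \cite{GJKO} --- namely that for $b\le 1$ one has $(b,1)\in\theta$ iff $a^k\le b$ for some $k\ge 1$; this gives $a^k\le 1\wedge h(t)\le h(t)$ for some $k$. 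Finally negative $n$-potency (whence $a^n=a^{n+j}$ for all $j\ge 0$, while the powers of $a\le 1$ are non-increasing) upgrades this to $a^n\le h(t)$, uniformly in $\mathbf{A}$ and $h$, establishing the claim.

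The step I expect to be the main obstacle is this last one: pinning down the structure of the congruence generated by $(a,1)$ --- the algebraic content of a local deduction theorem for $\CRL$, and the place where commutativity is genuinely used --- together with the observation that negative $n$-potency is exactly the ingredient that converts the a priori model-dependent exponent $k$ into the fixed exponent $n$ appearing in $\varepsilon_S^n(t)$. Everything else --- the soundness computations and the two passes through (\ref{prodjoin2}) --- is routine bookkeeping.
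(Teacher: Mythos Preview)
Your proof is correct and follows essentially the same approach as the paper: both arguments are local deduction theorem arguments that hinge on the description of congruences (equivalently, convex normal submonoids of the negative cone) in commutative residuated lattices from \cite{GJKO}, together with (\ref{prodjoin2}) to absorb the variable exponent into the fixed $n$. The paper packages this as a single chain of equivalences carried out once in the free algebra $\Fv$ with the congruence generated by $\{(1\wedge s,s):s\in S\}$, whereas you argue pointwise in each $\mathbf{A}\in\mathcal{V}$ using the principal congruence generated by $(a,1)$ with $a=\prod_{s\in S}(1\wedge h(s))$ and handle the two implications separately; these are routine presentational variants of the same idea.
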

\begin{proof}
Let $\Fv$ be the free algebra for $\mathcal{V}$, and define the congruence $\const:=\mathrm{Cg}(\{(1\wedge s, s): s\in S \})$. We denote the quotient algebra by $\Fv/\const$. For a subset $X$ of $\mathrm{F}_\mathcal{V}^-$ , we denote by $M(X)$ the convex normal submonoid of $\mathrm{F}_\mathcal{V}^-$ generated by $X$.\footnote{See Theorem 3.47 in \cite{GJKO}.} Observe that $\mathcal{V}\models \amper\limits_{s\in S} 1\leq s \Rightarrow 1\leq t $
$$\begin{array}{ l c l l }
& \iff & \mbox{in $\Fv/\const$, } [1\wedge t]_\const = [1]_\const & \\
& \iff & \mbox{in $\Fv$, } (1\wedge t)\in M(\{ 1\wedge s: s\in S \}) & \mbox{\cite{GJKO}}\\ 
&\iff &\mbox{in $\Fv$, }  (\exists m\in\N)(\exists s_1,...,s_m\in S) ~\prod_{i=1}^m (1\wedge s_i)\leq t &\mbox{\cite{GJKO}}\\
&\iff &(\exists m\in\N)(\exists s_1,...,s_m\in S) ~ \mathcal{V}\models \prod_{i=1}^m (1\wedge s_i)\leq t &\\
&\iff & \mathcal{V}\models (1\wedge\bigwedge S)^n\leq t &\mbox{{Eq. (\ref{prodjoin2})}}.
\end{array} $$
\end{proof}

For an inequality $p: s\leq t$, define the term $p^\to:= s\to t$. Let ${\acm}=(\Reg_k,{\State},{\Inst},q_f)$ be an ACM. Define ${\Inst}^\to:=\{p^\to: p\in {\Inst}\}.$ Then for ${u}\in A_{{\acm}}$, the quasiequation $\mathrm{acc}_{\acm}({u})$ is equivalent to $\xi_{{\Inst}^\to}({u}\to q_f)$. By Lemma~\ref{lem:nnpotent} and Theorem~\ref{Vmhard}, we obtain the following:
\begin{thm} 
Let $\mathcal{V}$ be a subvariety of $\CRL $ containing $\mathbf{W}_{\acm}^+$, for some $k$-ACM ${\acm}$ and satisfying $(x\wedge 1)^n\leq (x\wedge 1)^{n+1}$ for some $n\geq 1$. Then deciding membership in the equational theory of $\mathcal{V}$ is at least as hard as deciding membership in ${\Acc}({\acm})$.
\end{thm}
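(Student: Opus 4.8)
The plan is to combine the two ingredients that were just assembled: Theorem~\ref{Vmhard}, which says that the word problem of a variety $\mathcal{V}$ containing $\mathbf{W}^+_{\acm}$ is at least as hard as membership in $\Acc(\acm)$ via the family of quasiequations $\mathrm{acc}_{\acm}(u)$, and Lemma~\ref{lem:nnpotent}, which converts a quasiequation of the shape $\xi_S(t)$ into an honest equation $\varepsilon^n_S(t)$ whenever $\mathcal{V}$ is negatively $n$-potent. The bridge between the two is the observation, made just before the statement, that for an ACM $\acm$ the quasiequation $\mathrm{acc}_{\acm}(u)$ is (equivalent to) $\xi_{\Inst^\to}(u\to q_f)$: the hypotheses $\amper\Pcom$ become the hypotheses $1\le p^\to$ for $p\in\Inst$ (together with the commutativity equations, which in a \emph{commutative} residuated lattice hold automatically, so they can be dropped), and the conclusion $u\le q_f$ becomes $1\le u\to q_f$. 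Thus $\mathrm{acc}_{\acm}(u)$ is of the form $\xi_S(t)$ with $S=\Inst^\to$ and $t = u\to q_f$, a finite set of terms fixed once $\acm$ is fixed.

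First I would spell out that reduction carefully: in any residuated lattice $s\le t$ iff $1\le s\to t$, so each instruction inequality $p:\cf\le\vid$ is equivalent to $1\le p^\to$, and likewise $u\le q_f$ is equivalent to $1\le u\to q_f$; hence $\mathbf{R},h\models\mathrm{acc}_{\acm}(u)$ iff $\mathbf{R},h\models\xi_{\Inst^\to}(u\to q_f)$, and since $\mathcal{V}\subseteq\CRL$ the commutativity part of $\Pcom$ is vacuous. Next, since $\mathcal{V}$ is assumed negatively $n$-potent for some $n\ge 1$, Lemma~\ref{lem:nnpotent} applies with $S=\Inst^\to$ and $t=u\to q_f$, giving
$$\mathcal{V}\models\mathrm{acc}_{\acm}(u)\iff\mathcal{V}\models\xi_{\Inst^\to}(u\to q_f)\iff\mathcal{V}\models\varepsilon^n_{\Inst^\to}(u\to q_f),$$
where $\varepsilon^n_{\Inst^\to}(u\to q_f)$ is the single equation $\bigl(1\wedge\bigwedge\Inst^\to\bigr)^n\le u\to q_f$. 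Crucially $\Inst$, and therefore the term $\bigl(1\wedge\bigwedge\Inst^\to\bigr)^n$, does not depend on $u$; only $u\to q_f$ varies. Finally I would invoke Theorem~\ref{Vmhard} (equivalently Lemma~\ref{crl}, which uses $\mathbf{W}^+_{\acm}\in\mathcal{V}$): the map $u\mapsto \varepsilon^n_{\Inst^\to}(u\to q_f)$ is a computable reduction from the membership problem ``is $u\in\Acc(\acm)$?'' to the equational theory of $\mathcal{V}$, because $u\in\Acc(\acm)$ iff $\mathcal{V}\models\mathrm{acc}_{\acm}(u)$ iff $\mathcal{V}\models\varepsilon^n_{\Inst^\to}(u\to q_f)$. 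Hence deciding the equational theory of $\mathcal{V}$ is at least as hard as deciding membership in $\Acc(\acm)$.

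There is essentially no deep obstacle here, since the two hard lemmas (the residuated-frame completeness of the encoding behind Theorem~\ref{Vmhard}, and the congruence-theoretic argument of Lemma~\ref{lem:nnpotent}) are already in hand; the work is purely in checking that the pieces fit. The one point that needs genuine care is the interface in Lemma~\ref{lem:nnpotent}: it is stated for quasiequations $\xi_S(t)$ with conclusion of the form $1\le t$ and hypotheses of the form $1\le s$, so I must verify that $\mathrm{acc}_{\acm}(u)$ really does rewrite into exactly that normal form over $\CRL$ — in particular that discarding the commutativity sub-hypotheses of $\Pcom$ is legitimate (it is, as $\mathcal{V}\subseteq\CRL$) and that the single conclusion $u\le q_f$, where $u$ is a join of configurations, is unproblematic (it is, since $u\to q_f$ is just a term). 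A secondary, but routine, point is to note that the reduction is uniform and computable: $\Inst$, $n$, and the term $\bigl(1\wedge\bigwedge\Inst^\to\bigr)^n$ are all fixed data of $(\acm,\mathcal{V})$, and $u\mapsto u\to q_f$ is effective, so an algorithm for the equational theory of $\mathcal{V}$ would yield one for $\Acc(\acm)$.
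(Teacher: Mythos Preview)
Your proposal is correct and follows essentially the same route as the paper: rewrite $\mathrm{acc}_{\acm}(u)$ as $\xi_{\Inst^\to}(u\to q_f)$ over $\CRL$, apply Lemma~\ref{lem:nnpotent} to convert it to the single equation $\varepsilon^n_{\Inst^\to}(u\to q_f)$, and then invoke Lemma~\ref{crl}/Theorem~\ref{Vmhard} to conclude the reduction. The paper's own argument is exactly this (stated tersely in the paragraph preceding the theorem), and your additional care about dropping the commutativity hypotheses and the computability of the reduction only makes the argument more explicit.
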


We say a simple equation $\varepsilon$ is \textit{expansive} if it has, as a substitution instance, an equation of the form
\begin{equation}\label{exp}x^n \leq \bigvee_{j=1}^m x^{n+c_j}, \end{equation}
for some $n,m\geq1$ and $c_1,...,c_n\geq1$. It is easy to verify that if $\varepsilon$ is expansive then $\CRL+\varepsilon$ is negatively $n$-potent. We say a variety  is {\em expansive} if it satisfies an expansive equation. As a consequence of Lemma~\ref{und1var}, if a simple equation is the equivalent linearization of an expansive basic equation where $m\geq 2$ then it is spineless. By the theorem above and Theorem~\ref{Vmhard} we obtain the following:
\begin{cor}\label{undeq}
Let $\mathcal{V}$ be an expansive subvariety of $\CRL $ containing $\mathbf{W}_{\acm}^+$, for some ACM ${\acm}$. Then deciding membership in the equational theory of $\mathcal{V}$ is at least as hard as deciding membership in ${\Acc}({\acm})$.
\end{cor}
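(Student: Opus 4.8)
The plan is to chain together the pieces already assembled in this section with the preceding machine/frame machinery. The statement to prove is Corollary~\ref{undeq}: if $\mathcal{V}\subseteq\CRL$ is expansive and contains $\mathbf{W}_{\acm}^+$ for some ACM $\acm$, then deciding the equational theory of $\mathcal{V}$ is at least as hard as deciding membership in $\Acc(\acm)$. First I would invoke the sentence immediately before the statement: since $\mathcal{V}$ is expansive, it satisfies an expansive equation, i.e.\ an equation having as a substitution instance one of the form $x^n\leq\bigvee_{j=1}^m x^{n+c_j}$; and it was noted there that $\CRL+\varepsilon$ is then negatively $n$-potent. Hence $\mathcal{V}$, being a subvariety, is negatively $n$-potent for this particular $n\geq 1$. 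That is exactly the hypothesis needed to apply the theorem just proved (the unlabeled one stating that for $\mathcal{V}\subseteq\CRL$ containing $\mathbf{W}_{\acm}^+$ and satisfying $(x\wedge 1)^n\leq(x\wedge 1)^{n+1}$, deciding the equational theory of $\mathcal{V}$ is at least as hard as deciding membership in $\Acc(\acm)$).

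So the proof is essentially a two-line reduction: (1) from expansiveness extract the potency degree $n$; (2) quote the previous theorem. I would write it as: ``Since $\mathcal{V}$ is expansive, it satisfies some expansive equation, and as observed above $\CRL$ together with such an equation is negatively $n$-potent for the corresponding $n\geq 1$; hence so is $\mathcal{V}$. Since $\mathcal{V}$ also contains $\mathbf{W}_{\acm}^+$, the preceding theorem applies and gives the conclusion.'' For completeness I would recall, as the previous theorem's proof does implicitly, that the reduction itself runs through Lemma~\ref{lem:nnpotent} (which converts each $\mathrm{acc}_{\acm}(u)$, equivalently $\xi_{\Inst^\to}(u\to q_f)$, into the equation $\varepsilon_{\Inst^\to}^n(u\to q_f)$) together with Theorem~\ref{Vmhard} (membership in $\Acc(\acm)$ reduces to the word problem, i.e.\ to satisfaction of the quasiequations $\mathrm{acc}_{\acm}(u)$ in $\mathcal{V}$). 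Composing these, membership of $u$ in $\Acc(\acm)$ is equivalent to $\mathcal{V}\models\varepsilon_{\Inst^\to}^n(u\to q_f)$, an equation, so an algorithm for the equational theory of $\mathcal{V}$ would decide $\Acc(\acm)$.

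There is really no obstacle here, as this corollary is designed to be an immediate consequence of the theorem it follows; the only point requiring a word of care is making sure the value of $n$ coming from expansiveness is the same $n$ fed into the potency hypothesis and into $\varepsilon_S^n$ — but since ``negatively $n$-potent'' for one $n$ implies negatively $m$-potent for all $m\geq n$, and the previous theorem is stated for an arbitrary $n\geq 1$, there is no mismatch. One could also remark that $n$ can be extracted effectively from a given axiomatization of $\mathcal{V}$, but this is not needed for the statement, which is only about the relative difficulty of the decision problems.

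\begin{proof}
Since $\mathcal{V}$ is expansive, it satisfies an expansive equation, and as observed above, $\CRL$ together with an expansive equation of the form \eqref{exp} is negatively $n$-potent for the corresponding $n\geq 1$; being a subvariety, $\mathcal{V}$ is then negatively $n$-potent as well, i.e.\ $\mathcal{V}\models(x\wedge 1)^n\leq(x\wedge 1)^{n+1}$. As $\mathcal{V}$ also contains $\mathbf{W}_{\acm}^+$, the theorem above applies and yields that deciding membership in the equational theory of $\mathcal{V}$ is at least as hard as deciding membership in $\Acc(\acm)$.
\end{proof}
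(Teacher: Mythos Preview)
Your proof is correct and follows exactly the paper's own route: the corollary is obtained immediately from the preceding theorem by noting that an expansive variety is negatively $n$-potent for the $n$ appearing in the expansive instance \eqref{exp}. Your additional remarks unpacking the reduction through Lemma~\ref{lem:nnpotent} and Theorem~\ref{Vmhard} just make explicit what the paper leaves implicit in the one-line justification.
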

 
In particular, we prove the following theorem as a consequence of Theorem~\ref{main} and the corollary above.
\begin{thm} \label{main2}
If $\Gamma$ is a finite set spineless simple equations containing an expansive equation then variety ${\CRL +\Gamma}$ has an undecidable equational theory.
\end{thm}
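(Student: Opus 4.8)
The plan is to combine the undecidability of the word problem for spineless varieties (Theorem~\ref{main}) with the reduction of quasiequations to equations available in expansive varieties (Corollary~\ref{undeq}), and the only real work is to check that the witnessing machine sits inside $\CRL+\Gamma$ in the relevant expansive variety. First I would fix an expansive simple equation $\varepsilon_0 \in \Gamma$, so that by the observation just before Corollary~\ref{undeq} the variety $\mathcal{V} := \CRL+\Gamma$ is negatively $n$-potent for the appropriate $n\geq 1$; thus $\mathcal{V}$ is an expansive subvariety of $\CRL$.

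Next, since $\Gamma$ is a finite set of spineless simple equations, the machinery culminating in Corollary~\ref{UndLink} (restated as Theorem~\ref{main}) produces, for the universal machine $\Uacm$ of Theorem~\ref{lincoln}, an appropriate exponential-scale machine whose associated residuated frame algebra $\mathbf{W}^+$ (commutative, and satisfying every equation of $\Gamma$) has undecidable membership-in-$\Acc$, witnessing that every variety between $\CRL+\Gamma$ and $\RL$ has undecidable word problem. The key point to extract from that development is that there is a machine $\acm$ with undecidable acceptance problem such that $\mathbf{W}^+_{\acm}\in \CRL+\Gamma = \mathcal{V}$; this is exactly the input Corollary~\ref{undeq} requires. (Strictly, Theorem~\ref{main} is phrased in terms of the word problem of the varieties rather than in terms of membership of a specific $\mathbf{W}^+_{\acm}$; I would invoke the internal construction behind it — the machine $\acm_K$ of \S7 together with the fact that its frame algebra lies in $\CRL+\Gamma$ — to get this algebra explicitly.)

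With $\mathcal{V}$ expansive and containing $\mathbf{W}^+_{\acm}$ for this $\acm$, Corollary~\ref{undeq} gives that deciding membership in the equational theory of $\mathcal{V}$ is at least as hard as deciding membership in $\Acc(\acm)$, which is undecidable. Hence the equational theory of $\CRL+\Gamma$ is undecidable, which is the claim. The one subtlety worth spelling out: Corollary~\ref{undeq} needs $\mathcal{V}$ itself to be expansive, not merely some member of it, and it needs negative $n$-potency with the same $n$ used to form $\varepsilon^n_S(t)$; both are delivered by the chosen expansive $\varepsilon_0\in\Gamma$ via the remark that $\CRL+\varepsilon_0$ is negatively $n$-potent. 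The translation $\mathrm{acc}_{\acm}(u)\equiv \xi_{\Inst^\to}(u\to q_f)$ together with Lemma~\ref{lem:nnpotent} then converts the quasiequational hardness into equational hardness.

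The main obstacle, and the step I would be most careful about, is verifying that the frame algebra $\mathbf{W}^+_{\acm}$ built for the undecidable machine actually belongs to $\CRL+\Gamma$ — i.e., that each spineless equation in $\Gamma$ holds in it. This is precisely the content of the long \S7--\S8 analysis: one must show (via Theorem~\ref{nucond}, condition $(\Dset)$, and the admissibility of each $\varepsilon\in\Gamma$ in the exponential machine $\acm_K$, which in turn rests on condition $(\star\star)$ and its equivalence with spinelessness) that the ambient instructions coming from $\Gamma$ add no new accepted IDs. Once that is in hand, the passage from word-problem undecidability to equational-theory undecidability is the comparatively routine bookkeeping sketched above, using expansiveness to collapse the fixed antecedent $\&\,\Pcom$ into a single meet-power term.
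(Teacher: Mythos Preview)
Your proposal is correct and follows essentially the same route as the paper, which derives Theorem~\ref{main2} directly from Theorem~\ref{main} together with Corollary~\ref{undeq}. You are right to flag that the statement of Theorem~\ref{main} alone does not literally hand you an algebra $\mathbf{W}^+_{\acm}\in\CRL+\Gamma$ with $\Acc(\acm)$ undecidable, and that one must reach into its proof (the construction of $\tilde\acm_K$ and Lemma~\ref{Wadmiss}) to obtain this; the paper glosses over this point, but your unpacking is exactly what is needed to invoke Corollary~\ref{undeq}.
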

%
%
\section{Admissibility}
We now begin investigating the required features that a machine should have, in  order to achieve the exponential encoding. We begin by formalizing the notion of admissibility and its two natural parts. 

Let ${\acm}=(\Reg_k,{\State},{\Inst}, q_f)$ be a $k$-ACM and $[\Dset]$ a $n$-variable simple equation. We define the relation $\leq^\Dset$ to be the smallest relation containing
$$\mathbf{t_n}^\mathbf{1} \leq \bigvee_{\vd\in\Dset}\mathbf{t_n}^\vd, $$
for all $\mathbf{t_n}\in (({\State}\cup \Reg_k)^*)^n$, and closed under the inference rules $[\cdot]$ and $[\vee]$. We define the computation relation $\leq_{\Dset {\acm}}$ as the smallest compatible preorder generated by ${\Inst}\cup{\leq}^\Dset$, and set ${\Acc}(\Dset {\acm}):=\{u\in A_{\acm}: \exists {\uid}_f\in {\Fid}({\acm}),~u\leq_{\Dset {\acm}} {\uid}_f\}.$ 

The construction of $\leq_{\Dset\acm}$ enjoys an analogue to Lemma~\ref{basicrel}(2), and therefore the following analogue to Lemma~\ref{comprel}(2):
\begin{lem}\label{Dcomprel}
Let ${\acm}$ be an ACM and $[\Dset]$ be a simple equation. For all ${u},{v}\in A_\acm$, ${u}\vee {v}\in {\Acc}({\Dset\acm})$ if and only if ${u}\in{\Acc}({\Dset\acm})$ and ${v}\in{\Acc}({\Dset\acm})$.
\end{lem}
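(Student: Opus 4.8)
The plan is to mirror the proof of Lemma~\ref{comprel}(2), which itself was deduced from Lemma~\ref{basicrel}(2). Concretely, I would first establish the stated ``analogue to Lemma~\ref{basicrel}(2)'' for the relation $\leq_{\Dset\acm}$: namely that $t\vee t'\leq_{\Dset\acm}s$ if and only if there exist $s',s''\in A_\acm$ with $s=s'\vee s''$, $t\leq_{\Dset\acm}s'$ and $t'\leq_{\Dset\acm}s''$ (with an accompanying bound on computation lengths, if one wants it). The reverse direction is immediate from $[\vee]$-compatibility of $\leq_{\Dset\acm}$ together with transitivity. For the forward direction I would argue by induction on the length of a computation $t\vee t'=_{\mathbf{A}_\acm}t_0\leq^{r_1}t_1\leq^{r_2}\cdots\leq^{r_\ell}t_\ell=_{\mathbf{A}_\acm}s$, where each $r_i$ is either an instruction in $\Inst$ or an instance of $\leq^\Dset$. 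The key point is that every single step $v\leq^{r}w$, whether $r\in\Inst$ or $r$ is an instance of the ambient rule $\mathbf{t_n}^\mathbf{1}\leq\bigvee_{\vd\in\Dset}\mathbf{t_n}^\vd$, has the shape $v=_{\mathbf{A}_\acm}ax\vee c$, $w=_{\mathbf{A}_\acm}bx\vee c$ for suitable $a,b,x,c\in A_\acm$ (this is exactly how $\leq^p$ was analyzed in \S3, and it is valid because both $\Inst$-instructions and $\Dset$-instances are closed under $[\cdot]$ and $[\vee]$, and because $\cdot$ distributes over $\vee$ in $\mathbf{A}_\acm$). Hence a one-step rewrite of a join $t\vee t'$ either rewrites entirely within the $t$-part or entirely within the $t'$-part after re-bracketing; splitting accordingly and applying the induction hypothesis gives the decomposition.

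Granting this decomposition lemma, the proof of Lemma~\ref{Dcomprel} proceeds exactly as the proof of Lemma~\ref{comprel}(2). For the forward direction, suppose $u\vee v\in\Acc(\Dset\acm)$, so $u\vee v\leq_{\Dset\acm}\uid_f$ for some $\uid_f\in\Fid(\acm)$. By the decomposition lemma there are $s',s''$ with $\uid_f=s'\vee s''$, $u\leq_{\Dset\acm}s'$ and $v\leq_{\Dset\acm}s''$. Since $\Fid(\acm)$ consists precisely of finite nonempty joins of the single configuration $q_f$, any decomposition $\uid_f=s'\vee s''$ inside $\mathbf{A}_\acm$ forces both $s'$ and $s''$ themselves to be finite nonempty joins of $q_f$'s (here I should note the set $A_\acm$ is a genuine semiring, so a representation of $\bigvee_{i=1}^n q_f$ as $s'\vee s''$ really does split the multiset of joinands), hence $s',s''\in\Fid(\acm)$, and therefore $u,v\in\Acc(\Dset\acm)$. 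For the reverse direction, if $u\leq_{\Dset\acm}\uid_f'$ and $v\leq_{\Dset\acm}\uid_f''$ with $\uid_f',\uid_f''\in\Fid(\acm)$, then by $[\vee]$-compatibility and transitivity $u\vee v\leq_{\Dset\acm}\uid_f'\vee\uid_f''$, and $\uid_f'\vee\uid_f''\in\Fid(\acm)$ since a join of two finite nonempty joins of $q_f$ is again such; thus $u\vee v\in\Acc(\Dset\acm)$.

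The main obstacle, and the only place requiring genuine care, is the decomposition step for a single application of the ambient rule $\leq^\Dset$: one must confirm that adjoining the new rule $\leq^\Dset$ does not break the structural analysis that made Lemma~\ref{basicrel}(2) work for pure $\Inst$-computations. This is fine precisely because $\leq^\Dset$ was \emph{defined} to be closed under $[\cdot]$ and $[\vee]$ just like the $\leq^p$, so a step $v\leq^\Dset w$ still has the canonical form $v=_{\mathbf{A}_\acm}\mathbf{t_n}^\mathbf{1}x\vee c$, $w=_{\mathbf{A}_\acm}(\bigvee_{\vd\in\Dset}\mathbf{t_n}^\vd)x\vee c$; the ``active'' redex $\mathbf{t_n}^\mathbf{1}x$ lies in exactly one of the two join-summands after re-bracketing $t\vee t'$, and the rewrite stays inside that summand. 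One minor subtlety worth flagging is that, unlike $\Inst$-steps, a $\leq^\Dset$-step can change whether a term lies in $\indes\acm$ (it need not preserve being an ID); but this is irrelevant for Lemma~\ref{Dcomprel}, which only concerns membership in $\Acc(\Dset\acm)$, so no analogue of the ID-preservation clause of Lemma~\ref{basicrel}(1) is needed or claimed here.
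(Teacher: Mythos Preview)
Your proposal is correct and follows essentially the same approach as the paper, which simply notes that $\leq_{\Dset\acm}$ enjoys an analogue of Lemma~\ref{basicrel}(2) and hence Lemma~\ref{Dcomprel} follows as in Lemma~\ref{comprel}(2). Your write-up in fact supplies more detail than the paper does, and your observation that the $\leq^\Dset$-steps have the same $[\cdot]$/$[\vee]$-closure shape as the $\leq^p$-steps is exactly the point needed.
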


The frame  $\mathbf{W}_{\Dset {\acm}}$ is defined as $\mathbf{W}_{ {\acm}}$,  but the nuclear relation is defined with respect to ${\Acc}({\Dset\acm})$ instead of ${\Acc}({\acm})$.
\begin{lem}\label{Wsatsr}
If ${\acm}$ is an ACM and $[\Dset]$ a simple equation, then $\mathbf{W}_{\Dset {\acm}}^+\in \CRL +[\Dset]$.
\end{lem}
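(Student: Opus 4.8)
The plan is to mirror the proof strategy used for the earlier frame results (Lemma on $\mathbf{W}_\acm^+ \in \CRL$ together with Lemma~\ref{valuation} and Theorem~\ref{nucond}), now applied to $\mathbf{W}_{\Dset\acm}$ and the simple equation $[\Dset]$. First I would note that $\mathbf{W}_{\Dset\acm}$ is a commutative residuated frame by exactly the argument of the earlier lemma (the definition of $\sslash$, namely $z\sslash y = yz$, and the verification $xy\Nuc z \iff x\Nuc yz$ go through verbatim, since only the monoid structure $(W_\acm,\cdot,1)$ and the equivalence ``$w\in\Acc(\Dset\acm)$ is preserved under the needed manipulations'' are used, and $\Acc(\Dset\acm)$ is still just a subset of $A_\acm$). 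Hence $\mathbf{W}_{\Dset\acm}^+\in\CRL$ by the cited result of \cite{GJ}. It remains only to check that $\mathbf{W}_{\Dset\acm}^+$ additionally satisfies the equation $[\Dset]$.

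By Theorem~\ref{nucond}, it suffices to show that the frame $\mathbf{W}_{\Dset\acm}$ satisfies the rule $(\Dset)$: for all $\mathbf{u_n}\in W_{\Dset\acm}^n$ and all $v\in W_{\Dset\acm}$ (here $W' = W_{\Dset\acm}$), if $\mathbf{u_n}^{\vd_j}\Nuc_{\Dset\acm} v$ for every $\vd_j\in\Dset$, then $\mathbf{u_n}^{\mathbf{1}}\Nuc_{\Dset\acm} v$. Unwinding the definition of $\Nuc_{\Dset\acm}$, the hypothesis says $\mathbf{u_n}^{\vd_j} v\in\Acc(\Dset\acm)$ for each $j$, and the conclusion we want is $\mathbf{u_n}^{\mathbf{1}} v\in\Acc(\Dset\acm)$. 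The key step is the observation that the ambient instruction $\leq^{\Dset}$, applied with the substitution $\subt: x_i\mapsto u_i$ and multiplied on the right by $v$, gives
$$
\mathbf{u_n}^{\mathbf{1}} v \ \leq^{\Dset}\ \Bigl(\bigvee_{\vd\in\Dset}\mathbf{u_n}^{\vd}\Bigr) v \ =\ \bigvee_{\vd\in\Dset}\mathbf{u_n}^{\vd} v,
$$
where the equality holds in $A_\acm$ by distributivity of $\cdot$ over $\vee$. Since each joinand $\mathbf{u_n}^{\vd} v$ lies in $\Acc(\Dset\acm)$, Lemma~\ref{Dcomprel} (the analogue of Lemma~\ref{comprel}(2) for $\leq_{\Dset\acm}$) shows that their join $\bigvee_{\vd\in\Dset}\mathbf{u_n}^{\vd} v$ lies in $\Acc(\Dset\acm)$ as well. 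As $\mathbf{u_n}^{\mathbf{1}} v \leq_{\Dset\acm} \bigvee_{\vd\in\Dset}\mathbf{u_n}^{\vd} v$ and $\leq_{\Dset\acm}$ is transitive (it is a preorder), we conclude $\mathbf{u_n}^{\mathbf{1}} v\in\Acc(\Dset\acm)$, i.e. $\mathbf{u_n}^{\mathbf{1}}\Nuc_{\Dset\acm} v$. This establishes $\mathbf{W}_{\Dset\acm}\models(\Dset)$, and hence $\mathbf{W}_{\Dset\acm}^+\models[\Dset]$ by Theorem~\ref{nucond}.

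I do not anticipate a serious obstacle here; this lemma is essentially bookkeeping built on the infrastructure already set up. The one point requiring a little care is making sure the ambient instruction $\leq^{\Dset}$ is stated for arbitrary tuples of terms $\mathbf{t_n}\in((\State\cup\Reg_k)^*)^n$ (which it is, by the definition given at the start of \S6), so that the substitution $x_i\mapsto u_i$ with $u_i\in W_{\Dset\acm}=(\State\cup\Reg_k)^*$ is legitimate, and that multiplying on the right by $v$ is permitted by the inference rule $[\cdot]$; both are immediate. A second minor point is that $\mathbf{W}_{\Dset\acm}$ uses the same set on both coordinates ($W'=W$), so the instantiation of Theorem~\ref{nucond} is with $W'=W_{\Dset\acm}$ throughout — no mismatch arises. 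Putting the two halves together ($\mathbf{W}_{\Dset\acm}^+\in\CRL$ and $\mathbf{W}_{\Dset\acm}^+\models[\Dset]$) yields $\mathbf{W}_{\Dset\acm}^+\in\CRL+[\Dset]$, as claimed.
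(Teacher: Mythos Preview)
Your proposal is correct and follows essentially the same approach as the paper's own proof: reduce to showing $\mathbf{W}_{\Dset\acm}\models(\Dset)$ via Theorem~\ref{nucond}, unwind the nuclear relation to acceptance in $\Acc(\Dset\acm)$, use the ambient instruction $\leq^\Dset$ (closed under $[\cdot]$) to get $\mathbf{u_n}^{\mathbf{1}}v\leq_{\Dset\acm}\bigvee_{\vd\in\Dset}\mathbf{u_n}^{\vd}v$, and invoke Lemma~\ref{Dcomprel} plus transitivity. You are slightly more explicit than the paper in spelling out why $\mathbf{W}_{\Dset\acm}$ is a commutative residuated frame in the first place, but the argument is otherwise identical.
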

\begin{proof} Let $[\Dset]$ be an $n$-variable simple equation where $\Dset=\{\vd_1, \ldots, \vd_m\}$. 
It is enough to show that $\mathbf{W}_{\Dset {\acm}}^+\models [\Dset]$.
By Theorem~\ref{nucond}, this is equivalent to showing $\mathbf{W}_{\Dset \acm}\models (\Dset)$, i.e., for all $s \in W$, $\mathbf{t_n}\in W^n$,
$$\infer[{(\Dset).}] {\mathbf{t_n}^{\mathbf{1}} \NucS{\Dset {\acm}} s }{\mathbf{t_n}^{\vd_1} \NucS{\Dset {\acm}} s& \cdots &\mathbf{t_n}^{\vd_m} \NucS{\Dset {\acm}} s} $$

If the antecedent of the implication holds, by the definition of $\mathit{N}_{\Dset {\acm}}$ and $\leq_{\Dset {\acm}}$ and by Lemma~\ref{Dcomprel} we obtain
$$(\forall \vd\in \Dset)~\mathbf{t_n}^{\vd} \NucS{\Dset {\acm}} s 
	\iff
	(\forall \vd\in \Dset)~ s\mathbf{t_n}^\vd\in{\Acc}{(\Dset {\acm})} 
	 \iff \bigvee_{\vd\in \Dset}s\mathbf{t_n}^\vd\in{\Acc}{(\Dset {\acm})}
.$$

Now, by the definition of $\leq_{\Dset {\acm}}$, 
$$s\mathbf{t_n} \leq_{\Dset {\acm}}s\bigvee_{\vd\in\Dset}\mathbf{t_n}^\vd=\bigvee_{\vd\in\Dset} s\mathbf{t_n}^\vd ,$$
hence $s\mathbf{t_n} \in {\Acc}{(\Dset {\acm})} $ and $\mathbf{t_n} \NucS{\Dset {\acm}} s$. Therefore $\mathbf{W}_{\Dset {\acm}}^+ \models [\Dset].$
\end{proof}

Since ${\leq}_{\acm} \subseteq {\leq}_{\Dset {\acm}}$, it follows that ${\Acc}({\acm})\subseteq {\Acc}(\Dset{\acm})$. We say a simple equation $[\Dset]$ is \textit{admissible} in a machine ${\acm}$ if ${\Acc}({\acm}) = {\Acc}(\Dset {\acm})$. As the only difference between $\mathbf{W}_{ {\acm}}$ and $\mathbf{W}_{\Dset {\acm}}$ is ${\Acc}({\acm})$ and ${\Acc}(\Dset {\acm})$, if $[\Dset]$ is admissible in $\acm$ then $\mathbf{W}^+_{\acm} = \mathbf{W}^+_{\Dset {\acm}}$. Therefore by Lemma~\ref{Wsatsr} we obtain the following lemma:
\begin{lem}\label{Wadmiss}
If a simple equation $[\Dset]$ is admissible in ${\acm}$, then $\mathbf{W}^+_{\acm} \in \CRL+[\Dset].$
\end{lem}

As we will see, admissibility in ${\acm}$ depends on the machine ${\acm}$ as well as the equation $[\Dset]$. We define the intermediate notions \textit{register-admissibility} and \textit{state-admissibility} to make this distinction clear. 

For a given ACM ${\acm}=(\Reg_k,{\State},{\Inst},q_f)$ and $n$-variable simple equation $[\Dset]$, we define $\leq^{\regeq{\Dset}}$ to be the smallest relation containing, 
$$\mathbf{x_n}^\mathbf{\Lvec{}} \leq \bigvee_{\vd\in\Dset}\mathbf{x_n}^\vd, $$
for all $\mathbf{x_n}\in (\Reg_k^*)^n$, and closed under the inference rules $[\cdot]$ and $[\vee]$. Define the new computation relation $\leq_{{\regeq{\Dset}} {\acm}}$ to be the smallest compatible preorder generated by ${\Inst}\cup{\leq}^{{\regeq{\Dset}}}$, and set ${\Acc}({\regeq{\Dset}} {\acm}):=\{u\in A_{\acm}: \exists {\uid}_f\in {\Fid}({\acm}),~u\leq_{{\regeq{\Dset}} {\acm}} {\uid}_f\}$. Since the effect of $[\Dset]$ is restricted to only register terms in $\leq^{\Dset\Reg}$, by the same argument as Lemma~\ref{comprel}(1), it follows that 
${\Acc}(\regeq{\Dset} {\acm})\subseteq \indes {\acm}$.

It is clear then that ${\Acc}({\regeq{\Dset}} {\acm})\subseteq {\Acc}(\Dset {\acm})$, since ${\leq}^{\regeq{\Dset}}$ is merely a restriction of ${\leq}^{\Dset}$. In total, we obtain
$${\Acc}({\acm})\subseteq{\Acc}({\regeq{\Dset}} {\acm})\subseteq {\Acc}(\Dset {\acm}).$$
If ${\Acc}({\acm})={\Acc}({\regeq{\Dset}} {\acm})$, then we say $[\Dset]$ is \textit{register-admissible in ${\acm}$}. If ${\Acc}({\regeq{\Dset}} {\acm})= {\Acc}(\Dset {\acm})$, we say $[\Dset]$ is \textit{state-admissible in ${\acm}$}. Hence $[\Dset]$ is admissible in  ${\acm}$ iff $[\Dset]$ is both register and state-admissible in ${\acm}$.~
Due to the property that instructions in an ACM replace a single state-variable of a configuration by precisely one state-variable, we show  in Lemma~\ref{mingly} that state-admissibility is a property of the equation $[\Dset]$ independent of the machine ${\acm}$.
%
%
\subsection{State-admissibility for spineless equations}
We say that a simple equation $[\Dset]$ is \textit{mingly} if there exists a one-variable substitution $\subt$ such that $\subt [\Dset] : x^\lambda \leq \bigvee_{\vd \in \Dset} x$ for some $\lambda>1$. That is, if $[\Dset]$ is an equation in $n$-variables, $\subt (\m{x_n}^{\m 1})=x^\lambda$ and $\subt (\m{x_n}^{\vd})=x$, for all $\vd \in \Dset$.

Of course, this is equivalent over $\RL$ to $x^\lambda \leq x$, but since we do not assume idempotency of $\jn$ in $\m A_\acm$, we write $ x^\lambda \leq \bigvee_{\vd \in \Dset} x$ so as to be explicit about the implementation of the equation in computations. 

By definition, mingly equations are prespinal. 
The equation (vi) from Table~\ref{Rtable} is mingly, using the substitution witnessing it is prespinal, while the remaining equations can easily be verified to be not mingly.\footnote{Note that the remaining equations are such that each variable appears with degree at least $2$ on the right-hand side, so any $1$-variable non-trivializing substitution instance will result in a joinand of degree at least $2$. }  
Since mingly equations are prespinal by definition, we obtain the following result.
\begin{lem}\label{nming}
A spineless equation is non-mingly.
\end{lem}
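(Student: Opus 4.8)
The plan is to prove the contrapositive, exploiting the fact recorded just above the statement that mingly equations are, by construction, prespinal. So I would start by assuming that the simple equation $[\Dset]$ is mingly and aim to exhibit it as prespinal. By the definition of mingly there is a one-variable substitution $\subt$ for which $\subt[\Dset]$ reads $x^\lambda \leq \bigvee_{\vd\in\Dset} x$ with $\lambda > 1$. The first step is to observe that, since every joinand on the right-hand side is the single term $x$, collapsing the duplicated joinands (exactly as one does when passing to commutative versions of simple equations) turns this image into the equation $x^\lambda \leq x$.

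The second step is to check that $x^\lambda \leq x$ is a spinal equation in the sense of Definition~\ref{def: spinal}: writing it as $[f,\spine]$ with $k=1$, $f=(\lambda)$ and $\spine=\{(1)\}$, the diagonal entry $\vs_1(1)=1$ is nonzero, the triangularity condition $\vs_i(j)=0$ for $j>i$ is vacuous, and $f=(\lambda)\notin\spine$ precisely because $\lambda>1$. (Equivalently, one simply invokes the observation already made in the text that the knotted inequality $[\lambda,\{1\}]$ is a one-variable spinal equation.) Hence $[\Dset]$ has a spinal equation as its image under a monoidal substitution, i.e. $[\Dset]$ is prespinal, contradicting spinelessness. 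Concluding by contraposition then gives the lemma: a non-prespinal (spineless) equation cannot be mingly.

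I do not expect any genuine obstacle here; the only point that deserves a word of care is that the phrase ``image under a monoidal substitution'' in Definition~\ref{def: spinal} must be read so as to permit removing the repeated joinand $x$ on the right-hand side, which is consistent with how substitution instances of simple equations are treated elsewhere in the paper (duplicate joinands on the right are always identified). Once that is granted, the argument is a one-line consequence of the definitions.
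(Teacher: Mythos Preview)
Your proposal is correct and matches the paper's argument exactly: the paper states immediately before the lemma that ``mingly equations are prespinal by definition'' and derives the lemma as the contrapositive, without further elaboration. Your write-up simply spells out the verification that $x^\lambda \leq x$ is spinal (which the paper leaves implicit), and your caveat about collapsing duplicate joinands is a non-issue since basic equations $[f,\spine]$ are defined with $\spine$ a \emph{set} of column vectors, so the image $\subt[\Dset]$ is automatically $[(\lambda),\{(1)\}]$.
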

As we will be dealing only with spineless equations, the equations we will consider are not mingly. The following lemma shows that only mingly equations invalidate state-admissibility and also that state-admissibility is independent of the choice of the machine.
\begin{lem}\label{mingly}
The following are equivalent for any ${\acm}$ and simple equation $[\Dset]$.
\begin{enumerate}
	\item $[\Dset]$ is state-admissible in ${\acm}$.
	\item ${\Acc}{(\Dset {\acm})}\subseteq \indes {\acm}$.
	\item $[\Dset]$ is not mingly.
\end{enumerate}
\end{lem}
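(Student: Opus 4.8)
The plan is to prove the cycle of implications $(1)\Rightarrow(2)\Rightarrow(3)\Rightarrow(1)$, treating $(3)\Rightarrow(1)$ as the substantive step.

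\textbf{The easy implications.} For $(1)\Rightarrow(2)$: if $[\Dset]$ is state-admissible in $\acm$, then ${\Acc}(\Dset\acm)={\Acc}(\regeq{\Dset}\acm)\subseteq\indes\acm$, the last inclusion being exactly the property noted right after the definition of $\leq_{\regeq\Dset\acm}$ (proved as in Lemma~\ref{comprel}(1), since register-only instances of $[\Dset]$ never delete a state variable). For $(2)\Rightarrow(3)$ I argue by contraposition: suppose $[\Dset]$ is mingly, witnessed by a one-variable substitution $\subt$ with $\subt(\mathbf{x_n}^\mathbf{1})=x^\lambda$ and $\subt(\mathbf{x_n}^{\vd})=x$ for all $\vd\in\Dset$, where $\lambda>1$. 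Substituting $x:=q_f$ into this instance of the ambient rule $\leq^\Dset$ gives $q_f^\lambda\leq^\Dset \bigvee_{\vd\in\Dset}q_f = q_f$ (a nonempty join of copies of $q_f$), so $q_f^\lambda\leq_{\Dset\acm} q_f$; since $q_f$ is a final ID this yields $q_f^\lambda\in{\Acc}(\Dset\acm)$. But $q_f^\lambda\notin\indes\acm$ because it contains $\lambda>1$ state letters, contradicting $(2)$. Hence $(2)$ fails whenever $[\Dset]$ is mingly, which is $(2)\Rightarrow(3)$.

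\textbf{The main implication $(3)\Rightarrow(1)$.} Here I must show: if $[\Dset]$ is not mingly, then ${\Acc}(\regeq{\Dset}\acm)={\Acc}(\Dset\acm)$, i.e. every ambient instance of $[\Dset]$ used in an accepting computation can in fact be taken to act only on register terms. Since ${\Acc}(\regeq{\Dset}\acm)\subseteq{\Acc}(\Dset\acm)$ always, it suffices to prove ${\Acc}(\Dset\acm)\subseteq{\Acc}(\regeq{\Dset}\acm)$. I would proceed by induction on the computation length of $u\leq_{\Dset\acm}\uid_f$, with $\uid_f\in\Fid(\acm)$. Using the $\leq^p$/$\leq^\Dset$ decomposition and Lemma~\ref{Dcomprel} (the analogue of Lemma~\ref{comprel}(2)), the base case and the steps using genuine instructions $p\in\Inst$ are handled exactly as in Lemma~\ref{crl1}/Lemma~\ref{comprel}. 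The crucial case is a first step that is an ambient step: $u = s\,\mathbf{t_n}^\mathbf{1}\vee w \leq^\Dset s\bigvee_{\vd\in\Dset}\mathbf{t_n}^\vd\vee w =: u'$ with $u'\in{\Acc}(\Dset\acm)$ of smaller length, where $\mathbf{t_n}=(t_1,\dots,t_n)$ is a tuple of monoid terms over $\State\cup\Reg_k$ and $s,w\in A_\acm$. By Lemma~\ref{Dcomprel}, each joinand $s\,\mathbf{t_n}^\vd$ lies in ${\Acc}(\Dset\acm)\subseteq\indes\acm$ — wait, that is precisely what we are trying to establish; so instead I use that the \emph{inductive hypothesis} gives $s\,\mathbf{t_n}^\vd\in{\Acc}(\regeq{\Dset}\acm)\subseteq\indes\acm$ for every $\vd\in\Dset$, and likewise $w\in{\Acc}(\regeq{\Dset}\acm)$ or is absent. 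So each $s\,\mathbf{t_n}^\vd$ is an ID, hence contains exactly one state letter. Counting state-letter occurrences: if $\sigma(\cdot)$ denotes the number of state letters in a monoid term, then $\sigma(s)+\sum_{i=1}^n\vd(i)\,\sigma(t_i)=1$ for each $\vd\in\Dset$.

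\textbf{Extracting the non-mingly dichotomy.} From $\sigma(s)+\sum_i \vd(i)\sigma(t_i)=1$ holding simultaneously for all $\vd\in\Dset$, I distinguish two cases. \emph{Case A: $\sigma(s)=1$.} Then $\sum_i \vd(i)\sigma(t_i)=0$ for every $\vd$, and since the variable sets on the two sides of $[\Dset]$ coincide, every index $i\le n$ has $\vd(i)>0$ for some $\vd\in\Dset$, forcing $\sigma(t_i)=0$ for all $i$; so every $t_i$ is a pure register term and the ambient step is already an instance of $\leq^{\regeq\Dset}$ — the inductive hypothesis then closes the case directly, since $u\leq^{\regeq\Dset} u'$ and $u'\in{\Acc}(\regeq{\Dset}\acm)$. \emph{Case B: $\sigma(s)=0$.} Then for each $\vd\in\Dset$ there is exactly one index $i$ with $\vd(i)\sigma(t_i)=1$, i.e. $\vd(i)=\sigma(t_i)=1$, and all other indices $j$ have $\vd(j)\sigma(t_j)=0$. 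This says $\mathbf{t_n}^\vd$ contains exactly one state letter for every $\vd$; reading this back through the substitution $x_i\mapsto t_i$ shows that, modulo the pure-register factors, $[\Dset]$ collapses to a one-variable equation in which the left side $\mathbf{x_n}^\mathbf{1}$ maps to a term with $\sum_i\sigma(t_i)\ge 1$ state letters while every right-side joinand maps to a term with exactly one state letter. I would argue that because $u=s\,\mathbf{t_n}^\mathbf{1}\vee w\in\indes\acm$ would need a single state letter, the left side also contributes exactly one, so $\sum_i\sigma(t_i)=1$, and then replacing each state letter by a common variable $x$ and erasing register letters realizes $[\Dset]$ as mingly (left side $x^\lambda$ with $\lambda=\mathbf{t_n}^\mathbf{1}$'s exponent count $\ge$ something, right side a join of $x$'s) — \emph{unless} $u\notin\indes\acm$, in which case $u$ is simply not a candidate for ${\Acc}$ at all and can be discarded. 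Concluding: if $[\Dset]$ is not mingly, Case B cannot arise for an $ID$ $u$, so only Case A occurs, the first ambient step is register-only, and induction finishes the proof. \textbf{The main obstacle} I anticipate is making the bookkeeping in Case B fully rigorous: precisely tracking which joinands $s\mathbf{t_n}^\vd$ must be ID's versus merely being in ${\Acc}(\Dset\acm)$, and turning the state-letter count $\sum_i\sigma(t_i)$ together with the per-$\vd$ constraints into an actual one-variable substitution exhibiting minglyness — this is where the non-idempotency of $\jn$ and the exact definition of $\Fid(\acm)$ as joins of copies of a single $q_f$ must be used carefully.
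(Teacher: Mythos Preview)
Your $(1)\Rightarrow(2)$ and $(2)\Rightarrow(3)$ match the paper, and the overall architecture for $(3)\Rightarrow(1)$---induction on computation length, reduction to a single monoid term, case split on whether the context $s$ carries the state---is correct. Case~A is fine.

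The gap is in Case~B. From $\sigma(s)=0$ and each $s\mathbf{t_n}^{\vd}\in\conf\acm$ you correctly obtain $\sum_i \vd(i)\sigma(t_i)=1$ for every $\vd\in\Dset$. Since every variable occurs on the right, this forces $\sigma(t_i)\in\{0,1\}$ for all $i$, so $\lambda:=\#\{i:\sigma(t_i)>0\}=\sum_i\sigma(t_i)$. The correct dichotomy is then: if $\lambda\ge 2$, the substitution $x_i\mapsto x$ when $\sigma(t_i)>0$ and $x_i\mapsto 1$ otherwise yields $x^\lambda\le\bigvee_{\vd\in\Dset} x$, so $[\Dset]$ is mingly, contradicting~(3). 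If $\lambda=1$, this substitution gives only the trivial $x\le\bigvee x$ and does \emph{not} witness minglyness. Instead, the unique $t_{j_0}$ with a state satisfies $\sigma(t_{j_0})=1$, say $t_{j_0}=qx$ with $q\in\State$ and $x\in\Reg_k^*$, and the constraint forces $\vd(j_0)=1$ for every $\vd\in\Dset$. The ambient step then rewrites with new context $s':=sq$ and the register-only substitution $x_{j_0}\mapsto x$, $x_i\mapsto t_i$ for $i\ne j_0$; this is a genuine instance of $\leq^{\regeq{\Dset}}$, and so $t\in\Acc(\regeq{\Dset}\acm)$ by the inductive hypothesis. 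This is exactly what the paper does.

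You inverted the logic: you tried to force $\sum_i\sigma(t_i)=1$ by assuming $u\in\indes\acm$ and then claimed minglyness, but $\lambda=1$ is precisely the non-mingly subcase. And your fallback ``if $u\notin\indes\acm$ discard it'' is not available: such a $u$ would still lie in $\Acc(\Dset\acm)\setminus\Acc(\regeq{\Dset}\acm)$ (since $\Acc(\regeq{\Dset}\acm)\subseteq\indes\acm$), and would therefore falsify state-admissibility rather than be harmless. The missing idea is the $\lambda=1$ rewriting: absorb the unique state letter from $t_{j_0}$ into the context, turning the ambient step into a register-only one.
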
 
\begin{proof} Assume $\acm=(\Reg_k,\State,\Inst,q_f)$ and let $[\Dset]$ be an $n$-variable simple equation.\\
\noindent
($1\Rightarrow2$) We have that ${\Acc}(\Dset {\acm})={\Acc}(\regeq{\Dset} {\acm})\subseteq \indes {\acm}$. \\
\noindent
($2\Rightarrow3$) Proceeding by contraposition, suppose $[\Dset]$ is mingly. 
Then  $x^\lambda \leq \bigvee_{\vd \in \Dset} x$ is a direct substitution image of $[\Dset]$, for $\lambda >1$. For $x=q_f$, we have 
 $$q_f^{\lambda}\leq^\Dset \bigvee_{\vd\in\Dset} q_f \in{\Fid}({\acm})\subseteq{\Acc}(\Dset {\acm}).$$ Since $\lambda>1$, it follows that $q_f^{\lambda}\nin \indes {\acm}.$ 

\noindent
($3\Rightarrow1$)  Proceeding by contraposition, suppose ${\Acc}({\regeq{\Dset}} {\acm})$ is a proper subset of ${\Acc}(\Dset {\acm})$ and let $t\in {\Acc}(\Dset {\acm})\setminus {\Acc}({\regeq{\Dset}} {\acm})$ be a witness with minimal computation 
$$t=u_0\leq^{p_1}u_1\leq^{p_2} \cdots\leq^{p_N} u_N ={\uid}_f\in {\Fid}({\acm}), $$
for some $u_0, u_1,...,u_N\in A_{\acm}$ and $p_1,...,p_N\in {\Inst}\cup \{\Dset\}$.
 Since ${\Fid}({\acm})\subseteq  {\Acc}({\regeq{\Dset}} {\acm})$, we have that $t\nin {\Fid}({\acm})$ and so $N>1$.
By Lemma~\ref{Dcomprel}, we may assume $t\in ({\State}\cup \Reg_k)^*$. Since $N$ is minimal, it follows that $p_1 =\Dset $ and $u_1\in {\Acc}({\regeq{\Dset}} {\acm})\subseteq \indes {\acm}$. So,
$$t= s\mathbf{t_n}^\mathbf{1} \leq^\Dset \bigvee_{\vd\in\Dset}  s\mathbf{t_n}^\vd=u_1. $$
where $s\in({\State}\cup \Reg_k)^*$ and $\mathbf{t_n}\in (({\State}\cup \Reg_k)^*)^n$; here we used the fact that the rule $[\jn]$ is not applicable, as $t\in ({\State}\cup \Reg_k)^*$.

Since $u_1\in \indes {\acm}$, it follows that $s\mathbf{t_n}^\vd\in \conf {\acm}$ for all $\vd\in\Dset$. 
Also, because $p_1\neq \Dset\Reg$,  there is some $t_i$ that contains at least one state variable. As that $t_i$ has to also appear on the right-hand side and $s\mathbf{t_n}^\mathbf{1} \in \conf {\acm}$, it follows that $s$ cannot contain any state variable, so $s \in  \Reg_k^*$.
Consequently, every joinand in the right-hand side has a unique $t_i$ containing a state variable.
 
Therefore, applying the substitution $\subt$ defined by: $\subt(x_i)=x$ if $t_i$ contains a state variable and $\subt(x_i)=1$ otherwise, yields $x^\lambda \leq x$, where $\lambda$ is the number of $t_i$'s containing a state variable.

We will show that $\lambda> 1$. If, by way of contradiction, there was a unique $t_j$ containing a state variable, then it would have the form  $t_j=qx$ for some $q\in {\State}$ and $x\in \Reg_k^*$ (with $t_i\in \Reg_k^*$ for all other $t_i$'s) and $t_j$ would appear on all the joinands on the right-hand side. So, 
we have 
$$t=sqx\prod_{i\not = j} t_i \leq^{\mathrm{\regeq{\Dset}}}\bigvee_{\vd\in\Dset}  sqx\prod_{i\not = j}^n{ t_i}^{\vd(i)}=u_1\in {\Acc}(\regeq{\Dset} {\acm}),$$
and thus $t\in  {\Acc}(\regeq{\Dset} {\acm}),$ a contradiction.
\end{proof}

Therefore, in our search for an appropriate machine for spineless equations, state-admissibility will be automatic and will not restrict the type of possible machines.
%
%
\section{The exponential encoding}\label{mksec}
Given a $2$-ACM ${\acm}=(\Reg_2,{\State},{\Inst}, q_f)$ (we will later choose as ${\acm}$ a machine with undecidable halting problem)  and simple equation $[\Dset]$, our ultimate goal is to construct a new machine ${\acm}'$ ``simulating'' the machine ${\acm}$ such that ${\Acc}(\Dset {\acm}' ) = {\Acc}({\acm}')$. More specifically, for any spineless equation $[\Dset]$ we can construct a 3-ACM ${\acm}_K=(\Reg_3,{\State}_K,{\Inst}_K, q_F)$ for which it will be register-admissible, for some $K>1$ provided by Theorem~\ref{final}, that will simulate the behavior of the 2-ACM ${\acm}$ in the following way:
$$q\reg_1^{n_1}\reg_2^{n_2}\in {\Acc}({\acm}){\mbox{ if and only if }} q\reg_1^{K^{n_1}}\reg_2^{K^{n_2}} \in{\Acc}({\acm}_K).$$

So, the content $n$ of a register $\reg$ is not represented by $\reg^{n}$ but by $\reg^{K^n}$. Computationally, this will be achieved by replacing each increment-$\reg$ (decrement-$\reg$) instruction by a distinct set of instructions (i.e., a {\em program}) that will carry out the process of multiplying (dividing) the contents in register $\reg$ by $K$. The auxiliary register $\reg_3$ will be necessary to carry out such computations, and the faithfulness of this encoding will be guaranteed by the insistence that accepted configurations are those which have a computation resulting in a finite join of configurations labeled by state $q_F$ where all the registers are empty. 

We will first show how we can achieve multiplying the contents of a register $\reg \in \Reg_2$ by a fixed constant $K>1$. Let $\State_{+K}=\{a_0,...,a_K\}$ be a set of $(K+1)$-many fresh state-variables. We can add $K$ tokens to the contents of the auxiliary register $\reg_3$ by starting from the state $a_0$ and applying the instructions $\Inst_{+K}=\{+_i : i=1,...,K \}$, where $+_i: a_{i-1}\leq a_i\reg_3$, and reaching state $a_K$. Then for each $0\leq i<K$, we have 
$$a_0\leq_{+K} a_i\reg_3^N \iff N=i ,$$
where $\leq_{+K}$ is the computation relation defined in the usual way from $\Inst_{+K}$. Hence $a_0x\leq_{+K} a_Ky$ iff $y=x\reg_3^K$, for each $x,y\in \Reg_3^*$. 

Now, if we have $N$ tokens in register $\reg$ and we are at state $a_K$, then by removing one $\reg$-token, moving to state $a_0$ by the  instruction $\times_{\mathrm{loop}}: a_K\reg\leq a_0$, and using $\Inst_{+K}$ to add $K$  $\reg_3$-tokens and repeating this process, we can essentially exchange $N$ $\reg$-tokens for  $NK$  $\reg_3$-tokens. E.g.,
$$a_K\reg^N \leq^{\times_{\mathrm{loop}}}a_0\reg^{N-1}\leq_{+K}a_K\reg^{N-1}\reg_3^{K}\leq^{\times_{\mathrm{loop}}}a_0\reg^{N-2}\reg_3^{K}\leq_{+K}\cdots ~a_K\reg_3^{NK}.$$
If we set $\leq_{\times\reg}$ to be the computation relation defined from $\Inst_{\reg\times K}=\Inst_{+K}\cup\{\times_{\mathrm{loop}} \}$, then it is easily verified by induction that for each $N,M,n,m\in \N$ and $0\leq i \leq K $:

\begin{equation}\label{eq:TimesK}
\begin{array}{r c l}
a_K \reg^N \leq_{\times\reg} a_i \reg^n \reg_3^m &\iff & KN=Kn + m + (K-i)\\
a_i\reg^n \reg_3^m\leq_{\times\reg} a_K\reg_3^M &\iff& M=Kn+m + (K-i)
\end{array}
\end{equation}
Observe that multiplying the contents of register $\reg$ is achieved by an iterative process of adding $K$-many tokens to $\reg_3$ and then looping the process by removing one from $\reg$. We can define a division program analogously. However, in both cases, we start with tokens in $\reg$ and compute the product (or quotient) by $K$ in the register $\reg_3$ by emptying $\reg$. We would then like our machine to {\em transfer} those contents back to the original register $\reg$ to complete this program.

Let $\Trsf_\reg$ be the program with fresh states $\State_{\Trsf_\reg}=\{t_0,t_1 \}$ and instructions $\Inst_{\Trsf_\reg}=\{T_-,T_+ \}$ given by $T_-: t_0\reg_3\leq t_1$ and $T_+: t_1\leq t_0\reg$. Defining its computation relation to be $\leq_{\Trsf_\reg}$, we easily obtain for all $N,M,n,m\in \N$:
\begin{equation}\label{eq:TransK}
\begin{array}{r c l}
t_0 \reg_3^N \leq_{\Trsf_\reg} t_\delta \reg^n\reg_3^m &\iff &N=n+m+\delta \\
t_\delta \reg^n\reg_3^m\leq_{\Trsf_\reg} t_0\reg^M&\iff& n+m+\delta=M
\end{array}
\end{equation}
If we wish to implement the transfer program after, say, executing the program for multiplying by $K$, then we need an instruction to switch from the state $a_0$ to the state $t_0$. This can naively be achieved by a forking instruction, say $p:a_K\leq t_0$,\footnote{Technically, $p: a_0\leq t_0\vee t_0$ since $\vee$ is not idempotent, but this technicality is unnecessary for the example.} which would allow for the computation 
$$a_K\reg^N \leq_{\times \reg}a_K\reg_3^{NK} \leq^p t_0\reg_3^{NK} \leq_{\Trsf_\reg}t_0\reg^{NK}. $$
However, since the instruction $p$ can be applied to any configuration labeled by $a_K$, even those for which the register $\reg$ is nonempty, we also get unwanted instances of the form $a_K\reg^N\leq t_0\reg^{M}$ where $M=Km+N-m$ for each $0\leq m\leq N$. Since we want our simulation to be faithful, we need a way of switching to the transfer program {\em only when} the register $\reg$ is empty.
%
%
\subsection{The Zero-Test Program}
What we are asking for is similar to (what is commonly called) a zero-test instruction of a standard counter machine, i.e., an instruction which is applicable only when a specified register is empty. Since we require our computation relations to be compatible with multiplication, such an insistence is impossible. I.e., we insist $q\leq q'$ to entail $qx\leq q'x$ for any term $x$. 

However, following the ideas in \cite{Lincoln}, we construct a program that has a similar behavior utilizing the insistence that accepted configurations are those that compute final ID's, i.e., finite joins of the configuration labeled by a final state $q_F$ with all registers empty.

We define the (sub-)machine ${\o}=(\Reg_3,{\State}_{\o}, {\Inst}_{\o},q_F)$, with a fresh set of variables ${\State}_{\o} = \{z_1,z_2,z_3,q_F\}$ and instructions ${\Inst}_{\o}$ are given by: 
$$\begin{array}{ l c r c l }
{{\o}^i_j}&:&z_i \reg_j&\leq& z_i \\
{{\o}^i_F}&:& z_i&\leq& q_F\vee q_F
\end{array},$$
for each $i, j\in\{1,2,3\}$ with $i \not = j$, resulting in a total of $6+3=9$ instructions.

The addition of the auxiliary states $z_1, z_2, z_3$ is explained by the fact that the role of $z_i$ is to empty the contents of all registers other than $\reg_i$ and transition to the final state $q_F$. Thus it detects situations where $\reg_i$ is not already empty; as then it cannot reach a final ID. So, assuming we want to move from state $\qin$ to $\qout$ only when the register $\reg_i$ is empty, we can start a parallel computation, the main branch of which moves to state $\qout$ (even if $\reg_i$ is non-empty) but the auxiliary branch involving the $z_i$ terminates successfully only if $\reg_i$ is empty. This $z_i$-branch ensures/safeguards that the combined computation acts as intended. 

We call the above machine the \textit{zero-test program}, and we denote its computation relation by $\leq_{\o}$. The zero-test program for a register $\reg_i$ is implemented by a \textit{zero-test $\reg_i$ instruction $p$}, where $p$ is of the form $\qin \leq \qout\vee z_i$. Since the desired final ID's of ${\acm}_K$ will consist of only joins of the configuration $q_F$, i.e. all registers are empty, the above instruction copies the contents of the registers and creates two paths; one path with the state $\qout$ where $\reg_i$ is intended to be empty, and the second with a state $z_i$ where the program ${\o}$ is intended to empty registers $\reg_j$ and $\reg_k$ and then output to the final state. 
Below is an example of implementing the zero-test on register $\reg_1$ via the instruction $p:\qin\leq \qout\vee z_1$  on the configuration $\qin \reg_1\reg_2\reg_3$:
$$\begin{array} {r l l}
\qin \reg_1\reg_2\reg_3 &\leq^{p}&\qout \reg_1\reg_2\reg_3\vee z_1\reg_1\reg_2\reg_3 \\
&\leq^{{\o}^1_2}&\qout \reg_1\reg_2\reg_3\vee z_1\reg_1\reg_3 \\
&\leq^{{\o}^1_3}&\qout \reg_1\reg_2\reg_3\vee z_1\reg_1 \\
&\leq^{{\o}^1_F}&\qout \reg_1\reg_2\reg_3\vee q_F\reg_1 \vee q_F\reg_1 .
\end{array}$$

As we see, the above (maximal in $\o)$ computation detected that register $\reg_1$ is not empty in the configuration $q\reg_1\reg_2\reg_3$ since the final ID contains the configuration $q_F\reg_1$, and there are no $q_F$-instructions. In fact, $z_1\reg_1\reg_2\reg_3\nin{\Acc}(\o)$ since there is no instruction applicable to the state $z_1$ which alters the contents of register $\reg_1$. By a similar analysis, we obtain the following,
\begin{lem}\label{zero0}
$z_i\reg_1^{n_1}\reg_2^{n_2}\reg_3^{n_3}\in{\Acc}(\o)$ if and only if $n_i=0$.
\end{lem}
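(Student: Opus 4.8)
We want: $z_i\reg_1^{n_1}\reg_2^{n_2}\reg_3^{n_3}\in\Acc(\o)$ iff $n_i=0$. Let me think about both directions.

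**Forward direction (necessity of $n_i=0$).** Suppose $z_i\reg_1^{n_1}\reg_2^{n_2}\reg_3^{n_3}$ is accepted. Key observation: no instruction in $\Inst_\o$ alters the contents of register $\reg_i$ when the state is $z_i$ — the only $z_i$-instructions are $\o^i_j : z_i\reg_j \leq z_i$ for $j\neq i$ (which decrement registers $\reg_j$, $j\neq i$) and $\o^i_F: z_i \leq q_F\vee q_F$ (which changes the state to $q_F$ and does nothing to registers). And $q_F$ has no instructions. So I want to argue: in any computation starting from $z_i\reg_1^{n_1}\reg_2^{n_2}\reg_3^{n_3}$, the number of $\reg_i$-tokens in (each branch descending from) this configuration can never decrease. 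Since acceptance means reaching a final ID $\bigvee_{l=1}^m q_F$ with *all* registers empty, we'd need that count to become $0$, forcing $n_i=0$.

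**Backward direction (sufficiency of $n_i=0$).** If $n_i=0$, say $i=1$, I exhibit an explicit computation: repeatedly apply $\o^1_2$ (which is $z_1\reg_2\leq z_1$) $n_2$ times to empty $\reg_2$, then $\o^1_3$ ($z_1\reg_3\leq z_1$) $n_3$ times to empty $\reg_3$, arriving at $z_1$, then apply $\o^1_F$ to get $q_F\vee q_F\in\Fid(\o)$. This shows $z_1\reg_2^{n_2}\reg_3^{n_3}\leq_\o q_F\vee q_F\in\Fid(\o)$, hence accepted. The general $i$ case is symmetric.

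**How I'd structure it.** The backward direction is a routine direct computation; I'd write it out in one displayed chain (being careful, since $\jn$ is not idempotent, to note that $\Fid(\o)$ contains finite joins of copies of $q_F$ and $q_F\vee q_F$ qualifies). The forward direction needs a small invariant argument. I would formalize it as follows: by Lemma~\ref{comprel}(1) (applied to the submachine $\o$) a computation witnessing acceptance consists entirely of IDs, and by Lemma~\ref{comprel}(2) / Lemma~\ref{basicrel}(2) it suffices to track a single branch — i.e.\ I may assume we start from the configuration $z_i\reg_1^{n_1}\reg_2^{n_2}\reg_3^{n_3}$ itself (an element of $\conf\o$) and follow it through instructions $\leq^p$. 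Define, for a configuration $c = q\reg_1^{a_1}\reg_2^{a_2}\reg_3^{a_3}$, the quantity $\rho_i(c) = a_i$. I claim: if $c\leq^p c'$ for $p\in\Inst_\o$ and $c$ has state in $\{z_i\}$ while $c'$ is on the path being followed, then $\rho_i(c')\geq\rho_i(c)$ *unless* the state changes away from $z_i$ — but the only $z_i$-instruction that changes the state is $\o^i_F$, which sends us to $q_F$, a dead state. Since $\reg_i$-tokens are never destroyed while we remain at $z_i$, and once we leave $z_i$ we are stuck at $q_F$ (no further instructions, no way to remove tokens), the final ID obtained has $\rho_i \geq n_i$ on the corresponding configuration; being final, it has $\rho_i = 0$, so $n_i = 0$.

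**Main obstacle.** The only subtlety is the nondeterminism/branching: in principle the forking instruction $\o^i_F$ produces two branches, and one must make sure the bookkeeping of "the $\reg_i$-count never drops" is done per-branch correctly. But here $\o^i_F$ is the *only* branching instruction and it immediately transitions both new branches to $q_F$, which admits no instructions — so the tree of configurations descending from our start is in fact almost linear: a single chain of $z_i$-configurations terminated (optionally) by one application of $\o^i_F$. Making this precise via Lemma~\ref{comprel} (every term in an accepting computation is an ID, and $u\jn v$ accepted iff both $u,v$ accepted) reduces the whole thing to the single-configuration statement, which is the invariant argument above. I expect this reduction step to be where the care is needed; the rest is immediate.
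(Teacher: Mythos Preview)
Your proposal is correct and follows exactly the paper's approach: the paper states the lemma immediately after the observation that ``there is no instruction applicable to the state $z_1$ which alters the contents of register $\reg_1$'' and leaves the rest implicit with ``by a similar analysis,'' which is precisely the invariant argument you spell out for the forward direction together with the obvious explicit computation for the backward direction.
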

Let $\mathcal{P}$ be a program (i.e. a sub-machine) and $\leq_\mathcal{P}$ be its corresponding computation relation. We define the relation $\sqsubseteq_\mathcal{P}$ on $\conf{{\mathcal{P}}}$ via ${\cf}\sqsubseteq_\mathcal{P} {\cf'}$ iff ${\cf}\leq_\mathcal{P} {\cf'}\vee u$, where either $u=\bot$ or $u\in\mathrm{ID}({\o}) $ with $u\in{\Acc}(\o)$.\footnote{If $p$ is an instruction, by ${\cf}\sqsubseteq_{\{p\}} {\cf'}$ we mean ${\cf}\leq^p {\cf'}\vee u.$} If $\mathcal{P}$ contains no ${\State}_{\o}$-instructions (i.e., no instruction $z_ix\leq\cdots$), then ${\cf}\sqsubseteq_{\mathcal{P}}{\cf'}$ iff there is a computation from ${\cf}$ to ${\cf'}\vee u$ with instructions from $\mathcal{P}$ such that every zero-test was properly applied. Note that $\sqsubseteq_\mathcal{P}$ is transitive on configurations and ${\cf}\leq_\mathcal{P} {\cf'}$ implies ${\cf}\sqsubseteq_\mathcal{P} {\cf'}$. We obtain the following lemma.
\begin{lem}\label{zero}
Let $p$ be the instruction $\qin\leq\qout\vee z_i$ with distinct $\qin, \qout\nin {\State}_{\o}$. For $x,x'\in \Reg_3^*$, $\qin x \sqsubseteq_{\{p\}} \qout x'$ if and only if $x=x'=\reg_1^{n_1}\reg_2^{n_2}\reg_3^{n_3}$ and $n_i=0$.
\end{lem}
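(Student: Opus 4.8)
The plan is to unfold the definitions, reduce $\sqsubseteq_{\{p\}}$ to a single application of $\leq^p$, use the freeness of the ambient commutative semiring (and the non-idempotency of $\jn$ there) to pin down the shape of the witnessing term, and then invoke Lemma~\ref{zero0}.

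First I would write $p : \qin \leq \qout \vee z_i$ and note that, by the footnote to the definition of $\sqsubseteq$, $\qin x \sqsubseteq_{\{p\}} \qout x'$ holds iff there is a term $u$, constrained as in the definition of $\sqsubseteq$ (i.e.\ $u = \bot$, or $u \in \mathrm{ID}(\o)$ with $u \in \Acc(\o)$), such that $\qin x \leq^p \qout x' \vee u$. Next I would recall from \S3 the explicit description of $\leq^p$: here $v \leq^p w$ holds iff $v = \qin y \vee t$ and $w = (\qout \vee z_i) y \vee t$ for some terms $y, t$, these equalities taken in the ambient commutative semiring.

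For the forward direction I would argue as follows. Given such $u, y, t$: since $\qin x$ is a single monomial and $\jn$ is not idempotent, a short multiset count (using $\qin \neq \qout$) shows that $\qin x = \qin y \vee t$ forces $t = \bot$ and $y = x$, in particular $y \in \Reg_3^*$. Substituting back, $\qout x' \vee u = \qout x \vee z_i x$, whose right-hand side is a sum of exactly two distinct monomials (distinct because $z_i \in \State_{\o}$ while $\qout \notin \State_{\o}$); hence $u$ is a single monomial — ruling out $u = \bot$ — and, since the state of $u$ lies in $\State_{\o}$, $u$ cannot be $\qout x$, so $u = z_i x$ and $\qout x' = \qout x$, giving $x' = x$. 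Writing $x = \reg_1^{n_1}\reg_2^{n_2}\reg_3^{n_3}$, the membership $z_i x = u \in \Acc(\o)$ together with Lemma~\ref{zero0} yields $n_i = 0$. The converse is immediate: from $x = x' = \reg_1^{n_1}\reg_2^{n_2}\reg_3^{n_3}$ with $n_i = 0$ I set $u := z_i x$, observe $u \in \mathrm{ID}(\o)$ and $u \in \Acc(\o)$ by Lemma~\ref{zero0}, and note $\qin x \leq^p (\qout \vee z_i) x = \qout x' \vee u$, so $\qin x \sqsubseteq_{\{p\}} \qout x'$.

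The only point requiring care — rather than a genuine obstacle — is the bookkeeping in the free commutative semiring: one must use non-idempotency of $\jn$ (so that elements are honest multisets of monomials) to see that the witness $u$ collapses to the single configuration $z_i x$, and use the disjointness of $\State_{\o}$ from $\{\qin,\qout\}$ to match monomials. Everything then reduces to the zero-test analysis already carried out in Lemma~\ref{zero0}.
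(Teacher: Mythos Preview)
Your proof is correct and follows essentially the same route as the paper: unfold $\sqsubseteq_{\{p\}}$ via the footnote to a single $\leq^p$, identify the unique result $\qout x \vee z_i x$, match terms to get $x'=x$ and $u=z_i x$, and conclude with Lemma~\ref{zero0}. The paper argues this operationally (``the only instruction applicable \ldots\ the computation cannot proceed''), while you spell out the multiset bookkeeping in $\mathbf{A}_\acm$ explicitly; the only minor quibble is that the hypothesis $\qin\neq\qout$ is not actually needed to force $t=\bot$ and $y=x$ from $\qin x = \qin y \vee t$ (freeness alone does that), though it is used in the subsequent matching step, and arguably to rule out $y=\bot$.
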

\begin{proof}
Let $x=\reg_1^{n_1}\reg_2^{n_2}\reg_3^{n_3}$. The only instruction applicable to $\qin x$ is $p$, so from $\qin\leq \qout\vee z_i$ we obtain $\qin x\leq^p \qout x\vee z_i x$. Since the only instructions applicable are those from $\{p\}$ and $\qin\neq\qout$, the computation cannot proceed from this configuration. Hence by Lemma~\ref{zero0}, 
$$\qin x \sqsubseteq_{\{p\}} \qout x' \iff x=x' \mbox{ and } z_ix\leq_{{\o}} q_F\iff x=x' \mbox{ and } n_i=0.$$
\end{proof}
%
%
\subsection{Multiplying and Dividing}
We are now ready to faithfully simulate an increment-$\reg$ instruction by a program that multiplies the contents of register $\reg \in \{\reg_1, \reg_2\}$ by the fixed constant $K$. For $p: \qin\leq \qout \reg$, we define the program ${\times}(p)$ to have states $\State_{\times(p)}=\State_{+K}\cup \State_{\Trsf_\reg}$ and instructions $\Inst_{\times(p)}=\Inst_{+K}\cup\Inst_{\Trsf_\reg}\cup\{\times_{\mathrm{in}}, \times_{\Trsf},\times_{\mathrm{out}} \}$, where
$$\begin{array}{l c r c l      }
{\times_{\mathrm{in}}}&:& \qin  &\leq& a_K\vee z_3\\
{\times_{\Trsf}} &:& a_K &\leq& t_0\vee z_i\\
{\times_{\mathrm{out}}} &:& t_0&\leq& \qout \vee z_3
\end{array}.$$
The instruction $\times_{\mathrm{in}}$ is intended to verify that the auxiliary register $\reg_3$ is in fact empty, and initiate the process of storing in $\reg_3$ $K$-times the contents in the active register $\reg$. The instruction $\times_\Trsf$ is meant to check that all the contents of the active register $\reg$ have been emptied (and thus $K$-times that amount is in $\reg_3$), and initiate the transfer program. The instruction $\times_{\mathrm{out}}$ is intended to end the program by transitioning to the state $\qout$ only when the transfer is complete, i.e., when $\reg_3$ has been emptied. 
Below is an example of $\times(\qin\leq\qout \reg_1)$ running on the configuration $\qin \reg_1^2\reg_2$:
$$\begin{array}{r l l}
\qin \reg_1^2\reg_2 & \sqsubseteq_{\{\times_{\mathrm{in}}\}}&  a_K\reg_1^2 \reg_2\\
&\leq_{\times_{\reg_1}}& a_K\reg_2\reg_3^{2K}\\
&\sqsubseteq_{\{\times_{\Trsf}\}} &t_0\reg_2\reg_3^{2K}\\
&\leq_{\Trsf_{\reg_1}}&t_0 \reg_1^{2K}\reg_2\\
&\sqsubseteq_{\{ \times_{\mathrm{out}}\}}&\qout \reg_1^{2K}\reg_2
\end{array}$$

In this way, we obtain the following technical lemma by induction as a consequence of Lemma~\ref{zero} and Equations~\ref{eq:TimesK} and~\ref{eq:TransK}. We state the lemma for $\reg=\reg_1$, but the same holds when swapping the roles of $\reg_1$ and $\reg_2$.
\begin{lem}\label{times} 
Let $p: \qin\leq \qout\reg_1$ be an increment-$\reg_1$ instruction, where $\qin, \qout \not \in \State_{\times(p)}$. Then 
\[\qin \reg_1^{N_1}\reg_2^{N_2}\reg_3^{N_3} \sqsubseteq_{\times(p)} \qout \reg_1^{M_1}\reg_2^{M_2}\reg_3^{M_3}\] if and only if $M_1=KN_1$, $M_2=N_2$, and $M_3=N_3=0$. In fact, for each $n_1,n_2,n_3\in \N$ and state $q\in \State_{\times(p)} $,
\begin{enumerate}
\item $\qin \reg_1^{N_1}\reg_2^{N_2}\reg_3^{N_3} \sqsubseteq_{\times(p)} q\reg_1^{n_1}\reg_2^{n_2}\reg_3^{n_3}$ iff $N_3=0$, $N_2=n_2$, and 
$$KN_1 = \left\{\begin{array}{r l} n_1+n_3+\delta & \mbox{if } q=t_\delta \mbox{ where } \delta\in \{0,1\},\\ Kn_1+n_3+(K-\delta) & \mbox{if }q=a_\delta \mbox{ where } 0\leq \delta\leq K \end{array}\right. .$$
\item $q\reg_1^{n_1}\reg_2^{n_2}\reg_3^{n_3}  \sqsubseteq_{\times(p)} \qout \reg_1^{M_1}\reg_2^{M_2}\reg_3^{M_3} $ iff $M_3=0$, $M_2=n_2$, and 
$$M_1 = \left\{\begin{array}{r l} n_1+n_3+\delta & \mbox{if } q=t_\delta \mbox{ where } \delta\in \{0,1\},\\ Kn_1+n_3+(K-\delta) & \mbox{if }q=a_\delta \mbox{ where } 0\leq \delta\leq K \end{array}\right. .$$
\end{enumerate}
\end{lem}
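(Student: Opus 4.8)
──────────────────────────────────────────────────────────
Proof proposal
──────────────────────────────────────────────────────────

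The plan is to prove Lemma~\ref{times} by unravelling the definition of $\sqsubseteq_{\times(p)}$ into the three ``phases'' of the program $\times(p)$ and chaining together the arithmetic facts established in Equations~\ref{eq:TimesK} and~\ref{eq:TransK} together with the zero-test behaviour of Lemma~\ref{zero}. It suffices to prove the two displayed biconditionals in parts~(1) and~(2), since the main biconditional of the statement is the special case $q=\qout$ (which forces $M_3=0$, $M_2=N_2$ and, unpacking the $a$/$t$ formulas at the boundary, $M_1=KN_1$ and $N_3=0$). By symmetry of the construction in $\reg_1,\reg_2$ we lose nothing by doing $\reg=\reg_1$ only.

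First I would record the ``one-step skeleton'' of any computation in $\times(p)$. The only instruction applicable to a $\qin$-labelled configuration is $\times_{\mathrm{in}}:\qin\leq a_K\vee z_3$; applying Lemma~\ref{zero} to $p':=\times_{\mathrm{in}}$ gives that $\qin x\sqsubseteq_{\{\times_{\mathrm{in}}\}} a_K x'$ forces $x=x'$ with the $\reg_3$-component zero. Likewise $\times_{\Trsf}:a_K\leq t_0\vee z_i$ (here $z_i$ tests that the \emph{active} register $\reg_1$ is empty) gives, via Lemma~\ref{zero}, that passing from an $a_K$-configuration to a $t_0$-configuration forces the $\reg_1$-component to be $0$; and $\times_{\mathrm{out}}:t_0\leq\qout\vee z_3$ forces the $\reg_3$-component to be $0$ on exit. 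Between these ``gate'' instructions the computation can only use $\Inst_{+K}\cup\{\times_{\mathrm{loop}}\}=\Inst_{\reg_1\times K}$ while in an $a_\delta$-state and $\Inst_{\Trsf_{\reg_1}}$ while in a $t_\delta$-state, because the state sets $\State_{+K}$ and $\State_{\Trsf_{\reg_1}}$ are disjoint and no other instruction mentions those states. So any computation witnessing $\qin\cdots\sqsubseteq_{\times(p)}q\cdots$ factors as: a $\times_{\mathrm{in}}$ step (with its zero-test), then a $\leq_{\times\reg_1}$-computation ending at some $a_\delta$ or, if it has already passed the $\times_{\Trsf}$ gate, a $\times_{\Trsf}$ step followed by a $\leq_{\Trsf_{\reg_1}}$-computation ending at some $t_\delta$.

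Then I would just substitute the arithmetic. For the ``still in the $a$-phase'' case: after the $\times_{\mathrm{in}}$ gate we are at $a_K\reg_1^{N_1}\reg_2^{N_2}$ with $N_3=0$, and the second line of Equation~\ref{eq:TimesK} (run to $a_\delta$, not all the way to $a_K$ — here the first line of~\ref{eq:TimesK}) gives $a_K\reg_1^{N_1}\leq_{\times\reg_1}a_\delta\reg_1^{n_1}\reg_3^{n_3}$ iff $KN_1=Kn_1+n_3+(K-\delta)$, with $\reg_2$ untouched so $n_2=N_2$; that is exactly the $q=a_\delta$ clause of part~(1). For the ``already in the $t$-phase'' case: the $\times_{\Trsf}$ gate forces the $\reg_1$-count to $0$ at the moment of transition, so by the first line of Equation~\ref{eq:TimesK} (run to $a_K$) the $\reg_3$-count there is $KN_1$; then the first line of Equation~\ref{eq:TransK} gives $t_0\reg_3^{KN_1}\leq_{\Trsf_{\reg_1}}t_\delta\reg_1^{n_1}\reg_3^{n_3}$ iff $KN_1=n_1+n_3+\delta$, again with $\reg_2$ a spectator; this is the $q=t_\delta$ clause. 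Part~(2) is the mirror image: given a $q$-configuration, run the remainder of the relevant phase's computation (second line of~\ref{eq:TimesK} if $q=a_\delta$, then through the $\times_{\Trsf}$ gate; second line of~\ref{eq:TransK} if $q=t_\delta$), then the $\times_{\mathrm{out}}$ gate forcing $M_3=0$, arriving at $\qout\reg_1^{M_1}\reg_2^{M_2}$ with the stated $M_1$. The induction is on the length of the computation (equivalently, one inducts through the phases), exactly as indicated in the statement's ``by induction as a consequence of'' phrasing.

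The step I expect to be the main obstacle is the bookkeeping at the two internal gates $\times_{\mathrm{in}}$ and $\times_{\Trsf}$: one has to argue carefully that a computation cannot ``cheat'' by interleaving zero-test branches and main branches in a way that circumvents the intended emptiness checks, i.e.\ that Lemma~\ref{zero}'s hypothesis (the $z_i$-branch is \emph{forced} to be the maximal $\o$-computation, and it succeeds only when the tested register is empty) genuinely applies at each gate even in the presence of the other two registers as passive spectators — this is why the lemma is phrased with $x,x'\in\Reg_3^*$ rather than just $\Reg_3^*$ in a single variable, and it is the place where the disjointness of the fresh state sets $\State_{+K}$, $\State_{\Trsf_{\reg_1}}$, $\{z_1,z_2,z_3,q_F\}$ and $\{\qin,\qout\}$ must be invoked to rule out stray instructions. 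Everything else is substitution into Equations~\ref{eq:TimesK} and~\ref{eq:TransK}.
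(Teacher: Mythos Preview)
Your proposal is correct and follows essentially the same approach as the paper, which does not give a detailed proof but simply states that the lemma follows ``by induction as a consequence of Lemma~\ref{zero} and Equations~\ref{eq:TimesK} and~\ref{eq:TransK}.'' Your write-up is in fact considerably more explicit than the paper's own treatment, correctly isolating the three gate instructions where Lemma~\ref{zero} applies and the two intermediate phases where Equations~\ref{eq:TimesK} and~\ref{eq:TransK} supply the arithmetic.
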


For a configuration $\cf=q\reg_1^n\reg_2^m$ in $\acm$, by $\cf_K$ we denote the configuration $q\reg_1^{K^n}\reg_2^{K^m}$ in $\acm_K$.

\begin{cor}\label{timesK} Let $p$ be an increment instruction from some $2$-ACM $\acm$. Then $\cf\leq^p\cf'$ if and only if $\cf_K\sqsubseteq_{\times(p)}\cf'_K$ for any configurations $\cf,\cf'$ in $\conf{\acm}$.
\end{cor}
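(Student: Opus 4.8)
The plan is to verify the biconditional by analyzing the single increment instruction $p : \qin \leq \qout \reg_1$ (the case $\reg=\reg_2$ being symmetric) and comparing the one-step relation $\leq^p$ in $\acm$ with the program relation $\sqsubseteq_{\times(p)}$ in $\acm_K$, using Lemma~\ref{times} as the essential computational input. First I would recall that $\cf \leq^p \cf'$ for configurations $\cf = q\reg_1^{n_1}\reg_2^{n_2}$, $\cf' = q'\reg_1^{n_1'}\reg_2^{n_2'}$ holds precisely when $q = \qin$, $q' = \qout$, $n_1' = n_1 + 1$, and $n_2' = n_2$; here I use that $\leq^p$ on configurations, by the analysis following the definition of $\leq^p$ (with the $[\jn]$ rule inapplicable to a single configuration), is exactly the substitution of the left side of $p$ by its right side inside a configuration, and that a configuration contains exactly one state symbol. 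In particular, if $q \neq \qin$ then neither side of the claimed equivalence can hold: the left side fails since $p$ is not applicable, and the right side fails because $\cf_K = q\reg_1^{K^{n_1}}\reg_2^{K^{n_2}}$ has state $q \notin \{\qin\} \cup \State_{\times(p)}$, so no instruction of $\times(p)$ applies to it. So I may assume $q = \qin$.

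For the forward direction, suppose $\cf = \qin \reg_1^{n_1}\reg_2^{n_2} \leq^p \cf' = \qout \reg_1^{n_1+1}\reg_2^{n_2}$. Then $\cf_K = \qin \reg_1^{K^{n_1}}\reg_2^{K^{n_2}}$ and $\cf'_K = \qout \reg_1^{K^{n_1+1}}\reg_2^{K^{n_2}}$. Applying Lemma~\ref{times} with $N_1 = K^{n_1}$, $N_2 = K^{n_2}$, $N_3 = 0$, its conclusion gives $\qin \reg_1^{N_1}\reg_2^{N_2}\reg_3^{N_3} \sqsubseteq_{\times(p)} \qout \reg_1^{M_1}\reg_2^{M_2}\reg_3^{M_3}$ iff $M_1 = KN_1 = K^{n_1+1}$, $M_2 = N_2 = K^{n_2}$, and $M_3 = N_3 = 0$; since those are exactly the exponents of $\cf'_K$, we get $\cf_K \sqsubseteq_{\times(p)} \cf'_K$.

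For the converse, suppose $\cf_K \sqsubseteq_{\times(p)} \cf'_K$ where $\cf = \qin \reg_1^{n_1}\reg_2^{n_2}$ and $\cf' = q' \reg_1^{n_1'}\reg_2^{n_2'}$ is a configuration of $\acm$; since $\sqsubseteq_{\times(p)}$ relates configurations whose states must be $\qin$ and $\qout$ at the endpoints (the only $\times(p)$-instruction applicable at the start is $\times_{\mathrm{in}}$ and the only one producing the ``exterior'' output state is $\times_{\mathrm{out}}$), we get $q' = \qout$. Now $\cf_K$ has register exponents $N_1 = K^{n_1}$, $N_2 = K^{n_2}$, $N_3 = 0$ and $\cf'_K$ has exponents $M_1 = K^{n_1'}$, $M_2 = K^{n_2'}$, $M_3 = 0$. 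By Lemma~\ref{times} (the displayed biconditional, or part (2) with $q = \qin$-reachable state data), $\cf_K \sqsubseteq_{\times(p)} \cf'_K$ forces $M_1 = KN_1$, i.e. $K^{n_1'} = K^{n_1+1}$, so $n_1' = n_1 + 1$, and $M_2 = N_2$, i.e. $n_2' = n_2$; hence $\cf \leq^p \cf'$.

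I expect the main subtlety, rather than an obstacle, to be bookkeeping the state-symbol argument cleanly: one must make explicit that $\sqsubseteq_{\times(p)}$ is by definition a relation on $\conf{\acm_K}$ whose endpoints carry states outside $\State_{\times(p)}$, so that the only possible entry and exit points are governed by $\times_{\mathrm{in}}$ and $\times_{\mathrm{out}}$, and that the auxiliary-register value is $0$ at both ends (built into $\cf_K$ and $\cf'_K$ since genuine configurations of $\acm$ have no $\reg_3$-tokens). Once this is pinned down, Lemma~\ref{times} does all the arithmetic and the corollary is immediate.
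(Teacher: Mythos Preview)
The paper gives no proof of this corollary (it is stated as immediate from Lemma~\ref{times}), and your argument via that lemma is exactly the intended one: unpack $\leq^p$ on configurations, then read off the displayed biconditional of Lemma~\ref{times} with the exponents $K^{n_i}$.

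There is one small slip in your case analysis. You assert that when the state $q$ of $\cf$ is not $\qin$ the right side $\cf_K \sqsubseteq_{\times(p)} \cf'_K$ must fail because no instruction of $\times(p)$ is applicable to $\cf_K$. But $\sqsubseteq_{\times(p)}$ is defined via the \emph{preorder} $\leq_{\times(p)}$ and is therefore reflexive, so $\cf_K \sqsubseteq_{\times(p)} \cf_K$ always holds; the same issue resurfaces in your converse direction when you conclude $q'=\qout$. Strictly speaking this is a defect in the corollary as literally stated (take $\cf=\cf'$ with state $q\neq\qin$: then $\cf_K\sqsubseteq_{\times(p)}\cf'_K$ holds trivially while $\cf\leq^p\cf'$ fails). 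This is harmless for the paper's only use of the corollary, in Lemma~\ref{Mhalts}, where a reflexive step can simply be discarded from a computation; but your argument should acknowledge the reflexive case rather than claim both sides fail.
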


In a completely analogous way, given a decrement-$\reg$ instruction $p:\qin\reg\leq \qout $, we define the {\em division by $K$} program $\div(p)$ as follows. For its set of states $\State_{\div(p)}$, we define a fresh set of states $\State_{\reg-K}=\{s_0,...,s_K \}$ and set $\State_{\div(p)}=\State_{\reg-K}\cup\State_{\Trsf_\reg}$. We take as its instructions $\Inst_{\div(p)}=\Inst_{\reg-K}\cup\Inst_{\Trsf_\reg}\cup\{\div_{\mathrm{in}},\div_{\Trsf},\div_\mathrm{out} \}$, where $\Inst_{\reg-K}$ contains the instruction $\div_{\mathrm{loop}}: s_K\leq s_0\reg_3$ and $K$-many instructions of the form $-_i:s_{i-1}\reg\leq s_i $, for $1\leq i\leq K$, and finally
$$\begin{array}{l c r c l}
{\div_{\mathrm{in}}}&:& \qin  &\leq& s_0\vee z_3\\
{\div_{\Trsf}} &:& s_0 &\leq& t_0\vee z_i\\
{\div_{\mathrm{out}}} &:& t_0&\leq& \qout \vee z_3
\end{array}.$$

The instruction $\div_{\mathrm{in}}$ is intended to verify that the auxiliary register $\reg_3$ is in fact empty, and initiate the process of storing in $\reg_3$ the quotient by $K$ of the contents in the active register $\reg$. The instruction $\times_\Trsf$ is meant to check that all the contents of the active register $\reg$ have been emptied (and thus $K$-divided by that amount is in $\reg_3$), and initiate the transfer program. The instruction $\div_{\mathrm{out}}$ is intended to end the program transitioning to the state $\qout$ only when the transfer is complete, i.e., when $\reg_3$ has been emptied. 
Below is an example of $\div(\qin\reg_1\leq\qout)$ running on the configuration $\qin \reg_1^{2K}\reg_2$:
$$\begin{array}{r l l}
\qin \reg_1^{2K}\reg_2 & \sqsubseteq_{\{\div_{\mathrm{in}}\}}&  s_0\reg_1^{2K} \reg_2\\
&\leq_{\reg_1-K}& s_K\reg_1^K\reg_2\\
&\leq^{\div_{\mathrm{loop}}}& s_0\reg_1^K\reg_2\reg_3\\
&\leq_{\reg_1-K}& s_K\reg_2\reg_3\\
&\leq^{\div_{\mathrm{loop}}}& s_0\reg_2\reg_3^2\\
&\sqsubseteq_{\{\div_{\Trsf}\}} &t_0\reg_2\reg_3^{2}\\
&\leq_{\Trsf_{\reg_1}}&t_0 \reg_1^{2}\reg_2.\\
&\sqsubseteq_{\{ \div_{\mathrm{out}}\}}&\qout \reg_1^{2}\reg_2
\end{array}$$
The following technical lemma is easily verified by induction. We state the lemma for $\reg=\reg_1$, but the same holds by swapping the roles of $\reg_1$ and $\reg_2$.
\begin{lem}\label{div} 
Let $p: \qin\reg_1\leq \qout$ be a decrement-$\reg_1$ instruction. Then \[\qin \reg_1^{N_1}\reg_2^{N_2}\reg_3^{N_3} \sqsubseteq_{\div(p)} \qout \reg_1^{M_1}\reg_2^{M_2}\reg_3^{M_3}\] if and only if $KM_1=N_1>0$, $M_2=N_2$, and $M_3=N_3=0$. In fact, for each $n_1,n_2,n_3\in \N$ and state $q\in \State_{\div(p)} $,
\begin{enumerate}
\item $\qin \reg_1^{N_1}\reg_2^{N_2}\reg_3^{N_3} \sqsubseteq_{\div(p)} q\reg_1^{n_1}\reg_2^{n_2}\reg_3^{n_3}$ iff $N_3=0$, $N_2=n_2$, and
$$N_1 = \left\{\begin{array}{r l} n_1+n_3+\delta & \mbox{if } q=t_\delta \mbox{ where } \delta\in \{0,1\}, \\ Kn_1+n_3+(K-\delta) & \mbox{if }q=s_\delta \mbox{ where } 0\leq \delta\leq K \end{array}\right. .$$
\item $q\reg_1^{n_1}\reg_2^{n_2}\reg_3^{n_3}  \sqsubseteq_{\times(p)} \qout \reg_1^{M_1}\reg_2^{M_2}\reg_3^{M_3} $ iff $M_3=0$, $M_2=n_2$, and
$$KM_1 = \left\{\begin{array}{r l} n_1+n_3+\delta & \mbox{if } q=t_\delta \mbox{ where } \delta\in \{0,1\},\\ Kn_1+n_3+(K-\delta) & \mbox{if }q=s_\delta \mbox{ where } 0\leq \delta\leq K \end{array}\right. .$$
\end{enumerate}
\end{lem}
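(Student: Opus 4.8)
The plan is to run the argument of Lemma~\ref{times}, \emph{mutatis mutandis}: obtain the behaviour of $\div(p)$ from the behaviours of its three disjoint parts — the count-down sub-program $\Inst_{\reg_1-K}$, the transfer sub-program $\Inst_{\Trsf_{\reg_1}}$, and the three connecting zero-test instructions $\div_{\mathrm{in}}$, $\div_{\Trsf}$, $\div_{\mathrm{out}}$ — glued by the transitivity of $\sqsubseteq$ on configurations. Since $\qin,\qout\notin\State_{\div(p)}$, the only $\qin$-instruction of $\div(p)$ is $\div_{\mathrm{in}}$ and the only instruction producing $\qout$ is $\div_{\mathrm{out}}$; moreover at $s_0$ the applicable instructions are exactly $-_1$ and $\div_{\Trsf}$, at $s_\delta$ with $0<\delta<K$ only $-_{\delta+1}$, at $s_K$ only $\div_{\mathrm{loop}}$, at $t_0$ only $T_-$ and $\div_{\mathrm{out}}$, and at $t_1$ only $T_+$. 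Hence the main branch of any witness of $\qin\,w\sqsubseteq_{\div(p)}\qout\,w'$ (side branches that reach a $z_i$ being $\o$-computations, which is exactly what $\sqsubseteq$ records) must move forward through the state-blocks $\{\qin\}$, then $\{s_0,\ldots,s_K\}$, then $\{t_0,t_1\}$, then $\{\qout\}$, and so factors as
$$
\qin\,w
\;\sqsubseteq_{\{\div_{\mathrm{in}}\}}\;
s_0\,w_0
\;\leq_{\reg_1-K}\;
s_0\,w_1
\;\sqsubseteq_{\{\div_{\Trsf}\}}\;
t_0\,w_2
\;\leq_{\Trsf_{\reg_1}}\;
t_0\,w_3
\;\sqsubseteq_{\{\div_{\mathrm{out}}\}}\;
\qout\,w' ,
$$
with $w_0,w_1,w_2,w_3\in\Reg_3^*$; items~1 and~2 are then obtained by stopping, respectively starting, this chain at an arbitrary intermediate configuration.

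Next I would record two routine facts, each proved by induction on computation length (this is what ``easily verified by induction'' refers to). Since each $-_i$ deletes one $\reg_1$ and advances $s_{i-1}\mapsto s_i$, while $\div_{\mathrm{loop}}$ adds one $\reg_3$ and resets $s_K\mapsto s_0$, a straightforward induction yields the count-down analogue of Equation~\ref{eq:TimesK}: for all $N,n_1,n_3\in\N$ and $0\leq\delta\leq K$,
$$
s_0\,\reg_1^{N}\;\leq_{\reg_1-K}\;s_\delta\,\reg_1^{n_1}\reg_3^{n_3}
\qquad\Longleftrightarrow\qquad
N \;=\; n_1 + K n_3 + \delta ,
$$
and in particular $s_0\reg_1^{N}\leq_{\reg_1-K}s_0\reg_1^{0}\reg_3^{M}$ iff $N=KM$; note that the roles of $\reg_1$ and $\reg_3$, and of $\delta$ and $K-\delta$, are interchanged relative to the multiplying program, since $\div(p)$ divides. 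For the transfer block, Equation~\ref{eq:TransK} with $\reg=\reg_1$ gives directly $t_0\reg_3^{N}\leq_{\Trsf_{\reg_1}}t_\delta\reg_1^{n_1}\reg_3^{n_3}$ iff $N=n_1+n_3+\delta$. For the three junctions I would invoke Lemma~\ref{zero} (via Lemma~\ref{zero0}): $\div_{\mathrm{in}}$ and $\div_{\mathrm{out}}$ are zero-tests on $\reg_3$, while $\div_{\Trsf}$ is a zero-test on $\reg_1$ (its side-state being $z_i$ with $i=1$, the active register of $p$); each is properly applied exactly when the tested register is empty at that moment, and it leaves the word of register contents unchanged. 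Register $\reg_2$ occurs in no instruction of $\div(p)$, which accounts for the clauses $N_2=n_2=M_2$ throughout.

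Composing along the factorization then proves the lemma. From $\qin\reg_1^{N_1}\reg_2^{N_2}\reg_3^{N_3}$ the step $\div_{\mathrm{in}}$ forces $N_3=0$ and lands at $s_0\reg_1^{N_1}\reg_2^{N_2}$; running $\leq_{\reg_1-K}$ reaches a generic $s_\delta\reg_1^{n_1}\reg_2^{N_2}\reg_3^{n_3}$ with $N_1=n_1+Kn_3+\delta$ — this is item~1 in the case $q=s_\delta$; for $\div_{\Trsf}$ to be properly applied $\reg_1$ must then be empty, i.e.\ $\delta=0$ and $n_1=0$, so $N_1=Kn_3$, i.e.\ $K\mid N_1$, and the transfer block is entered at $t_0\reg_2^{N_2}\reg_3^{N_1/K}$; running $\leq_{\Trsf_{\reg_1}}$ reaches a generic $t_\delta\reg_1^{n_1}\reg_2^{N_2}\reg_3^{n_3}$ with $n_1+n_3+\delta=N_1/K$ — this is item~1 in the case $q=t_\delta$; finally $\div_{\mathrm{out}}$ requires $\reg_3$ empty, forcing $\delta=0$, $n_3=0$ and $n_1=N_1/K$, so the computation terminates at $\qout\reg_1^{N_1/K}\reg_2^{N_2}$, giving $KM_1=N_1$, $M_2=N_2$ and $M_3=N_3=0$ (the requirement $N_1>0$ in the statement reflecting the intended use in $\acm_K$, where every register content is a positive power of $K$). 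Item~2 is read off the very same chain, run from an arbitrary configuration at a state $q\in\State_{\div(p)}$ down to $\qout$; all converse directions are immediate, each such displayed statement being an equivalence.

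I expect the main obstacle to be the $\sqsubseteq$-bookkeeping at the three junctions: one must check that at each of $\div_{\mathrm{in}},\div_{\Trsf},\div_{\mathrm{out}}$ the spawned $z_i$-branch genuinely lies in $\Acc(\o)$ — which by Lemma~\ref{zero0} holds precisely when the register tested by that instruction is empty at that moment — and that no other $\div(p)$-instruction is applicable at those states, so that these steps can neither be skipped by nor interleaved into the $\Inst_{\reg_1-K}$ and $\Inst_{\Trsf_{\reg_1}}$ blocks; in other words, that the block structure above is genuinely forced. Given that, the two inductions together with the transitivity of $\sqsubseteq$ on configurations close the argument, everything else being the routine arithmetic of the division-analogues of Equations~\ref{eq:TimesK} and~\ref{eq:TransK}.
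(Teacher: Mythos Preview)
Your approach is exactly what the paper intends; it offers no proof beyond ``easily verified by induction,'' and you carry out precisely that induction by decomposing $\div(p)$ into its count-down block, transfer block, and three zero-test junctions, composing via Lemma~\ref{zero}. The state analysis and block structure are correct.

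However, the formulas you derive do not match the lemma as \emph{stated}, and you should flag this rather than assert ``this is item~1.'' At $q=s_\delta$ you correctly obtain $N_1 = n_1 + Kn_3 + \delta$, while the statement reads $N_1 = Kn_1 + n_3 + (K-\delta)$; at $q=t_\delta$ you obtain $N_1 = K(n_1+n_3+\delta)$, while the statement reads $N_1 = n_1+n_3+\delta$. Your formulas are the right ones: the displayed cases in the statement appear to have been copied from Lemma~\ref{times} without adjusting for the fact that in $\div(p)$ the roles of $\reg_1$ and $\reg_3$ (and of $\delta$ versus $K-\delta$) in the count-down are swapped, and that the transfer now occurs \emph{after} dividing by $K$ rather than before multiplying. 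The paper's own later use of the lemma in the register-admissibility argument confirms your versions: for $q=s_\delta$ it takes the $\qin$-configuration to have $\reg_1$-content $(n_1+\delta)+Kn_3$, i.e.\ $N_1 = n_1 + Kn_3 + \delta$. Analogous discrepancies occur in item~2, which also carries the typo $\sqsubseteq_{\times(p)}$ for $\sqsubseteq_{\div(p)}$. Finally, $N_1>0$ in the main clause is not actually enforced by the program --- the run with $N_1=M_1=0$ passes all three zero-tests vacuously --- so your gloss that it ``reflects the intended use'' is accurate, but it means the stated ``iff'' is slightly too strong as written.
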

\begin{cor}\label{divK} Let $p$ be a decrement instruction from some $2$-ACM $\acm$ and $\cf,\cf'$ be configurations in $\acm$. Then $\cf\leq^p\cf'$ if and only if $\cf_K\sqsubseteq_{\div(p)}\cf'_K$.
\end{cor}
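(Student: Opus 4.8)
The plan is to read Corollary~\ref{divK} off from Lemma~\ref{div} in exactly the way Corollary~\ref{timesK} is read off from Lemma~\ref{times}: substitute the exponential encoding into the arithmetic conditions of the lemma and observe that they collapse to the description of $\leq^p$ for a decrement instruction. By the symmetry remarked before Lemma~\ref{div} it suffices to treat a decrement-$\reg_1$ instruction $p:\qin\reg_1\leq\qout$ (for a decrement-$\reg_2$ instruction one swaps the roles of $\reg_1$ and $\reg_2$), and throughout I use $K>1$, so that $n\mapsto K^n$ is strictly increasing, hence injective, on $\N$.

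For the forward direction I would assume $\cf\leq^p\cf'$ with $\cf,\cf'\in\conf\acm$. Unwinding the definition of $\leq^p$ --- for a single configuration the context term is forced to be $\bot$ --- gives $\cf=\qin\reg_1^{n_1+1}\reg_2^{n_2}$ and $\cf'=\qout\reg_1^{n_1}\reg_2^{n_2}$ for some $n_1,n_2\in\N$, so $\cf_K=\qin\reg_1^{K^{n_1+1}}\reg_2^{K^{n_2}}$ and $\cf'_K=\qout\reg_1^{K^{n_1}}\reg_2^{K^{n_2}}$, with $\reg_3$ empty on both sides. Applying Lemma~\ref{div} with $N_1=K^{n_1+1}$, $N_2=K^{n_2}$, $N_3=0$ and $M_1=K^{n_1}$, $M_2=K^{n_2}$, $M_3=0$, the conditions $KM_1=N_1$, $N_1>0$, $M_2=N_2$ and $M_3=N_3=0$ are all satisfied, since $K\cdot K^{n_1}=K^{n_1+1}\geq K>1$; hence $\cf_K\sqsubseteq_{\div(p)}\cf'_K$.

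For the converse I would assume $\cf_K\sqsubseteq_{\div(p)}\cf'_K$ and write $\cf=q\reg_1^{a_1}\reg_2^{a_2}$, $\cf'=q'\reg_1^{b_1}\reg_2^{b_2}$ with $q,q'\in\State$. The first task is to pin down $q$ and $q'$: the states of $\acm$ are disjoint from every fresh state occurring in $\div(p)$ and in the zero-test program $\o$, the only instruction in $\Inst_{\div(p)}$ with left-hand state in $\State$ is $\div_{\mathrm{in}}:\qin\leq s_0\vee z_3$, the only one producing a state in $\State$ on its right is $\div_{\mathrm{out}}:t_0\leq\qout\vee z_3$, and the garbage term $u$ in $\cf_K\leq_{\div(p)}\cf'_K\vee u$ has all its states in $\State_{\o}$. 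Hence any nontrivial witnessing computation begins by firing $\div_{\mathrm{in}}$, forcing $q=\qin$, and produces $\cf'_K$ via $\div_{\mathrm{out}}$, forcing $q'=\qout$; so $\cf_K\sqsubseteq_{\div(p)}\cf'_K$ is an instance of the relation governed by Lemma~\ref{div}, which then yields $K\cdot K^{b_1}=K^{a_1}>0$, $K^{a_2}=K^{b_2}$ and $\reg_3$ empty on both sides. Injectivity of $n\mapsto K^n$ gives $a_1=b_1+1$ and $a_2=b_2$, i.e. $\cf=\qin\reg_1^{b_1+1}\reg_2^{b_2}$ and $\cf'=\qout\reg_1^{b_1}\reg_2^{b_2}$, which is precisely $\cf\leq^p\cf'$. (The only other possibility is the length-zero witness, which forces $\cf_K=\cf'_K$, hence $\cf=\cf'$; since a decrement instruction admits no $\cf$ with $\cf\leq^p\cf$, this case is vacuous and the corollary is to be read with $\cf\neq\cf'$.)

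I do not expect a real obstacle here: the corollary is pure bookkeeping on top of the genuinely technical Lemma~\ref{div}, and the argument mirrors the one for Corollary~\ref{timesK} with $KM_1=N_1$ in place of $M_1=KN_1$. The only step that deserves a moment's care is the state-matching in the converse --- checking that a $\sqsubseteq_{\div(p)}$-relation between two configurations of the ambient machine $\acm$ can only be realized by entering through $\div_{\mathrm{in}}$ and exiting through $\div_{\mathrm{out}}$ --- together with the routine use of $K>1$ to make $n\mapsto K^n$ injective.
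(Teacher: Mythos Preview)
Your proof is correct and takes essentially the same approach the paper intends: the corollary is stated without proof as an immediate consequence of Lemma~\ref{div}, and your argument is precisely the natural unpacking of that lemma under the exponential encoding $\cf\mapsto\cf_K$, mirroring Corollary~\ref{timesK}.

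You go further than the paper in two respects. First, the state-matching argument in the converse direction---verifying that a $\sqsubseteq_{\div(p)}$-relation between two configurations whose states lie in $\State$ must enter through $\div_{\mathrm{in}}$ and exit through $\div_{\mathrm{out}}$---fills a small gap: Lemma~\ref{div} is stated only for configurations already labelled $\qin$ and $\qout$, while the corollary allows arbitrary configurations of $\acm$. Second, your observation about the length-zero witness is a genuine (if harmless) imprecision in the corollary as stated: $\sqsubseteq_{\div(p)}$ is reflexive since $\leq_{\div(p)}$ is a preorder, so $\cf_K\sqsubseteq_{\div(p)}\cf_K$ always holds while $\cf\leq^p\cf$ never does for a decrement instruction. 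This does not affect the only use of the corollary (the induction in Lemma~\ref{Mhalts}, where one tracks actual computation steps), but you are right to flag it.
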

\begin{cor}\label{DivBack}
Assume that $p:\qin\reg\leq \qout$ is a decrement instruction, 
$\cf_\mathrm{in}$ is a $\qin$-configuration, $\cf$ is a $\State_{\div(p)}$-configuration and $\cf_\mathrm{out}$ is $\qout$-configuration. If $\cf_\mathrm{in}\sqsubseteq_{\div(p)}\cf$ and $\cf_\mathrm{in}\sqsubseteq_{\div(p)} \cf_\mathrm{out}$, then $\cf\sqsubseteq_{\div(p)}\cf_\mathrm{out}$.
\end{cor}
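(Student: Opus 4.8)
The plan is to unpack the definition of $\sqsubseteq_{\div(p)}$ on both hypotheses and then apply the detailed computation in Lemma~\ref{div}, reading off the register contents and state of $\cf$ and matching them against the output configuration $\cf_\mathrm{out}$. Write $\cf_\mathrm{in} = \qin\reg_1^{N_1}\reg_2^{N_2}\reg_3^{N_3}$, $\cf = q\reg_1^{n_1}\reg_2^{n_2}\reg_3^{n_3}$ with $q\in\State_{\div(p)}$, and $\cf_\mathrm{out} = \qout\reg_1^{M_1}\reg_2^{M_2}\reg_3^{M_3}$; without loss of generality take $\reg = \reg_1$. From $\cf_\mathrm{in}\sqsubseteq_{\div(p)}\cf$, Lemma~\ref{div}(1) gives $N_3 = 0$, $N_2 = n_2$, and the equation relating $N_1$ to $n_1,n_3$ (and $\delta$, according to whether $q = t_\delta$ or $q = s_\delta$). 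From $\cf_\mathrm{in}\sqsubseteq_{\div(p)}\cf_\mathrm{out}$, the ``in fact'' clause of Lemma~\ref{div} (the top-level statement, since $\qout\notin\State_{\div(p)}$) gives $KM_1 = N_1$, $M_2 = N_2$, $M_3 = N_3 = 0$.

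The goal is to produce $\cf\sqsubseteq_{\div(p)}\cf_\mathrm{out}$, i.e., to verify the conditions of Lemma~\ref{div}(2) with the roles as follows: the ``middle'' configuration there is $\cf$ (state $q\in\State_{\div(p)}$) and the output is $\cf_\mathrm{out}$. (Note the typo in Lemma~\ref{div}(2): $\sqsubseteq_{\times(p)}$ should read $\sqsubseteq_{\div(p)}$.) That requires $M_3 = 0$ (already have it), $M_2 = n_2$ (we have $M_2 = N_2 = n_2$), and $KM_1$ equal to $n_1 + n_3 + \delta$ if $q = t_\delta$, or $Kn_1 + n_3 + (K-\delta)$ if $q = s_\delta$. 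But the right-hand side of this is exactly the expression that Lemma~\ref{div}(1) equates with $N_1$, and $N_1 = KM_1$ from the second hypothesis. So the required equation holds in each of the two cases, and Lemma~\ref{div}(2) yields $\cf\sqsubseteq_{\div(p)}\cf_\mathrm{out}$.

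I expect no real obstacle here: the statement is essentially a ``middle-segment'' or cancellation property that falls straight out of the exact arithmetic characterizations in Lemma~\ref{div}, since both branches of the case split ($q = t_\delta$ versus $q = s_\delta$) are handled uniformly by the same substitution of $N_1 = KM_1$. The only points needing a word of care are: first, that $\qin$ and $\qout$ lie outside $\State_{\div(p)}$ while $q$ lies inside, so the correct clauses of Lemma~\ref{div} apply to each segment; and second, the bookkeeping of the additive $u\in\mathrm{ID}(\o)\cap\Acc(\o)$ side-branches implicit in $\sqsubseteq$ — but these are already absorbed into the statement of Lemma~\ref{div}, whose conclusions are phrased directly in terms of $\sqsubseteq_{\div(p)}$, so no extra argument about the zero-test branches is needed. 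Hence the proof is just the substitution described above together with the case analysis on $q$.
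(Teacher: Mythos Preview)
Your proposal is correct and is exactly the approach the paper takes, only spelled out in full detail: the paper's proof simply says that by Lemma~\ref{div} the register values of $\cf_\mathrm{in}$ are linked to those of $\cf$ and to those of $\cf_\mathrm{out}$, hence the values of $\cf$ and $\cf_\mathrm{out}$ are linked, and Lemma~\ref{div}(2) gives the conclusion. Your version makes the linkage explicit by writing out the equations from parts (1), (2) and the main clause of Lemma~\ref{div} and observing that the expression in (1) equated to $N_1$ is the same expression that (2) equates to $KM_1$, while the main clause gives $N_1=KM_1$.
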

\begin{proof} 
Since $\cf_\mathrm{in}\sqsubseteq_{\div(p)}\cf$ and $\cf_\mathrm{in}\sqsubseteq_{\div(p)} \cf_\mathrm{out}$, by Lemma~\ref{div} the values of $\cf_\mathrm{in}$ and $\cf$, as well as the values of $\cf_\mathrm{in}$ and $\cf_\mathrm{out}$ are linked. Therefore, the values of $\cf_\mathrm{out}$ and $\cf$ are also linked and hence by Lemma~\ref{div} we obtain $\cf\sqsubseteq_{\div(p)}\cf_\mathrm{out}$.
\end{proof}
%
%
\subsection{Construction of ${\acm}_K$}
Let $\acm=(\Reg_2,\State,\Inst,q_f)$ be a $2$-ACM  and let  $K>1$ be an integer. 
Since the configuration $q_f$ is accepted in $\acm$ by definition, we will need $(q_f)_K=q_f\reg_1\reg_2$ to be accepted in $\acm_K$. To accommodate this, we define the \textit{end program} as follows.
For a fresh variable $c_F$, we define the set of states ${\State}_F=\{c_F\}$ and the set of instructions ${\Inst}_F=\{F_1,F_2\}$ by:
$$\begin{array}{l c r c l }
F_1&:& q_f\reg_1&\leq& c_F\\
F_2&:& c_F\reg_2&\leq& q_F \end{array} .$$
By $\leq_F$ we denote the computation relation for the end program. 

\begin{lem}\label{end}
For $q\in \State_{F}\cup\{q_f\}$, $q\reg_1^{n_1}\reg_2^{n_2}\reg_3^{n_3}\leq_F q_F$ if and only if $n_3=0$ and 
$$(n_1,n_2)=\left\{ \begin{array}{rl} (1,1)&\mbox{if $q=q_f$}\\ (0,1)&\mbox{if $q=c_F$}\\(0,0)&\mbox{if $q=q_F$} \end{array}\right. .$$
\end{lem}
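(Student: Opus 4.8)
The plan is to observe that the end program is a rigidly deterministic two-step pipeline and simply trace it. First I would note that $\leq_F$ is the least $\{\cdot,\vee\}$-compatible preorder containing the two instructions $F_1\colon q_f\reg_1\leq c_F$ and $F_2\colon c_F\reg_2\leq q_F$, both of which are decrement instructions (neither is a fork). Hence, by the analogue of Lemma~\ref{basicrel}(1), any computation witnessing $q\reg_1^{n_1}\reg_2^{n_2}\reg_3^{n_3}\leq_F q_F$ can be taken to consist entirely of single configurations (a decrement step applied to a single configuration yields a single configuration), so no $\vee$-branching ever occurs. Moreover neither $F_1$ nor $F_2$ mentions $\reg_3$, so the $\reg_3$-exponent is invariant along such a computation; comparing with the target $q_F$ (all registers empty) forces $n_3=0$.

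Next I would record the forced state dynamics among the three relevant states $q_f$, $c_F$, $q_F$: $F_1$ is the only instruction applicable to a $q_f$-configuration and it requires $\reg_1\geq 1$, sending the state $q_f\mapsto c_F$; $F_2$ is the only instruction applicable to a $c_F$-configuration and it requires $\reg_2\geq 1$, sending $c_F\mapsto q_F$; and $\Inst_F$ contains no $q_F$-instruction. Consequently, from a $q_F$-configuration the only computation has length $0$; from a $c_F$-configuration the only computation is a prefix of $c_F\cdots\leq^{F_2}q_F\cdots$; and from a $q_f$-configuration the only computation is a prefix of $q_f\cdots\leq^{F_1}c_F\cdots\leq^{F_2}q_F\cdots$.

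For the forward direction I then read off the register constraints from the fact that the endpoint is the bare $q_F$. If $q=q_F$, the computation has length $0$, so $(n_1,n_2,n_3)=(0,0,0)$. If $q=c_F$, then $F_2$ must be applied exactly once (after it the state is $q_F$ and the computation halts), which requires $n_2\geq 1$ and produces $q_F\reg_1^{n_1}\reg_2^{n_2-1}$; this equals $q_F$ only if $(n_1,n_2)=(0,1)$. If $q=q_f$, then $F_1$ followed by $F_2$ must be applied (requiring $n_1,n_2\geq 1$), producing $q_F\reg_1^{n_1-1}\reg_2^{n_2-1}$, which forces $(n_1,n_2)=(1,1)$. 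For the converse, the computations $q_F\leq_F q_F$, $\ c_F\reg_2\leq^{F_2}q_F$, and $\ q_f\reg_1\reg_2\leq^{F_1}c_F\reg_2\leq^{F_2}q_F$ witness all three cases.

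I do not expect a genuine obstacle here: the statement is essentially mechanical. The only point that deserves an explicit line of justification is the determinism claim of the first two paragraphs — that from each of $q_f$, $c_F$, $q_F$ exactly one instruction (or none) is applicable and that no $\vee$-branching can sneak in — since everything else is just reading exponents off the forced computation.
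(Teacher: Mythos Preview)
Your proposal is correct and is exactly the mechanical trace the paper has in mind; the paper itself states this lemma without proof, treating it as an immediate consequence of the shape of $\Inst_F=\{F_1,F_2\}$. Your explicit observation that neither instruction is a fork (so computations stay in $\conf{\acm_K}$), that $\reg_3$ is untouched, and that the state sequence is forced to be a prefix of $q_f\to c_F\to q_F$ is precisely the argument.
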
 

We write the instructions $\Inst$ of $\acm$ as the disjoint union $\Inst_+\cup\Inst_-\cup\Inst_\vee$ of its increment, decrement, and forking instructions, respectively. We can now formally define the $3$-ACM simulation of $\acm$ to be the machine ${\acm}_K=(\Reg_3,\State_K,\Inst_K,q_F)$, where 
\begin{itemize}
\item $\State_K$ is the (disjoint) union of $\State$, $\State_{\o}$, $\State_{F}$, $\State_{\times(p)}$ for each ${p\in \Inst_+}$, and $\State_{\div(p)}$ for each $p\in\Inst_-$.
\item $\Inst_K$ is the (disjoint) union of $\Inst_\vee$, $\Inst_{\o}$, $\Inst_{F}$, $\Inst_{\times(p)}$ for each ${p\in \Inst_+}$, and $\Inst_{\div(p)}$ for each ${p\in\Inst_-}$.
\item $q_F$ is the final state of $\acm_K$.
\end{itemize}
Formally, we view all states and instructions in some multiply/divide program $\mathcal{P}(p)$ (where $p$ is an increment/decrement instruction from $\acm$) as being labeled by the instruction $p$, e.g., a state from $\State_{\mathcal{P}(p)}$ is of the form $q^p$, and an instruction $\Inst_{\mathcal{P}(p)}$ is of the form $\rho^p$. In other words, we make states and instructions in each subprogram disjoint. In fact, since there are no instructions in $\mathcal{P}(p)$ of the form $\cdots\leq q_F\cdots$, we obtain the following useful observation.
\begin{lem}\label{internal states}
Let $p$ be an increment or decrement instruction from $\acm$ and $\mathcal{P}(p)$ its corresponding program in $\acm_K$. If $\cf$ is a configuration in $\acm_K$ labeled by a state from $\State_{\mathcal{P}(p)}$ then the only instructions applicable to $\cf$ are those from $\Inst_{\mathcal{P}(p)}$. Furthermore, if $\qout$ is the output state of $p$, then $\cf$ being accepted in $\acm_K$ implies $\cf\sqsubseteq_{\mathcal{P}(p)} \cf'\in \Acc(\acm_K)$, where $\cf'$ is labeled by $\qout$.
\end{lem}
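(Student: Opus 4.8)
The plan is to prove the two assertions separately: the first is a bookkeeping argument on state labels, and the second is an induction on the length of an accepting computation. For the first assertion, recall that by construction $\State_K$ is the \emph{disjoint} union of $\State$, $\State_\o$, $\State_F$, and the sets $\State_{\mathcal{P}(p')}$ as $p'$ ranges over the increment and decrement instructions of $\acm$. Running through the finite list of instruction types of $\acm_K$, one checks that an instruction reads an input state lying in $\State_{\mathcal{P}(p)}$ only if it belongs to $\Inst_{\mathcal{P}(p)}$: the instructions in $\Inst_\vee$ read states from $\State$, those in $\Inst_F$ read $q_f$ or $c_F$, those in $\Inst_\o$ read some $z_i$, and within any program $\mathcal{P}(p')$ the only instruction whose input state lies outside $\State_{\mathcal{P}(p')}$ is $\times_{\mathrm{in}}$ (resp. $\div_{\mathrm{in}}$), whose input is the original state $\qin\in\State$. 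Since $\cf$ is labeled by some $q\in\State_{\mathcal{P}(p)}$, every instruction applicable to $\cf$ is a $q$-instruction and hence lies in $\Inst_{\mathcal{P}(p)}$. I record for later use that the same disjointness argument applied to $\o$ in place of $\mathcal{P}(p)$ shows that every configuration reachable from a $\State_\o$-configuration is again a $\State_\o$-configuration or else is labeled by $q_F$; consequently a $\State_\o$-configuration that is accepted in $\acm_K$ is already accepted in $\o$.

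For the second assertion, I would induct on the length $N$ of a computation witnessing $\cf\in\Acc(\acm_K)$. If $N=0$ then $\cf\in\Fid(\acm_K)$, so $\cf$ is labeled by $q_F\notin\State_{\mathcal{P}(p)}$ and the case is vacuous. For $N\geq1$, the first step applies some instruction $\rho$ to the single configuration $\cf$, and $\rho\in\Inst_{\mathcal{P}(p)}$ by the first assertion. If $\rho$ is non-forking, then $\cf\leq^{\rho}\cf_1$ where $\cf_1$ is again labeled by a state of $\State_{\mathcal{P}(p)}$ (every non-forking program instruction has its output state in $\State_{\mathcal{P}(p)}$) and is accepted by a strictly shorter computation, so the induction hypothesis yields $\cf_1\sqsubseteq_{\mathcal{P}(p)}\cf'$ for a $\qout$-configuration $\cf'\in\Acc(\acm_K)$; as $\cf\sqsubseteq_{\mathcal{P}(p)}\cf_1$ (with trailing $\o$-part $\bot$) and $\sqsubseteq_{\mathcal{P}(p)}$ is transitive on configurations, $\cf\sqsubseteq_{\mathcal{P}(p)}\cf'$. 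If $\rho$ is forking, then — since $\times_{\mathrm{in}},\div_{\mathrm{in}}$ are inapplicable to a configuration on a program state — $\rho$ is one of $\times_{\Trsf},\times_{\mathrm{out}},\div_{\Trsf},\div_{\mathrm{out}}$, and $\cf\leq^{\rho}\cf_1\vee z$ where $z$ is a single $\State_\o$-configuration (over the three registers) and $\cf_1$ is labeled by a program state (cases $\times_{\Trsf},\div_{\Trsf}$) or already by $\qout$ (cases $\times_{\mathrm{out}},\div_{\mathrm{out}}$). By Lemma~\ref{comprel}(2) and Lemma~\ref{basicrel}(2), $\cf_1$ and $z$ are each accepted in $\acm_K$ by strictly shorter computations; by the encapsulation remark above, $z\in\mathrm{ID}(\o)$ and $z\in\Acc(\o)$, so $\cf\sqsubseteq_{\mathcal{P}(p)}\cf_1$. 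In the cases $\times_{\mathrm{out}},\div_{\mathrm{out}}$ we are done with $\cf':=\cf_1$; in the cases $\times_{\Trsf},\div_{\Trsf}$ the induction hypothesis applied to $\cf_1$ together with transitivity finishes the argument.

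The point I expect to require the most care is the mismatch between the relation $\sqsubseteq_{\mathcal{P}(p)}$ — which by definition records only a computation using $\mathcal{P}(p)$-instructions with a single accepted $\o$-ID appended at the very end — and a genuine $\acm_K$-computation, in which the spawned auxiliary $z_i$-branches keep evolving and interleave with the main computation. The induction circumvents this by never evolving a spawned branch: each forking step contributes a still-inert $\State_\o$-configuration, whose membership in $\Acc(\acm_K)$ (obtained from Lemma~\ref{comprel}(2)) upgrades to membership in $\Acc(\o)$ precisely by the encapsulation remark, which is exactly what the definition of $\sqsubseteq$ demands. A minor auxiliary fact worth confirming along the way is that every configuration on a program state admits an applicable instruction of that program (the instructions $\times_{\mathrm{out}},\times_{\Trsf},T_+,+_i$ and their division analogues carry no register precondition), so no intermediate configuration encountered in the analysis is simultaneously stuck and non-final.
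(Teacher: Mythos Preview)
Your proof is correct and supplies the induction-on-computation-length argument that the paper omits: in the paper this lemma is stated as a direct observation from the disjointness of the state sets and the remark that no $\mathcal{P}(p)$-instruction outputs $q_F$, with no further justification. Your decomposition (first assertion by inspection of input states; second by induction, splitting on non-forking versus forking program instructions and using the encapsulation of $\o$) is exactly the natural way to make that observation rigorous.

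One small correction to your closing paragraph: the ``minor auxiliary fact'' that every program-state configuration admits an applicable program instruction is false as stated. In the division program, for $0<\delta<K$ the only $s_\delta$-instruction is $-_{\delta+1}:s_\delta\reg\leq s_{\delta+1}$, which \emph{does} carry a register precondition; a configuration $s_\delta x$ with the active register empty is stuck. This does not affect your argument, since the induction only ever considers accepted configurations, and an accepted configuration that is not a final ID necessarily admits an applicable instruction (else its minimal accepting computation would have length $0$). So the fact you need is not ``every program-state configuration has an applicable instruction'' but the weaker ``every accepted non-final configuration has one,'' which is immediate. You can simply drop the auxiliary remark.
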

Recall, for a configuration $\cf=q\reg_1^n\reg_2^m$ in $\acm$, by $\cf_K$ we denote the configuration $q\reg_1^{K^n}\reg_2^{K^m}$ in $\acm_K$.
\begin{lem}\label{Mhalts}
The following hold for any 2-ACM $\acm=(\Reg_2,\State,\Inst,q_f)$ and $K>1$.
\begin{enumerate} 
\item A configuration $\cf$ is accepted in $\acm$ iff $\cf_K$ is accepted in $\acm_K$. Furthermore, any accepted configuration in $\acm_K$ labeled by a state from $\State$ must be of the form $\cf_K$ where $\cf$ is accepted in $\acm$.
\item Let $p$ be an increment or decrement instruction of $\acm$ and $\cf$ a configuration of the corresponding program $\mathcal{P}(p)$ $(\mathcal{P}\in\{\times,\div\})$. 
Then $\cf$ is accepted in $\acm_K$ iff there are accepted configurations $\cf',\cf''$ in $\acm$ such that $\cf'\leq^p\cf''$ and $\cf'_K\sqsubseteq_{\mathcal{P}(p)} \cf\sqsubseteq_{\mathcal{P}(p)} \cf''_K$.
\end{enumerate}
\end{lem}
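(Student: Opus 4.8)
The plan is to prove the two items together, splitting each biconditional into its two halves and using two induction measures: the forward halves (``accepted in $\acm$ implies accepted in $\acm_K$'' and the ``if'' direction of item~2) by induction on the length of an accepting computation in $\acm$, and the backward halves (the ``furthermore'' clause of item~1 together with the ``only if'' directions) by induction on the length of a minimal accepting computation in $\acm_K$; at each inductive step the two items feed into one another.

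For the forward half of item~1 the base case is $\cf=q_f$, where $\cf_K=q_f\reg_1\reg_2\leq_F q_F$ by Lemma~\ref{end}. For the inductive step write an accepting computation of $\cf$ as $\cf\leq^p\uid$ with $\uid\in\Acc(\acm)$ of strictly smaller length and split on the type of $p$. If $p$ is a fork then $p\in\Inst_K$, and from the inductively available acceptance of the two children of $\cf_K$ together with Lemma~\ref{comprel}(2) we get $\cf_K\in\Acc(\acm_K)$. If $p$ is an increment resp.\ decrement then Corollary~\ref{timesK} resp.\ Corollary~\ref{divK} gives $\cf_K\sqsubseteq_{\mathcal{P}(p)}\uid_K$; since $\leq_{\mathcal{P}(p)}\subseteq\leq_{\acm_K}$, any side ID peeled off lies in $\Acc(\o)\subseteq\Acc(\acm_K)$, and $\uid_K\in\Acc(\acm_K)$ by hypothesis, so $\cf_K\in\Acc(\acm_K)$. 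The ``if'' direction of item~2 is then immediate: from $\cf\sqsubseteq_{\mathcal{P}(p)}\cf''_K$, $\cf''\in\Acc(\acm)$, and the forward half of item~1 we obtain $\cf''_K\in\Acc(\acm_K)$, whence $\cf\in\Acc(\acm_K)$ by the same argument ($\cf'$ is irrelevant here).

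For the backward halves I would induct on the length of a minimal accepting computation of a configuration $\cf$ of $\acm_K$, splitting on the state labelling $\cf$. If $\cf$ is labelled by a state of $\State$, the first instruction applicable to $\cf$ is one of: a fork of $\acm$ --- then the two children of $\cf$ are accepted by strictly shorter computations (Lemma~\ref{basicrel}(2)), so by item~1 they are $(\cf_1)_K$ and $(\cf_2)_K$ with $\cf_1,\cf_2\in\Acc(\acm)$ sharing the register contents of $\cf$, forcing $\cf=(\cf_0)_K$ with $\cf_0\leq^p\cf_1\vee\cf_2\in\Acc(\acm)$, hence $\cf_0\in\Acc(\acm)$; a program-entry instruction $\times_{\mathrm{in}}^p$ resp.\ $\div_{\mathrm{in}}^p$ --- then $\cf$ splits into a $z_3$-branch, whose acceptance forces the third register of $\cf$ empty (Lemma~\ref{zero0}), and a program-internal configuration accepted by a strictly shorter computation, to which item~2 applies; Lemma~\ref{times}(1) resp.\ Lemma~\ref{div}(1) then forces the first two registers of $\cf$ to be the appropriate powers of $K$, so again $\cf=(\cf_0)_K$ with $\cf_0\in\Acc(\acm)$; or, if $\cf$ is a $q_f$-configuration, the instruction $F_1$, in which case Lemma~\ref{end} gives $\cf=q_f\reg_1\reg_2=(q_f)_K$. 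If $\cf$ is labelled by a state of some $\State_{\mathcal{P}(p)}$, then Lemma~\ref{internal states} yields $\cf\sqsubseteq_{\mathcal{P}(p)}\cf''_K$ with $\cf''_K$ labelled by the output state $\qout$ of $p$ and accepted in $\acm_K$ by a strictly shorter computation (the latter extracted from the proof of that lemma); by item~1, $\cf''\in\Acc(\acm)$, and then Lemma~\ref{times}(2) resp.\ Lemma~\ref{div}(2), using that the registers of $\cf''_K$ are powers of $K$, determines the register data of $\cf$ and lets us read off a $\qin$-configuration $\cf'$ of $\acm$ with $\cf'\leq^p\cf''$ and $\cf'_K\sqsubseteq_{\mathcal{P}(p)}\cf$ (via Lemma~\ref{times}(1) resp.\ Lemma~\ref{div}(1)); finally $\cf'\in\Acc(\acm)$ since $\cf'\leq^p\cf''$ and $\cf''\in\Acc(\acm)$.

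The step I expect to be the main obstacle is the backward direction: one must make the combined induction genuinely well-founded, i.e.\ check that the $\qout$-configuration returned by Lemma~\ref{internal states} and the program-internal configuration peeled off at a program entry each come with a strictly shorter accepting computation than $\cf$, and one must carry out the modular arithmetic of Lemmas~\ref{times} and \ref{div} to confirm that an accepted program-internal configuration necessarily holds a genuine power of $K$ in every register and that the reconstructed $\acm$-configurations $\cf',\cf''$ are legitimate (non-negative register values) and actually satisfy $\cf'\leq^p\cf''$. A further bookkeeping chore is that every program run also emits auxiliary $z_i$-branches, each of which has to be separately certified accepted via Lemma~\ref{zero0}.
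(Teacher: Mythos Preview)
Your proposal is correct and follows essentially the same route as the paper: induction on computation length, Corollaries~\ref{timesK}/\ref{divK} for item~(1), Lemma~\ref{internal states} together with Lemmas~\ref{times}/\ref{div} for item~(2), and the observation that $\cf'\in\Acc(\acm)$ follows from $\cf'\leq^p\cf''\in\Acc(\acm)$. The paper compresses the argument considerably (its proof of~(2) is essentially ``by~(1) the output configuration has powers of $K$; by Lemmas~\ref{times} and~\ref{div} the result follows''), whereas you spell out the mutual induction between~(1) and~(2) and the well-foundedness checks explicitly; your self-identified ``obstacles'' are all routine once the cited lemmas are in hand.
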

\begin{proof} For (1), let $\cf$ be a configuration in $\acm$.  
Since there are no $q_f$-instructions in $\acm$ by definition, if $\cf$ is labeled by state $q_f$ then it is accepted in $\acm$ iff $\cf=q_f$, i.e., both registers $\reg_1$ and $\reg_2$ are empty. By definition, the only $q_f$-instructions in $\acm_K$ are those found in the end program. By Lemma~\ref{end}, the only accepted configuration in $\acm_K$ labeled by $q_f$ is $\cf_K$. Now, suppose $p$ is a $q$-instruction from $\acm$. Clearly, if $p$ is a forking instruction, then $\cf\leq^p \cf'\vee \cf''$ in $\acm$ iff $\cf_K\leq^p \cf'_K\vee\cf_K''$. Otherwise, $p$ is an increment or decrement instruction, and by Corollaries~\ref{timesK} and~\ref{divK}, $\cf\leq^p \cf'$ in $\acm$ iff $\cf_K\sqsubseteq_{\mathcal{P}(p)} \cf_K'$ in $\acm_K$. The claim therefore follows by induction on the computation lengths.

For (2), consider a configuration $\cf$ in $\acm_K$ labeled by a state from some program $\mathcal{P}(p)$, where $p$ is an increment or decrement instruction from $\acm$. Let $\qin$ and $\qout$ be the input and output states of $p$, respectively. By Lemma~\ref{internal states}, we conclude that if a computation witnesses $\cf$ being accepted in $\acm_K$ it must implement the output instruction of $\mathcal{P}(p)$. That is $\cf\sqsubseteq_{\mathcal{P}(p)}\qout \reg_1^{n_1}\reg_2^{n_2}\reg_3^{n_3}$. By (1), $n_1$ and $n_2$ are powers of $K$ while $n_3=0$. By Lemmas~\ref{times} and~\ref{div}, the result follows.
\end{proof}

Let $\Uacm$ be the 2-ACM given by Theorem~\ref{lincoln}. Since membership of ${\Acc}(\Uacm)$ is undecidable, we obtain the following consequence of Lemma~\ref{Mhalts}(1):
 
\begin{cor}\label{mkundec} 
Membership in the set ${\Acc}(\Uacm_K)$ is undecidable for $K>1$.
 \end{cor}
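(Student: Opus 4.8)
The plan is to deduce undecidability of membership in $\Acc(\Uacm_K)$ directly from Lemma~\ref{Mhalts}(1) and Theorem~\ref{lincoln}, by exhibiting a computable many-one reduction from membership in $\Acc(\Uacm)$ to membership in $\Acc(\Uacm_K)$. The substantive content of such a reduction is already in place: by Lemma~\ref{Mhalts}(1) the computable map $\cf\mapsto\cf_K$ (replace the $\reg_1,\reg_2$ exponents $n_1,n_2$ of $\cf$ by $K^{n_1},K^{n_2}$) carries accepted configurations of $\Uacm$ precisely to those accepted configurations of $\Uacm_K$ that are labeled by a state of $\State$.

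What remains is the routine step of passing from configurations to arbitrary terms of $A_{\Uacm}$. First I would observe that membership in $\indes{\Uacm}$ is decidable, and that by Lemma~\ref{comprel}(1) (applied to both $\Uacm$ and $\Uacm_K$) any term lying outside the respective ID-set is automatically not accepted. Then, on input $t\in A_{\Uacm}$, define $f(t)$ as follows: if $t\notin\indes{\Uacm}$, let $f(t)$ be some fixed configuration of $\Uacm_K$ that is not accepted --- e.g.\ the bare configuration $q_f$, which is not of the form $\cf_K$ (since $K^{n}\ge 1$ forces both register exponents of any $\cf_K$ to be positive) and hence, by Lemma~\ref{Mhalts}(1), not accepted; if $t\in\indes{\Uacm}$, write $t=\cf_1\vee\cdots\vee\cf_m$ in canonical form and set $f(t):=(\cf_1)_K\vee\cdots\vee(\cf_m)_K$. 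Clearly $f$ is computable.

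For correctness: when $t\notin\indes{\Uacm}$ both $t\in\Acc(\Uacm)$ and $f(t)\in\Acc(\Uacm_K)$ fail; and when $t=\cf_1\vee\cdots\vee\cf_m$, Lemma~\ref{comprel}(2) gives $t\in\Acc(\Uacm)$ iff each $\cf_i\in\Acc(\Uacm)$, which by Lemma~\ref{Mhalts}(1) holds iff each $(\cf_i)_K\in\Acc(\Uacm_K)$, which by Lemma~\ref{comprel}(2) for $\Uacm_K$ holds iff $f(t)=\bigvee_{i}(\cf_i)_K\in\Acc(\Uacm_K)$. Thus $f$ is the desired reduction, and since $\Acc(\Uacm)$ is undecidable by Theorem~\ref{lincoln}, so is $\Acc(\Uacm_K)$.

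I do not expect any genuine obstacle here: all the real work has already been done in establishing the faithful simulation, Lemma~\ref{Mhalts}. The only point requiring mild care is the harmless mismatch between the ``term'' and ``configuration'' formulations of the acceptance problem, handled above; indeed one could shortcut even this by noting at the outset that, by Lemma~\ref{comprel}, deciding $\Acc(\Uacm)$ is computably equivalent to deciding acceptance of configurations, after which the single reduction $\cf\mapsto\cf_K$ already suffices.
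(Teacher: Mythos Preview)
Your proposal is correct and follows essentially the same approach as the paper: the paper simply states the corollary as an immediate consequence of Lemma~\ref{Mhalts}(1) (together with Theorem~\ref{lincoln}) without any further argument. Your extra bookkeeping---handling arbitrary terms versus configurations via Lemma~\ref{comprel}, and exhibiting an explicit non-accepted image for non-ID inputs---is sound but more detailed than the paper deems necessary; as you yourself note at the end, the single reduction $\cf\mapsto\cf_K$ on configurations already suffices once one observes (via Lemma~\ref{comprel}) that acceptance of terms reduces to acceptance of configurations.
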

%
%
\subsection{Register-Admissibility in $\acm_K$}
Consider an $n$-variable simple equation $[\Dset]$, a $2$-ACM $\acm$, and an integer $K>1$. To show the register-admissibility of $[\Dset]$ in $\acm_K$, we need only show that for each configuration $\cf$ in $\acm_K$, if the ID $\bigvee_{\vd\in\Dset} \cf_\vd$ is obtained by an instance of $\leq^{\regeq{\Dset}}$ from $\cf$ and $\bigvee_{\vd\in\Dset} \cf_\vd$ is accepted in $\acm_K$, then $\cf$ is accepted in $\acm_K$. By Lemma~\ref{comprel}(2), this implication is equivalently stated as
$$\cf\leq^{\regeq{\Dset}} \bigvee_{\vd\in \Dset}\cf_\vd\quad\&\quad (\forall \vd\in\Dset)(\cf_\vd\in \Acc(\acm_K))\quad \implies \quad\cf\in \Acc(\acm_K) .$$

Since we are only considering applications of $[\Dset]$ to the register contents, we can split our analysis into cases depending upon the state $q\in \State_K$ that labels the configurations. The following useful observation follows from the fact that every variable that appears on the right-hand side of a simple equation appears also on the left-hand side.

\begin{lem}\label{cond1} If a substitution sends all the joinands of a simple equation to $1$, then 
it sends all variables of the equation to $1$.
\end{lem}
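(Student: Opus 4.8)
The plan is to argue entirely inside the free commutative monoid $X^*$, using that it is \emph{reduced}: a product $w_1\cdots w_r$ in $X^*$ equals $1$ only if every $w_j=1$, so in particular $w^k=1$ with $k\geq 1$ forces $w=1$. This is immediate from the total-degree homomorphism $X^*\to(\N,+,0)$, which sends $1$ to $0$ and is additive; it is also the only place the \emph{monoid} (rather than group) nature of $X^*$ is used, since in a group products can collapse to $1$ nontrivially. The other ingredient is the standing convention on simple equations, namely that in $[\Dset]$ no row of $\Dset$ is identically zero: for each $i\in\{1,\dots,n\}$ there is some $\vd\in\Dset$ with $\vd(i)>0$.

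So I would let $\subt$ be a monoidal substitution sending every joinand of $[\Dset]$ to $1$, and set $t_j:=\subt(x_j)\in X^*$. For each $\vd\in\Dset$ the hypothesis reads $\prod_{j=1}^{n}t_j^{\vd(j)}=1$ in $X^*$; by reducedness, $t_j^{\vd(j)}=1$ for every $j$, and hence for each $j$ either $\vd(j)=0$ or $t_j=1$. Now fix a variable $x_i$ of the equation and, via the no-zero-row property, choose $\vd\in\Dset$ with $\vd(i)>0$; the dichotomy just obtained forces $t_i=1$. Since the variables occurring in $[\Dset]$ are exactly $x_1,\dots,x_n$, this shows that $\subt$ sends all variables of the equation to $1$, as claimed.

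I do not expect any real obstacle: the lemma is a bookkeeping consequence of reducedness of the free commutative monoid together with the fact that simple equations carry the same variable set on both sides. The only point worth stating explicitly — and the only place the hypotheses are genuinely used — is that a monomial in $X^*$ is the identity iff all of its exponents vanish, so that the identity $\prod_j t_j^{\vd(j)}=1$ really does propagate down to each individual factor $t_j^{\vd(j)}$.
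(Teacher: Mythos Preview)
Your proof is correct and is essentially the argument the paper has in mind: the paper merely remarks that the lemma follows from the defining condition on simple equations that the variable sets on the two sides coincide (equivalently, no row of $\Dset$ is identically zero), and you have spelled this out via reducedness of the free commutative monoid together with the existence, for each $i$, of some $\vd\in\Dset$ with $\vd(i)>0$.
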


In the following for two tuples $\rvec$ and $\vd$ of the same length, $\rvec \vd$ denotes their dot product. In the next section we will actually view $\rvec$ as a row-matrix and $\vd$ as a column-matrix, so $\rvec \vd$ will be their matrix product. 
In this way, focusing on the list/column vector $d$ of exponents of the variables in $[\Dset]$ and also on the list/row vector $\sigma$ of the exponents of the images of the variables via a one-variable substitution, the above lemma can be stated as: for an $n$-variable simple equation $[\Dset]$, if $\rvec \vd = 0$ for each $\vd\in \Dset$, then $\rvec$ must be the constantly zero vector $\Zero\in\N^n$.

As observed in Section~\ref{motivation}, if $\cf \leq^{\regeq{\Dset}} \bigvee_{\vd\in\Dset} \cf_\vd$ is an instance of $\leq^{\regeq{\Dset}}$, we may write $\cf=qx\mathbf{x_n}^{\Lvec{}}$ and $\cf_\vd=qx\mathbf{x_n}^\vd$ for each $\vd\in \Dset$, where $x\in \Reg_3^*$ and $\mathbf{x_n}=(x_1,\ldots,x_n)\in (\Reg_3^*)^n$.  Let $x=\reg_1^{\const_1}\reg_2^{\const_2}\reg_3^{\const_3}$, where $\const_1,\const_2,\const_3\geq0$, and for each $j\in\{1,2,3\}$, define ${\rvec}_j\in\N^n$ via $x_i=\reg_1^{{\rvec}_1(i)}\reg_2^{{\rvec}_2(i)}\reg_3^{{\rvec}_3(i)}$, for each $i\in\nset{n}$. Then,
$${\cf}= q\reg_1^{\const_1+ {\rvec}_1{\Lvec{}}}\reg_2^{\const_2+ {\rvec}_2{\Lvec{}}}\reg_3^{\const_3+ {\rvec}_3\Lvec{}}, $$
and for each $d\in\Dset$,
$${\cf}_d=q\reg_1^{\const_1+ {\rvec}_1{\vd}}\reg_2^{\const_2+ {\rvec}_2{\vd}}\reg_3^{\const_3+ {\rvec}_3{\vd}}.$$

\begin{lem}\label{lem: zero end} The zero-test program is register-admissible for any simple equation, and the end program is register-admissible for any non-mingly simple equation.
\end{lem}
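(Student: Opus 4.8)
The plan is to treat the two programs separately, since for each we must show that an application of $\leq^{\regeq{\Dset}}$ to a configuration labeled by one of their internal states cannot create a newly accepted configuration. In both cases the key structural feature is that the only instructions applicable to such a configuration are those of the program itself (cf. Lemma~\ref{internal states} for the programs inside $\acm_K$; for the standalone programs here the states $z_i$, $q_F$, $c_F$, $q_f$ have only the listed instructions applicable), and each such program has a completely rigid, essentially deterministic computation behavior governed by Lemmas~\ref{zero0}, \ref{zero} and~\ref{end}. Fix a simple equation $[\Dset]$ in $n$ variables, and suppose $\cf \leq^{\regeq{\Dset}} \bigvee_{\vd\in\Dset}\cf_\vd$ with every $\cf_\vd$ accepted; using the notation set up just before the lemma, write $\cf = q\reg_1^{c_1+\rvec_1\mathbf 1}\reg_2^{c_2+\rvec_2\mathbf 1}\reg_3^{c_3+\rvec_3\mathbf 1}$ and $\cf_\vd = q\reg_1^{c_1+\rvec_1\vd}\reg_2^{c_2+\rvec_2\vd}\reg_3^{c_3+\rvec_3\vd}$, where $q$ ranges over the states of the program in question.

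For the zero-test program: the relevant states are $z_1,z_2,z_3$ (the state $q_F$ admits no instruction, so if $\cf$ is a $q_F$-configuration then each $\cf_\vd$ accepted forces $\cf_\vd=q_F$, whence by Lemma~\ref{cond1} applied to the exponent vectors $\rvec_j$ — each $\rvec_j\vd=0$ for all $\vd$ forces $\rvec_j=\Zero$ — we get $\cf=\cf_\vd$, so $\cf$ is accepted). For $q=z_i$, Lemma~\ref{zero0} says $\cf_\vd$ is accepted iff its $\reg_i$-content is $0$, i.e.\ $c_i+\rvec_i\vd=0$ for every $\vd\in\Dset$; since the entries are nonnegative this forces $c_i=0$ and $\rvec_i\vd=0$ for all $\vd$, hence $\rvec_i=\Zero$ by Lemma~\ref{cond1}. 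But then the $\reg_i$-content of $\cf$ is also $c_i+\rvec_i\mathbf 1 = 0$, so by Lemma~\ref{zero0} again $\cf$ is accepted. (Note this argument uses nothing about $[\Dset]$ beyond the common-variable property behind Lemma~\ref{cond1}, which is why the zero-test program is register-admissible for every simple equation.)

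For the end program: the relevant states are $q_f, c_F, q_F$. By Lemma~\ref{end}, a $q$-configuration with register contents $(n_1,n_2,n_3)$ is accepted iff $n_3=0$ and $(n_1,n_2)$ equals $(1,1)$, $(0,1)$, or $(0,0)$ according as $q$ is $q_f$, $c_F$, or $q_F$. Apply this to each $\cf_\vd$: we get $c_3+\rvec_3\vd=0$ for all $\vd\in\Dset$, hence $c_3=0$ and $\rvec_3=\Zero$ (Lemma~\ref{cond1}), and also each $\cf_\vd$ has the fixed $(\reg_1,\reg_2)$-content determined by $q$. The latter means $c_1+\rvec_1\vd$ and $c_2+\rvec_2\vd$ are constant over $\vd\in\Dset$; if some $\rvec_j$ were nonzero then, picking any two $\vd,\vd'$ with $\rvec_j\vd\neq\rvec_j\vd'$ — which exists unless the substitution is trivial on that variable — we contradict constancy, \emph{unless} $\Dset$ is such that $\rvec_j\vd$ is forced constant for a nonzero $\rvec_j$, which is exactly the configuration behind mingliness (a one-variable substitution collapsing all joinands to the same power). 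This is the step where non-mingliness is used: the hypothesis rules out a nontrivial $\rvec_j$ with $\rvec_j\vd$ constant across $\Dset$, forcing $\rvec_1=\rvec_2=\rvec_3=\Zero$, whence $\cf=\cf_\vd$ and $\cf$ is accepted. I expect the main obstacle to be making the last deduction precise — isolating exactly why a nonzero $\rvec_j$ with $\{\rvec_j\vd : \vd\in\Dset\}$ a singleton yields, via the substitution $x_i\mapsto x^{\rvec_j(i)}$ composed with the map killing variables not supported by $\rvec_j$, an equation of the form $x^\lambda\leq\bigvee x$ with $\lambda>1$, i.e.\ exactly the defining instance of mingliness (Lemma~\ref{nming} then being the bridge back from "spineless" to "non-mingly", though here the statement already restricts to non-mingly equations).
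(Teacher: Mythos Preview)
Your treatment of the zero-test program and of the final state $q_F$ is correct and matches the paper's argument exactly: when the forced register value is $0$, Lemma~\ref{cond1} immediately gives $\rvec_j=\Zero$.

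For the end program there is a genuine (small) gap. Your claim that non-mingliness ``rules out a nontrivial $\rvec_j$ with $\rvec_j\vd$ constant across $\Dset$, forcing $\rvec_1=\rvec_2=\rvec_3=\Zero$'' is too strong. For instance, the simple equation $x_1x_2 \leq x_1x_2^2 \vee x_1x_2^3$ is not mingly, yet $\rvec=(1,0)$ is nonzero with $\rvec\vd=1$ for both $\vd\in\Dset$. What non-mingliness actually excludes is only the case $\lambda:=\rvec_j\mathbf{1}>1$; the case $\lambda=1$ can occur. The paper's proof (and your own final parenthetical, almost) handles this by an explicit split: from $c_j+\rvec_j\vd=1$ for all $\vd$ one has either $c_j=1$ and $\rvec_j\vd=0$ for all $\vd$ (whence $\rvec_j=\Zero$ by Lemma~\ref{cond1}), or $c_j=0$ and $\rvec_j\vd=1$ for all $\vd$, so the substitution $x_i\mapsto x^{\rvec_j(i)}$ yields $x^\lambda\leq\bigvee_{\vd\in\Dset}x$. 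If $\lambda>1$ this is a mingly instance, contradicting the hypothesis; if $\lambda=1$ then the $\reg_j$-content of $\cf$ is $c_j+\rvec_j\mathbf{1}=0+1=1$, so $\cf$ coincides with each $\cf_\vd$ and is accepted. Your final conclusion $\cf=\cf_\vd$ is therefore correct, but the intermediate assertion $\rvec_j=\Zero$ is not; you need the $\lambda=1$ branch to close the argument.
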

\begin{proof}
Let $[\Dset]$ be a simple equation. If $q$ is the final state $q_F$, then $\cf_\vd$ is accepted iff all registers are empty, i.e. ${\cf}_\vd=q_F$ for each $\vd\in\Dset$. Hence $x=1$ and $ {\rvec}_j \vd=0$ for each $\vd\in \Dset$ and $j\in\{1,2,3\}$.  For each $j\in\{1,2,3\}$, this implies that ${\rvec}_j={\Zero}$, by Lemma~\ref{cond1}. Therefore ${\cf}=q_F\in {\Acc}({\acm}_K)$. 

Suppose $q=z_i$, and without loss of generality, let $i=3$. By Lemma~\ref{zero0}, $\cf_\vd$ is accepted iff register $\reg_3$ is empty, i.e., ${\cf}_\vd\in {\Acc}(\o)$ iff $\const_3+ {\rvec}_3 {\vd}= 0$, for each $\vd\in \Dset$. This implies $\const_3=0$ and $ {\rvec}_3 \vd=0$, for each $\vd\in \Dset$. So by Lemma~\ref{cond1}, ${\rvec}_3 = {\Zero}$. Hence $\const_3+ {\rvec}_3{\Lvec{}} = 0$ and ${\cf}\in {\Acc}(\o)\subseteq {\Acc}({\acm}_K)$.

Lastly, suppose $q=c_F$. By Lemma~\ref{end}, ${\cf}_\vd\in {\Acc}(F)$ iff ${\cf}_\vd=c_F\reg_2$. Hence $\const_1=\const_3=0$, $ {\rvec}_1{\vd} = {\rvec}_3{\vd} = 0$ for each $\vd\in\Dset$, and $\const_2+ {\rvec}_2{\vd} =1$. Again, by Lemma~\ref{cond1}, ${\rvec}_1={\rvec}_3 = {\Zero}$. Let $\lambda=\rvec_2 \Lvec{}$. Then $\lambda$ is positive since $[\Dset]$ is a simple equation. If $\lambda=1$, then $\cf=c_F\reg_2$ and we are done. If $\lambda\neq 1$ then $\rvec_2$ is a substitution witnessing that $[\Dset]$ is mingly. 
\end{proof}

Now, suppose $\cf$ is labeled by a state $q\in \State$ from $\acm$. By Lemma~\ref{Mhalts}, $\cf_\vd$ is accepted in $\acm_K$ only if the contents of the registers $\reg_1, \reg_2$ are each powers of $K$ and the register $\reg_3$ is empty. That is, $\const_1+\rvec_1\vd$ and $\const_2+\rvec_2\vd$ are powers of $K$ and $\const_3+\rvec_3\vd=0$. On the one hand, Lemma~\ref{cond1} ensures that $\rvec_3$ is the zero vector and $\const_3=0$, and so $\reg_3$ is empty in $\cf$. 

By the motivation in Section~\ref{motivation}, a natural condition to consider would be to stipulate that $[\Dset]$ satisfies $(\star K)$. In such a case, if $\const_1+\rvec_1\vd$ is a power of $K$ for each $\vd\in \Dset$ then there exists $\bar \vd\in \Dset$ such that $\rvec_1\Lvec{}=\rvec_1\bar \vd$. Similarly, if $\const_2+\rvec_2\vd$ is a power of $K$ for each $\vd\in \Dset$, then there exists ${\bar \vd}'\in \Dset$ such that $\rvec_1\Lvec{}=\rvec_1{\bar \vd}'$. However, there is no reason {\em a priori} that entails $\bar \vd={\bar \vd}'$ and thus $\cf \in\{ \cf_\vd:\vd\in \Dset\},$ which would be sufficient to ensure that $\cf$ would be accepted if $\bigvee_{\vd\in\Dset} \cf_\vd$ were accepted.

Since the most naive and obvious way to ensure acceptance is to ask that the left-hand side $\cf$ appears as one of the joinands $\cf_\vd$ on the right-hand side, it is sufficient to stipulate that $[\Dset]$ satisfies the following condition:
\begin{center}
If the exponents of each variable in the right-hand side of $[\Dset]$ produced by a 2-variable substitution are translated powers of $K$, then the substitution instance is trivial.
\end{center}
In symbolic terms this can be written as:
\begin{equation}\label{cond2}\tag{${\star\star} K$}
\begin{array}{c}
\mbox{For all } \rvec,\rvec'\in\N^n \mbox{ and for all }\const,\const'\in \N,\\
\mbox{if }  \const+\rvec{\vd} \mbox{ and } \const'+ \rvec'{\vd} \mbox{ are powers of } K\mbox{ for each }\vd\in \Dset,\\ 
\mbox{then there exists } \bar \vd\in \Dset \mbox{ such that } \rvec{\bar \vd}= \rvec{\Lvec{}}\mbox{ and } \rvec'{\bar \vd}= \rvec'\Lvec{}. 
\end{array}
\end{equation}

In this case, we say $[\Dset]$ satisfies $({{\star\star}}K)$. We also consider the condition $({\star\star})$: there exists $K>1$ such that $({\star\star} K)$ holds. Note that, by setting $\rvec=\rvec'$, we see that if $[\Dset]$ satisfies $({\star\star} K)$ then it satisfies $(\star K)$.\footnote{Surprisingly, we prove in Theorem~\ref{final} that the converse holds for all $K$ sufficiently large.} So, we obtain the following lemma.
\begin{lem}\label{2starTo1star}
If a simple equation satisfies $({\star\star})$ then it satisfies $(\star)$.
\end{lem}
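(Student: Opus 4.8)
The plan is to reduce the statement to the single-$K$ implication $(\star\star K) \Rightarrow (\star K)$, which is exactly the specialization noted in the remark immediately preceding the lemma. First I would fix a simple equation $[\Dset]$ in $n$ variables that satisfies $(\star\star)$, and, by the definition of $(\star\star)$, choose an integer $K>1$ for which $(\star\star K)$ holds. It then suffices to verify that $(\star K)$ holds for this same $K$, since $(\star)$ asks only for the existence of some $K>1$ with $(\star K)$; in particular no new constant needs to be chosen on the $(\star)$ side.

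To verify $(\star K)$, I would take an arbitrary $\rvec\in\N^n$ and $\const\in\N$ such that $\const+\rvec{\vd}$ is a power of $K$ for every $\vd\in\Dset$, and then instantiate the hypothesis of $(\star\star K)$ with the ``diagonal'' choice $\rvec':=\rvec$ and $\const':=\const$. With this choice the two conditions ``$\const+\rvec{\vd}$ is a power of $K$'' and ``$\const'+\rvec'{\vd}$ is a power of $K$'' coincide and hold by assumption, so $(\star\star K)$ supplies some $\bar \vd\in\Dset$ with $\rvec{\bar \vd}=\rvec{\Lvec{}}$ (and, redundantly, $\rvec'{\bar \vd}=\rvec'{\Lvec{}}$). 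Since $\rvec{\bar \vd}=\rvec{\Lvec{}}$ is precisely the conclusion demanded by $(\star K)$, this establishes $(\star K)$, and hence $[\Dset]$ satisfies $(\star)$.

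There is no genuine obstacle here: the entire content lies in unwinding the two definitions and observing that $(\star K)$ is the special case $\rvec=\rvec'$, $\const=\const'$ of $(\star\star K)$. The only point that warrants a moment's care is the quantifier structure — namely that ``satisfies $(\star\star)$'' and ``satisfies $(\star)$'' both existentially quantify over the \emph{same} witness $K$ — so that the $K$ extracted from $(\star\star)$ can be reused verbatim to witness $(\star)$.
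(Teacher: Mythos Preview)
Your proposal is correct and matches the paper's own argument exactly: the paper simply notes that setting $\rvec'=\rvec$ (and $\const'=\const$) shows $(\star\star K)\Rightarrow(\star K)$, from which the lemma follows immediately by reusing the same witness $K$.
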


It is clear then that when $q\in \State$, if $[\Dset]$ satisfies $({\star\star} K)$ then the acceptance of $\bigvee_{\vd\in \Dset}\cf_\vd$ in $\acm_K$ implies the acceptance of $\cf$ in $\acm_K$ by Lemma~\ref{Mhalts}(1) and the observations above. 

As it turns out, the remaining cases can be reduced to the above, and so satisfying the condition $({\star\star}K)$ alone is sufficient to ensure register-admissibility. The only remaining cases to verify are when the state $q$ is internal to a multiply or divide by $K$ program. Let $p\in \Inst$ be some increment or decrement instruction for $\acm$. The idea is that if an instance of $[\Dset]$, which leads to acceptance in $\acm_K$, occurs internal to a program $\mathcal{P}(p)$ then, by using Lemma~\ref{Mhalts}(2) such an instance could have {\em equivalently} occurred at the end (or beginning) of executing the program $\mathcal{P}(p)$. Without loss of generality, suppose the instruction $p$ acts on register $\reg_1$ with input and output states $\qin$ and $\qout$, respectively. 

For instance, if $q$ is a transfer state $q=t_\delta$, where $\delta\in \{0,1\}$, then by Lemmas~\ref{times}(2) and \ref{div}(2), 
$${\cf}\sqsubseteq_{\mathcal{P}(p)}\cf' :=\qout \reg_1^{(\const_1+  {\rvec}_1{\Lvec{}}) +(\const_3 + {\rvec}_3{\Lvec{}})+\delta}\reg_2^{\const_2+ {\rvec}_2{\Lvec{}} },$$
and by Lemmas~\ref{times}(2),~\ref{div}(2), and~\ref{Mhalts}(2), for each $\vd\in \Dset$,
$$\cf_\vd \sqsubseteq_{\mathcal{P}(p)}\cf'_d := \qout \reg_1^{(\const_1+  {\rvec}_1\vd) +(\const_3 + {\rvec}_3\vd)+\delta}\reg_2^{\const_2+ {\rvec}_2 \vd }\in \Acc(\acm_K).$$
We see that by setting $\const=\const_1+\const_3+\delta$, $\const'=\const_2,$  $\rvec=\rvec_1+\rvec_3$, and $\rvec'= \rvec_2$, we obtain the instance $\cf'\leq^\Dset \bigvee_{\vd\in \Dset}\cf'_\vd \in \Acc(\acm_K)$. Since $\cf\leq_{\acm_K} \cf'$ and $\cf'$ is accepted ($\cf'$ is labeled by state $\qout\in \State$, which was handled above), it follows that $\cf$ is accepted.

Similarly, if $q$ is a multiply state $q=a_\delta$, for some $\delta\leq K$, then by Lemma~\ref{times}(2),
$${\cf}\sqsubseteq_{\times(p)}\cf':=\qout \reg_1^{K(\const_1+ {\rvec}_1{\Lvec{}} + K-\delta) +(\const_3 + {\rvec}_3{\Lvec{}}) }\reg_2^{\const_2+ {\rvec}_2{\Lvec{}} }$$
and by Lemmas~\ref{times}(2) and \ref{Mhalts}(2), for each $\vd\in \Dset$,
$${\cf_\vd}\sqsubseteq_{\times(p)}\cf'_d:= \qout \reg_1^{K(\const_1+ {\rvec}_1{\vd} + K-\delta) +(\const_3 + {\rvec}_3{\vd}) }\reg_2^{\const_2+ {\rvec}_2{\vd} }\in \Acc(\acm_K).$$
So by setting $\const=K\const_1+\const_3+K-\delta$, $\const'=\const_2,$  $\rvec=K\rvec_1+\rvec_3$, and $\rvec'= \rvec_2$, we obtain the instance $\cf'\leq^\Dset \bigvee_{\vd\in \Dset}\cf'_\vd \in \Acc(\acm_K)$. Since $\cf\leq_{\acm_K} \cf'$ and $\cf'$ is accepted ($\cf'$ is labeled by state $\qout\in \State$, which was handled above), it follows that $\cf$ is accepted.

Lastly, we consider when $q$ is a division state $q=s_\delta$, for some $\delta\leq K$. By Lemma~\ref{div}(2), 
$$\cf':=\qin \reg_1^{(\const_1+ {\rvec}_1{\Lvec{}} + \delta)+K(\const_3 + {\rvec}_3{\Lvec{}}) }\reg_2^{\const_2+ {\rvec}_2{\Lvec{}} }\sqsubseteq_{\div(p)}\cf, $$ 
and by Lemmas~\ref{div}(2) and~\ref{Mhalts}(2), for each $\vd\in \Dset$, 
$$\cf'_\vd:=\qin \reg_1^{(\const_1+ {\rvec}_1{\vd} + \delta)+K(\const_3 + {\rvec}_3{\vd}) }\reg_2^{\const_2+ {\rvec}_2{\vd} }\sqsubseteq_{\div(p)}{\cf_\vd} \sqsubseteq_{\div(p)} \cf_\vd''\in \Acc(\acm_K),$$
where $\cf''_\vd$ is the unique output configuration of $\div(p)$ labeled by $\qout$. 

Now, it is clear that by setting $\const=\const_1+K\const_3+\delta$, $\const'=\const_2$, $\rvec=\rvec_1+K\rvec_3$, and $\rvec'=\rvec_2$, we have that $\cf'\leq^\Dset \bigvee_{\vd\in \Dset} \cf'_\vd$. Hence $\cf'=\cf'_{\bar \vd}$ for some $\bar \vd\in \Dset$ by $({\star\star}K)$, and so $\cf_{\bar \vd}'\sqsubseteq_{\div(p)}\cf$. Since $\cf_{\bar \vd}'\sqsubseteq_{\div(p)}\cf_{\bar \vd}''$, by Corollary~\ref{DivBack}, it follows that $\cf \sqsubseteq_{\div(p)} \cf_{\bar \vd}''$.  
Therefore $\cf$ is accepted in $\acm_K$ if $\bigvee_{\vd\in \Dset} \cf_\vd$ is accepted in $\acm_K$. 

By the arguments above the following lemma is established: 

\begin{lem}\label{dredundant}
Let ${\acm}$ be a $2$-ACM and $K>1$. If a non-mingly simple equation satisfies $({\star\star}K)$ then it is register-admissible in ${\acm}_K$.
\end{lem}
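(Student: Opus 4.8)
The plan is to unwind the definition of register-admissibility and verify the resulting implication by a case analysis on the state labelling the configuration to which the instance of $\leq^{\regeq{\Dset}}$ is applied; essentially all the ingredients have been assembled in the discussion preceding the lemma, and the work is to organize them. By the reduction carried out there (using Lemma~\ref{comprel}(2)), it suffices to show: whenever $\cf \leq^{\regeq{\Dset}} \bigvee_{\vd\in\Dset}\cf_\vd$ is an instance of $\leq^{\regeq{\Dset}}$ in $\acm_K$ and $\cf_\vd \in \Acc(\acm_K)$ for every $\vd \in \Dset$, then $\cf \in \Acc(\acm_K)$. Writing $\cf = q\reg_1^{\const_1+\rvec_1\mathbf{1}}\reg_2^{\const_2+\rvec_2\mathbf{1}}\reg_3^{\const_3+\rvec_3\mathbf{1}}$ and $\cf_\vd = q\reg_1^{\const_1+\rvec_1\vd}\reg_2^{\const_2+\rvec_2\vd}\reg_3^{\const_3+\rvec_3\vd}$ as in the normal form established just before the statement, the contents of all three registers become affine functions of $\vd$, and this affineness is what is exploited throughout.

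First I would dispose of the cases where $q$ lies in the zero-test program $\o$ or in the end program: these are precisely Lemma~\ref{lem: zero end}, the subcase $q = c_F$ being where non-minglyness of $[\Dset]$ is used. Next, for $q \in \State$ inherited from $\acm$: Lemma~\ref{Mhalts}(1) forces $\const_1+\rvec_1\vd$ and $\const_2+\rvec_2\vd$ to be powers of $K$ and $\const_3+\rvec_3\vd = 0$ for every $\vd \in \Dset$; Lemma~\ref{cond1} then gives $\rvec_3 = \Zero$ and $\const_3 = 0$, while $({\star\star}K)$, applied to the two pieces of data $(\const_1,\rvec_1)$ and $(\const_2,\rvec_2)$, produces a single $\bar\vd \in \Dset$ with $\rvec_1\bar\vd = \rvec_1\mathbf{1}$ and $\rvec_2\bar\vd = \rvec_2\mathbf{1}$, i.e.\ $\cf = \cf_{\bar\vd}$, which is accepted by assumption.

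It remains to handle $q$ internal to a multiply or divide program $\mathcal{P}(p)$, where (say) $p$ acts on $\reg_1$ with input/output states $\qin,\qout \in \State$; the idea is to transport the instance of $[\Dset]$ along $\mathcal{P}(p)$ to one of its endpoints. If $q = t_\delta$ (a transfer state) or $q = a_\delta$ (a multiply state), then Lemmas~\ref{times}(2), \ref{div}(2) and \ref{Mhalts}(2) yield $\sqsubseteq_{\mathcal{P}(p)}$-images $\cf'$ of $\cf$ and $\cf'_\vd \in \Acc(\acm_K)$ of $\cf_\vd$, all labelled by $\qout$ and with exponents again affine in $\vd$; hence $\cf' \leq^{\Dset} \bigvee_{\vd\in\Dset}\cf'_\vd$ is a new instance, now at a state of $\State$, so the previous paragraph together with $\cf \leq_{\acm_K}\cf'$ shows $\cf$ is accepted. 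If $q = s_\delta$ (a division state), the induced instance $\cf' \leq^{\Dset}\bigvee_{\vd\in\Dset}\cf'_\vd$ sits instead at the \emph{start} of $\div(p)$, with $\cf'_\vd \sqsubseteq_{\div(p)}\cf_\vd \sqsubseteq_{\div(p)}\cf''_\vd \in \Acc(\acm_K)$ ($\cf''_\vd$ the $\qout$-output); applying $({\star\star}K)$ to this instance gives $\cf' = \cf'_{\bar\vd}$ for some $\bar\vd \in \Dset$, so $\cf'_{\bar\vd}\sqsubseteq_{\div(p)}\cf$ and $\cf'_{\bar\vd}\sqsubseteq_{\div(p)}\cf''_{\bar\vd}$, whence Corollary~\ref{DivBack} gives $\cf \sqsubseteq_{\div(p)}\cf''_{\bar\vd}$ and $\cf$ is accepted. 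I expect the division case to be the only genuine obstacle: one cannot forward-simulate past the division loop (unlike multiplication, division is not realized as an unconditional forward step on the encoded values), so $({\star\star}K)$ must be invoked at the \emph{beginning} of the program and the ``confluence'' statement Corollary~\ref{DivBack} is then needed to close the diagram; everything else reduces to the routine check that the program simulations of Lemmas~\ref{times} and \ref{div} keep the exponent data affine in $\vd$.
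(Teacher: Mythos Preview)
Your proposal is correct and follows the paper's own argument essentially line for line: the same reduction to a single instance $\cf\leq^{\regeq{\Dset}}\bigvee_{\vd}\cf_\vd$, the same case split on the state $q$ (zero-test/end program via Lemma~\ref{lem: zero end}, $q\in\State$ via Lemma~\ref{Mhalts}(1), Lemma~\ref{cond1} and $({\star\star}K)$, and the internal program states via forward-transport to $\qout$ for transfer/multiply states and backward-transport to $\qin$ plus Corollary~\ref{DivBack} for division states). Your identification of the division case as the one genuine obstacle, and the reason for it, matches the paper exactly.
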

%
%
\subsection{Condition $({\star\star})$ and undecidability} Assume that $[\Dset]$ is a non-mingly simple equation that satisfies $({\star\star})$. Since it is non-mingly, by Lemma~\ref{mingly} we get that $[\Dset]$ is state-admissible in any machine. Since it also satisfies $({\star\star})$,  by Lemma~\ref{dredundant} we have that 
$[\Dset]$ is register-admissible in $\acm_K$ for some integer $K>1$, where $\acm$ is any machine. In particular, $ [\Dset]$ is admissible in $\tilde\acm_K$, where $\tilde\acm$ is the machine with undecidable halting problem. By Corollary~\ref{mkundec}, the machine $\tilde\acm_K$ has an undecidable set of accepted configurations for any $K>1$.
By  Lemma~\ref{Wadmiss} we obtain $\mathbf{W}_{\tilde\acm_K}^+\in \CRL+[\Dset]$. Therefore, 
 $\CRL+[\Dset]$ has an undecidable word problem by Theorem~\ref{Vmhard}. This proves the following result.

\begin{cor}\label{preUndLink}
For any finite set $\Gamma$ of non-mingly equations that satisfy  $({\star\star})$, every subvariety of $\RL$ containing $\CRL+\Gamma$ has an undecidable word problem. 
\end{cor}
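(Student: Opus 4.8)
\emph{The plan} is to reproduce, for a finite set $\Gamma=\{\varepsilon_1,\dots,\varepsilon_r\}$ of non-mingly simple equations each satisfying $({\star\star})$, the argument just given for a single equation, the only genuinely new ingredient being a \emph{single} constant $K>1$ that serves all of $\Gamma$ at once. Concretely, I would aim to produce an integer $K>1$ such that every $\varepsilon_i$ is admissible in the machine $\Uacm_K$, where $\Uacm$ is the $2$-ACM of Theorem~\ref{lincoln}. Once that is in hand, Lemma~\ref{Wadmiss} gives $\mathbf{W}^+_{\Uacm_K}\in\CRL+\varepsilon_i$ for each $i$, so $\mathbf{W}^+_{\Uacm_K}$ is a commutative residuated lattice satisfying all of $\Gamma$, i.e. $\mathbf{W}^+_{\Uacm_K}\in\CRL+\Gamma$. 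If $\mathcal{V}$ is any subvariety of $\RL$ with $\CRL+\Gamma\subseteq\mathcal{V}$, then $\mathbf{W}^+_{\Uacm_K}\in\mathcal{V}$; since membership in $\Acc(\Uacm_K)$ is undecidable by Corollary~\ref{mkundec}, Theorem~\ref{Vmhard} then yields that deciding the word problem of $\mathcal{V}$ is at least as hard as deciding membership in $\Acc(\Uacm_K)$, hence undecidable.

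It remains to choose the common $K$ and to verify admissibility in $\Uacm_K$ for each $\varepsilon_i$. Admissibility splits into state- and register-admissibility. State-admissibility of each $\varepsilon_i$ in every machine is automatic from $\varepsilon_i$ being non-mingly, by Lemma~\ref{mingly}. For register-admissibility in $\Uacm_K$, Lemma~\ref{dredundant} reduces the task to checking that each $\varepsilon_i$ satisfies $({\star\star}K)$ for the chosen $K$. This is where the uniformity enters: by Theorem~\ref{final} (and the analysis of \S\ref{mksec}), a non-mingly simple equation satisfying $({\star\star})$ in fact satisfies $({\star\star}K)$ for \emph{every sufficiently large} $K$; say $\varepsilon_i$ satisfies $({\star\star}K)$ for all $K\geq N_i$. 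Taking $K:=1+\max\{N_1,\dots,N_r\}$, so that $K>1$, all of $\varepsilon_1,\dots,\varepsilon_r$ satisfy $({\star\star}K)$ simultaneously; hence all are register-admissible, and therefore admissible, in $\Uacm_K$. Feeding this $K$ into the chain of implications above completes the proof.

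\emph{The main obstacle}, and the only point at which the argument goes beyond the single-equation case already treated just above, is exactly this need for one constant $K$ good for all of $\Gamma$: the condition $({\star\star}K)$ has no obvious monotonicity in $K$ (as $K$ grows, the sets of powers of $K$ are not nested, so the hypothesis of $({\star\star}K)$ is neither weakened nor strengthened), and one therefore cannot simply collate the individual constants $K_i$ that Lemma~\ref{dredundant} supplies equation-by-equation. What rescues the situation is precisely the ``all sufficiently large $K$'' strength of Theorem~\ref{final}; granted that, the passage from $r=1$ to arbitrary finite $r$ is routine, since the machine $\Uacm_K$ — and hence the frame $\mathbf{W}_{\Uacm_K}$ — depends only on $K$ and on $\Uacm$, not on any of the equations, so a single frame witnesses all of $\Gamma$ at once.
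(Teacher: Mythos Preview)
Your proposal is correct and matches the paper's approach. The paragraph preceding the corollary in the paper gives exactly your argument for a single equation, and the passage after Theorem~\ref{final} supplies the common-$K$ step you identify as the only new ingredient for finite $\Gamma$: since each $\varepsilon_i$ satisfying $({\star\star})$ is spineless (Lemma~\ref{2star2spineless}) and hence satisfies $({\star\star}K)$ for all sufficiently large $K$ (Theorem~\ref{final}), a single $K$ works for all of them, after which Lemmas~\ref{mingly}, \ref{dredundant}, \ref{Wadmiss}, Corollary~\ref{mkundec}, and Theorem~\ref{Vmhard} finish as you describe.
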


As motivation for the general case, we show that the $1$-variable basic equations $[n,P]:x^n\leq \bigvee_{p\in P} x^p$, where $P$ contains at least two distinct positive integers, considered in Lemma~\ref{und1var}, define varieties with undecidable word problem. The results of the next section will show that this holds for many more equations, all spineless equations. 
\begin{thm}\label{th:1-var}
Let $[n,P]$ be a $1$-variable basic equation where $P$ contains at least two distinct positive integers. Then the variety $\CRL +[n,P]$ has an undecidable word problem. If additionally $P$ only contains integers strictly greater than $n$, then the variety $\CRL+[n,P]$ has an undecidable equational theory.
\end{thm}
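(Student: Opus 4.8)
The plan is to derive the first assertion from Corollary~\ref{preUndLink} and the second from Corollary~\ref{undeq}, in both cases bypassing the apparatus of \S7--\S8: for one-variable equations condition $(\star\star)$ can be checked by hand. Write $[\Dset]$ for the linearization of $[n,P]$ displayed in~(\ref{1varLin}), so that $[\Dset]$ is equivalent to $[n,P]$ over $\CRL$ and $\Dset=\{d\in\N^n : \sum_{i=1}^n d(i)\in P\}$, where $n$ is the exponent on the left of $[n,P]$. Since $P$ contains two distinct positive integers, Lemma~\ref{und1var} gives that $[\Dset]$ is spineless, hence non-mingly by Lemma~\ref{nming}.

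The heart of the argument will be the following: fix two distinct positive integers $p>q$ in $P$, and let $K$ be any integer with $K>p/q$ (for instance $K=p+1$). I claim $[\Dset]$ satisfies condition $(\star\star K)$. To see this, suppose $\rho,\rho'\in\N^n$ and $c,c'\in\N$ are such that $c+\rho d$ and $c'+\rho' d$ are powers of $K$ for every $d\in\Dset$; I would show $\rho=\rho'=\mathbf{0}$, whence the conclusion of $(\star\star K)$ holds trivially with $\bar d$ any element of the nonempty set $\Dset$. For $\rho$: fix a coordinate $i$; since $q\,e_i$ and $p\,e_i$ belong to $\Dset$ (their coordinate sums are $q,p\in P$), both $c+q\rho(i)=K^a$ and $c+p\rho(i)=K^b$ for some $a,b$. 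If $\rho(i)\ge 1$, then $K^b-K^a=(p-q)\rho(i)>0$ forces $b>a$, so $(p-q)\rho(i)=K^a(K^{b-a}-1)\ge K^a(K-1)\ge q\rho(i)(K-1)$, and dividing by $\rho(i)$ yields $K\le p/q$, contradicting the choice of $K$. Hence $\rho(i)=0$ for all $i$, and likewise $\rho'=\mathbf{0}$.

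Granting the claim, Corollary~\ref{preUndLink} with $\Gamma=\{[\Dset]\}$ gives that $\CRL+[\Dset]=\CRL+[n,P]$ has undecidable word problem, which is the first assertion. For the second, assume in addition that every element of $P$ exceeds $n$. Sending every variable of $[\Dset]$ to a single $x$ produces the instance $x^n\le\bigvee_{j\in P}x^{n+(j-n)}$ with each $n+(j-n)>n$, so $[\Dset]$ is an expansive simple equation and therefore $\CRL+[n,P]$ is an expansive variety. I would then reuse the construction underlying Corollary~\ref{preUndLink}: taking $K=p+1$ and the $2$-ACM $\Uacm$ of Theorem~\ref{lincoln}, the equation $[\Dset]$ is state-admissible in $\Uacm_K$ by Lemma~\ref{mingly} and register-admissible by Lemma~\ref{dredundant}, hence admissible, so $\mathbf{W}^+_{\Uacm_K}\in\CRL+[\Dset]=\CRL+[n,P]$ by Lemma~\ref{Wadmiss}, while $\Acc(\Uacm_K)$ is undecidable by Corollary~\ref{mkundec}. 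Corollary~\ref{undeq} then yields undecidability of the equational theory of $\CRL+[n,P]$. (Alternatively one could invoke Theorem~\ref{main2} directly, as $[\Dset]$ is both spineless and expansive, but that route passes through \S8.)

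The only step I expect to require care is the claim that $[\Dset]$ satisfies $(\star\star K)$, and even that is light: because $\Dset$ is the set of \emph{all} exponent vectors whose coordinate sum lies in $P$, it is rigid enough that the elementary observation ``two distinct powers of $K$ differ by at least a factor of $K$'' already pins down $\rho$. Recovering this rigidity for a general spineless $\Dset$ is exactly the positive-linear-algebra content of \S8 that the present one-variable case avoids.
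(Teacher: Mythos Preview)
Your proof is correct and follows essentially the same approach as the paper: establish that the linearization $[\Dset]$ is spineless (hence non-mingly), verify $(\star\star K)$ directly by exploiting that $pe_i,qe_i\in\Dset$ for every coordinate $i$, and then invoke Corollary~\ref{preUndLink} and Corollary~\ref{undeq}. The only cosmetic difference is the threshold on $K$: you take $K>p/q$ (using $K^a\ge q\rho(i)$), while the paper takes $K>1+p-q$ (using the cruder $K^a\ge\rho(i)$); both bounds work, and yours is never worse.
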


\begin{proof}
Let $[\Dset]$ be the $n$-variable simple equation that is the linearization of $[n,P]$ over $\CRL$ (given by Equation~\ref{1varLin} in Lemma~\ref{und1var}) and let $p,q\in P$ be such that $p>q>0$. Note that by Lemma~\ref{und1var}, $[\Dset]$ is spineless and hence it is not mingly by Lemma~\ref{nming}. By Corollary~\ref{preUndLink}, to establish the first claim it is enough to show $[\Dset]$ satisfies $({\star\star})$. We will show that $[\Dset]$ satisfies $({\star\star} K)$, for every $K> 1+p-q$; since $p>q$, this implies that $K>1$. 

Assume that there exist $\const,\const' \in \N$ and  $1$-variable substitutions $\rvec,\rvec'$ such that $\const + \rvec d$ and $\const +\rvec d'$ are powers of $K$ for each $d\in \Dset$. We will show that $\rvec$ and $\rvec'$ are trivial substitutions (i.e., all entries are 0), and hence $\rvec\Lvec{}=0=\rvec \bar d$ and $\rvec'\Lvec{}=0=\rvec'\bar d$ for every $\bar d \in \Dset$.

Arguing towards contradiction, suppose that $\rvec$ is nontrivial with $\rvec(i)>0$ for some $i\leq n$. Now (by Equation~\ref{1varLin}) the terms $x_i^p$ and $x_i^q$ appear as joinands on the right-hand side of $[\Dset]$, i.e., $\Dset$ contains $d$ and $d'$ such that $d(i)=p$, $d'(i)=q$, and $d(j)=d'(j)=0$ for each $j\neq i$. By the assumption on $\rvec$ and $\const$, $\const+\rvec d = K^{a+b}$ and $\const+\rvec d'=K^a$, for some $a,b\in\N$, with $b>0$ since $p>q$. We have that,
$$K^a(K-1) \leq  K^{a}(K^b-1)=K^{a+b}-K^a = \rvec d- \rvec d'. $$

Also, $\rvec d =\rvec(i)p $ and $\rvec d' = \rvec(i)q$ (by definition of $d,d'$), so we obtain
$$\rvec d - \rvec d' = \rvec(i)p-\rvec(i)q=\rvec(i)(p-q)\leq K^a(p-q) ,$$
where the last inequality follows from $\rvec(i)\leq \rvec(i)q\leq K^a$; note that $q\geq1$.
Combining the inequalities we obtain $K-1 \leq p-q$ and $K\leq 1+p-q$, a contradiction. Hence $[\Dset]$ satisfies $({\star\star}K)$. Furthermore, if all elements from $P$ are larger then $n$, then $[n,P]$ is an expansive equation. Therefore the second claim follows by  Corollary~\ref{undeq}. 
\end{proof}
%
%
\section{Characterization of spineless equations}\label{charD} 
In this section we prove that a simple equation is spineless if and only if it satisfies $({\star\star} K)$ for every $K$ sufficiently large. 
%
%
\subsection{Basic and simple equations of $\CRL$ as sets of tuples}
Given the natural ordering of the variable set $\{x_i : i \in \Z^+\}$, note that using the above-mentioned vector notation, every commutative monoid term can be written in the form $\mathbf{x_n}^f$, for some $n \in \Z^+$ and some $n$-tuple $f$ of natural numbers; recall that $\mathbf{x_n}=(x_1, \ldots, x_n)$. If we actually extend our notation to the case where $\mathbf{x_\infty}=(x_i)_{i \in \Z^+}=(x_1, x_2, \ldots)$ and $f$ is a sequence of natural numbers that is eventually constantly zero, then every commutative monoid term is of the form $\mathbf{x_\infty}^f$, and thus it is fully specified by such an $f$. In the following we will work interchangeably in the free monoid over the variable set $\{x_i : i \in \Z^+\}$ and also in the isomorphic monoid $\F$ of eventually-zero sequences of natural numbers. More formally, $\N^{\Z^+}$ denotes the set of all functions from $\Z^+$ to $\N$ and for $f\in \N^{{\Z^+}}$, we define $\supp f :=\{i\in\Z^+: f(i)\neq 0 \}$ to be the \textit{support} of $f$. Then the set $\F := \{f\in \N^{{\Z^+}}: | \supp f|<{\infty} \}$ of all functions of finite support forms a commutative monoid $(\F, +,{\Zero})$, under addition and with unit the constantly-zero function ${\Zero}$. Clearly, this monoid is simply an additive rendering of the free commutative monoid on countably many generators and is isomorphic to the above multiplicative rendering by exactly the map $f \mapsto \mathbf{x_\infty}^f$. Up to now we have favored the multiplicative representation due to its connection with machines, but from now on we will use the additive one as it connects better with the linear algebra arguments of this section. Under this isomorphism the variable $x_i$ maps to the generator  $\mathbf{e}_i$, which has $1$ in the $i$-th entry and $0$ everywhere else.

For reasons that will be clear soon, we view the elements of $\F$ as column vectors and we also consider the bijective set $\F^{{\top}}$ of the row vectors, which are the transposes of the elements of $\F$.
In particular, for $f \in \F$ and $\rvec \in \F^{{\top}}$, the matrix product $\rvec f$ yields a $1\times 1$ matrix, which we identify with the natural number equal to its unique entry. Even though $f$ and $\rvec$ are each of infinite dimension, they both have finite support, so their product is well defined. For a subset $S$ of $\Z^+$ we define $\F_S$ to be the set of eventually zero functions from $S$ to $\mathbb{N}$, so $\F=\F_{\Z^+}$; we identify $\F_S$ with the corresponding subset of $\F$ in the natural way, as every function in $\F_S$ is the restriction to $S$ of the function in $\F$ that is defined to be zero outside $S$. We write $\F_n$ for $\F_{\{1, \ldots, n\}}$. So, if $f \in \F$ with support included in $\{1,...,n\}$, we will identify $f$ with the corresponding element of $\F_n$. We define sets $\F_S^{{\top}}$ and $\F_n^{{\top}}$ in a similar way. Therefore, $\F_n$ is the set of all $n \times 1$ matrices and $\F_n^{{\top}}$ is the set of all $1 \times n$ matrices.

For a set $X\subseteq\F$, we write $ {\rvec}{X} := \{ {\rvec}{f}\in \N: f\in X \}$ and $\supp X:=\bigcup_{f\in X}\supp f$.
For each $n\in\Z^+$, we define the column vector $\Lvec{n}\in\F$ to contain $1$ in its first $n$ entries and $0$ everywhere else. 
A substitution $\subt$ on $\F$ is fully determined by its application on the generators $\mathbf{e}_i\mapsto f_i\in\F$ for each $i\in\Z^+$, and as it is a homomorphism, namely an additive/linear map, its application is given by multiplication of an associated matrix $\Mmat_\subt$; so $\subt(f)=\Mmat_\subt f$. 
Since we only consider finite subsets $A$ of $\F$ in basic equations $[f,A]$, we may view $A$ as a subset of $\F_n$, where $n$ is the largest index in $\supp{A\cup\{f\}}$ and, in this way, will only consider substitutions $\subt:\F_n\to\F_k$, in which case the associated $\Mmat_\subt$ is a $k\times n$ matrix; in this case, we say that $\subt$ is a {\em $k$-variable substitution}.
We will write $\sigma_i\in\F_n^\top$ for the $i$-th row of $\Mmat_\subt$ for each $i\leq k$ and also $\Mmat_\subt=[\sigma_i]_{i=1}^k$. Abusing notation, we will identify $\subt$ with $\Mmat_\subt$ and also we use  $\subt[f,A]$ for the resulting basic inequality. 
As we have seen in the statement of condition $({\star\star})$, $1$-variable substitutions play an important role. Actually, every substitution $\subt$ is rendered as the product of 1-variable substitutions $\rvec_i$ (the ones corresponding to the rows of $\Mmat_\subt=[\sigma_i]_{i=1}^k$) as for every variable $x_j$, $\subt(x_j)=\sigma_1(x_j)\cdot\sigma_2(x_j)\cdots \sigma_k(x_j)$, when using multiplicative notation, and as a sum of $1$-variable substitutions $\rvec_i$ as for every $\m{e}_j$, we have $\subt(\m{e}_j)=\sigma_1(\m{e}_j)+\sigma_2(\m{e}_j)+ \cdots + \sigma_k(\m{e}_j)$, when using additive notation.
%
%
\subsection{Spinal equations} 
Let $[f,\spine]$ be a $k$-variable spinal equation. We define $\vs_0:=\Zero$ and $\spine_\Zero=\spine \cup \{ v_0\}$.
Using additive notation, it follows  from Definition~\ref{def: spinal}: 
\begin{enumerate}
\item $\spine$ contains a subset $\spine_+$ consisting of $k\geq 1$ many vectors $\vs_1,...,\vs_k$, where $\vs_j(i)$ is positive if $i=j$ and zero if $i>j$. 
\item $\spine$ is exactly either $\spine_+$ or $\spine_\Zero$.
\item $f$ is a vector in $\F_{k}$ with all entries positive such that $f\nin \spine$. 
\end{enumerate}
We write  $[\vs_1~\cdots~\vs_k]$ for the matrix with columns $\vs_1, \ldots,\vs_k$, in that order. 
Observe that (1) is equivalent to $[\vs_1~\cdots~\vs_k]$ being a $k\times k$ upper-triangular matrix whose diagonal entries are positive. 

Using this additive perspective, we will demonstrate why spinal equations fail to satisfy the condition $({\star\star})$, and thus the argument for register-admissibility in the machines $\acm_K$ found in Lemma~\ref{dredundant} is not applicable to extensions by such equations. In fact, we prove a much stronger property for spinal equations which entails such an argument will fail, not just for our exponential encoding, but for any similar sort of encoding in general.\footnote{Specifically, we mean the following: Let $\acm$ be a $2$-ACM and $\phi:\N\to \N$ any (computable) function with infinite range. Let $\acm_\phi$ be an ACM constructed so that the register contents $\langle n,m\rangle$ of a configuration from $\acm$ are stored as $\langle\phi(n),\phi(m),0,...,0\rangle$ in $\acm_\phi$, and programs constructed so that increments $n\mapsto n+1 $ [decrements $n\mapsto n-1 $] of a register in $\acm$ are simulated by $\phi(n)\mapsto \phi(n+1)$ [$\phi(n)\mapsto \phi(n-1)$] in $\acm_\phi$. Lemma~\ref{injective} ensures that the corresponding argument for register-admissibility is not valid for spinal equations without having more information about $\Acc(\acm)$.} 

To that aim, for a set $S$ of natural numbers, we consider the following property:
\begin{center}
If the exponents in the right-hand side of $[\Dset]$ produced by a 1-variable substitution are in a translation of $S$ (by the same constant), then the substitution instance is trivial.
\end{center}
In symbolic terms this can be written as
\begin{equation}\label{singlestar}\tag{$\star S$}
\begin{array}{c}
\mbox{If for some $\rvec\in\F_n^{\top}$ and  $\const\in \N,$}\\
\mbox{every $\const+\rvec{\vd}$ is in $S$, for $\vd\in \Dset$,}\\ 
\mbox{then there exists $\bar \vd\in \Dset$ such that $\rvec{\bar \vd}= \rvec{\mathbf{1}}$}
\end{array}
\end{equation}
In more compact terms, this can be written as 
$$(\exists \rvec\in \F_n^{\top}, \exists \const\in \N, \const+\rvec\Dset \subseteq S) \Ra   \rvec{\mathbf{1}}  \in \rvec\Dset.$$
Clearly, what we called $(\star K)$ is simply $(\star S)$, where $S$ is the set of all powers of $K$. In Lemma~\ref{injective}, we essentially show that $(\star S)$ fails for any prespinal equation $\varepsilon$ and infinite set $S$. 

\begin{lem}\label{injective}
If a simple equation satisfies $(\star S)$ for an infinite subset $S$ of $\mathbb{N}$, then it is spineless.
\end{lem}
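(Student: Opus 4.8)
\textit{Proof proposal.} The plan is to prove the contrapositive: a prespinal simple equation $[\Dset]$ (in variables $x_1,\dots,x_n$) fails $(\star S)$ for every infinite $S\subseteq\N$. By Definition~\ref{def: spinal} fix a monoidal substitution $\subt:\F_n\to\F_{k}$ such that $\subt[\Dset]$ is a spinal equation $[f,\spine]$ in $k$ variables, so that $\subt\Lvec{n}=f$ and $\{\subt\vd:\vd\in\Dset\}=\spine$. Then it suffices to produce a row vector $\nu\in\F_k^{\top}$ and a constant $\const\in\N$ with $\const+\nu\vs\in S$ for all $\vs\in\spine$ while $\nu f\notin\{\nu\vs:\vs\in\spine\}$. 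Indeed, setting $\rvec:=\nu\subt\in\F_n^{\top}$ we have $\rvec\vd=\nu(\subt\vd)$ with $\subt\vd\in\spine$, so $\const+\rvec\vd\in S$ for all $\vd\in\Dset$, while $\rvec\Lvec{n}=\nu f\notin\{\nu\vs:\vs\in\spine\}=\{\rvec\vd:\vd\in\Dset\}$; hence $\rvec,\const$ witness the failure of $(\star S)$ for $[\Dset]$. This reduces the lemma to the spinal case.

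Now write $V:=[\vs_1~\cdots~\vs_k]$ for the matrix of the spine, which is upper-triangular over $\N$ with positive diagonal entries $\vs_j(j)$ (as in Definition~\ref{def: spinal}), so $\det V=D:=\prod_{j=1}^{k}\vs_j(j)\geq 1$. Since $S$ is infinite, pigeonhole yields a residue class modulo $D$ meeting $S$ in an infinite set; inside it pick $s_0<s_1<\cdots<s_k$ in $S$, each large enough relative to its predecessors (with the size constraints produced below). Put $\const:=s_0$ and $\nu:=(s_1-s_0,\dots,s_k-s_0)V^{-1}$. Then $\nu\in\Z^{k}$, since every $s_j-s_0$ is a multiple of $D=\det V$ and $DV^{-1}=\mathrm{adj}(V)$ is an integer matrix; and $\nu\geq 0$, since solving $\nu V=(s_1-s_0,\dots,s_k-s_0)$ by forward substitution gives $\nu_j=\bigl((s_j-s_0)-\sum_{i<j}\nu_i\vs_j(i)\bigr)/\vs_j(j)$, whose numerator depends only on $s_0,\dots,s_{j-1}$ and is made $\geq 0$ by taking $s_j$ large. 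So $\nu\in\F_k^{\top}$, and by construction $\const+\nu\vs_j=s_j\in S$ for each $j$, with also $\const+\nu\Zero=s_0\in S$ when $\Zero\in\spine$. This gives the first required property.

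For the second property, note first that $\nu_1=(s_1-s_0)/\vs_1(1)>0$ and $f(1)>0$, so $\nu f>0=\nu\Zero$; it remains to secure $\nu(f-\vs_j)\neq 0$ for $j=1,\dots,k$. The upper-triangular shape of $\spine$ makes each $\nu(f-\vs_j)$ governed by its top-index summand, which has nonzero coefficient, so a suitably large choice of one of the $s$'s forces it to dominate. For $j<k$, that summand has index $k$ and coefficient $f(k)-\vs_j(k)=f(k)>0$ (as $\vs_j$ is supported in $\{1,\dots,j\}$), so enlarging $s_k$ — which makes $\nu_k$ large while leaving $\nu_1,\dots,\nu_{k-1}$ fixed — gives $\nu(f-\vs_j)>0$. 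For $j=k$, $f\notin\spine$ gives $f\neq\vs_k$; with $i^{*}:=\max\{i\leq k:f(i)\neq\vs_k(i)\}$, the summands of index exceeding $i^{*}$ vanish and the one of index $i^{*}$ has nonzero coefficient, so enlarging $s_{i^{*}}$ forces $\nu(f-\vs_k)\neq 0$, and this is not disturbed by the later enlargements of $s_{i^{*}+1},\dots,s_k$. Only finitely many such largeness requirements occur, and all can be met in turn because the chosen residue class meets $S$ infinitely often; this yields $\nu f\notin\{\nu\vs:\vs\in\spine\}$, completing the argument.

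I expect the main obstacle to be exactly this coordination of constraints: the witnesses $s_0,\dots,s_k$ must lie in a single residue class modulo $D$ (so that the adjugate identity keeps $\nu$ an integer vector), yet they must also be free to grow arbitrarily fast (to keep $\nu$ nonnegative and to force the dominant-term separations $\nu(f-\vs_j)\neq 0$). These demands are compatible only because an infinite set $S$ meets some residue class modulo $D$ in an infinite set.
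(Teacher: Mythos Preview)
Your proof is correct and follows essentially the same route as the paper: reduce by contraposition to the spinal case, use the pigeonhole to find a residue class modulo $\det V$ with infinite intersection with $S$, and then build $\nu$ by forward substitution through the upper-triangular matrix $V$, handling $\nu f\neq\nu\vs_k$ via the maximal index $i^*$ where $f$ and $\vs_k$ differ and $\nu f>\nu\vs_j$ for $j<k$ by enlarging $\nu_k$. The only cosmetic differences are that the paper parameterizes via $x\in\bar\N^{\,k}$ with $\tau=x\cdot\mathrm{adj}(V)$ and sets $x_i=0$ for $i<i^*$ (so that $\tau(f-\vs_k)=t_{i^*}(f(i^*)-\vs_k(i^*))$ is immediately nonzero), whereas you keep all $s_i>s_0$ and argue by dominance; also, your phrase ``whose numerator depends only on $s_0,\dots,s_{j-1}$'' should say that the \emph{subtracted sum} depends only on those, since the numerator itself contains $s_j$.
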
 

\begin{proof} 
We argue by contraposition, assuming that a simple equation $\varepsilon$ is prespinal. So there is a substitution $\subt$ such that  $[f,\spine] := \subt \varepsilon$ is a spinal equation, where every column vector of $\spine$ has $k$ entries/rows. We will construct a 1-variable substitution $\tau = [t_1~t_2~\cdots ~t_k]\in \F_k^\top$ such that $\const+ {\tau}{\spine_\Zero}\subseteq S$, for some $C$, and ${\tau}f  \nin {\tau}\spine_\Zero$ (hence also ${\tau}f  \nin {\tau}\spine$, as $\spine\subseteq \spine_\Zero$); let $\spine_\Zero=\{v_0, v_1,\ldots, v_k\}$. This will imply that $\varepsilon$ falsifies $(\star S)$ by the 1-variable substitution $\tau\subt$ and constant $\const$. 

First note that for any $\tau\in \F_k^\top$ we have $\tau \vs_0= 0$, so $\const+ {\tau} \spine_\Zero \subseteq S$ iff $C\in S$ and $\const+ {\tau} \spine_+ \subseteq S$. Observe that $\spmat_+: =[v_1~\cdots~v_k]$ is an upper-triangular $k\times k$ matrix  whose entries are non-negative integers; note the different font from the set $\spine_+$. Furthermore, the determinant $\delta:=\det{\spmat_+} = \vs_1(1)\cdots \vs_k(k)$ is positive since $\vs_n(n)$ is positive for each $1\leq n\leq k$ by definition. Hence $\spmat_+$ is invertible and $\spmat_+^{-1}=\delta^{-1}\adj{\spmat_+}$, where the adjoint $\adj{\spmat_+}$ is an upper-triangular matrix with integer entries which furthermore has positive entries on its diagonal (each of the form $\delta/\vs_n(n)$). 

Now, if $C \in S$ and $\tau\in \F_k^\top$ then
$$\const+ {\tau} \spine_+ \subseteq S \iff \tau \spmat_+\in (S-C)^{k} \iff \tau \in (S-C)^{k}\spmat_+^{-1}.$$
Observe that 
$$(S-C)^{k}\spmat_+^{-1}=(S-C)^{k}\frac{1}{\delta}\adj{\spmat_+}=\left( \frac{S-C}{\delta}\right)^{k}\adj{\spmat_+}.$$
Therefore, 
$$\const+ {\tau} \spine_+ \subseteq S \iff \tau \in \left( \frac{S-C}{\delta}\right)^{k}\adj{\spmat_+}.$$

We claim that there is a $C\in S$ such that the set $(S-C)/\delta$ has an infinite subset in the positive integers. 
Indeed, since $S$ is infinite there exists a coset $C+\N\delta$ that has infinite intersection with $S$, where we can take $C\in S$ without loss of generality; let $\bar\N\subseteq\N$ be the infinite set such that $C+\bar \N\delta $ is the intersection of $S$ with $C+\N\delta$. Hence $\bar\N$  is such an infinite subset of $(S-C)/\delta$, and actually $0\in \bar\N$ since $C\in S$. Consequently, if $\tau \in \bar \N^{k}\adj{\spmat_+}$, then $\const+ {\tau} \spine_+ \subseteq S$. Note that $ \bar \N^{k}\adj{\spmat_+}$ is an infinite set and all if its entries are integers, while we need $\tau \in \F_k^\top$. Therefore, it is enough to be able to find $[x_1~\cdots ~x_k] \in \bar\N^{k}$ such that $ [t_1~t_2~\cdots ~t_k] = [x_1~\cdots ~x_k] \adj{\spmat_+}$, where the entries $t_i$ are nonnegative and further ${\tau}f  \nin {\tau}\spine_\Zero$.

Note that since $\adj{\spmat_+}$ is upper-triangular, the value of $t_n$, for each $n\leq k$,  is determined only by the values $x_1,\ldots,x_n$; $t_n$ is a linear combination of only $x_1,...,x_n$. This allows us to recursively choose the values of the $x_i$'s, in order to specify the values $t_i$ one-by-one. Furthermore, at the recursive step where we have already determined the values of $x_1,...,x_{n-1}$,  the value of $x_n$ can be chosen arbitrarily large from the infinite set $\bar \N$; moreover, in the linear combination specifying $t_n$ the coefficient of $x_n$ is the $(n,n)$-entry of $\adj{\spmat_+}$, which is positive; this allows for the value of $t_n$ to be as large as we want (in particular nonnegative). Therefore, the only thing that we have to ensure is that $x_1,...,x_n$ are chosen in $\bar\N$ so that furthermore  ${\tau}f  \nin {\tau}\spine_\Zero$, i.e.  ${\tau}f \not =\tau v_i$ for all $1 \leq i \leq k$. Below we first prove that ${\tau}f \not =\tau v_k$ and then that ${\tau}f  > \tau v_i$, for all $i<k$.

Since $[f,\spine]$ is a spinal equation, we have $f\nin \spine$ and in particular $f\neq v_k$. Let $m$ be the largest number in $\{1, \ldots, k\}$ such that $f(m)\neq v_k(m)$ and $f(i)=v_k(i)$ for all $i>m$. 
We now define $x_i=0$ for each $i<m$; note that since $0\in\bar\N$, all of these values are in $\bar\N$. Since $t_i$ is a linear combination of $x_1, \ldots, x_i$, we have that $t_i=0$ for $i <m$. 
We define $x_m$ to be any positive number in $\bar\N$, resulting in a positive value for $t_m$, since $x_i=0$ for  $i<m$. Since $f(m)\neq v_k(m)$, we get $t_m f(m)\neq t_mv_k(m)$, and since  $f(i)=v_k(i)$ for all $i>m$ we obtain 
$$\tau f =\sum_{i=m}^k t_if(i)=t_mf(m)+\sum_{i>m} t_iv_k(i) \neq  t_mv_k(m)+\sum_{i>m} t_iv_k(i) = \tau v_k,$$
for all possible values of $t_i$ for $i>m$. So any choice of a positive value for  $x_m$ in $\bar\N$ ensures that  $\tau f\neq \tau v_k$.

For $m<i<k$, we continue choosing positive values for $x_i$ in $\bar\N$ that are large enough to ensure that $t_i$ is nonnegative, as explained above. Finally, at the last step, we have chosen $x_1, \ldots, x_{k-1}$ and therefore  determined the values of $t_1, \ldots, t_{k-1}$. Note that furthermore the values of $\tau v_1, \ldots, \tau v_{k-1}$ have also been determined: since $\spmat_+$ is an upper triangular matrix, we have $v_i(j)=0$ for each $j>i$, so $\tau v_i=t_1v_i(1)+\cdots +t_iv_i(i)=0$ for all $i<m$; in particular even though $t_k$ appears in $\tau$, it does not appear in the values of  $\tau v_1, \ldots, \tau v_{k-1}$. We now choose $x_k\in \bar\N$ so that $t_k>\tau \vs_i$ for all $i<k$. Since $[f,\spine]$ is a basic equation by definition, $f$ is positive in each of its entries  and in particular $f(k)\geq 1$, so we obtain $\tau f \geq t_kf(k)\geq t_k>\tau v_i$ for each $i<k$. 
\end{proof}
\begin{cor}\label{3to1} 
If a simple equation satisfies $(\star)$, then it is spineless.
\end{cor}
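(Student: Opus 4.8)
The plan is to obtain this as an immediate consequence of Lemma~\ref{injective}. Suppose the simple equation $[\Dset]$ satisfies $(\star)$. By the definition of $(\star)$, there is an integer $K>1$ for which $(\star K)$ holds. I would then set $S:=\{K^m : m\in\N\}$, the set of all powers of $K$. Since $K>1$, the map $m\mapsto K^m$ is strictly increasing, so $S$ is an infinite subset of $\N$. As noted just before Lemma~\ref{injective}, the condition $(\star K)$ is literally the condition $(\star S)$ for this choice of $S$: both assert that whenever $\const+\rvec\Dset\subseteq S$ for some $\const\in\N$ and $\rvec\in\F_n^{\top}$, there exists $\bar\vd\in\Dset$ with $\rvec\bar\vd=\rvec\mathbf{1}$. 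Hence $[\Dset]$ satisfies $(\star S)$ for an infinite set $S$, and Lemma~\ref{injective} yields that $[\Dset]$ is spineless.

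There is essentially no obstacle at this level of the argument: the entire content lies in Lemma~\ref{injective}, whose proof already carries out the substantive work — the recursive construction of a one‑variable substitution $\tau$ witnessing the failure of $(\star S)$ for any prespinal equation, using the invertibility of the upper‑triangular matrix $\spmat_+$ (with $\delta=\det\spmat_+>0$) and the fact that some translate $C+\N\delta$ of $S$ meets $S$ in an infinite set. The only new thing to observe for Corollary~\ref{3to1} is the trivial fact that the set of powers of a base $K>1$ is infinite, so that Lemma~\ref{injective} is applicable; everything else is a direct appeal. Accordingly, I would keep the proof of the corollary to a single sentence, with the remark that the hard part was already discharged in the lemma.
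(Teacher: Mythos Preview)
Your proposal is correct and matches the paper's intended argument: the corollary is meant as an immediate consequence of Lemma~\ref{injective}, using that $(\star K)$ is precisely $(\star S)$ for the infinite set $S=\{K^m:m\in\N\}$ when $K>1$. The paper leaves the corollary unproved for exactly this reason, so your single-sentence derivation is appropriate.
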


Thus we obtain the following from Lemma~\ref{2starTo1star} and Corollary~\ref{3to1}:
\begin{lem}\label{2star2spineless}
If a simple equation satisfies $({\star\star})$ then it is spineless.
\end{lem}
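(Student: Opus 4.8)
The plan is to chain the two results stated immediately before the lemma: $({\star\star}) \Rightarrow (\star)$ and $(\star) \Rightarrow$ spineless. Concretely, let $\varepsilon$ be a simple equation satisfying $({\star\star})$, say $({\star\star}K)$ holds for a fixed $K>1$. By Lemma~\ref{2starTo1star} (whose proof merely specializes $\rvec=\rvec'$ and $\const=\const'$ in the definition of $({\star\star}K)$, collapsing it to $(\star K)$), the equation $\varepsilon$ satisfies $(\star)$ — indeed $(\star K)$ for the same $K$. Then Corollary~\ref{3to1} gives that $\varepsilon$ is spineless, which is exactly the conclusion.

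It is worth unpacking why $(\star)\Rightarrow$ spineless, since that is where the content lies. For $K>1$, the set $S$ of powers of $K$ is an infinite subset of $\N$, and $(\star K)$ is precisely $(\star S)$ for this $S$; so the implication is an instance of Lemma~\ref{injective}. That lemma is the only nontrivial input: for any prespinal $\varepsilon$ and any infinite $S\subseteq\N$ it constructs a $1$-variable substitution defeating $(\star S)$, exploiting the invertibility and upper-triangularity of the spine matrix $\spmat_+$ together with a recursive choice of the entries $x_i$ inside a suitable infinite subset of an arithmetic progression coming from $S$.

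Consequently there is no real obstacle to overcome in proving Lemma~\ref{2star2spineless} itself: it is a purely formal corollary of Lemma~\ref{2starTo1star} and Corollary~\ref{3to1} (equivalently, of Lemma~\ref{2starTo1star} and Lemma~\ref{injective}), and its proof is a single line. All the genuine work has already been carried out in Lemma~\ref{injective}.
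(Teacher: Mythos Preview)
Your proof is correct and matches the paper's own one-line argument exactly: the paper derives the lemma directly from Lemma~\ref{2starTo1star} and Corollary~\ref{3to1}, just as you do. Your additional unpacking of how Corollary~\ref{3to1} follows from Lemma~\ref{injective} is accurate but not needed for the proof of this lemma.
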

%
%
\subsection{Prespinality}
We will begin with a concrete example of a prespinal equation before illustrating the general case. 
\begin{ex}\label{ex: spine} Consider the $8$-variable simple equation $\varepsilon:stuvwxyz\leq$
$$
 1\vee 
 s w^2 x z^4\vee
 s^2\vee
 s^3 t x^2 y  z\vee
 s^4 t z^2 \vee
  s^5 t w y^2\vee
 s^6 z^4\vee
  s^7 v w x^2 y\vee
 s^8 t^2\vee
 s^9  u v x^2 y,$$ 
where for better readability we use the letters $s,...,z$ for the formal variables $x_1,...,x_8$, in that order; we will use both names for each variable below. 

Since $\varepsilon$ is an $8$-variable equation with $10$ distinct joinands, and all spinal equations in $8$-variables have no more than $9$ distinct joinands, the equation $\varepsilon$ is not spinal. However, it is easily verified that $\varepsilon$ is prespinal, as witnessed by the $3$-variable substitution $\subt$ defined via: 
$s\mapsto 1$, 
$t\mapsto x_1^2$, 
$z\mapsto x_1$,  
$y \mapsto x_1x_2$, 
$v\mapsto x_3$, and $u,w,x\mapsto x_2$. 
Indeed, we have $$\subt \varepsilon: x_1^4 x_2^4 x_3 \leq 1\vee x_1^4 \vee x_1^4x_2^3 \vee x_1 x_2^4 x_3.$$

We name the joinands in the right-hand side of $\varepsilon$ in order of appearance from left to right: $\vd_1=1, \vd_2=sw^2x z^4, \ldots, \vd_{10}=s^9  u v x^2 y$; we do the same for $\subt \varepsilon$: $\vs_0=1$, $\vs_1=x_1^4$, $\vs_2=x_1^4x_2^3$, and $\vs_3=x_1x_2^4x_3$. Also, we define the sets $\Dset=\{\vd_1,...,\vd_{10}\}$ and $\spine=\{\vs_0,...,\vs_3\}$. 
Given this particular ordering of variables and joinands, the set-theoretic equation $\subt \Dset =\spine$ induces the matrix equation $\subt \Dmat = \spmat$ (note the different font for the sets $\Dset, \spine$ and the matrices $\Dmat, \spmat$), where
$$\Scale[.65]{
\subt = 
 \hspace{-5pt}
\begin{array}{c}
\Scale[.95]{
\begin{array}{c cc lcc cc}s& ~t&u& v&w&x& y&z \end{array} }
\\
\begin{bmatrix} 
0& 2&0&0& 0&0&1& 1\\  
0& 0&1&0& 1&1&1& 0\\ 
0& 0&0&1& 0&0&0& 0
\end{bmatrix}
\\
~
\end{array} \hspace{-5pt},
\Dmat=
 \hspace{-5pt}
 \begin{array}{c}
 \phantom{X}\\
 s\\t\\u\\v\\w\\x\\y\\z
 \end{array}
 \hspace{-12pt}
\begin{array}{c}
\Scale[.76]{\begin{array}{cc ccc ccc cc}\vd_1 &\vd_2& \vd_3&\vd_4& \vd_5&\vd_6&\vd_7& \vd_8&\vd_9&\vd_{10} \end{array} }
\\
\begin{bmatrix} 
0&1&2&3 &4&5&6&7&8&  9\\
0&0&0&1 &1&1&0&0&2&  0\\
0&0&0&0 &0&0&0&0&0&  1\\
0&0&0&0 &0&0&0&1&0&  1\\
0&2&0&0 &0&1&0&1&0&  0\\
0&1&0&2 &0&0&0&2&0&  2\\
0&0&0&1 &0&2&0&1&0&  1\\ 
0&4&0&1 &2&0&4&0&0&  0
\end{bmatrix}
\end{array}
 \hspace{-5pt},
 \spmat=
 \hspace{-5pt}
 \begin{array}{c}
\Scale[.76]{\begin{array}{cc ccc ccc cc}\vs_0 &\vs_2& \vs_0&\vs_2& \vs_1& \vs_2&\vs_1& \vs_3&\vs_1&\vs_3 \end{array} }
\\ 
\begin{bmatrix} 
0&4&0& 4&4&4& 4&1&4 &1 \\  
0&3&0& 3&0&3& 0&4&0 &4\\  
0&0&0& 0&0&0& 0&1&0 &1
\end{bmatrix}
\\
~
\end{array}
 }$$
Note that the $(i,j)$-entry of $\Dmat$ represents the degree of the $i$-th variable $x_i$ in the joinand $\vd_j$, and the $(j,i)$-entry of $\subt$ represents the degree of $x_j$ in $\subt(x_i)$.

Note that, by omitting $\vs_0=\Zero$, the $3\times 3$ matrix $[\vs_1~\vs_2~\vs_3]$ is upper triangular with a positive diagonal (as demanded in the definition of $\subt\varepsilon$ being spinal) and this is the reason for the particular naming of $\vs_0, \vs_1, \vs_3,$ in that order. In turn, given this order, the substitution partitions the set of joinands $\Dset$, i.e., the columns of $\Dmat$ into 
$\Dset_0=\{\vd_1,\vd_3\}$, $\Dset_1=\{\vd_5,\vd_7,\vd_9 \}$, $\Dset_2=\{\vd_2,\vd_4,\vd_6 \}$ and $\Dset_3=\{\vd_8,\vd_{10}\}$, so that $\subt\Dset_j=\{\vs_j\}$, for all $j$. Guided by this ordering, we further rearrange the columns of $\spmat$ into a new matrix $\spmat'$ and the columns of $\Dmat$ into a new matrix $\Dmat'$, where the ordering of the columns within each $\Dmat_i$ is done randomly. If we represent $\Dmat'$ symbolically as $[\Dset_0~\Dset_1~\Dset_2~\Dset_3]$, then we also have $\subt \Dmat' = [\subt\Dset_0~\subt\Dset_1~\subt\Dset_2~\subt\Dset_3]$ and the equation $\subt \Dmat' =\spmat'$. 

We can improve the presentation of this equation even more by putting $\subt$ and $\Dmat'$ in a triangular form, at least in blocks. More specifically, we now rearrange the rows of $\Dmat'$ and simultaneously the columns of $\subt$ (this corresponds to permuting the variables of $\varepsilon$) to obtain new matrices $\Dmat''$ and $\subt '$, yielding the equation $\subt' \Dmat''  = \spmat'$:

$$\Scale[0.72]{ 
\begin{array}{c}
\Scale[.95]{\begin{array}{c cc lcc cc}s& ~t&z& w&x&y& u&v \end{array} }
\\
\left[
\begin{array}{c|cc|ccc|cc}
0& 2&1& 0&0&1& 0&0\\ \hdashline 
0& 0&0& 1&1&1& 1&0\\ \hdashline
0& 0&0& 0&0&0& 0&1\end{array}
\right] 
\\
~
\end{array}
\hspace{-10pt}
\begin{array}{c}
\Scale[.77]{\begin{array}{cc ccc ccc cc}\vd_1 &\vd_3& \vd_5&\vd_7& \vd_9&\vd_2&\vd_4& \vd_6&\vd_8&\vd_{10} \end{array} }
\\
\left[
\begin{array}{c c | ccc | c c c | cc }
0&2& 4&6&8& 1&3&5& 7&9\\
\hline
0&0& 1&0&2& 0&1&1& 0&0\\
0&0& 2&4&0& 4&1&0& 0&0\\ 
\hline
0&0& 0&0&0& 2&0&1& 1&0\\
0&0& 0&0&0& 1&2&0& 2&2\\
0&0& 0&0&0& 0&1&2& 1&1\\ 
\hline
0&0& 0&0&0& 0&0&0& 0&1\\
0&0& 0&0&0& 0&0&0& 1&1
\end{array}
\right]
\end{array}
\hspace{-5pt}
 = 
\hspace{-5pt}
\begin{array}{c}
\Scale[.8]{\begin{array}{cc ccc ccc cc}\vs_0 &\vs_0& \vs_1&\vs_1& \vs_1& \vs_2&\vs_2& \vs_2&\vs_3&\vs_3 \end{array} }
\\ 
\left[\begin{array}{c c | c c c | c c c |cc} 
0&0& 4&4&4& 4&4&4 &1&1 \\ \hdashline 
0&0& 0&0&0& 3&3&3 &4&4\\ \hdashline 
0&0& 0&0&0& 0&0&0 &1&1
\end{array}
\right] 
\\
~
\end{array}
\hspace{-5pt}.}$$
Finally, we observe that the rearrangement of the rows results in a partition of the set of rows such that the two partitions (of the set of rows and the set of columns) induce a blocking (given by the solid lines above) that has an upper-triangular shape. We denote by $\Dmat^\bl$ and $\subt^\bl$ the resulting block matrices, and the equation $\subt' \Dmat''  = \spmat'$ of matrices yields the equation $\subt^\bl \Dmat^\bl = \spmat^\bl$ of block matrices. We call the elements of $\Dmat^\bl$ {\em blocks} and they are submatrices of $\Dmat''$; we denote by $(\Dmat^\bl)_{ij}$  the $(i+1,j+1)$-block of $\Dmat^\bl$, where $0\leq i,j\leq 3$. We observe that 
\begin{enumerate}
	\item each $(\Dmat^\bl)_{ij}$ is the zero matrix when $i>j$ (all blocks below the diagonal are zero matrices) and 
	\item each row and each column of $(\Dmat^\bl)_{jj}$ contains a nonzero entry for $j\geq1$ (in the diagonal blocks no row and no column is fully zero, with the possible exception of the top left block $(\Dmat^\bl)_{00}$).
\end{enumerate}  
\end{ex}

We call such a partition of the matrix into a block matrix with these two features a \emph{blocking} of the matrix. Each blocking specifies an (ordered) partition of the set of rows and an (ordered) partition of the set of columns of a matrix in the obvious way (with the provision that the first class in this ordered list may be the empty set), but each of these two partitions is special as we will explain. Given a column partition, we define below an associated list of sets of rows. Whenever the original partition comes from a blocking, the resulting list is actually an (ordered) partition (every set in the list, except possibly the first one, is non-empty). The same holds with the roles of rows and columns swapped. We will explain that blockings correspond bijectively to column-partitions that happen to induce ordered row-partitions and to row-partitions that happen to induce ordered column-partitions.

Given an $I\times J$ matrix $\Dmat$, formally viewed as a function from $I \times J$, as usual $\Dmat_{ij}$ denotes its entry in the $i$-th row and $j$-th column, for $i\in I$ and $j\in J$; usually $I$ and $J$ are taken to be initial segments of the positive integers as in the example above. We denote by $\Dmat_{i\_}$ the $i$-th row and by $\Dmat_{\_j}$ the $j$-th column of $\Dmat$. Given an \emph{ordered partition} $(C_0, C_1, \ldots, C_k)$ of $J$ (i.e., $C_0$ may be empty, but not all of $J$, and the remaining list forms a partition of $J$) we define the list $(R_0, R_1, \ldots, R_k)$ of subsets of $I$ as follows:
for $m \geq 1$, $R_m$ contains those $i \in I$ such that the entry $\Dmat_{in}$ is zero for $n$ belonging to parts $C_\ell$ with $\ell<m$ and there is a non-zero entry $\Dmat_{in}$ for some $n$ belonging to the part $C_m$. In other words, if we group the columns of $\Dmat$ according to the ordered partition, then $R_m$ corresponds to those rows that are fully zero on all columns before $C_m$ and are not fully zero on $C_m$. We define $R_0$ as containing the remaining $i$'s that are not in $R_1 \cup \cdots \cup R_k$. (Note that we allow our ordered partitions to have an optional initial empty part $R_0$.) In the example above, the sets $R_n$ are all non-empty, thus resulting into a partition of the set of columns; however, this may not be the case when $(C_0, C_1, \ldots, C_k)$ of $J$ is an arbitrary ordered partition of $J$. Note that whenever  $(R_0, R_1, \ldots, R_k)$ is a partition, we can define a partition of $I\times J$ into blocks of the form $R_m \times C_n$ with the feature that, for $n\geq 1$, each block $R_n \times C_n$ is such that no column and no row of $\Dmat$ in that block is fully zero. Therefore, blockings correspond to column-partitions that happen to induce ordered row-partitions. Often, instead of writing a partition $(C_0, C_1, \ldots, C_k)$ of the column index set $J$, we will be writing the partition $(\Dset_0, \Dset_1, \ldots, \Dset_k)$ of the set $\Dset$ of the corresponding columns.

Conversely, given an ordered partition $(R_0, R_1, \ldots, R_k)$ of $I$ we define a list  $(C_0, C_1, \ldots, C_k)$ of subsets of $J$ as follows: for $n \geq 1$, 
$C_n$ contains those $j \in J$ such that the entry $\Dmat_{mj}$ is zero for $m$ belonging to parts $R_\ell$ with $\ell >n$, and there is a non-zero entry $\Dmat_{mj}$ for some $m$ belonging to the part $C_n$. In other words, if we group the rows of $\Dmat$ according to the ordered partition, then $R_n$ corresponds to those rows that finish with zeros on all rows after $C_n$ and are not fully zero on $C_n$. We define $C_0$ as containing the remaining $i$'s that are not in $C_1 \cup \cdots \cup C_k$. Again we can see that this yields an ordered partition (i.e., the sets $C_n$, $n \geq 1$, are non-empty) iff this corresponds to a blocking of the matrix. 

Given a blocking $\bl$ of $I\times J$ on a matrix $\Dmat$, we obtain the block matrix $\Dmat^\bl$ and observe that it has an upper-triangular form. In the following we will consider a blocking given by either its ordered row-partition or its ordered column-partition. In analogy with our notation for entries, rows and columns of a matrix, 
we define $\Dmat^\bl_{mn}:=\{\Dmat_{ij}: i \in R_m, j \in C_n\}$, $\Dmat^\bl_{m \_}=\{\Dmat_{ij}: i \in R_m\}$ and $\Dmat^\bl_{\_ n}=\{\Dmat_{ij}: j \in C_n\}$.  If $\Dset$ is a set of column vectors, we say $\bl$ is a \emph{blocking of} $\Dset$ if it is a blocking of a matrix $\Dmat$ whose set of columns form $\Dset$, and we define $\Dset_{j}^\bl$ to be the $j$-th set of column vectors $\Dmat^\bl_{\_ j}$. Observe that if $\bl=(R_0,\ldots,R_k)$ is a row blocking for $\Dset$, then by how the associated list of columns are defined, we get $\Dset^\bl_j=\{d\in \Dset: \supp{d}\cap R_j \neq\emptyset\}\setminus (\Dset^\bl_{j+1}\cup\cdots \Dset^\bl_k)$, for each $j\geq 1$, and $\Dset^\bl_0 = \Dset\setminus (\Dset^\bl_1\cup\cdots \Dset^\bl_k)$.

The blocking in the Example~\ref{ex: spine} is induced by $\subt$ in the sense that the corresponding partition on the set $\{1,\ldots,10\}$ of columns of $\Dmat$ along the vertical lines in $\Dmat^\bl$ into the sets $C_0=\{1, 3\}, C_1=\{5,7,9\}$, $C_2=\{2,3,6\}$, and $C_3=\{8,10 \}$ is given by the stipulation that $C_n$ is exactly the set of the columns that are mapped by $\subt$ to the same column vector, $v_n$, of $V$, for all $n \in \{0,1,2,3\}$. The partition of the set of rows $\{1,\ldots,8\}$ of $\Dmat$ (along the horizontal lines of $\Dmat^\bl$) yields the sets $R_3=\{ 3,4\}$, $R_2=\{5,6,7 \}$, $R_1=\{ 2,8\}$, and $R_0=\{1\}$. 

As in the example, every substitution $\subt$ that maps an $I \times J$ matrix $\Dmat$ to a spine $\spmat=[\vs_0~\vs_1~\cdots ~\vs_k]$ induces a blocking $\bl$ via a partitioning of the columns: $C_n=\{j \in J : \subt \Dmat_{\_ j}=\vs_n\}$. The blockings induced by substitutions enjoy further properties (in addition to yielding upper-triangular block matrices with diagonal blocks that have no fully zero row or column). 
Returning to the example we see that $\subt \Dmat^\bl_{\_ j}=\vs_j$ for each $0\leq j\leq 3$. We say that a substitution is a \emph{solution} for a set/matrix of column vectors if it sends all of the vectors of the set/matrix to the same vector. 
In this terminology, $\subt$ is a solution for each of the sets $\Dmat^\bl_{\_ 0}$, $\Dmat^\bl_{\_ 1}$ and $\Dmat^\bl_{\_ 2}$. Note that a substitution  is a solution for a set/matrix iff each of its rows is a solution for it. 

Also, looking at the induced partition on the rows, the $R_m$ part of each row $\rvec_m$ of $\subt$ is not fully zero and all of its elements are non-negative.
For $f\in \F$ and $T\subseteq\Z^{+}$ we say $f$ is {\em $T$-positive} if $f_T>\Zero,$ i.e., $f_T\neq\Zero$ and $f(i)\geq 0$ for each $i\in T$; put differently $f \not \in \F_{T^c}$, where $T^c$ is the complement of $T$.
In this terminology, the row $\rvec_m$ of $\subt$ is $R_m$-positive, for each $m$. Finally, we note that $\rvec_3$ is an element of $\F^\top_{R_3}$, $\rvec_2$ is an element of $\F^\top_{R_3 \cup R_2}$, and $\rvec_1$ is an element of $\F^\top_{R_3 \cup R_2 \cup R_1}$. In general, for every blocking defined by a substitution $\subt$ with respect to a spine, $\rvec_m$ is an element of $\F^\top_{R^+_m}$, where $R^+_m:=R_m \cup \cdots \cup R_k$. For $T\subseteq S$, we say $f$ is {\em $(T,S)$-positive} if $f$ is $T$-positive and $f \in \F_S$; put differently $f \in \F_S$ and $f \not \in \F_{T^c}$. Therefore, the row $\rvec_m$ of $\subt$ is $(R_m, R_m^+)$-positive, for each $m$. 

Given a row blocking $\bl=(R_0,\ldots,R_k)$ of a set $\Dset$, a $1$-variable substitution $\sigma \in \F^\top$ and $1\leq i \leq k$, if $\sigma$ is  $(R_i,R_i^+)$-positive and $\sigma$ is a solution for each set of columns $\Dset^\bl_{j}$, then we say that $\sigma$ is a  \emph{$(\bl, i)$-solution} for $\Dset$. 
Finally, we say that a $k$-variable substitution $\subt$ is a {\em $\bl$-solution for $\Dset$}, where $\bl=(R_0,\ldots,R_k)$, if for all $i$, the $i$-th row $\sigma_i$ of $\subt$ is a \emph{$(\bl, i)$-solution} for $\Dset$.
$$\Scale[0.73]{ 
\begin{bmatrix} 
\Zero & \subt^{\bl}_{11}& \cdots & \subt^{\bl}_{1i}&\cdots& \subt^\bl_{1k}\\
\vdots & \vdots& \ddots & \vdots & \vdots & \vdots \\
\Zero & \Zero &\cdots & \subt^{\bl}_{ii} & \cdots & \subt^{\bl}_{ik}\\
\vdots & \vdots& \ddots & \vdots & \ddots & \vdots \\
\Zero&\Zero&\cdots &\Zero &\cdots &\subt^{{\bl}}_{kk}
\end{bmatrix}
\begin{bmatrix} 
\Dset^{\bl}_{00} & \Dset^{\bl}_{01}& \cdots & \Dset^{\bl}_{0i}&\cdots& \Dset^{\bl}_{0k}\\
\Zero & \Dset^{\bl}_{11}&\cdots &\Dset^{\bl}_{1i}&\cdots &\Dset^{\bl}_{1k}\\
\vdots & \vdots& \ddots & \vdots & \vdots & \vdots \\
\Zero & \Zero &\cdots & \Dset^{\bl}_{ii} & \cdots & \Dset^{\bl}_{ik}\\
\vdots & \vdots& \ddots & \vdots & \ddots & \vdots \\
\Zero&\Zero&\cdots &\Zero &\cdots &\Dset^{{\bl}}_{kk}
\end{bmatrix} =
\begin{bmatrix} 
\Zero & \vs_1(1)& \cdots & \vs_i(1)&\cdots& \vs_k(1)\\
\vdots & \vdots& \ddots & \vdots & \vdots & \vdots \\
\Zero & \Zero &\cdots & \vs_i(i) & \cdots & \vs_k(i)\\
\vdots & \vdots& \ddots & \vdots & \ddots & \vdots \\
\Zero&\Zero&\cdots &\Zero &\cdots &\vs_k(k)
\end{bmatrix}
},
$$

The following lemma and theorem then follow by the definition of $\bl$-solutions.

\begin{lem}\label{b-solutions}
If a $k$-variable substitution $\subt$ is a $\bl$-solution for $\Dmat$, then the matrix $[\vs_1~\vs_2~\cdots~\vs_k]$, where $\vs_j=\subt \Dmat^\bl_{*j}$, 
is a $k\times k$ upper-triangular matrix whose diagonal contains positive entries.
\end{lem}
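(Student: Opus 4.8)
The plan is to derive both assertions — the upper-triangular shape and the positivity of the diagonal — by unwinding the definition of a $\bl$-solution against the two defining features of a blocking $\bl=(R_0,\dots,R_k)$: that $(\Dmat^\bl)_{mn}=\Zero$ whenever $m>n$, and that for $j\geq 1$ no row (and no column) of the diagonal block $(\Dmat^\bl)_{jj}$ is identically zero. First I would record that the matrix in question is well-defined and has the right size: since each row $\sigma_i$ of $\subt$ is a $(\bl,i)$-solution it is, in particular, a solution for every block-column $\Dset^\bl_j$, so $\subt d=\subt d'$ for all $d,d'\in\Dset^\bl_j$; hence $\vs_j:=\subt\Dmat^\bl_{*j}\in\F_k$ is unambiguous, with $i$-th entry $\vs_j(i)=\sigma_i d$ for any $d\in\Dset^\bl_j$, and $[\vs_1~\vs_2~\cdots~\vs_k]$ is a $k\times k$ matrix.

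For the upper-triangular shape I would fix $i>j$ and pick $d\in\Dset^\bl_j$. Since $(\Dmat^\bl)_{mj}=\Zero$ for every $m>j$, the column $d$ vanishes on $R_m$ for all $m>j$, i.e. $\supp d\subseteq R_0\cup\cdots\cup R_j$. On the other hand $\sigma_i\in\F^\top_{R_i^+}$, so $\supp\sigma_i\subseteq R_i\cup\cdots\cup R_k$; as $i>j$ and the $R_m$ are pairwise disjoint, these two index sets are disjoint, whence $\vs_j(i)=\sigma_i d=\sum_{\ell}\sigma_i(\ell)d(\ell)=0$.

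For the positivity of the diagonal I would combine the $R_j$-positivity of $\sigma_j$ with the non-degeneracy of the diagonal block. Being a $(\bl,j)$-solution, $\sigma_j$ is $(R_j,R_j^+)$-positive, so $(\sigma_j)_{R_j}\neq\Zero$; fix $\ell^*\in R_j$ with $\sigma_j(\ell^*)>0$. Since $\bl$ is a blocking and $j\geq 1$, the $\ell^*$-th row of $(\Dmat^\bl)_{jj}$ is not identically zero, so there is $d^*\in\Dset^\bl_j$ with $d^*(\ell^*)>0$. All entries involved are natural numbers, so $\vs_j(j)=\sigma_j d^*\geq\sigma_j(\ell^*)\,d^*(\ell^*)>0$.

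I expect no genuine obstacle: the lemma essentially re-expresses, for $\bl$-solutions, exactly what the notions ``blocking'' and ``$(\bl,i)$-solution'' were designed to encode, and the block-matrix display immediately preceding the statement already exhibits the relevant shapes. The only step beyond a definition chase is the positivity of the diagonal, where one must marry the non-triviality of $\sigma_j$ on $R_j$ (the positivity half of being a $(\bl,j)$-solution) with the absence of a zero row in each diagonal block (part of being a blocking), non-negativity of all entries then doing the rest. The one point I would take care to spell out, since it is invoked twice, is why $\supp d\subseteq R_0\cup\cdots\cup R_j$ for $d\in\Dset^\bl_j$ — this is precisely the vanishing of the strictly-below-diagonal blocks $(\Dmat^\bl)_{mj}$, $m>j$.
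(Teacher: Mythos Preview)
Your proposal is correct and follows the same route as the paper, which simply asserts that the lemma ``follows by the definition of $\bl$-solutions'' without spelling out the argument. Your explicit unpacking of the two defining features of a blocking against the $(R_i,R_i^+)$-positivity and solution conditions is exactly the definition chase the paper leaves implicit.
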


\begin{thm}\label{spinalform} 
An $n$-variable simple equation $[\Dset]$ is prespinal if and only if $\Dset$ has a $\bl$-solution $\subt$, for some row blocking $\bl$, such that $\subt\Lvec{n}$ and $\subt\Dset^{\bl}_{k}$ differ.
\end{thm}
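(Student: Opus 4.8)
The plan is to show that ``$\subt[\Dset]$ is spinal'' unpacks, via the matrix dictionary for substitutions and equations set up above, into the conjunction of ``$\subt$ is a $\bl$-solution for $\Dset$'' and ``$\subt\Lvec{n}\neq\subt\Dset^\bl_k$'', for a suitable row blocking $\bl$. The only nontrivial ingredient beyond unwinding definitions is Lemma~\ref{b-solutions}; everything else is bookkeeping with the supports of the rows $\sigma_1,\dots,\sigma_k$ of $\Mmat_\subt$ against the columns of $\Dmat$, all of it squeezed out of the single triangularity inequality $\vs_j(m)=0$ for $m>j$.

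For ($\Leftarrow$), let $\subt$ be a $\bl$-solution for $\Dset$ with $\bl=(R_0,\dots,R_k)$ and $\subt\Lvec{n}\neq\subt\Dset^\bl_k$. Put $f:=\subt\Lvec{n}$, $\vs_j:=\subt\Dset^\bl_j$ for $1\le j\le k$ (a single vector, since each row of $\subt$ solves $\Dset^\bl_j$), and $\vs_0:=\Zero$. A downward induction on the defining recursion for $\Dset^\bl_j$ shows that every column of $\Dset^\bl_0$ is supported inside $R_0$; since for $i\ge1$ the row $\sigma_i$ lies in $\F^\top_{R^+_i}$ and hence vanishes on $R_0$, $\subt$ sends each such column to $\Zero$, so $\subt[\Dset]=[f,\{\vs_0,\dots,\vs_k\}]$ with $\{\vs_0,\dots,\vs_k\}$ equal to $\{\vs_1,\dots,\vs_k\}$ or $\{\Zero,\vs_1,\dots,\vs_k\}$. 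By Lemma~\ref{b-solutions} the matrix $[\vs_1~\cdots~\vs_k]$ is upper-triangular with positive diagonal, so $\vs_j(k)=0$ for $j<k$ and $\vs_0(k)=0$; whereas $f(k)=\sigma_k\Lvec{n}=\sum_{i'}\sigma_k(i')>0$ because each row $\sigma_m$ of $\subt$ is $(R_m,R^+_m)$-positive, hence a nonzero row with nonnegative entries (this also makes every entry of $f$ positive, so $[f,\spine]$ is a legitimate basic equation). Therefore $f$ differs from $\vs_0,\dots,\vs_{k-1}$ on the $k$-th coordinate and from $\vs_k$ by hypothesis, so $f\notin\spine$ and $\subt[\Dset]$ is spinal; thus $[\Dset]$ is prespinal.

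For ($\Rightarrow$), let $\rho$ witness prespinality; after permuting its target variables we may assume $\rho$ is a $k$-variable substitution with $\rho[\Dset]=[f,\spine]$ spinal, the nonzero joinands listed as $\vs_1,\dots,\vs_k$ with $\vs_j(j)>0$ and $\vs_j(i)=0$ for $i>j$. Partition $\Dset$ by $C_j:=\{d\in\Dset:\rho d=\vs_j\}$, $0\le j\le k$ (with $\vs_0:=\Zero$, so $C_0$ may be empty); this is a partition since every joinand of $\rho[\Dset]$ lies in $\spine$, and $C_j\neq\emptyset$ for $j\ge1$ since $\vs_j$ is a joinand. The key computation: for $1\le m\le k$ and $d\in C_j$ we have $\sigma_m d=\vs_j(m)$, which is $0$ whenever $j<m$; as $\sigma_m$ and $d$ have nonnegative entries, $\sigma_m d=0$ forces $d(i')=0$ for every $i'\in\supp\sigma_m$, so no variable in $\supp\sigma_m$ occurs in a joinand of $C_0\cup\cdots\cup C_{m-1}$. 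Because every variable of $[\Dset]$ occurs in some joinand, each $i'\in\supp\sigma_m$ first occurs in a block $C_{m'}$ with $m'\ge m$, whence $i'$ lies in the part $R_{m'}\subseteq R^+_m$ of the row partition induced by $(C_0,\dots,C_k)$; and since $C_m\neq\emptyset$ with $\sigma_m d=\vs_m(m)>0$ for $d\in C_m$, some such $i'$ lies in $R_m$. Hence $\supp\sigma_m\subseteq R^+_m$, $\sigma_m$ is nonzero on $R_m$, and $R_m\neq\emptyset$, so $(R_0,\dots,R_k)$ is an ordered partition and defines a blocking $\bl$. A downward induction ($\Dset^\bl_k=C_k$, then $\Dset^\bl_{k-1}=C_{k-1}$, and so on) identifies the column partition induced by $\bl$ with $(C_0,\dots,C_k)$, so $\rho$ solves each $\Dset^\bl_j$; together with the positivity above, $\rho$ is a $\bl$-solution. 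Finally $\rho\Dset^\bl_k=\{\vs_k\}$ while $\rho\Lvec{n}=f$, and these differ since $f\notin\spine\ni\vs_k$.

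The step I expect to be the main obstacle is the middle of ($\Rightarrow$): drawing from the lone inequality $\vs_j(m)=0$ (for $m>j$) the three facts that each $R_m$ with $m\ge1$ is nonempty (so $(R_0,\dots,R_k)$ really is a blocking), that $\supp\sigma_m\subseteq R^+_m$, and that the column partition $\bl$ induces is exactly $(C_j)$ — so that all the clauses in the definition of a $\bl$-solution hold. None of this is deep, but lining up the indices and supports correctly is delicate; the ($\Leftarrow$) direction, by contrast, is almost immediate once Lemma~\ref{b-solutions} is in hand.
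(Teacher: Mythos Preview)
Your proof is correct and follows the same route the paper intends: the paper asserts that the theorem ``follows by the definition of $\bl$-solutions'' together with Lemma~\ref{b-solutions}, and your argument is precisely the careful unpacking of those definitions in both directions. You supply considerably more detail than the paper does---in particular the verification in ($\Rightarrow$) that the row list $(R_0,\dots,R_k)$ is genuinely a blocking and that it induces back the column partition $(C_0,\dots,C_k)$---but the underlying idea (triangularity of $\spine$ $\leftrightarrow$ support conditions on the rows $\sigma_m$ $\leftrightarrow$ $\bl$-solution) is exactly the one the paper has in mind.
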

 
The importance of Theorem~\ref{spinalform} is that it characterizes the notion of prespinality without a reference to a spine.

 Using Theorem~\ref{spinalform} we now  characterize which equations are spineless. For instance, we can verify that equations (iii)-(v) from Table~\ref{Rtable} are spineless. In each equation, the set $\Dset$ has only one blocking $\bl$, given by $(\Dset^\bl_0,\Dset^\bl_1)$ where $\Dset^\bl_1$ contains all the nonzero columns from $\Dset$. We see that there is no nonzero $(\bl,1)$-solution for either equation (iii) or (iv). In the case of equation (v), the only $(\bl,1)$-solutions are scalar multiples of $\rvec=[1 \; 1 \; 1]$, but $\rvec\Lvec{3}\in \rvec\Dset$. Therefore equations (iii)-(v) are spineless.
%
%
\subsection{Spineless equations satisfy $(\star\star)$}
Theorem~\ref{spinalform} provides the foundational link between an equation being spineless and satisfying $(\star\star)$, namely by the (non-)existence of certain $1$-variable substitutions viewed as solutions to particular linear systems. 

We prove that every spineless equation satisfies $({\star \star} K)$, for sufficiently large $K$, contrapositively. 
So, we assume that a simple equation $[\Dset]$ satisfies the antecedent of $({\star \star} K)$ and  in particular there exists a (nontrivial) $1$-variable substitution $\rvec$ for which $\rvec\Dset$ is contained in some shift of $K^\N$. 
The following lemma ensures that, if $K$ is chosen large enough, $\rvec$ \emph{induces} a blocking $\bl$ on $\Dset$ in the sense that the naturally ordered column-partition $(\Dset_{0},\ldots, \Dset_{k})$ of $\Dset$ induces $\bl$, i.e., $\Dset_j=\Dset^\bl_{j}$ for each $j$. 
Here we conventionally order the sets so that $\rvec \Dset_{j}< \rvec \Dset_{j+1}$ for $j<k$, and we always take $\Dset_{0}$ to be the (possibly empty) set of all $\vd\in \Dset$ such that $\rvec \vd= 0$. Note that if a $\rvec$ induces a blocking then it must be that $k\geq 1$ and so $\rvec$ is not the zero vector.

For a finite set $\Dset\subseteq\F$, we define $\Delta \Dset=\sum_{i=1}^n\max\{|\vd(i)-\vd'(i)|:\vd,\vd'\in \Dset \}$.

\begin{lem}\label{Ksol2}
Assume that for a finite $\Dset\subseteq\F$ and for some 1-variable substitution $\rvec$ that is nontrivial on $\Dset$, $\rvec \Dset$ is contained in some shift of $K^\N$, where $K>\Delta \Dset+1$. Then $\rvec$ induces a blocking on $\Dset$. 
\end{lem}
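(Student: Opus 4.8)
The plan is to turn the arithmetic hypothesis into a combinatorial statement about supports, and then exploit that consecutive powers of $K$ differ by a factor at least $K>\Delta\Dset+1$. I would first fix $\const\in\N$ with $\const+\rvec\vd\in K^{\N}$ for every $\vd\in\Dset$, enumerate the distinct values of $\rvec$ on $\Dset$ as $0=v_0<v_1<\cdots<v_k$ (with $v_0=0$ occurring only if $\rvec\vd=0$ for some $\vd\in\Dset$; since $\rvec$ is nontrivial on $\Dset$ we have $k\ge1$), write $\const+v_\ell=K^{b_\ell}$, and put $\Dset_\ell=\{\vd\in\Dset:\rvec\vd=v_\ell\}$, so that $(\Dset_0,\ldots,\Dset_k)$ is exactly the naturally ordered column-partition determined by $\rvec$. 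Unwinding the definitions of the row-list $(R_0,\ldots,R_k)$ induced by a column-partition and of the column-sets $\Dset^\bl_j$ recovered from a row-partition, I claim it suffices to prove the single assertion $(\dagger)$: \emph{for every $j\in\{1,\ldots,k\}$ and every $\vd\in\Dset_j$, $\supp\vd\not\subseteq\supp{\Dset_0\cup\cdots\cup\Dset_{j-1}}$.}

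Granting $(\dagger)$: it yields $R_j\neq\emptyset$ for each $j\ge1$ (take $i\in\supp\vd\setminus\supp{\Dset_0\cup\cdots\cup\Dset_{j-1}}$; then $i$ is zero on every column of strictly lower level and nonzero on $\vd\in\Dset_j$, so $i\in R_j$), hence the induced row-list is a genuine ordered partition; an induction from $j=k$ downwards then shows $\Dset^\bl_j=\Dset_j$ for all $j$, using only $(\dagger)$ and the disjointness of the levels, and the diagonal blocks $R_j\times\Dset_j$ are nondegenerate for the same reasons—so $\rvec$ induces a blocking. It therefore remains to prove $(\dagger)$.

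For $j=1$: if $\Dset_0=\emptyset$ then $\supp{\Dset_0\cup\cdots\cup\Dset_{j-1}}=\emptyset$ but $\supp\vd\neq\emptyset$ (since $\rvec\vd=v_1>0$); if $\Dset_0\neq\emptyset$, then $\rvec(i)=0$ for every $i\in\supp{\Dset_0}$ (columns in $\Dset_0$ have $\rvec$-value $0$ and all entries involved are nonnegative), so $\supp\vd\subseteq\supp{\Dset_0}$ would force $\rvec\vd=0$, against $v_1>0$. Now fix $j\ge2$ and, towards a contradiction, assume $\vd\in\Dset_j$ with $\supp\vd\subseteq\supp{\Dset_0\cup\cdots\cup\Dset_{j-1}}$. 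For each $i\in\supp\vd$ some column of $\Dset_0\cup\cdots\cup\Dset_{j-1}$ has positive $i$-th entry and $\rvec$-value at most $v_{j-1}$, whence $\rvec(i)\le v_{j-1}$. Fix any $\vd'\in\Dset_{j-1}$ (so $\rvec\vd'=v_{j-1}$) and put $\Delta_i=\max\{|a(i)-b(i)|:a,b\in\Dset\}$, so $\Delta\Dset=\sum_i\Delta_i$. Then
\[
v_j-v_{j-1}\;=\;\rvec\vd-\rvec\vd'\;=\;\sum_i\rvec(i)\bigl(\vd(i)-\vd'(i)\bigr)\;\le\;v_{j-1}\,\Delta\Dset,
\]
since terms with $\vd(i)\le\vd'(i)$ are nonpositive while for $i$ with $\vd(i)>\vd'(i)$ one has $i\in\supp\vd$, hence $\rvec(i)\le v_{j-1}$, and $\vd(i)-\vd'(i)\le\Delta_i$. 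On the other hand $\const+v_j=K^{b_j}\ge K\cdot K^{b_{j-1}}=K(\const+v_{j-1})$ gives $v_j\ge Kv_{j-1}$, so $v_j-v_{j-1}\ge(K-1)v_{j-1}>\Delta\Dset\cdot v_{j-1}$ because $K-1>\Delta\Dset$ and $v_{j-1}\ge v_1>0$; this contradicts the displayed bound, establishing $(\dagger)$.

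I expect the main obstacle to be twofold. The reduction to $(\dagger)$ is routine but must be matched carefully against the paper's definitions of induced row-partitions and recovered column-partitions; in particular $(\dagger)$, and not merely ``all $R_j$ nonempty'', is what is needed, since otherwise a column of $\Dset_j$ could have its whole support covered by strictly lower-level columns—precisely the phenomenon that a large $K$ forbids. In the arithmetic step, the crude bound $\rvec\vd\le v_{j-1}\cdot(\text{total degree of }\vd)$ is useless because the total degree is not controlled by $\Delta\Dset$; the trick is to subtract a single maximum-level earlier column $\vd'$ first, so that only the coordinatewise differences $\vd(i)-\vd'(i)\le\Delta_i$ enter, while the surviving coefficients $\rvec(i)$ are bounded by $v_{j-1}$ because those coordinates lie in the support of some lower-level column.
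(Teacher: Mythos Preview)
Your proof is correct and follows essentially the same approach as the paper's: both form the column partition $(\Dset_0,\ldots,\Dset_k)$ by $\rvec$-values, argue by contradiction that every $\vd\in\Dset_j$ has a coordinate outside $\supp(\Dset_0\cup\cdots\cup\Dset_{j-1})$, bound the relevant $\rvec(i)$ by the previous level's value, and derive $K-1\le\Delta\Dset$. Your reduction to $(\dagger)$ and the separate treatment of $j=1$ are somewhat more explicit than the paper's presentation, and you use the slightly tighter bound $\rvec(i)\le v_{j-1}$ where the paper uses $\rvec(i)\le K^a$, but the argument is the same in substance.
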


\begin{proof}
Suppose $\Dset\subseteq \F_n$ and let $(\Dset_0,...,\Dset_k)$ be the column partition of $\Dset$ such that $0=\rvec\Dset_0<\cdots <\rvec \Dset_k$. 
We will show that the associated list $(R_0, R_1, \ldots, R_k)$ of sets of rows is an ordered row-partition, i.e., each of the $R_1, \ldots, R_k$ is not empty. Recall that $R_j$ is the set of all indices $i\in \nset{n}$ such that $\Dmat_{ij}\neq \Zero$ but $\Dmat_{il}=\Zero$ for each $l<j$, where $\Dmat: =[\Dset_0\cdots\Dset_k]$ (the order of the columns within each $\Dset_i$ plays no role). We first prove that $R_k$ induces $\Dset_k$, 
i.e., each $\vd\in\Dset_{k}$ must have some nonzero entry/row that is zero in every $\vd'\in \Dset\setminus \Dset_k$. 
If not, then there exists some $\vd\in\Dset_{k}$ such that for each $i\in\supp{\vd}$ there exists $j<k$ and vector $\vd'\in \Dset_j$ with $i\in\supp{\vd'}$. By assumption, $\rvec \vd$ and $\rvec \vd'$ are in the same shift of $K^\N$, say $K^\N-C$ for some $C\in \N$, and because of our ordering convention, $\rvec \vd' < \rvec \vd$. So, 
$\rvec \vd = K^{a+1} -C$ and $\rvec \vd'\leq K^a- C$ for some $a\geq0$. 
 Since $\vd'(i) \geq 1$, we have $\rvec(i)\leq\rvec(i)\vd'(i) \leq \rvec \vd'\leq K^a$, so
$$K^a(K-1)= K^{a+1}-K^a\leq\rvec{\vd}-\rvec{\vd'}\leq \rvec{|\vd-\vd'|}\leq  K^a\Delta\{\vd,\vd'\} \leq K^a\Delta \Dset,$$
where the entries of $|\vd-\vd'|$ are the absolute values of the corresponding entries of $\vd-\vd'$.
Therefore,  $K\leq \Delta \Dset+1$, which contradicts the assumption on the size of $K$. 
So, $R_k$ is nonempty and we obtain $\Dset_k=\{\vd\in\Dset:\supp{\vd}\cap R_k\neq \emptyset\}$.

Continuing in this way for $1\leq j<k$, set  $\Dset'= \Dset_0\cup\cdots \cup \Dset_{j} $. Since $\Dset'\subseteq \Dset$ implies $\Delta\Dset'\leq\Delta\Dset$, the same argument shows each $\vd\in \Dset_j$ contains a nonzero entry that is zero for each $\vd'\in \Dset'\setminus \Dset_j$. 
Hence $R_j$ induces $\Dset_j$, i.e., $$\Dset_j= \{\vd\in \Dset: \supp{\vd}\cap R_j\neq\emptyset \}\setminus(\Dset_{j+1}\cup\cdots\cup \Dset_k). $$
Given $R_0=\supp{\Dset}\setminus R^{+}_1$ by definition, we conclude that $\bl=(R_0,...,R_k)$ is a blocking on $\Dset$ such that $\Dset_{j}^\bl=\Dset_j$ for each $j\leq k$, i.e., $\rvec$ induces the blocking $\bl$ on $\Dset$. 
\end{proof}

Suppose $\rvec$ induces a row blocking $\bl=(R_0, R_1, \ldots, R_k)$ of $D$. Since $\rvec \Dset_1>0$ by definition, it must be that $\rvec$ is $R_1$-positive, and since $\rvec \Dset_0=0$ by definition, it must be that the support $\rvec$ is contained in $R_1^+$. That is, $\rvec$ must be $(R_1,R_1^+)$-positive. Since it is also a solution for each $\Dset^\bl_{j}$ by definition, this proves the following lemma.

\begin{lem}\label{NewSubs}
If a $1$-variable substitution $\rvec$ induces a blocking $\bl$ on $\Dset$, then it is a $(\bl,1)$-solution for $\Dset$. In particular, if $\subt$ is any $\bl$-solution for $\Dset$, the substitution obtained by replacing the first row of $\subt$ by $\rvec$ is also a $\bl$-solution for $\Dset$.
\end{lem}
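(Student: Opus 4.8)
The plan is to verify the two defining conditions of a \emph{$(\bl,1)$-solution} directly from the hypothesis that $\rvec$ induces the blocking $\bl=(R_0,R_1,\ldots,R_k)$, and then to read off the ``in particular'' clause as a one-line corollary. Recall that $\rvec$ being a $(\bl,1)$-solution for $\Dset$ means (a) $\rvec$ is a solution for each column block $\Dset^\bl_j$, and (b) $\rvec$ is $(R_1,R_1^+)$-positive. Condition (a) is essentially free: since $\rvec$ induces $\bl$, the blocks $\Dset^\bl_j$ are by definition exactly the parts $\Dset_j$ of the column-partition associated with $\rvec$ (ordered so that $\rvec\Dset_j<\rvec\Dset_{j+1}$, with $\Dset_0=\{\vd\in\Dset:\rvec\vd=0\}$), and on each $\Dset_j$ the value $\rvec\vd$ equals the constant $\rvec\Dset_j$; a $1$-variable substitution that is constant on a set of columns is by definition a solution for that set.

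The substance is condition (b). Since any element of $\F^\top$ already has nonnegative entries, $(R_1,R_1^+)$-positivity reduces to showing (i) $\supp\rvec\subseteq R_1^+=R_1\cup\cdots\cup R_k$, and (ii) $\rvec$ does not vanish identically on $R_1$. For (i) I would use that every $\vd\in\Dset_0$ has $\rvec\vd=0$, which (nonnegative entries) forces $\rvec(i)=0$ for all $i\in\supp{\vd}$; since, by the construction of the row-partition associated with a column-partition, each $i\in R_0$ lies in $\supp{\vd}$ for some $\vd\in\Dset_0$ (here using that $\supp{\Dset}=\{1,\ldots,n\}$, as holds for simple equations), we get $\supp\rvec\cap R_0=\emptyset$, i.e.\ $\supp\rvec\subseteq R_1^+$.

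For (ii) --- the one step that needs genuine care --- the key observation is that the vectors in $\Dset_1$ have support contained in $R_0\cup R_1$. Indeed, if some $\vd\in\Dset_1$ had $\supp{\vd}\cap R_j\neq\emptyset$ with $j\geq 2$, then from the description $\Dset_j=\{\vd':\supp{\vd'}\cap R_j\neq\emptyset\}\setminus(\Dset_{j+1}\cup\cdots\cup\Dset_k)$, together with $\vd\notin\Dset_{j+1}\cup\cdots\cup\Dset_k$ (the $\Dset_i$ partition $\Dset$ and $\vd\in\Dset_1$), we would conclude $\vd\in\Dset_j$, contradicting $\vd\in\Dset_1$. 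Combining $\supp{\vd}\subseteq R_0\cup R_1$ with $\supp\rvec\subseteq R_1^+$ gives $\rvec\vd=\sum_{i\in R_1}\rvec(i)\vd(i)$ for every $\vd\in\Dset_1$; so if $\rvec$ were zero on all of $R_1$ we would get $\rvec\Dset_1=0$, contradicting the ordering convention $0=\rvec\Dset_0<\rvec\Dset_1$ (legitimate because inducing a blocking forces $k\geq 1$). Hence $\rvec$ is $R_1$-positive, which with (i) yields $(R_1,R_1^+)$-positivity, so $\rvec$ is a $(\bl,1)$-solution for $\Dset$. Finally, if $\subt$ is any $\bl$-solution for $\Dset$, each of its rows $\sigma_i$ is a $(\bl,i)$-solution; replacing $\sigma_1$ by $\rvec$ produces a matrix whose first row is a $(\bl,1)$-solution and whose remaining rows are the unchanged $(\bl,i)$-solutions, which is exactly the definition of a $\bl$-solution for $\Dset$.
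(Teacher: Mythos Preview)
Your proof is correct and follows essentially the same approach as the paper's (brief) argument: both derive $\supp\rvec\subseteq R_1^+$ from $\rvec\Dset_0=0$, derive $R_1$-positivity from $\rvec\Dset_1>0$, and note that being a solution on each $\Dset_j^\bl$ is immediate from the definition of inducing. Your treatment is more explicit---in particular, you spell out why $\supp\vd\subseteq R_0\cup R_1$ for $\vd\in\Dset_1$, which the paper leaves implicit in the one-line assertion ``$\rvec\Dset_1>0$ implies $\rvec$ is $R_1$-positive''---but the argument is the same.
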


We now demonstrate the converse of Corollary~\ref{2star2spineless} by proving the following stronger statement. The proof relies on Lemma~\ref{cor: ArbKspines}, which we prove in the next section.

\begin{lem}\label{spineless2star}
A spineless equation satisfies $({\star\star} K)$ for all sufficiently large $K$.
\end{lem}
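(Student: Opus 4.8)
The plan is to argue by contraposition: assume $[\Dset]$ fails $(\star\star K)$ for arbitrarily large $K$, and show that $[\Dset]$ is prespinal (hence not spineless). So suppose that for every $K$ there exist $\const,\const'\in\N$ and $1$-variable substitutions $\rvec,\rvec'\in\F_n^\top$ such that $\const+\rvec\vd$ and $\const'+\rvec'\vd$ are powers of $K$ for every $\vd\in\Dset$, yet there is no $\bar\vd\in\Dset$ with $\rvec\bar\vd=\rvec\Lvec{n}$ \emph{and} $\rvec'\bar\vd=\rvec'\Lvec{n}$. Fixing one such $K$ with $K>\Delta\Dset+1$, we first want to produce a useful blocking. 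Note that at least one of $\rvec,\rvec'$ is nontrivial on $\Dset$ — say $\rvec$ is — for otherwise $\rvec\vd=0=\rvec\Lvec{n}$ for all $\vd$, and then the failure of $(\star\star K)$ would force $\rvec'\bar\vd\neq\rvec'\Lvec{n}$ for all $\bar\vd$, i.e.\ $\rvec'$ alone witnesses the failure of $(\star K)$, and by Lemma~\ref{injective} (taking $S=K^\N$) we would already conclude $[\Dset]$ is prespinal, contradicting spinelessness — actually this is exactly what we are trying to prove, so in this degenerate case we are done immediately. Assuming then $\rvec$ is nontrivial on $\Dset$, Lemma~\ref{Ksol2} gives that $\rvec$ induces a row blocking $\bl=(R_0,R_1,\ldots,R_k)$ on $\Dset$, and by Lemma~\ref{NewSubs} $\rvec$ is a $(\bl,1)$-solution for $\Dset$.

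The next step is to extend $\rvec$ to a full $\bl$-solution of $\Dset$. This is where Lemma~\ref{cor: ArbKspines} (forward-referenced, proved in the next section) enters: it should supply, for each diagonal block index $i\geq 1$, a $(\bl,i)$-solution $\sigma_i$ for $\Dset$; stacking these rows together with $\rvec$ in the first row yields a $k$-variable substitution $\subt=[\sigma_i]_{i=1}^k$ which by Lemma~\ref{NewSubs} is a $\bl$-solution for $\Dset$. By Lemma~\ref{b-solutions}, the associated matrix $[\vs_1~\cdots~\vs_k]$ with $\vs_j=\subt\,\Dmat^\bl_{*j}$ is upper-triangular with positive diagonal, so $\subt$ maps $\Dset$ onto a spine $\spine=\{\vs_0,\vs_1,\ldots,\vs_k\}$ (with $\vs_0=\Zero$ coming from $\Dset^\bl_0$). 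By Theorem~\ref{spinalform}, $[\Dset]$ is prespinal provided $\subt\Lvec{n}$ and $\subt\,\Dset^\bl_k$ differ — equivalently $\subt\Lvec{n}\neq\vs_k$.

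The final step is precisely to verify $\subt\Lvec{n}\neq\vs_k$, and this is where the failure of $(\star\star K)$ gets used. Since $\rvec$ is the first row and $\rvec'$ can be arranged — via Lemma~\ref{NewSubs} — to replace another row without destroying the $\bl$-solution property, we can ensure that both $\rvec$ and $\rvec'$ appear among the rows $\sigma_i$ of $\subt$ (the delicate point being that $\rvec'$, restricted to the appropriate $R_i^+$, is $(R_i,R_i^+)$-positive and a solution for each $\Dset^\bl_j$; this may require shrinking $\rvec'$ by the block structure or invoking the same blocking lemma for $\rvec'$). Suppose for contradiction that $\subt\Lvec{n}=\vs_k=\subt\,\Dset^\bl_k$; in particular there is $\bar\vd\in\Dset^\bl_k$ with $\subt\Lvec{n}=\subt\bar\vd$, so reading off the rows indexed by $\rvec$ and $\rvec'$ gives $\rvec\Lvec{n}=\rvec\bar\vd$ and $\rvec'\Lvec{n}=\rvec'\bar\vd$, contradicting the assumed failure of $(\star\star K)$. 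Hence $\subt\Lvec{n}\neq\vs_k$ and $[\Dset]$ is prespinal.

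\textbf{Main obstacle.} The hard part is not the logical skeleton above but the two technical lemmas it leans on. The genuinely substantive content is hidden in Lemma~\ref{cor: ArbKspines} (the existence of $(\bl,i)$-solutions completing $\rvec$ to a $\bl$-solution), which belongs to the positive-linear-algebra machinery of the next section and is where one must actually solve the upper-triangular block systems over $\N$ with the positivity constraints on each diagonal block. Secondarily, one must be careful that \emph{both} $\rvec$ and $\rvec'$ can be simultaneously installed as rows of the same $\bl$-solution $\subt$ for the \emph{same} blocking $\bl$: $\rvec$ determines $\bl$ by Lemma~\ref{Ksol2}, and one then needs $\rvec'$ (suitably adjusted) to be a $(\bl,i)$-solution for some row index $i$, which again is exactly the kind of statement Lemma~\ref{cor: ArbKspines} is designed to deliver. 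Once those are in hand, the conclusion $\subt\Lvec{n}\neq\vs_k$ is immediate from the hypothesis, and combined with Corollary~\ref{2star2spineless} (the converse direction) this establishes that spineless equations are exactly those satisfying $(\star\star K)$ for all sufficiently large $K$.
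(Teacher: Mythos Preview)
Your overall architecture is right---contraposition, pass to blockings via Lemma~\ref{Ksol2}, invoke Lemma~\ref{cor: ArbKspines} to produce a $\bl$-solution, and then apply Theorem~\ref{spinalform}---and this is exactly the skeleton of the paper's proof. But the step where you try to install \emph{both} $\rvec$ and $\rvec'$ as rows of a single $\bl$-solution $\subt$ for the \emph{same} blocking $\bl$ is a genuine gap, not merely a delicate point. Lemma~\ref{NewSubs} only tells you that a substitution which \emph{induces} $\bl$ is a $(\bl,1)$-solution; it says nothing about $\rvec'$, which induces its own blocking $\cl$ that is in general unrelated to $\bl$. There is no mechanism here---and Lemma~\ref{cor: ArbKspines} does not provide one---for forcing $\rvec'$ to be a solution for each of the column blocks $\Dset^\bl_j$, nor for making it $(R_i,R_i^+)$-positive for some $i$. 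Your parenthetical ``this may require shrinking $\rvec'$ by the block structure'' does not lead anywhere: the column partition determined by the level sets of $\rvec'$ simply need not refine or be refined by the $\bl$-partition.

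The paper's proof confronts exactly this obstruction and takes a different route. It applies Lemma~\ref{Ksol2} to $\rvec$ and to $\rvec'$ separately, obtaining two (possibly distinct) blockings $\bl=(R_0,\dots,R_k)$ and $\cl=(R'_0,\dots,R'_l)$; a pigeonhole argument over the infinitely many $K$ ensures the same pair $(\bl,\cl)$ recurs infinitely often, so Lemma~\ref{cor: ArbKspines} yields a $\bl$-solution $\subt$ and a $\cl$-solution $\subt'$. If either already witnesses prespinality via Theorem~\ref{spinalform}, done. In the remaining case one has $\subt\Lvec{n}=\subt\Dset^\bl_k$ and $\subt'\Lvec{n}=\subt'\Dset^\cl_l$, and the failure of $({\star\star}K)$ forces $\Dset^\bl_k\cap\Dset^\cl_l=\emptyset$, hence $R_k\cap R'_l=\emptyset$. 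The key new idea---absent from your sketch---is then to build a \emph{third} blocking $\mathfrak{a}=(B_0,B_1)$ with $B_1=R_k\cup R'_l$ and a $1$-variable substitution $\alpha=t'\rvec_k+t\rvec'_l$ (a positive combination of the \emph{bottom} rows of $\subt$ and $\subt'$), which is an $\mathfrak{a}$-solution with $\alpha\Lvec{n}=2tt'\neq tt'=\alpha\Dset^{\mathfrak{a}}_1$, yielding prespinality. Two smaller issues: your appeal to Lemma~\ref{injective} in the ``degenerate'' case runs the implication the wrong way (that lemma shows prespinal $\Rightarrow$ fails $(\star S)$, not the converse), though this is easily repaired by just swapping the roles of $\rvec$ and $\rvec'$; and you should make explicit the finiteness-of-blockings pigeonhole that lets you apply Lemma~\ref{cor: ArbKspines}, rather than fixing a single $K$.
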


\begin{proof}
Toward establishing the contrapositive, we assume that the $n$-variable simple equation $[\Dset]$ fails $({\star\star} K)$ for infinitely many $K\in\N$. Then there are infinitely many pairs of 1-variable substitutions $\rvec,\rvec'$ witnessing such failures that furthermore induce  pairs of blockings for $\Dset$ by Lemma~\ref{Ksol2}. Since there are only finitely many blockings, and thus finitely many pairs of them, there must exist blockings $\bl=(R_0,...,R_k)$ and $\cl=(R'_0,...,R'_l)$ that are witnessed infinitely often as a pair. By Lemma~\ref{cor: ArbKspines}, $[\Dset]$ has a $\bl$-solution $\subt$ and a $\cl$-solution $\subt'$. By Lemma~\ref{NewSubs}, we can replace the first rows of 
$\subt$ and $\subt'$ with 1-variable substitutions $\rvec_1$ and $\rvec_1'$ (from the above-mentioned infinitely many) witnessing the failure of $({\star\star} K)$ and inducing $\bl$ and $\cl$, respectively.

If either $\subt\Lvec{n}\neq\subt\Dset^\bl_{k}$ or $\subt'\Lvec{n}\neq\subt'\Dset^\cl_{l}$, then $[\Dset]$ is prespinal by Theorem~\ref{spinalform}, and we are done. If not, we have $\subt \Lvec{n}=\subt \Dset^\bl_{k}$ and likewise $\subt' \Lvec{n}=\subt' \Dset^\cl_{l}$. In particular, $\rvec_1\Lvec{n}=\rvec_1 \vd$ for every $\vd\in \Dset^\bl_{k}$ and $\rvec_1'\Lvec{n}=\rvec_1' \vd$ for every $\vd\in \Dset^{\cl}_{l}$, and hence
for every $\vd\in \Dset^{\bl}_{k} \cap \Dset^{\cl}_{l}$. If $\Dset^{\bl}_{k}$ and $\Dset^{\cl}_{l}$ were not disjoint, there would be a $\vd\in \Dset$ such that $\rvec_1\Lvec{n}=\rvec_1 \vd$ and $\rvec_1'\Lvec{n}=\rvec_1' \vd$, which would imply that $\rvec_1,\rvec_1'$ satisfy $({\star\star}K)$, contradicting their choice above.
 Hence $\Dset^{\bl}_{k}$ and $\Dset^{\cl}_{l}$ are disjoint and by the definition of blockings, $R_k$ and $R'_l$ are also disjoint. 

 As a result the row partition $\mathfrak{a}: =(B_0, B_1)$, where $B_1=R_k\cup R'_l$ and $B_0=\{1,...,n\}\setminus B_1$, is actually a row blocking on $\Dset$ which furthermore induces the partition of columns $\Dset^\mathfrak{a}_1=\Dset^\bl_{k}\cup \Dset^\cl_{l}$ and $\Dset^\mathfrak{a}_0=\Dset\setminus \Dset^\mathfrak{a}_1$. We will construct a $1$-variable substitution $\alpha$ that will serve as an $\mathfrak{a}$-solution for $\Dset$ witnessing the prespinality of $[\Dset]$. 

Let $\rvec_k$ and $\rvec'_l$ be the bottom rows of $\subt$ and $\subt'$, respectively. Since $\rvec_k$ is a $(R_k,R^+_k)$-positive solution for $\Dset^\bl_{k}$, the value $t= \rvec_k\Dset^\bl_{k}$ is positive, and since $R_k$ and $R_l'$ are disjoint, $\rvec_k\Dset^{\cl}_{l}$ is zero; in detail $R'_l$ contains columns that appear in earlier blocks than $R_k$. Similarly, $t'= \rvec'_l\Dset^{\cl}_{l}$ is positive and $\rvec'_l\Dset^\bl_{k}$ is zero. Note that $\rvec_k\Lvec{n}=t$ and $\rvec'_l\Lvec{n}=t'$ follow by the fact that $\subt \Lvec{n}=\subt \Dset^\bl_{k}$ and $\subt' \Lvec{n}=\subt' \Dset^\cl_{l}$.

We now define the $1$-variable substitution ${\alpha}= t'\rvec_k+t\rvec'_l$. Since $R_k$ and $R'_l$ are disjoint, it follows that ${\alpha}\Dset^\bl_k=t't+t0=tt'$ and ${\alpha}\Dset^{\cl}_l=t'0+tt'=tt'$, so ${\alpha}\Dset^\mathfrak{a}_1=tt'$; also ${\alpha}\Lvec{n}=t't+tt'=2tt'$. In the case when $\Dset^{\mathfrak{a}}_0$ is nonempty, we have ${\alpha} \Dset^{\mathfrak{a}}_0=0$, because  $\rvec_k\Dset^\bl_{j}=0$, for $j<k$, and $\rvec'_l\Dset^{\cl}_{i}=0$, for $i<l$.
Since $t,t'>0$, we have $2tt'>tt'>0$, and therefore it follows that $\alpha \Lvec{n}\nin \alpha \Dset$ and so $[\Dset]$ is prespinal by Theorem~\ref{spinalform}.
\end{proof}

Lemmas~\ref{2star2spineless} and~\ref{spineless2star} establish the equivalences between a simple equation being spineless and satisfying $({\star\star})$, and as well as satisfying $(\star)$.
\begin{thm}\label{final}
A simple equation is spineless iff it satisfies $({\star\star}K)$ for every sufficiently large $K$.
\end{thm}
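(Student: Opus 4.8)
The plan is to read off the stated equivalence as nothing more than the conjunction of the two directions already established, namely Lemma~\ref{spineless2star} and Lemma~\ref{2star2spineless}; no genuinely new argument is needed at this point, since all the mathematical content has been packaged into those lemmas (and, one level deeper, into Lemma~\ref{cor: ArbKspines} and Theorem~\ref{spinalform}).

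Concretely, for the left-to-right implication I would argue: if the simple equation $[\Dset]$ is spineless, then by Lemma~\ref{spineless2star} there is a threshold $K_0$ such that $[\Dset]$ satisfies $({\star\star}K)$ for every integer $K \geq K_0$, which is exactly the asserted conclusion. For the right-to-left implication: if $[\Dset]$ satisfies $({\star\star}K)$ for every sufficiently large $K$, then in particular it satisfies $({\star\star}K)$ for at least one integer $K>1$ (any sufficiently large $K$ is $>1$), hence it satisfies $({\star\star})$, and therefore it is spineless by Lemma~\ref{2star2spineless}. Combining the two directions gives the equivalence, and along the way also the equivalence with $(\star)$ recorded in the remark following the theorem, since $({\star\star})$ implies $(\star)$ by Lemma~\ref{2starTo1star} and $(\star)$ implies spinelessness by Corollary~\ref{3to1}.

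The real obstacle lies not in assembling this final statement but in one half of its input, namely the implication ``spineless $\Rightarrow$ $({\star\star}K)$ for all large $K$'' (Lemma~\ref{spineless2star}), which is proved contrapositively: from a failure of $({\star\star}K)$ for infinitely many $K$ one uses Lemma~\ref{Ksol2} to extract, for infinitely many such $K$, witnessing $1$-variable substitutions that induce blockings on $\Dset$; a pigeonhole over the finitely many pairs of blockings isolates a single pair $\bl=(R_0,\ldots,R_k)$, $\cl=(R'_0,\ldots,R'_l)$ occurring infinitely often; Lemma~\ref{cor: ArbKspines} upgrades these to full $\bl$- and $\cl$-solutions, whose first rows can be replaced by the witnessing substitutions via Lemma~\ref{NewSubs}. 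If either solution already has $\subt\Lvec{n}$ distinct from $\subt\Dset^\bl_k$, Theorem~\ref{spinalform} yields prespinality directly; otherwise one checks that $R_k$ and $R'_l$ must be disjoint (an overlap would produce a $\vd\in\Dset$ with $\rvec_1\Lvec{n}=\rvec_1\vd$ and $\rvec_1'\Lvec{n}=\rvec_1'\vd$, contradicting that these substitutions witness the failure of $({\star\star}K)$), forms the row blocking $\mathfrak{a}=(B_0,B_1)$ with $B_1=R_k\cup R'_l$, and merges the bottom rows $\rvec_k,\rvec'_l$ of the two solutions into the single $1$-variable substitution $\alpha=t'\rvec_k+t\rvec'_l$ with $t=\rvec_k\Dset^\bl_k>0$ and $t'=\rvec'_l\Dset^\cl_l>0$; disjointness of $R_k$ and $R'_l$ forces $\alpha\Dset^{\mathfrak a}_1=tt'$ while $\alpha\Lvec{n}=2tt'$, so $\alpha\Lvec{n}\notin\alpha\Dset$ and Theorem~\ref{spinalform} once more gives prespinality. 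The pigeonhole-and-disjointness bookkeeping, together with the reliance on the positive-linear-algebra Lemma~\ref{cor: ArbKspines}, is the delicate ingredient; the assembly into the final equivalence itself is routine.
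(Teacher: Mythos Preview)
Your proposal is correct and matches the paper's approach exactly: the paper states Theorem~\ref{final} as an immediate consequence of Lemmas~\ref{2star2spineless} and~\ref{spineless2star}, with the substantive work deferred to Lemma~\ref{cor: ArbKspines}. Your additional sketch of the contrapositive argument for Lemma~\ref{spineless2star} is also accurate and faithful to the paper.
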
 

Therefore, to establish Theorem~\ref{final} we must prove Lemma~\ref{cor: ArbKspines}.
%
%
\subsection{Solutions in $\R^n$} 
The goal of this section is to prove 
Lemma~\ref{cor: ArbKspines}. To address this, we recall a {theorem of alternatives} for positive solutions to linear systems accompanied by conventional terminology (see \cite{Roman}). 
 
 Let $v\in\R^n$ (viewed as a row vector) and $M\subseteq \R^n$. We say that $v$ is {\em orthogonal} to  $M$ if $v M=0$. We say that $v$ is \textit{strictly positive} if $v\neq {\Zero}$ and $v(i)\geq0$ for each $i\in \nset{n}$. The set ${X}^n_{+}$ denotes the set of all strictly positive vectors in $X^n$, called the \textit{strictly positive orthant} in ${X^n}$, where $X\in\{\Z,\mathbb{Q},\R \}$. The following (folklore) theorem is equivalent to Farkas's Lemma (for instance, see Theorem~27 \cite{Perng}).

 \begin{thm}\label{Palter}
Let $M\subseteq\R^n$ be nonempty set of vectors and $i\in \nset{n}$ a fixed index. Then exactly one of the following holds:
\begin{enumerate}
\item there exists a strictly positive vector $v$ orthogonal to $M$ where $v(i)>0$, or
\item there exists a strictly positive vector $w\in \mathrm{span}(M)$ where $w(i)>0$.
\end{enumerate} 
 \end{thm}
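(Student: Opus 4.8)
The plan is to derive this dichotomy from a standard theorem of the alternative (Farkas's lemma, equivalently the biduality $C^{\ast\ast}=C$ for polyhedral cones), which is precisely the folklore fact being cited; so what remains is to recast the two alternatives in a form where Farkas applies and to verify mutual exclusivity. First I would translate the statement into the geometry of the subspace $V:=\mathrm{span}(M)\subseteq\R^n$. A vector $v$ is orthogonal to $M$ exactly when $v\in V^\perp$, so alternative (1) asks for a nonzero componentwise-nonnegative $v\in V^\perp$ with $v(i)>0$, while alternative (2) asks for such a vector inside $V$ itself. (Here ``strictly positive'' in the sense used means nonzero with all entries $\ge 0$, i.e.\ semipositive; I would take care that this reading is used consistently.)

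Next I would dispatch the easy half, that the two alternatives cannot hold simultaneously. If $v\in V^\perp$ and $w\in V$ were both semipositive with $v(i)>0$ and $w(i)>0$, then $v\cdot w=0$ because $v$ is orthogonal to every element of $V$; but $v\cdot w=\sum_{j} v(j)w(j)$ is a sum of nonnegative terms whose $i$-th term $v(i)w(i)$ is strictly positive, a contradiction. Hence at most one of (1), (2) holds for any fixed $i$.

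The substantive half is that failure of (2) forces (1). I would fix a matrix $B$ with $V=\ker B$, so that $V^\perp=\mathrm{range}(B^\top)$. Feasibility of (2) is equivalent (after scaling) to feasibility of the system $Bw=0,\ -w\le 0,\ -e_i^\top w\le -1$. By the Farkas lemma for mixed equality/inequality systems, infeasibility of this system yields a free vector $y$ together with $z'\ge 0$ and $\lambda\ge 0$ satisfying $B^\top y=z'+\lambda e_i$ and $\lambda>0$. Setting $v:=B^\top y$ gives $v\in V^\perp$ with $v=z'+\lambda e_i\ge 0$ componentwise and $v(i)=z'(i)+\lambda\ge\lambda>0$, which is exactly alternative (1). (If one prefers to avoid invoking Farkas as a black box, the same conclusion follows from cone duality: for the polyhedral cone $C:=V\cap\R^n_{\ge 0}$ one has $C^\ast=V^\perp+\R^n_{\ge 0}$; failure of (2) says $C\subseteq\{x:x(i)=0\}$, hence $-e_i\in C^\ast$, and writing $-e_i=v'+u'$ with $v'\in V^\perp$ and $u'\ge 0$, the vector $v:=-v'=e_i+u'$ does the job.) Combining the two halves yields the claimed dichotomy.

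I do not expect a genuine conceptual obstacle — the statement is a packaging of Farkas/Stiemke-type duality — so the only real care needed is bookkeeping: correctly identifying orthogonality to $M$ with membership in $\mathrm{span}(M)^\perp$, tracking the nonstandard meaning of ``strictly positive,'' and checking that the strict inequality $v(i)>0$ survives passage through the alternative, which it does since the dual certificate forces $\lambda>0$. Since the paper already cites this as folklore, in the write-up I would likely keep only the mutual-exclusivity argument in full and point to Farkas for the other direction.
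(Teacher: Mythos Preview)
The paper does not give its own proof of this theorem; it simply cites it as a folklore result equivalent to Farkas's Lemma (with a pointer to \cite{Perng}). Your proposal is correct and does precisely what the citation suggests: you reduce the statement to a standard Farkas-type alternative, and your mutual-exclusivity argument and your derivation of alternative (1) from the failure of (2) via the dual certificate are both sound.
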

 
Note that $\mathrm{span}(M)_S = \mathrm{span}(M_S)$ for any $M\subseteq \R^n$ and $S\subseteq \nset{n}$; here the subscript $S$ denotes restriction to $S$.

\begin{cor}\label{span1}
Let $M\subseteq \R^n$ and $T\subseteq  S \subseteq \nset{n}$ be non-empty. If there is no $T$-positive vector in $\R^{|S|}_+$ orthogonal to $M_S$ then there exists $L\in\N$ such that, for any $v\in\R^n_+$ orthogonal to $M$, $v(i)\leq L\cdot\max\{v(j): j\in \nset{n}\setminus  S\}$ for each $i\in T$. 
\end{cor}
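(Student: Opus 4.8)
The plan is to reformulate the hypothesis as a (non-)solvability statement about a single linear system and to extract the bound $L$ by a compactness/finite-generation argument. First I would fix a non-empty $T \subseteq S \subseteq \nset{n}$ and consider, for each index $i \in T$, the restricted system on coordinates indexed by $S$: the set of vectors $v \in \R^{|S|}$ with $v$ orthogonal to $M_S$, $v(j) \geq 0$ for $j \in S$, and $v(i) > 0$. The hypothesis says this has no solution for any $i \in T$ (a $T$-positive vector in $\R^{|S|}_+$ orthogonal to $M_S$ is precisely a non-negative orthogonal vector that is strictly positive on some coordinate of $T$, hence on at least one $i\in T$). By Theorem~\ref{Palter} applied with ambient dimension $|S|$, the ground set $M_S$, and the index $i$, the failure of alternative (1) forces alternative (2): for each $i \in T$ there is a strictly positive vector $w_i \in \mathrm{span}(M_S)$ with $w_i(i) > 0$. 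Using $\mathrm{span}(M)_S = \mathrm{span}(M_S)$, lift each $w_i$ to a vector $\tilde w_i \in \mathrm{span}(M)$ whose restriction to $S$ is $w_i$; note $\tilde w_i$ need not be non-negative off $S$, but that is fine.

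Next I would use these witnesses to control an arbitrary non-negative orthogonal vector. Let $v \in \R^n_+$ be orthogonal to $M$, and let $c := \max\{v(j) : j \in \nset{n}\setminus S\}$. The idea is that $v$ restricted to $S$ is a non-negative vector orthogonal to $M_S$, so it lies in the dual cone of $M_S$ intersected with the non-negative orthant of $\R^{|S|}$; this is a finitely generated (polyhedral) cone, and on it each coordinate function $v \mapsto v(i)$, $i \in T$, is bounded by a fixed multiple of the sum $\sum_{j \in T'} v(j)$ for some fixed finite set $T'$ of "free" coordinates — but here the only genuinely free coordinates are those outside $S$. Concretely: since $w_i \in \mathrm{span}(M_S)$ and $v_S$ is orthogonal to $M_S$, we get $\langle v_S, w_i\rangle = 0$, i.e. $\sum_{j\in S} v(j) w_i(j) = 0$; but all terms are non-negative because $v_S \geq 0$ and $w_i \geq 0$ on $S$, which would force $v(i) w_i(i) = 0$ and hence $v(i)=0$ — giving the bound trivially. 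This shows $v(i) = 0$ for all $i \in T$, so any $L \geq 1$ works; the appearance of the coordinates outside $S$ in the statement is a harmless slack (the stated inequality $v(i)\leq L\cdot\max\{v(j):j\in\nset{n}\setminus S\}$ holds since its right-hand side is $\geq 0$).

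The remaining step is to be careful about edge cases: when $M_S = \{\Zero\}$ or $\mathrm{span}(M_S) = 0$, every coordinate of $S$ is "free" within the orthogonality constraint, so the hypothesis "no $T$-positive $v\in\R^{|S|}_+$ orthogonal to $M_S$" cannot hold (take $v = \mathbf e_i$ for $i\in T$) — hence this case is vacuous and there is nothing to prove. Similarly one should handle $\nset{n}\setminus S = \emptyset$, where the right-hand side $\max\emptyset$ must be read as making the statement vacuous or where $v(i)=0$ is forced directly. The main obstacle I anticipate is not the linear-algebra content — the orthogonality pairing collapses the cone argument very cleanly — but rather bookkeeping: correctly matching the ambient-dimension conventions of Theorem~\ref{Palter} with the restricted system on $S$, and making sure the witnesses $w_i$ are used on the $S$-coordinates only so that the non-negativity (and hence the term-by-term vanishing) genuinely applies. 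Once that is set up, the extraction of $L$ is immediate — indeed $L = 1$ suffices — and the statement follows. $\qed$
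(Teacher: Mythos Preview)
Your argument contains a genuine gap: the claim that ``$v$ restricted to $S$ is a non-negative vector orthogonal to $M_S$'' is false. Orthogonality in $\R^n$ does not restrict to orthogonality on a coordinate subset. If $v$ is orthogonal to $M$, then for each $m\in M$ we have $\sum_{j\in S} v(j)m(j) = -\sum_{j\notin S} v(j)m(j)$, and the right-hand side need not vanish. A concrete counterexample: take $n=2$, $S=T=\{1\}$, $M=\{(1,-1)^\top\}$. Then $M_S=\{(1)\}$, and no $T$-positive vector in $\R^1_+$ is orthogonal to $M_S$, so the hypothesis holds. But $v=(1,1)\in\R^2_+$ is orthogonal to $M$ with $v(1)=1\neq 0$. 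Thus your conclusion that $v(i)=0$ for all $i\in T$ is wrong, and $L=1$ does not suffice in general: the coordinates outside $S$ genuinely matter.

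The paper's proof proceeds exactly via the lift you set up but then abandoned. One sums the $w_i$ to a single strictly positive $\bar w\in\mathrm{span}(M_S)$ with $T\subseteq\supp{\bar w}$, lifts it to $w\in\mathrm{span}(M)$ with $w_S=\bar w$, and then uses orthogonality of $v$ to $w$ in the \emph{full} space $\R^n$: the equation $\sum_{j\in S} w(j)v(j) = -\sum_{j\in S^c} w(j)v(j)$ bounds the left side (which dominates $t\cdot v(i)$ for $i\in T$, where $t=\min_{i\in T} w(i)>0$) by the right side (which is at most $m\cdot\max_{j\in S^c} v(j)$, where $m=\sum_{j\in S^c}|w(j)|$). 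This yields $L=\lceil m/t\rceil$. The cross terms you tried to throw away are precisely the source of the bound.
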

\begin{proof} By Theorem~\ref{Palter}, for each $i\in T$ there exists a strictly positive (in $\R^{|S|}$) vector $w_i \in \mathrm{span}(M_S)$ where $w_i(i)>0$. Then $\bar w:=\sum_{i\in T}w_i\in \mathrm{span}(M_S)$ is strictly positive with $T\subseteq\supp{\bar w}$. Let $w\in \mathrm{span}(M)$ be such that $w_S = \bar w$. Note that since $T$ is nonempty and $T\subseteq \supp{w_S}$, it follows that $t:=\min\{w(i): i\in T\}$ is positive. Set $S^c:=\nset{n}\setminus S$ and $m:=\sum_{j\in S^c}|w(j)|$, where we take the empty sum to be zero. Define $L$ to be the smallest positive integer greater than $m/ t$.

Now, suppose $v\in\R^n_+$ is orthogonal to $M$. Then $v w^{{\top}}=0$ and hence
$$ \sum_{j\in  S}  w(j)v(j) = \sum_{j\in S^c} - w(j)v(j).$$
Let $N$ denote this common value. 
Considering the right-hand side of the equation, we obtain $N\leq m\cdot \max\{v(j): j\in S^c\}.$  
Considering the left-hand side of the equation, $tv(i)\leq w(i)v(i)\leq N$, for all $i\in T$, since $T\subseteq S$ and $w(i)\geq t >0$. 
Therefore, for all $i\in T$, we deduce $v(i)\leq  N/t\leq L\cdot \max\{v(j): j\in S^c\}$.
\end{proof}

\begin{lem}\label{cor: ArbKspines}
Let $\Dset$ be a finite subset of $\F_n$ and $\bl$ be a blocking on $\Dset$. If there are infinitely many $K\in \N$ for which there exists a $1$-variable substitution $\rvec$ that induces $\bl$ and $\rvec\Dset$ is contained in some shift of $K^\N$, then $\Dset$ has a $\bl$-solution.
\end{lem}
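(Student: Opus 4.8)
The statement to prove is Lemma~\ref{cor: ArbKspines}: given a finite $\Dset \subseteq \F_n$ and a blocking $\bl = (R_0, R_1, \ldots, R_k)$ on $\Dset$, if for infinitely many $K$ there is a $1$-variable substitution $\rvec$ inducing $\bl$ with $\rvec\Dset$ inside a shift of $K^\N$, then $\Dset$ admits a $\bl$-solution. Recall that a $\bl$-solution is a $k$-variable substitution $\subt$ each of whose rows $\sigma_i$ is a $(\bl,i)$-solution, i.e.\ $\sigma_i$ is $(R_i, R_i^+)$-positive and $\sigma_i$ is a solution (constant) for every column-class $\Dset^\bl_j$. My plan is to build the rows one at a time and to produce row $i$ by a compactness/linear-algebra argument from the infinitely many hypothesized substitutions, using Corollary~\ref{span1} as the crucial quantitative input.

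\emph{Step 1: set up the recursion over block-rows.} I will construct $\sigma_1, \ldots, \sigma_k$ in order. The first row $\sigma_1$ is the easiest: by hypothesis there is at least one $\rvec$ inducing $\bl$, and by Lemma~\ref{NewSubs} any such $\rvec$ is already a $(\bl,1)$-solution, so take $\sigma_1$ to be one of them. For $i \geq 2$ the point is that, although the hypothesized substitutions $\rvec$ only give me the \emph{first} block-row of a putative solution, the requirement on $\sigma_i$ is \emph{local} to the submatrix obtained by deleting rows in $R_1^+ \setminus R_i^+ = R_1 \cup \cdots \cup R_{i-1}$ and restricting columns appropriately: $\sigma_i$ must vanish on coordinates outside $R_i^+$, be nonzero on $R_i$, and be constant on each $\Dset^\bl_j$. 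So I will reduce the existence of $\sigma_i$ to a statement about the restricted data $\Dset_{\geq i} := \{ \vd_{R_i^+} : \vd \in \Dset \}$ with its induced blocking $(R_i, R_{i+1}, \ldots, R_k)$, and observe that the inductive hypothesis data transfer: an $\rvec$ inducing $\bl$ on $\Dset$ restricts to coordinates in $R_i^+$ to something inducing the shorter blocking on $\Dset_{\geq i}$ (its support lies in $R_1^+$, and the part in $R_1, \ldots, R_{i-1}$ is being discarded). Thus it suffices to prove: \emph{if the first block-row data exists for infinitely many $K$, then a $(\bl,1)$-solution exists} — which is exactly the $i=1$ case — and then apply it to each restricted instance. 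Actually the cleaner organization is: prove directly that for each $i$ a $(\bl,i)$-solution exists, using the infinitely many $\rvec$'s restricted to $R_i^+$.

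\emph{Step 2: the core extraction argument.} Fix $i$ and work with the restricted data. I have infinitely many $K$ and for each a substitution $\rvec_K$ (a nonnegative integer row vector supported in $R_i^+$, nonzero on $R_i$, and — after restriction — a solution for the relevant column classes because it induces the blocking), with $\rvec_K \Dset$ in a shift $K^\N - C_K$. The difficulty is that the $\rvec_K$ need not stabilize and their entries can blow up. Here is where Corollary~\ref{span1} enters. Apply the dichotomy of Theorem~\ref{Palter}/Corollary~\ref{span1} with $S = R_i^+$, $T = R_i$, and $M$ the set of difference vectors $\{\vd - \vd' : \vd, \vd' \in \Dset^\bl_j, \text{ some } j\}$ together with $\{\vd : \vd \in \Dset^\bl_j, j < i\}$ (the constraints defining ``solution''). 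If there \emph{is} a $T$-positive rational — hence integral, by clearing denominators — vector in $\R^{|S|}_+$ orthogonal to $M_S$, that vector \emph{is} a $(\bl,i)$-solution and we are done. If there is \emph{not}, Corollary~\ref{span1} gives a bound $L$ so that every such orthogonal $v \in \R^n_+$ (in particular each $\rvec_K$, which \emph{is} orthogonal to $M$ up to the shift — I must be careful here) satisfies $v(r) \leq L \max\{v(j): j \notin S\}$ for $r \in T = R_i$; but the $\rvec_K$ are supported \emph{inside} $S = R_i^+$, so the right-hand side is $0$, forcing $\rvec_K$ to vanish on $R_i$ — contradicting that it induces the blocking. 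Hence the second alternative is impossible and the first alternative supplies the desired row.

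\emph{The main obstacle.} The delicate point is reconciling the \emph{affine shift} ($\rvec_K \Dset \subseteq K^\N - C_K$) with the \emph{linear} ``orthogonality to differences'' that Corollary~\ref{span1} is phrased in: the substitutions $\rvec_K$ are genuinely not orthogonal to the constraint vectors $M$, only ``approximately'' so in the sense that their values on each column-class $\Dset^\bl_j$ lie in a sparse set $K^\N - C_K$. The resolution — and this is exactly the content that Lemma~\ref{Ksol2} was proving with its $K > \Delta\Dset + 1$ bound — is that for $K$ large the spacing of $K^\N$ exceeds the diameter $\rvec_K \cdot \Delta\Dset$ of the image of a column-class, which forces $\rvec_K$ to actually be \emph{constant} on each $\Dset^\bl_j$, not just to land in the sparse set. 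I will invoke Lemma~\ref{Ksol2} (or repeat its spacing estimate) to upgrade ``values in a shift of $K^\N$'' to ``genuinely a solution,'' i.e.\ genuinely orthogonal to all difference vectors and killing the $j<i$ blocks. Once that upgrade is in place, the $\rvec_K$ are honest strictly-positive integer vectors orthogonal to $M$, supported in $S$, nonzero on $T$; feeding this into the contrapositive of Corollary~\ref{span1} yields the contradiction that rules out alternative~(2), so alternative~(1) holds and produces a $(\bl,i)$-solution. Assembling the rows $\sigma_1, \ldots, \sigma_k$ obtained this way gives the required $\bl$-solution $\subt$, completing the proof.
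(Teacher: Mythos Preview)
Your overall strategy---use the Farkas alternative (Theorem~\ref{Palter}/Corollary~\ref{span1}) to show that for each $i$ a $(\bl,i)$-solution exists---is the same as the paper's. However, there is a genuine gap in Step~2. You claim that the hypothesized substitution $\rvec_K$ (or its restriction to $R_i^+$) is simultaneously \emph{supported in $S=R_i^+$} and \emph{orthogonal to $M$}. Neither version of this is correct. The full $\rvec_K$ is indeed orthogonal to the difference vectors (that is what ``inducing $\bl$'' gives, via Lemma~\ref{NewSubs}), but its support lies in $R_1^+$, not in $R_i^+$; for $i>1$ these differ, so the right-hand side $\max\{\rvec_K(j): j\notin S\}$ in Corollary~\ref{span1} is \emph{not} zero. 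Conversely, if you restrict $\rvec_K$ to coordinates in $R_i^+$, the restricted vector need not be constant on the column-classes $\Dset^\bl_j$ for $j\geq i$: writing $\rvec_K\vd=\rvec_K|_{R_i^+}\cdot\vd|_{R_i^+}+\rvec_K|_{X}\cdot\vd|_{X}$ with $X=R_1\cup\cdots\cup R_{i-1}$, the first summand is constant only if the second is, and there is no reason $\rvec_K|_{X}\cdot\vd|_{X}$ should be constant over $\vd\in\Dset^\bl_j$ (the blocks $\Dset^\bl_{lj}$ for $l<i\leq j$ are unconstrained). So your recursion-by-restriction does not inherit the hypothesis, and the contradiction ``right-hand side is $0$'' does not go through.

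The paper repairs exactly this point. It applies Corollary~\ref{span1} to the \emph{full} $\rvec$, obtaining $\rvec(t)\leq L\cdot\max\{\rvec(x):x\in X\}$ for $t\in R_i$, where $X=\{1,\dots,n\}\setminus R_i^+$. That maximum is not zero, but it is bounded: since $\rvec$ induces $\bl$ and $\rvec\Dset^\bl_{i-1}=K^a-C$ lies in a shift of $K^{\N}$, each $\rvec(x)$ with $x\in X$ is at most $K^a$. Hence every coordinate of $\rvec$ relevant to $\Dset^\bl_i$ is at most $LK^a$, so $\rvec\Dset^\bl_i-\rvec\Dset^\bl_{i-1}\leq LK^a\,\Delta\Dset$. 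But this gap is at least $K^{a+1}-K^a=K^a(K-1)$, forcing $K\leq L\,\Delta\Dset+1$. Thus only finitely many $K$ are possible, which is the desired contradiction. The quantitative use of the $K^{\N}$ gap structure---not a vanishing argument---is what you are missing.
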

\begin{proof}
Working contrapositively, we assume that  $\Dset$ has no $\bl$-solution. We will show that if $K> 1$ is such that  there exists a $1$-variable substitution $\rvec\in \F^\top_n$ that induces a blocking $\bl$ on $\Dset$ and that $\rvec\Dset$ is contained in some shift of $K^\N$,  then $K$ can be no larger than a certain multiple of $\Delta \Dset$. 
Let $\bl=(R_0,...,R_k)$, where $k\geq1$.

For any nonempty $A\subseteq \F_n$, fix $\bar a\in A$ and define $\bar A:=\{a-\bar a: a\in A \}$; note that the entries of the column vectors are in $\Z$. For a set of rows $S\subseteq\nset{n}$, $\rvec$ is a solution for $A_ S$ iff $\rvec A_S$ is a singleton iff  $\rvec A_S=\rvec \bar a$ iff $\rvec \bar A_S=\{0\}$ iff $\rvec$ is orthogonal to $\bar A_S$ in $\R^{n}$ (regardless of the choice $\bar a\in A$). Hence, if $T\subseteq  S$, then there exists a $T$-positive solution for $A$ in $\F^\top_S$ iff there exists a $T$-positive solution for $A_ S$ iff there exists a $T$-positive vector of 
$\R^{|S|}_+$ orthogonal to $\bar A_S$.\footnote{For the reverse direction, if $v\in \mathbb{R}_+^{|S|}$ is orthogonal to $\bar A_S$, then since $\bar A_S$ has integer entries, by Gaussian Elimination we may assume that $v\in \mathbb{Q}_+^{|S|}$, and so $t\cdot v\in \Z_+^{|S|}$ for some $t\in\N$ and $t\cdot v$ is orthogonal to $\bar A_S$. } 

Now, by definition of being a blocking, for each $i\geq k$ the set $\Dset^\bl_i$ is nonempty, so we can define $\bar \Dset^\bl_i=\{d-\bar d_i: d\in \Dset^\bl_i \}$ for some fixed $\bar d_i\in \Dset^\bl_i$. Since, if $\Zero\in \Dset$ then $\Zero\in \Dset^\bl_0$ by definition, we may define the (possibly empty) set $\bar \Dset^\bl_0:=\Dset^\bl_0$. We note that if $\rvec$ is a $(\bl,i)$-solution for $\Dset$ then $\rvec \bar\Dset^\bl=0$, where $\bar \Dset^\bl:=\bar\Dset^\bl_0\cup\cdots\cup \bar\Dset^\bl_k$.

Since $\Dset$ has no $\bl$-solution, there must be some $1\leq i\leq n$ for which there is no $(\bl, i)$-solution. However, since $\rvec$ induces $\bl$, Lemma~\ref{NewSubs} implies that $\rvec$ is a $(\bl, 1)$-solution, and so $i>1$. Therefore, for some $i>1$, there is no $R_i$-positive $v\in\R^{|S|}_+$ orthogonal to $M_S$, where $M={\bar \Dset^{\bl}}$ 
and $S={R^{+}_i}$. Since $\rvec$ induces a blocking, $\rvec$ is strictly positive, and since $\rvec$ is a $(\bl,1)$-solution for $\Dset$, $\rvec$ is orthogonal to ${\bar \Dset^{\bl}}$; since further $R_i \sbs R^+_i$, by Corollary~\ref{span1} we have that there exists $L \in \mathbb{N}$ such that $\rvec(t)\leq L\cdot \max\{ \rvec(x): x\in X\}$ for all $t\in R_i$, where $X:= \{1,...,n\}\setminus R_i^+=R_0\cup\ldots \cup R_{i-1}$.

Since $\rvec$ induces $\bl$ and since  $i>1$, we have $0<\rvec\Dset^\bl_{i-1}<  \rvec\Dset^\bl_{i}$. 
As $\rvec \Dset$ is contained in a shift of $K^\N$, say $K^\N-\const$ for some $\const\in \N$, there must be $a\geq 0$ and $b>0$ such that $\rvec\Dset^\bl_{i-1}=K^a-\const$ and $\rvec\Dset^\bl_{i}=K^{a+b}-\const$.
Observe that $\rvec(x)\leq K^a $ for all $x\in X$ since $\rvec\Dset^\bl_j\leq \rvec\Dset^\bl_{i-1}\leq K^a$ for each $j\leq i-1$ by definition of $\rvec$ inducing $\bl$. Hence $\rvec(t)\leq LK^a$ for all $t\in X\cup R_i$. 

For $\vd\in \Dset^\bl_i$ and $\vd'\in \Dset^\bl_{i-1}$, we have $\supp{\{\vd,\vd'\}}\subseteq X\cup R_i$, so
$$K^a(K-1) \leq K^a(K^b-1)= \rvec{\vd}-\rvec{\vd'} \leq  \rvec{|\vd-\vd'|}\leq L K^a\Delta\{\vd,\vd'\}\leq LK^a\Delta\Dset.$$
It follows that $K\leq L\Delta\Dset+1$.
\end{proof}

The lemma above completes the proof of Lemma~\ref{spineless2star} and hence Theorem~\ref{final}.

Now, if $\Gamma$ is a finite set of spineless simple equations then each equation in $\Gamma$ must be non-mingly by Lemma~\ref{nming}, and furthermore there must exist a smallest $K$ for which each equation in $\Gamma$ satisfies $({\star\star}K)$ as a consequence of Theorem~\ref{final}. Therefore, by Corollary~\ref{preUndLink}, we obtain:

\begin{cor}\label{UndLink}
For any finite set of spineless simple equations $\Gamma$, every subvariety of $\RL$ containing $\CRL+\Gamma$ has undecidable word problem.
\end{cor}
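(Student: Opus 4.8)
The plan is simply to assemble the machinery developed in the preceding sections and feed it into the undecidability transfer of Section~4. First I would observe that, since every equation in $\Gamma$ is spineless, Lemma~\ref{nming} shows each of them is non-mingly, and Theorem~\ref{final} shows each of them satisfies $({\star\star}K)$ for all sufficiently large $K$. Because $\Gamma$ is \emph{finite}, I can choose a single integer $K>1$ exceeding the threshold of every equation in $\Gamma$ at once, so that $({\star\star}K)$ holds for all of $\Gamma$ simultaneously; this is the only place finiteness of $\Gamma$ is genuinely used.

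Next I would invoke the construction of Section~\ref{mksec}: let $\tilde\acm$ be a $2$-ACM with undecidable acceptance problem (Theorem~\ref{lincoln}) and form the exponential machine $\tilde\acm_K$ for the $K$ fixed above. For each $[\Dset]\in\Gamma$, being non-mingly it is state-admissible in $\tilde\acm_K$ by Lemma~\ref{mingly}, and satisfying $({\star\star}K)$ it is register-admissible in $\tilde\acm_K$ by Lemma~\ref{dredundant}; hence $[\Dset]$ is admissible in $\tilde\acm_K$, i.e.\ ${\Acc}(\Dset\tilde\acm_K)={\Acc}(\tilde\acm_K)$. Since the frame $\mathbf{W}_{\tilde\acm_K}$ does not depend on $\Dset$ while admissibility forces $\mathbf{W}_{\Dset\tilde\acm_K}=\mathbf{W}_{\tilde\acm_K}$, Lemma~\ref{Wadmiss} (via Lemma~\ref{Wsatsr}) gives $\mathbf{W}^+_{\tilde\acm_K}\in\CRL+[\Dset]$ for every $[\Dset]\in\Gamma$, and therefore $\mathbf{W}^+_{\tilde\acm_K}\in\CRL+\Gamma$.

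Finally, for any variety $\mathcal{V}$ with $\CRL+\Gamma\subseteq\mathcal{V}\subseteq\RL$ we have $\mathbf{W}^+_{\tilde\acm_K}\in\mathcal{V}$, so by Theorem~\ref{Vmhard} deciding the word problem of $\mathcal{V}$ is at least as hard as deciding membership in ${\Acc}(\tilde\acm_K)$, which is undecidable by Corollary~\ref{mkundec}; hence $\mathcal{V}$ has an undecidable word problem. Equivalently, once the common $K$ is fixed one can simply quote Corollary~\ref{preUndLink} applied to $\Gamma$.

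There is no real obstacle remaining at this stage: the entire difficulty has been absorbed into Theorem~\ref{final} — in particular into Lemma~\ref{spineless2star} and the positive-linear-algebra argument of Lemma~\ref{cor: ArbKspines} — and the only point requiring any care here is that a \emph{single} $K$ must work uniformly for all members of $\Gamma$, which is exactly what the ``for every sufficiently large $K$'' clause of Theorem~\ref{final} guarantees.
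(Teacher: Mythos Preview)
Your proposal is correct and follows essentially the same approach as the paper: use Lemma~\ref{nming} for non-minglyness, Theorem~\ref{final} to obtain $({\star\star}K)$ for all large $K$, pick a single $K$ working for the whole finite set $\Gamma$, and then invoke Corollary~\ref{preUndLink}. Your version is simply more explicit in unpacking the content of Corollary~\ref{preUndLink} (via Lemmas~\ref{mingly}, \ref{dredundant}, \ref{Wadmiss}, Theorem~\ref{Vmhard}, and Corollary~\ref{mkundec}), whereas the paper's proof just cites that corollary directly after securing the common $K$.
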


This completes the proof of Theorem~\ref{main}, and therefore also of Theorem~\ref{main2}.
%
%
\section{Concluding Remarks}
First, we note that the quasiequations used to establish Theorem~\ref{Vmhard} are in the signature $\{\vee,\cdot,1\}$, so all complexity lower-bound/undecidability results hold even when restricting the word problem to the $\{\vee,\cdot,1\}$-fragments of such varieties. On the other hand, the equations used to establish Theorem~\ref{main2} make use of the full signature.
\begin{cor}
Let $\varepsilon$ be a spineless equation that is simple over $\RL$ and $\mathcal{V}$ a variety of residuated lattices containing $\CRL_\varepsilon$ as a subvariety. Then the word problem (and hence quasiequational theory) for the $\{\vee,\cdot,1\}$-fragment of $\mathcal{V}$ is undecidable. Furthermore, if $\varepsilon$ is expansive then the equational theory for $\mathcal{V}$ is undecidable.
\end{cor}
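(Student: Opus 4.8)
The plan is to split the corollary into its two claims and reduce each to results already in hand. First I would note that, since $\varepsilon$ is simple over $\RL$ and spineless, its commutative version $\varepsilon_{\mathcal C}$ is a spineless simple equation of $\CRL$, and over $\CRL$ the equations $\varepsilon$ and $\varepsilon_{\mathcal C}$ are interderivable (reordering variables inside monoid terms uses commutativity, deleting repeated joinands uses idempotency of $\jn$), so $\CRL_\varepsilon=\CRL+\{\varepsilon_{\mathcal C}\}$ and hence $\CRL+\{\varepsilon_{\mathcal C}\}\subseteq\mathcal V\subseteq\RL$. Applying Corollary~\ref{UndLink} with $\Gamma=\{\varepsilon_{\mathcal C}\}$ then gives directly that $\mathcal V$ has an undecidable word problem. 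The undecidable instances produced there are the quasiequations $\mathrm{acc}_{\Uacm_K}(u)$ over the finite presentation $\langle\State\cup\Reg_3,\Pcom\rangle$, with $\Uacm$ the $2$-ACM of Theorem~\ref{lincoln} whose acceptance problem is undecidable and $K$ the constant chosen in the next paragraph; each of these is a sentence purely over $\{\jn,\cdot,1\}$, since each premise in $\Pcom$ of the shape $q\le q'\reg_i$, $q\reg_i\le q'$, $q\le q'\jn q''$ or $xy\le yx$ is the $\{\jn,\cdot,1\}$-equation $q\jn q'\reg_i=q'\reg_i$, $q\reg_i\jn q'=q'$, $q\jn(q'\jn q'')=q'\jn q''$ or $xy\jn yx=yx$, and the conclusion $u\le q_F$ is $u\jn q_F=q_F$. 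Hence the word problem, and a fortiori the quasiequational theory, of the $\{\jn,\cdot,1\}$-fragment of $\mathcal V$ is undecidable — and this holds for every $\mathcal V$ between $\CRL_\varepsilon$ and $\RL$.

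For the second claim I would first use that $\varepsilon$ being expansive means $\varepsilon_{\mathcal C}$ has a substitution instance $x^n\le\bigvee_{j=1}^m x^{n+c_j}$, so that $\CRL_\varepsilon$ — indeed any variety satisfying an expansive equation — is negatively $n$-potent. By Theorem~\ref{final} and Lemma~\ref{nming} I fix $K>1$ for which $\varepsilon_{\mathcal C}$ is non-mingly and satisfies $(\star\star K)$; Lemma~\ref{mingly} together with Lemma~\ref{dredundant} then makes $\varepsilon_{\mathcal C}$ admissible in $\Uacm_K$, so $\mathbf W^+_{\Uacm_K}\in\CRL_\varepsilon$ by Lemma~\ref{Wadmiss}, while $\Acc(\Uacm_K)$ is undecidable by Corollary~\ref{mkundec}. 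Since $\mathbf W^+_{\Uacm_K}\in\CRL_\varepsilon\subseteq\mathcal V$, Corollary~\ref{undeq} applies to $\mathcal V$ whenever $\mathcal V$ is an expansive subvariety of $\CRL$ (in particular to $\CRL_\varepsilon$ itself), and yields that deciding the equational theory of $\mathcal V$ is at least as hard as deciding membership in $\Acc(\Uacm_K)$, hence undecidable. Concretely, the reduction sends $u$ to the single equation $\varepsilon^n_{\Inst_K^\to}(u\to q_F)$, which by Lemma~\ref{lem:nnpotent} is equivalent over $\mathcal V$ to the quasiequation $\xi_{\Inst_K^\to}(u\to q_F)$, i.e.\ to $\mathrm{acc}_{\Uacm_K}(u)$, so that by Lemma~\ref{crl} one gets $\mathcal V\models\varepsilon^n_{\Inst_K^\to}(u\to q_F)$ iff $u\in\Acc(\Uacm_K)$.

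The step I expect to be the main obstacle — and the one that determines how far the second claim can be pushed — is the passage from the quasiequation $\mathrm{acc}_{\Uacm_K}(u)$ to a single equation. Lemma~\ref{lem:nnpotent} does this inside $\CRL$ via the description of congruences by convex normal submonoids of the negative cone, and the essential point is that negative $n$-potency fixes the exponent at the $n$ coming from the expansive instance of $\varepsilon$, rather than letting it grow with the length of the computation witnessing $u\in\Acc(\Uacm_K)$; this boundedness is genuinely needed, since without it (e.g.\ in $\RL$ itself, whose equational theory is decidable) no single equation can encode membership in $\Acc(\Uacm_K)$, which is why the hypothesis on $\mathcal V$ must include being expansive (automatic for $\CRL_\varepsilon$). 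To reach a non-commutative expansive $\mathcal V$ one would have to work with convex normal submonoids closed under iterated conjugation and check that adjoining the commutativity premises of $\Pcom$ collapses the iterated conjugates of the terms $p^\to$ onto a fixed finite set, so that again an equation of the form $\varepsilon^n_{\Inst_K^\to}(u\to q_F)$ does the job; supplying that collapse is the remaining work, after which the reduction of the previous paragraph goes through verbatim. Putting the two claims together also recovers Theorems~\ref{main} and~\ref{main2} in the single-equation case.
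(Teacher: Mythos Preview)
Your approach is essentially the paper's: the corollary is stated without proof, as an immediate consequence of the remark preceding it that the quasiequations $\mathrm{acc}_{\acm}(u)$ live in the $\{\jn,\cdot,1\}$-signature (so Theorem~\ref{main}/Corollary~\ref{UndLink} already gives the first claim for the fragment) and that Theorem~\ref{main2} handles the equational-theory claim. Your first paragraph spells this out correctly, including the passage from $\varepsilon$ to $\varepsilon_{\mathcal C}$ and the explicit rewriting of each premise as a $\{\jn,\cdot,1\}$-equation.

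Where you go beyond the paper is in your second and third paragraphs, and there you have put your finger on a genuine issue with the statement rather than a gap in your argument. The tools the paper provides for the ``furthermore'' clause---Lemma~\ref{lem:nnpotent} and Corollary~\ref{undeq}---are formulated only for subvarieties of $\CRL$, and Theorem~\ref{main2} is stated only for $\CRL+\Gamma$. So the second claim is established by the paper's methods for $\mathcal V=\CRL_\varepsilon$ (and, as you note, for any expansive $\mathcal V\subseteq\CRL$ containing $\mathbf W^+_{\Uacm_K}$), but not for arbitrary $\mathcal V$ with $\CRL_\varepsilon\subseteq\mathcal V\subseteq\RL$. Indeed it cannot hold at that generality: taking $\mathcal V=\RL$ gives a counterexample, since the equational theory of $\RL$ is decidable (as the paper itself recalls in the introduction). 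The intended scope of the ``furthermore'' clause is thus narrower than the quantification in the first sentence suggests. Your sketch of what would be needed to push through to non-commutative expansive $\mathcal V$ (handling iterated conjugates and exploiting the commutativity premises in $\Pcom$) is a reasonable outline, but it is extra work not supplied by the paper, and in any case the blanket hypothesis $\CRL_\varepsilon\subseteq\mathcal V$ alone is not enough.
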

Given a simple equation $\varepsilon$, by $(\varepsilon)$ we denote its corresponding sequent-style inference rule, e.g., if $\varepsilon:xy\leq x^2yx\vee x\vee 1$, then
$$\infer[(\varepsilon).]{\Gamma,\Delta_1,\Delta_2,\Sigma\vdash\Pi}{\Gamma,\Delta_1,\Delta_1,\Delta_2,\Delta_1\Sigma\vdash\Pi & \Gamma,\Delta_1,\Sigma\vdash\Pi & \Gamma,\Sigma\vdash\Pi } $$
\begin{cor}
Let $\varepsilon$ be a spineless simple equation and $\m {L}$ any logic contained in the interval from $\m {FL}_\mathsf{e}+(\varepsilon)$ to $\m {FL}$. Then deducibility in the $\{\vee,\cdot,1\}$-fragment of $\m {L}$ is undecidable. Furthermore, if $\varepsilon$ is expansive then provability in the ($0$-free fragment of) $\m {L}$ is undecidable.
\end{cor}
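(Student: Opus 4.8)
The plan is to transfer the algebraic undecidability already established to the logical side via the algebraizability of $\m{FL}$ and its extensions in the sense of Blok and Pigozzi, together with the dual isomorphism between structural extensions of $\m{FL}$ and subvarieties of $\m{FL}$-algebras (see \cite{GJKO}), and the fact, recalled from \cite{GJ}, that the structural rule $(\varepsilon)$ corresponds to the simple equation $\varepsilon$. Under this dictionary, deducibility (the consequence relation $\vdash_{\m{L}}$) corresponds to the quasiequational theory of the equivalent algebraic semantics $\mathcal{V}_{\m{L}}$, while provability/theoremhood corresponds to its equational theory; also $\m{FL_e}+(\varepsilon)$ corresponds to $\CRL+\varepsilon$ (equivalently $\CRL+\varepsilon_\mathcal{C}$), pointed by an arbitrary constant $0$. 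Since, as stressed in the Preliminaries, none of the encodings of this paper involve $0$, all the quasiequations they produce concern the $\{\vee,\cdot,1\}$-reducts and hold in the presence or absence of $0$; in particular the witnessing algebras $\mathbf{W}_{\tilde\acm_K}^+$ may be regarded as commutative $\m{FL}$-algebras by assigning $0$ arbitrarily, with $\mathbf{W}_{\tilde\acm_K}^+\in\CRL+\varepsilon_\mathcal{C}$.

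For the deducibility claim I would argue as follows. If $\m{FL}\subseteq\m{L}\subseteq\m{FL_e}+(\varepsilon)$, then, passing to $0$-free reducts, the equivalent algebraic semantics satisfies $\CRL+\varepsilon_\mathcal{C}\subseteq\mathcal{V}_{\m{L}}\subseteq\RL$ (and it suffices that $\mathcal{V}_{\m{L}}$ be a quasivariety for this). Since $\varepsilon$, hence $\varepsilon_\mathcal{C}$, is spineless, Corollary~\ref{UndLink} and Theorem~\ref{Vmhard} apply to the whole interval $[\CRL+\varepsilon_\mathcal{C},\RL]$: the word problem of $\mathcal{V}_{\m{L}}$ is undecidable, witnessed by the fixed finite presentation $\Pcom$ and the queries $u\leq q_f$ arising from $\mathrm{acc}_{\tilde\acm_K}(u)$, $u\in\indes{\tilde\acm_K}$. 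All of $\Pcom$ and all the inequalities $u\leq q_f$ lie in the signature $\{\vee,\cdot,1\}$, so by the observation opening this section the word problem stays undecidable already for the $\{\vee,\cdot,1\}$-reduct of $\mathcal{V}_{\m{L}}$. Translating back along the algebraization, this is exactly the undecidability of deducibility in the $\{\vee,\cdot,1\}$-fragment of $\m{L}$: the basic judgments there are the inequalities $s\leq t$ between $\{\vee,\cdot,1\}$-formulas (equivalently the sequents $s\Rightarrow t$), the fixed antecedent is the logical reading of $\Pcom$, and the query is $u\Rightarrow q_f$.

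For the provability claim one proceeds in the same spirit but with equations in place of quasiequations. When $\varepsilon$ is moreover expansive, $\CRL+\varepsilon_\mathcal{C}$ is negatively $n$-potent for some $n$, so Lemma~\ref{lem:nnpotent} turns each quasiequation $\mathrm{acc}_{\tilde\acm_K}(u)\equiv\xi_{{\Inst}^\to}(u\to q_f)$ into, over such a variety, a single equation $\varepsilon^n_{{\Inst}^\to}(u\to q_f)$ of the form $(1\wedge\bigwedge{\Inst}^\to)^n\leq(u\to q_f)$; Corollary~\ref{undeq} (equivalently Theorem~\ref{main2}) then yields an undecidable equational theory. These equations involve $\wedge$ and $\to$, so no fragment restriction is available here; correspondingly the statement is about provability in the full $0$-free language, i.e.\ theoremhood, which under algebraization is precisely the equational theory ($\vdash_{\m{L}}\varphi$ iff $\mathcal{V}_{\m{L}}\models 1\leq\varphi$). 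The one proviso is that Corollary~\ref{undeq} requires $\mathcal{V}_{\m{L}}$ to be expansive (negatively $n$-potent), which holds as long as $\m{L}$ still derives the expansive substitution instance of $(\varepsilon)$ — in particular for $\m{L}=\m{FL_e}+(\varepsilon)$.

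The main obstacle, and the one point requiring genuine care rather than bookkeeping, is the matching between the $\{\vee,\cdot,1\}$-fragment of $\m{L}$ and the $\{\vee,\cdot,1\}$-reduct of $\mathcal{V}_{\m{L}}$: the standard Blok--Pigozzi inverse translation of an equation uses the residuals, which the fragment lacks, so one must instead work with the order/sequent presentation of the fragment and verify that the resulting correspondence between ``$\Gamma$ deduces $\varphi$ in the $\{\vee,\cdot,1\}$-fragment of $\m{L}$'' and ``$\& E\Rightarrow s=t$ holds in the $\{\vee,\cdot,1\}$-reduct of $\mathcal{V}_{\m{L}}$'' is recursive and faithful. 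This is exactly the content of the remark opening this section combined with the algebraizability of $\m{FL}$, and it is the hinge on which the $\{\vee,\cdot,1\}$-refinement of the first claim rests. Everything else — the dual isomorphism, compactness of quasiequational consequence over (quasi)varieties, the irrelevance of $0$, and the reduction to membership in $\Acc(\tilde\acm_K)$ — is already available in the excerpt or is standard.
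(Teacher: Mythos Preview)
The paper gives no explicit proof here; the corollary is meant as the logic-side restatement, via algebraizability of $\m{FL}$ and the dual isomorphism between extensions and subvarieties, of the preceding algebraic corollary and ultimately of Theorems~\ref{main} and~\ref{main2}. Your argument is precisely this intended route, and your handling of the $\{\vee,\cdot,1\}$-fragment matches the remark opening \S9.

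Your proviso on the second clause is not mere bookkeeping but a substantive and correct observation. Corollary~\ref{undeq} and Theorem~\ref{main2} establish undecidability of the equational theory only for \emph{expansive subvarieties of} $\CRL$ containing the frame algebra, hence on the logic side only for $\m L$ that are themselves commutative and expansive---in particular for $\m L=\m{FL_e}+(\varepsilon)$, but not for an arbitrary $\m L$ in the stated interval. Indeed the claim fails at the other endpoint: $\m L=\m{FL}$ has decidable provability, as the introduction recalls. So the ``furthermore'' clause as written overreaches, and your reading---that it applies only when $\mathcal V_{\m L}$ is an expansive subvariety of $\CRL$, i.e.\ when $\m L$ derives exchange and the expansive instance of $(\varepsilon)$---is the correct one.
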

%
%
\subsection{Commutative varieties and single variable extensions}
In the following table we display decidability results for subvarieties of $\CRL$ axiomatized by $1$-variable equations using Lemma~\ref{und1var}. The numbers $n,p,q,...$ are distinct and positive, $m\geq0$, and furthermore are all given so that $\varepsilon$ is not trivial. By $(1~\vee)$ we mean $1$ may or may not be included in the expression.
\begin{center}
\begin{tabular}{ l | c | c  }
\hspace{5em}$\varepsilon$ &Eq. Th. of $\CRL_\varepsilon$&Quasi-Eq Th. of $\CRL_\varepsilon$\\ \hline
$x^n\leq x^m$ & FMP \cite{vanA} & FEP \cite{vanA}\\ \hline
$ x^n\leq x^{m}\vee 1$ & ?&?\\ \hline
$ x^n \leq  (1~{\vee})~x^p\vee x^q\vee\cdots  $ & ?&  Und. (Thm~\ref{main})\\ \hline
$ x^n\leq x^{n+p}\vee x^{n+q}\vee\cdots $ &Und. (Thm~\ref{main2}) & Und. (Thm~\ref{main}) 
\end{tabular}
\end{center}

We note that subvarieties axiomatized by equations of the form $x^n\leq x\vee 1$ have the finite model property. In fact, any subvariety of $\RL$ axiomatized by a simple equation in which each term on the right-hand side is linear (i.e., any variable occurs at most once in any joinand) has the finite model property (see Theorem~3.15 in \cite{GJ}). Similarly, while the subvariety of $\CRL$ axiomatized by the simple equation $\varepsilon: xyz\leq xy\vee xz\vee yz\vee x\vee y\vee z$ has an undecidable word problem (it is easily verified that $\varepsilon$ is spineless), $\CRL+\varepsilon$ has the FMP.

Subvarieties of $\CRL$ axiomatized by equations of the form $x^n\leq x^{m+1}\vee 1$, the simplest of which is $\Ds: x\leq x^2 \vee 1$, have no known decidability results for their (quasi-)equational theories. Focusing on the equation $\Ds$, we make the following observations:
\begin{itemize}
\item $\CRL_\Ds$ does not have the finite embeddability property. In fact, extensions of $\CRL$ by equations of the form $x^n y^m\leq x^{2n}\vee y^{2m}$ do not have the FEP for any choice $n,m$. This follows from the fact that such equations hold in chains and the FEP fails in such extensions of $\CRL$ (see \cite{HRT}).

\item The quasiequational theory of $\CRL_\Ds$ does not have a primitive recursive decision procedure. This can be shown using Theorem~\ref{Vmhard} and the machine constructed in \cite{Urq} (which shows that provability in $\m {FL}_\mathsf{ec}$, while decidable, is not primitive recursive). In fact, the same construction can be used to show that there is no primitive recursive decision procedure for the quasiequational theory of the subvariety of $\CRL$ axiomatized by $x^m\leq x^{m+n}~(\vee ~1)$ with $m,n>1$. Furthermore, by Corollary~\ref{undeq} the same holds for the equational theory of the subvariety of $\CRL$ axiomatized by $x^m\leq x^{m+n}$, as this equation is expansive. A more general treatment will be given in a forthcoming paper.
\end{itemize}
%
%
\subsection{Non-commutative varieties} 
As mentioned above, although the main reason for our study has been to establish undecidability for commutative varieties, we also get results about non-commutative ones. Here we compare that portion of our results with existing ones. In \cite{Hor}, it is shown that any variety of residuated lattices containing (as a subvariety) $\mathcal{H}=\RL+(x^3\leq x^2)+(x\leq x^2)$ has an undecidable word problem witnessed in its $\{\leq, \cdot, 1\}$-fragment; of course no subvariety of $\CRL$ contains $\mathcal{H}$, so this result has no implication about commutative varieties. In fact, the algebra $\mathbf{W}^+\in \mathcal{H}$ constructed in \cite{Hor} satisfies every equation for which the deletion of any collection of its variables results in either a trivial equation or one with the right-hand side containing a {\em square} subterm (i.e., a joinand of the from $uv^2w$ for $u,v,w\in X^*$ with $v\neq 1$.) As a result, even though Theorem~\ref{main} covers a lot of commutative varieties, it does not offer any new results for non-commutative varieties axiomatized by $1$-variable equations. In fact, the results in \cite{Hor} entail undecidability of the word problem for many non-commutative extensions of $\RL$ by prespinal equations; e.g., $\RL_\Ds$ has an undecidable word problem since $\mathbf{W}^+\models x\leq x^2\vee 1$. 

However, the are infinitely many equations in  two or more variables for which Theorem~\ref{main} is applicable while $\cite{Hor}$ is not. For example, any equation that is rendered trivial via commutativity (e.g., $x^2y\leq xyx$) is spineless and hence subvarieties of $\RL$ by such equations have an undecidable word problem by Theorem~\ref{main}. 

More interesting $3$-variable basic equations can be obtained by using {\em square-free} joinands.\footnote{A word $w\in X^*$ is square-free if $w\neq ux^2v$ for any words $u,v,x\in X^*$ and $x\neq 1$.} Let $X=\{x,y,z\}$ and let $h:X^*\to X^*$ be the homomorphism extending the assignment $h(1)=1$, $h(x)=xyz$, $h(y)=xz$, and $h(z)=y$. One can produce square-free words of arbitrary length by considering $h^0(x)=x$ and $h^{k+1}(x)=h(h^k(x))$ (see \cite{Lot}). For any nontrivial $1$-variable equation $\varepsilon:x^n \leq (1~{\vee})~x^p \vee x^q\vee \cdots$,  we denote by $h(\varepsilon)$ the basic equation:
$$ h^n(x) \leq  (1~{\vee})~ h^p(x)\vee h^q(x)\vee\cdots  .$$ 

Since $\varepsilon$ is nontrivial, $h(\varepsilon)$ is nontrivial and furthermore has square-free joinands. Consequently, $\mathcal{H}\nmodels h(\varepsilon)$ and so \cite{Hor} is not applicable to equations of this form. However, if $\varepsilon$ is a spineless $1$-variable basic equation then it can easily be shown that $h(\varepsilon)_\comeq$ is also spineless and therefore $\RL+h(\varepsilon)$ has an undecidable word problem by Theorem~\ref{main}. E.g., consider the equation $\varepsilon: x\leq x^2\vee x^3$, then 
$$h(\varepsilon):xyz\leq xyzxzy\vee xyzxzyxyzyxz \quad\mbox{and}\quad h(\varepsilon)_\comeq: xyz\leq x^2y^2z^2\vee x^4y^4z^4. $$
It is easily checked using Theorem~\ref{spinalform} that $h(\varepsilon)_\comeq$ is spineless and hence $h(\varepsilon)$ is spineless in view of Definition~\ref{def:spineless}. Therefore $\RL+h(\varepsilon)$ has an undecidable word problem by Theorem~\ref{main}, but $\mathcal{H}\nmodels h(\varepsilon)$.

While our undecidability results for the word problem in these varieties takes place in the $\{\vee,\cdot,1\}$-fragment, we can strengthen such results to the $\{\leq,\cdot,1\}$-fragment for many non-commutative extensions of residuated lattices (even by some prespinal equations) using a different encoding not relying on the $\vee$ operation. Such ideas will be explored in a forthcoming paper.

\bibliography{FLe_Und_Ref}
\bibliographystyle{asl} 

\end{document}